\newtheorem{theorem}{Theorem}[section]
\newtheorem{lemma}[theorem]{Lemma}
\newtheorem{proposition}[theorem]{Proposition}
\newtheorem{corollary}[theorem]{Corollary}
\newtheorem{remark}[theorem]{Remark}
\newtheorem{definition}[theorem]{Definition}
\numberwithin{equation}{section}
\numberwithin{equation}{section}
\numberwithin{figure}{section}
\newcommand{\esssup}{\mathrm{supess}}
\newcommand{\p}{\textbf{p}}
\newcommand{\q}{\textbf{q}}
\newcommand{\dist}{\mathrm{dist}}
\newcommand{\R}{\mathbb{R}}
\newcommand{\diver}{\mathrm{div}}
\newcommand{\E}{\partial^{*}E}
\newcommand{\h}{\mathcal{H}}
\newcommand{\Upp}{^{\vee}}
\newcommand{\Low}{^{\wedge}}
\newcommand{\aplim}{\mathrm{aplim}}
\newcommand{\mres}{\mathbin{\vrule height 1.6ex depth 0pt width
0.13ex\vrule height 0.13ex depth 0pt width 1.3ex}}
\def\XXint#1#2#3{{\setbox0=\hbox{$#1{#2#3}{\int}$ }
\vcenter{\hbox{$#2#3$ }}\kern-.6\wd0}}
 \title[]{Rigidity of Steiner's inequality for the anisotropic perimeter}
 \author[]
 {Matteo Perugini}
 \address[Matteo Perugini]{Universit\"at M\"unster, Applied Mathematics, Eisteinstr. 62, D-48149 M\"unster}
 \email{matteo.perugini@uni-muenster.de}
\begin{document}
\begin{abstract}
The aim of this work is to study the rigidity problem for Steiner's inequality for the anisotropic perimeter, that is, the situation in which the only extremals of the inequality are vertical translations of the Steiner symmetral that we are considering. Our main contribution consists in giving conditions under which rigidity in the anisotropic setting is equivalent to rigidity in the Euclidean setting. Such conditions are given in term of a restriction to the possible values of the normal vectors to the boundary of the Steiner symmetral (see Corollary \ref{crystal clear 1}, and Corollary \ref{crystal clear 2}). 
\end{abstract}

\maketitle
\tableofcontents

\section{Introduction}

\subsection{Overview}
The characterization of the geometric properties of minimizers of variational problems can be in general a delicate thing to achieve. The study of perimeter inequalities under symmetrisation, and in particular the study of rigidity for such inequalities, is a good way to possibly provide tools in order to show symmetries of the minimizers of the problem under consideration. Steiner's symmetrisation is a classical and powerful example of symmetrisation, that has been often used in the analysis of geometric variational problems. For instance, De Giorgi in his proof of the very celebrated Euclidean isoperimetric theorem \cite{DeGiorgi}, used Steiner's inequality (see (\ref{eq: Classic Steiner inequality})) to show that the minimum for the Isoperimetric Problem is a convex set. After De Giorgi, in the seminal paper \cite{CCF}, Chleb\'ik, Cianchi and Fusco discussed Steiner's inequality in the natural framework of sets of finite perimeter and provided sufficient condition for the rigidity of equality cases. In our context, by \emph{rigidity of equality cases} we mean the situation in which equality cases are solely obtained in correspondence of translations of the Steiner's symmetral.
Then, the characterization of the rigidity of equality cases was resumed by Cagnetti, Colombo, De Philippis and Maggi in their work presented in \cite{CCPMSteiner}. There, they managed to fully  characterize the equality cases for the Steiner's inequality and obtain further important results for the \emph{rigidity problem}. 

\noindent
Concerning \emph{rigidity} of equality cases, let us mention two results that were obtained in different settings from the one just described. First, in the framework of Gaussian perimeter, again Cagnetti, Colombo, De Philippis and Maggi managed to prove a complete characterization result of rigidity of equality cases for
Ehrhard’s symmetrisation inequality (see \cite{CCPMGauss}). The other result, is the characterization of rigidity for the Euclidean perimeter inequality under the spherical symmetrisation (see \cite{cagneti_perugini_stoeger}). For a recent survey on rigidity results for perimeter inequality under symmetrisation see also \cite{portugalia}.\\
The main goal of this work is to characterize rigidity of the equality cases of the Steiner's inequality for the anisotropic perimeter. Our main contribution is presented in Proposition \ref{final sum up}, that provides sufficient conditions to get that rigidity of equality cases of the Steiner's inequality in the Euclidean setting coincides with rigidity of equality cases of the Steiner's inequality in the anisotropic setting (see Corollary \ref{crystal clear 1}, and Corollary \ref{crystal clear 2}). In the remaining part of this introduction, we will introduce some notation and state our main results (see in particular Section \ref{subsec: 1.7}). 

The remaining part of the paper is organized as follows: 
In Section \ref{preliminaries} we recall some basic notions of geometric measure theory and we introduce some useful notation. In Section \ref{section 3} we focus our attention on the properties of the surface tension $\phi_K$ (see (\ref{def surface tension})). In particular, we characterize the cases of additivity for the function $\phi_K$ (see Proposition \ref{prop:linearityphi}), and we prove other intermediate results that will be used in the proof of our main results about rigidity. In Section \ref{section 4} we prove a characterization result for the anisotropic total variation (see Definition \ref{def: anisotropic total variation}). Such result (see Theorem \ref{thm:Charac.anis.total.var.}) will play an important role in Section \ref{section 5}. In Section \ref{section 5} we prove a formula to compute the anisotropic perimeter for some classes of sets $E\subset\R^n$ having finite perimeter, and whose vertical
sections are segments (see Corollary \ref{cor: formula perimetro con b v}, and Corollary \ref{cor: formula per F[v]}). With these results at hands, in Section \ref{section 6} we prove the first of our main results, namely the characterization of equality cases for the anisotropic perimeter inequality under Steiner's symmetrisation (see Theorem \ref{thm:2.2 pag 117}). Lastly, in Section \ref{section Rigidity} we prove the other main results about rigidity, namely Theorem \ref{thm: rigidity}, Proposition \ref{prop: R1,R2}, and Corollary \ref{cor: smooth K^s}.

\subsection{Basic notions on sets of finite perimeter}\label{subsec: 1.2}
For every $r > 0$ and $x \in \R^n$, we denote by $B (x, r)$ the open ball of $\R^n$
with radius $r$ centred at $x$. In the special case $x = 0$, we set $B(r):= B(0, r)$.
Let $E \subset \R^n$ be a measurable set, and let $t \in [0, 1]$.
We denote by $E^{(t)}$ the set of points of density $t$ of $E$, given by 
$$
E^{(t)} : = \left\{ x \in \R^n : \lim_{\rho \to 0^+ } 
\frac{\mathcal{H}^n (E \cap B(x,\rho))}{\omega_n \rho^n} = t \right\},
$$
where here and in the following $\h^k$, $k\in \mathbb{N}$ with $0\leq k \leq n$, stands for $k$-dimensional Hausdorff measure, and $\omega_n=\mathcal{H}^n(B(1))$.
We then define the essential boundary of $E$ as 
$$
\partial^{\textnormal{e}} E:= E \setminus (E^{(1)} \cup E^{(0)}).
$$
Let $G \subset \R^n$ be any Borel set. We define the perimeter of $E$
relative to $G$ as the extended real number given by
$$
P (E; G) := \mathcal{H}^{n-1} (\partial^{\textnormal{e}} E \cap G) \in [0, \infty], 
$$
and the perimeter of $E$ as $P(E):= P (E; \R^n)$.
When $E$ is a set with smooth boundary, it turns out that 
$\partial^{\textnormal{e}} E = \partial E$, and the perimeter of $E$
agrees with the usual notion of $(n-1)$-dimensional measure of $\partial E$. If $P(E;C) < \infty$ for every compact set $C\subset \R^n$, $E$ is called a set of \emph{locally finite perimeter} and we can define the reduced boundary $\partial^*E$ of $E$. This has the property that $\partial^*E \subset \partial^{\textnormal{e}} E$, 
$\mathcal{H}^{n-1} (\partial^{\textnormal{e}} E \setminus \partial^*E) = 0$, 
and is such that for every $x \in \partial^* E$ there exists the measure theoretic outer unit normal $\nu^E (x)$ to $E$ at $x$ (see Section~\ref{preliminaries}).
\subsection{Steiner's inequality}\label{subsec: 1.3} We decompose $\R^n$, $n\geq 2$, as the Cartesian product $\R^{n-1}\times \R$. Then, for every $x=(x_1,\dots,x_n)\in\R^n$ we will write $x=(\p x,\q x)$, where $\p x= (x_1,\dots, x_{n-1})$, and $\q x= x_n$ are the "horizontal" and "vertical" projections, respectively. Given a Lebesgue measurable function $v: \R^{n-1}\rightarrow [0,\infty ]$, we say that a Lebesgue measurable set $E\subset \R^n$ is $v$\emph{-distributed} if, denoting by $E_z$ its vertical section with respect to $z\in \R^{n-1}$, that is 
\begin{align}\label{la striscia verticale}
E_z:=\left\{t\in \R:\, (z,t)\in E  \right\},\quad z\in \R^{n-1},
\end{align} 
we have that 
\begin{align*}
v(z)=\h^1(E_z),\quad \textit{\emph{for $\h^{n-1}$-a.e. $z\in\R^{n-1}$}}.
\end{align*}
\noindent
Among all $v$-distributed sets, we denote by $F[v]$ the only one (up to $\mathcal{H}^n$ negligible modifications) that is symmetric by reflection with respect to $\{\q x=0  \}$, and whose vertical sections are segments, that is 
\begin{align}\label{eq:F[v]}
F[v]:= \left\{x\in \R^n:\, |\q x| < \frac{v(\p x)}{2}  \right\}.
\end{align}
If $E$ is a $v$-distributed set, we define the Steiner symmetral $E^s$ of $E$ as $E^s:=F[v]$. Note that, if $v$ if Lebesgue measurable, then $F[v]$ is a Lebesgue measurable set. Furthermore, by Fubini's Theorem, Steiner symmetrisation preserves the volume, that is, if $E$ is a $v$-distributed set such that $\mathcal{H}^n(E)<\infty$, then $\h^n(E)=\h^n(F[v])$. A very important fact is that Steiner symmetrisation acts monotonically on the perimeter. More precisely, \emph{Steiner's inequality} holds true (see for instance \cite[Theorem 14.4]{MaggiBOOK}): if $E$ is a $v$-distributed set then
\begin{align}\label{eq: Classic Steiner inequality}
P(E;G\times \R)\geq P(F[v];G\times \R) \quad \textit{\emph{for every Borel set $G\subset \R^{n-1}$}}.
\end{align}

The next two results give the minimal regularity assumptions needed to study inequality (\ref{eq: Classic Steiner inequality}) (see \cite[Lemma 3.1]{CCF} and  \cite[Proposition 3.2]{CCPMSteiner}, respectively).
\begin{lemma}\emph{(Chleb\'ik, Cianchi and Fusco)}
Let $E$ be a $v$-distributed set of finite perimeter in $\R^n$, for some measurable function $v:\R^{n-1}\rightarrow [0,\infty]$. Then, one and only one of the following two possibilities is satisfied:
\begin{itemize}
\item[i)]$v(x')=\infty$ for $\h^{n-1}$-a.e. $x'\in\R^{n-1}$ and $F[v]$ is $\h^n$-equivalent to $\R^n$;
\medskip
\item[ii)]$v(x')<\infty$ for $\h^{n-1}$-a.e. $x'\in\R^{n-1}$, $\h^n(F[v])<\infty$, and $v\in BV(\R^{n-1})$,
\end{itemize}
where $BV(\R^{n-1})$ denotes the space of functions of bounded variation in $\R^{n-1}$ (see Section~2).
\end{lemma}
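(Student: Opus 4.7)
The statement splits into a dichotomy for $v$ and a $BV$-regularity claim in case (ii). My strategy is to first establish the analogous dichotomy for $E$ itself (either $E$ or $\R^n\setminus E$ has finite Lebesgue measure) and transfer it to $v$ via Fubini's theorem; then, using Steiner's inequality to control the perimeter of $F[v]$, I would apply the divergence theorem on $F[v]$ to test the distributional derivative of $v$ and obtain the $BV$ bound.

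\textbf{Dichotomy.} I would start from the consequence of the relative isoperimetric inequality on balls $B(r)$: for any set of finite perimeter $E$ in $\R^n$, one has $\min\bigl(\h^n(E),\h^n(\R^n\setminus E)\bigr) < \infty$, obtained by applying $\min(\h^n(E\cap B(r)),\h^n(B(r)\setminus E))^{(n-1)/n} \leq C_n\, P(E;B(r)) \leq C_n\, P(E)$ and letting $r\to\infty$. If $\h^n(E) < \infty$, Fubini gives $\int_{\R^{n-1}} v\,dx' = \h^n(E) < \infty$, so $v$ is integrable and finite $\h^{n-1}$-a.e., which is case (ii). If instead $\h^n(\R^n\setminus E) < \infty$, Fubini applied to $\R^n\setminus E$ shows that $\h^1(\R\setminus E_{x'}) \in L^1(\R^{n-1})$, which is incompatible with $v(x')$ being finite on any set of positive $\h^{n-1}$-measure; hence $v = \infty$ $\h^{n-1}$-a.e., and $F[v]$ coincides with $\R^n$ up to $\h^n$-null sets, giving case (i).

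\textbf{$BV$ regularity.} In case (ii) we already have $v \in L^1(\R^{n-1})$. To bound $|Dv|(\R^{n-1})$, I would invoke Steiner's inequality \eqref{eq: Classic Steiner inequality} with $G = \R^{n-1}$ to obtain $P(F[v]) \leq P(E) < \infty$. Given any $\varphi \in C^1_c(\R^{n-1};\R^{n-1})$ with $\|\varphi\|_\infty \leq 1$, I would set $\tilde\varphi(x',x_n) := (\varphi(x'),0)$ on $\R^n$ and combine Fubini with the generalised Gauss--Green formula for sets of finite perimeter:
\[
\int_{\R^{n-1}} v(x')\,\diver\varphi(x')\,dx' \;=\; \int_{F[v]} \diver\tilde\varphi\,dx \;=\; -\int_{\partial^* F[v]} \tilde\varphi\cdot \nu^{F[v]}\,d\h^{n-1},
\]
whose absolute value is bounded by $P(F[v]) \leq P(E)$. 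Taking the supremum over admissible $\varphi$ gives $|Dv|(\R^{n-1}) \leq P(E) < \infty$, so $v \in BV(\R^{n-1})$.

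\textbf{Main obstacle.} I expect the most delicate step to be the dichotomy: passing from the relative isoperimetric inequality on a ball to the global conclusion requires ruling out that both $\h^n(E\cap B(r))$ and $\h^n(B(r)\setminus E)$ diverge as $r\to\infty$, which needs either a sharper isoperimetric estimate on annuli or a careful subsequence argument exploiting that only the smaller of the two quantities enters the inequality. Once the dichotomy is in place, the $BV$ bound is a short application of Fubini and the Gauss--Green formula, since $P(F[v]) < \infty$ is immediate from Steiner's inequality.
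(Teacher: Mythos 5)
The paper does not actually prove this lemma: it is quoted verbatim from \cite[Lemma 3.1]{CCF}, so the relevant comparison is with the standard argument there. Your dichotomy step is correct and matches that argument, and the point you flag as the main obstacle is in fact the easy part: the relative isoperimetric inequality on balls is scale invariant, so $\min\bigl(\h^n(E\cap B(r)),\h^n(B(r)\setminus E)\bigr)\le (C_n P(E))^{n/(n-1)}$ for \emph{every} $r$, and since the two quantities increase monotonically to $\h^n(E)$ and $\h^n(\R^n\setminus E)$ as $r\to\infty$, they cannot both be infinite; no annulus estimate or subsequence argument is needed. The Fubini arguments giving case (i) when $\h^n(\R^n\setminus E)<\infty$, and $v\in L^1(\R^{n-1})$ (hence $v<\infty$ a.e. and $\h^n(F[v])=\int v<\infty$) when $\h^n(E)<\infty$, are fine.

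The genuine weak point is the $BV$ step. Deducing $v\in BV$ from Steiner's inequality (\ref{eq: Classic Steiner inequality}) inverts the logical order of the literature: the treatments of (\ref{eq: Classic Steiner inequality}) cited here obtain the $BV$-type regularity of $v$ (equivalently, the locally finite perimeter of $F[v]$, which is exactly what you need in order to speak of $\partial^*F[v]$ and apply Gauss--Green there) as a preliminary step, and in \cite{CCF} the present lemma is proved \emph{before} any use of Steiner's inequality; so as written your argument is at risk of circularity, and at the very least you must check that the particular proof of (\ref{eq: Classic Steiner inequality}) you invoke does not already contain the statement. The repair is immediate and removes the detour: run exactly your Fubini plus Gauss--Green computation on $E$ itself, which has finite perimeter by hypothesis. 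For $\varphi\in C^1_c(\R^{n-1};\R^{n-1})$ with $|\varphi|\le 1$, the field $\tilde\varphi(x)=(\varphi(\p x),0)$ is not compactly supported (a detail you also leave unaddressed in your version), but multiplying by a vertical cutoff $\eta_R(\q x)$ creates no error term, since the vertical component of $\tilde\varphi$ vanishes and hence $\mathrm{div}\,(\eta_R\tilde\varphi)=\eta_R\,\mathrm{div}\,\varphi$; letting $R\to\infty$, using $\h^n(E)<\infty$ and $P(E)<\infty$, gives $\int_{\R^{n-1}}v\,\mathrm{div}\,\varphi\,d\h^{n-1}=\int_{\partial^*E}\tilde\varphi\cdot\nu^E\,d\h^{n-1}\le P(E)$, whence $|Dv|(\R^{n-1})\le P(E)<\infty$ and $v\in BV(\R^{n-1})$. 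If you insist on working with $F[v]$, note also that (\ref{eq: Classic Steiner inequality}) in the paper's formulation only bounds $\h^{n-1}(\partial^{\mathrm e}F[v])$, so you would need Federer's criterion to conclude that $F[v]$ is a set of finite perimeter before using its reduced boundary, and the sign in your Gauss--Green identity is off (harmless, since you take suprema).
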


\begin{lemma}
Let $v:\R^{n-1}\rightarrow [0,\infty)$ be measurable. Then, we have $0<\h^n(F[v])<\infty$ and $P(F[v])<\infty$ if and only if 
\begin{align}\label{due tilde}
v\in BV(\R^{n-1}),\quad \textit{and}\quad
0<\h^{n-1}\left(\{v>0 \} \right)<\infty.
\end{align}

\end{lemma}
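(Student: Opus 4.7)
The plan rests on two standard ingredients: the Fubini identity $\mathcal{H}^n(F[v]) = \int_{\R^{n-1}} v \, dz$, which is immediate from \eqref{eq:F[v]}, and the one-dimensional slicing theory of $BV$ functions.

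For the forward direction, assume $0 < \mathcal{H}^n(F[v]) < \infty$ and $P(F[v]) < \infty$. Fubini forces $v \in L^1(\R^{n-1})$ with $\int v > 0$, so in particular $v < \infty$ a.e.\ and the previous lemma (case (ii)) yields $v \in BV(\R^{n-1})$. The decomposition $\{v > 0\} = \bigcup_k \{v > 1/k\}$ gives $\mathcal{H}^{n-1}(\{v > 0\}) > 0$. For the finiteness of $\mathcal{H}^{n-1}(\{v > 0\})$, I would apply the $BV$ slicing formula to $u := \chi_{F[v]}$ in the $x_n$-direction: for $\mathcal{H}^{n-1}$-a.e.\ $z \in \R^{n-1}$ the vertical slice $F[v]_z = (-v(z)/2, v(z)/2)$ is either empty or an open interval, so its one-dimensional $BV$ seminorm equals $2\,\chi_{\{v > 0\}}(z)$. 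Integrating gives $|D_{x_n} u|(\R^n) = 2\,\mathcal{H}^{n-1}(\{v > 0\})$, and since this is dominated by $|Du|(\R^n) = P(F[v]) < \infty$, the desired bound follows.

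For the converse, assume $v \in BV(\R^{n-1})$ and $0 < \mathcal{H}^{n-1}(\{v > 0\}) < \infty$. Since $BV(\R^{n-1}) \subset L^1(\R^{n-1})$, Fubini gives $\mathcal{H}^n(F[v]) < \infty$, and the same layer decomposition yields $\int v > 0$, hence $\mathcal{H}^n(F[v]) > 0$. The substantive step is $P(F[v]) < \infty$. I would first verify for smooth non-negative $v$ with compact support the bound
\[
P(F[v]) \le 2\,\mathcal{H}^{n-1}(\{v > 0\}) + |Dv|(\R^{n-1})
\]
by a direct area computation on the two graphs $\{x_n = \pm v(z)/2\}$, using $\sqrt{1 + a^2} \le 1 + a$ for $a \ge 0$. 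The general $BV$ case would then follow from smooth approximation: pick non-negative $v_k \to v$ in $L^1(\R^{n-1})$ with $|Dv_k|(\R^{n-1}) \to |Dv|(\R^{n-1})$, observe that $F[v_k] \to F[v]$ in $L^1(\R^n)$ once more by Fubini, and invoke $L^1$-lower semicontinuity of the perimeter.

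The main obstacle will be ensuring that the approximating sequence keeps $\mathcal{H}^{n-1}(\{v_k > 0\})$ uniformly bounded: standard mollification enlarges the support by the mollifier radius, so one must either truncate $v$ to $B_k \cap \{v > 1/k\}$ before regularising, or, as a cleaner route, apply the coarea formula to the auxiliary function $g(x', x_n) := v(x') - 2|x_n|$, whose superlevel set $\{g > 0\}$ coincides with $F[v]$ and whose total variation on any cylinder $\R^{n-1} \times (-M, M)$ is explicitly controlled by $|Dv|(\R^{n-1})$ and $\mathcal{H}^{n-1}(\{v > 0\})$.
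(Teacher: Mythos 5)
Your necessity argument is sound and is essentially the standard one: Fubini gives $\h^n(F[v])=\int_{\R^{n-1}}v\,d\h^{n-1}$, the preceding lemma applied to $E=F[v]$ (which is $v$-distributed and of finite perimeter) yields $v\in BV(\R^{n-1})$, and slicing $\chi_{F[v]}$ along the $e_n$-direction gives $2\h^{n-1}(\{v>0\})=|D_{x_n}\chi_{F[v]}|(\R^n)\le P(F[v])<\infty$. Note that the paper itself offers no proof of this lemma: it quotes it from \cite[Proposition 3.2]{CCPMSteiner}, whose sufficiency part rests on the $BV$ subgraph/epigraph theory rather than on smooth approximation, so the comparison below is with that standard route.

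The gap is in your sufficiency direction, and neither of your proposed repairs closes it as written. (i) Truncate-and-mollify: after mollifying $v_k=v\,1_{B_k\cap\{v>1/k\}}$ you only know that $\{(v_k)_\epsilon>0\}$ is contained in the $\epsilon$-neighbourhood of $\{v_k>0\}$, and as $\epsilon\to0$ the measure of that neighbourhood tends to $\h^{n-1}\bigl(\overline{\{v_k>0\}}\bigr)$, which is not controlled by $\h^{n-1}(\{v>0\})$: if $\{v>1/k\}$ is a union of tiny balls centred at a dense subset of $B_k$, it is of order $\h^{n-1}(B_k)$. Lower semicontinuity then gives $P(F[v_k])\le 2\h^{n-1}\bigl(\overline{\{v_k>0\}}\bigr)+|Dv_k|(\R^{n-1})$, finite for each $k$ but not uniformly in $k$, so no bound on $P(F[v])$ survives the second limit. (ii) Coarea for $g(x',x_n)=v(x')-2|x_n|$: the claim that $|Dg|(\R^{n-1}\times(-M,M))$ is controlled by $|Dv|(\R^{n-1})$ and $\h^{n-1}(\{v>0\})$ is false, since the vertical derivative alone contributes $2\,\h^{n}(\R^{n-1}\times(-M,M))=\infty$; working on bounded cylinders only yields locally finite perimeter, not $P(F[v])<\infty$; coarea in any case covers only a.e.\ level $t$ and not the level $t=0$ you need; and the natural truncation $(v(x')-2|x_n|)^{+}$ may itself have infinite total variation, its horizontal part being of order $\int_0^\infty|Dv|(\{v>2s\})\,ds$, which diverges for suitable unbounded $v$ satisfying (\ref{due tilde}). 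To close the argument, either approximate by functions whose positivity set never leaves $\{v>0\}$ (for instance staircase functions built from superlevel sets $\{v>s\}$, which for a.e.\ $s$ have finite perimeter and measure at most $\h^{n-1}(\{v>0\})$), or, in line with the cited source and with this paper's own machinery, write $F[v]=\Sigma_{-v/2}\cap\Sigma^{v/2}$ and use the exact perimeter formula for sets between graphs of $BV$ functions (the Euclidean case $K=B(1)$ of Lemma \ref{lem:1.8 pag 65}, or Corollary \ref{cor: formula per F[v]}); this is not circular, since that formula only requires $\pm v/2\in BV(\R^{n-1})$ and $\h^n(F[v])<\infty$, and it immediately yields $P(F[v])\le 2\h^{n-1}(\{v>0\})+|Dv|(\R^{n-1})$, which is the bound your smooth computation correctly anticipates.
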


\subsection{Rigidity for Steiner's inequality}\label{subsec: 1.4}
Given $v$ as in (\ref{due tilde}) we set:
\begin{align}
\mathcal{M}(v):=\left\{E\subset \R^n:\, \textit{\emph{$E$ is $v$-distributed and $P(E)=P(F[v])$}}\right\}.
\end{align}
We say that \emph{rigidity} holds true for Steiner's inequality if the only elements of $\mathcal{M}(v)$ are ($\h^n$-equivalent to) vertical translations of $F[v]$, namely:
\begin{align}\tag{RS}\label{rigidity steiner}
E\in \mathcal{M}(v) \quad \Longleftrightarrow \quad \h^n(E\Delta (F[v]+te_n))=0\quad \textnormal{ for some $t\in\R$},
\end{align}
where $\Delta$ stands for the symmetric difference between sets, and  $e_1,\dots,e_n$ are the elements of the canonical basis of $\R^n$. 

A natural step in order to understand when (\ref{rigidity steiner}) holds true, is to study the set $\mathcal{M}(v)$. The characterization of equality cases in (\ref{eq: Classic Steiner inequality}) was first addressed by Ennio De Giorgi in \cite{DeGiorgi}, where he showed that any set $E\in\mathcal{M}(v)$ is such that 
\begin{align}\label{eq: Ez segment}
E_z \textit{\emph{ is $\h^1$-equivalent to a segment}},\quad \textit{\emph{for $\h^{n-1}$-a.e. $z\in \R^{n-1}$}},
\end{align}
(see also \cite[Theorem 14.4]{MaggiBOOK}). After that, further information about $\mathcal{M}(v)$ was given
by Chleb\'ik, Cianchi and Fusco (see \cite[Theorem 1.1]{CCF}). The study of equality cases in Steiner's inequality was then resumed by Cagnetti, Colombo, De Philippis and Maggi in \cite{CCPMSteiner}, where the authors give a complete characterization of elements of $\mathcal{M}(v)$ (see Theorem \ref{thm:1.2 Filippo} below). In order to explain their result, let us observe that any $v$-distributed set $E$ satisfying (\ref{eq: Ez segment}) is uniquely determined by the barycenter function 
%
%
$b_E:\R^{n-1}\rightarrow \R$, defined as: 
\begin{align}\label{def:barycenter of E}
b_E(z)=\begin{cases} \frac{1}{v(z)}\int_{E_z}t\,d\h^1(t) & \mbox{if }0<v(z)<\infty,\\ 0 & \mbox{otherwise. }
\end{cases}
\end{align}
Note that, if E satisfies (\ref{eq: Ez segment}), for every $z\in\{0<v<\infty\}$, $b_E$ represents the midpoint of $E_z$. In general, $b_E$ may fail to be a $BV$, or even an $L^1_{loc}$ function, even if $E$ is a set of finite perimeter (see \cite[Remark 3.5]{CCPMSteiner}). The optimal regularity for $b_E$, when $E$ satisfies (\ref{eq: Ez segment}), is given by the following result (see \cite[Theorem 1.7]{CCPMSteiner}). 
\begin{theorem}\label{thm: baricentro Filippo}
Let $v$ be as in (\ref{due tilde}), and let $E$ be a $v$-distributed set of finite perimeter satisfying (\ref{eq: Ez segment}). Then,
\begin{align*}
b_{\delta}=1_{\{v>\delta\}}\,b_E\in GBV(\R^{n-1}),
\end{align*}
for every $\delta>0$ such that $\{ v>\delta \}$ is a set of finite perimeter, where $1_{\{v>\delta\}}$ stands for the characteristic function of the set $\{v>\delta\}$. Moreover, $b_E$ is approximately differentiable $\h^{n-1}$-a.e. on $\R^{n-1}$, namely the approximate gradient $\nabla b_E(x)$ (see Section \ref{preliminaries}) exists for $\h^{n-1}$-a.e. $x\in\R^{n-1}$. Finally, for every Borel set $G\subset \{ v\Upp >0 \}$ the following coarea formula holds:
\begin{align}\label{eq: 1.12 Filippo pag 7}
\int_{\R}\h^{n-2}(G\cap \partial^e\{b_E>t\}) dt= \int_G|\nabla b_E|d\h^{n-1}+\int_{G\cap S_{b_E}}[b_E]d\h^{n-2}+|D^cb_E|^+(G),
\end{align} 
where $|D^cb_E|^+$ is the Borel measure on $\R^{n-1}$ defined by
\begin{align*}
|D^cb_E|^+(G):=\lim_{\delta\rightarrow 0^+}|D^cb_{\delta}|(G)=\sup_{\delta>0}|D^cb_{\delta}|(G),\qquad \forall\,G\subset \R^{n-1}.
\end{align*}
\end{theorem}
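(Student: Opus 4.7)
The strategy is to reduce the claim to the classical coarea formula for $GBV$ functions by encoding $E$ through the endpoints of its vertical segments. Since $E_z$ is $\h^{1}$-equivalent to a segment for $\h^{n-1}$-a.e. $z$, I would introduce
$$
u^{-}(z) := \inf E_z, \qquad u^{+}(z) := \sup E_z, \qquad z \in \{v > 0\},
$$
so that, up to $\h^{n}$-null sets, $E \cap (\{v>0\} \times \R) = \{(z,t) : u^{-}(z) < t < u^{+}(z)\}$, and on $\{v>0\}$ one has $v = u^{+} - u^{-}$ and $b_E = (u^{+} + u^{-})/2$. The $GBV$ regularity of $b_\delta$ will then follow from that of the truncated endpoint functions $u^{\pm}_\delta := 1_{\{v > \delta\}}\, u^{\pm}$, since $2 b_\delta = u^{+}_\delta + u^{-}_\delta$ and $GBV$ is stable under finite linear combinations once the truncation built into its definition is applied.

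The core step is to show $u^{\pm}_\delta \in GBV(\R^{n-1})$ whenever $\{v > \delta\}$ is a set of finite perimeter. I would rely on the horizontal slicing theorem for the set of finite perimeter $E$: for $\h^{1}$-a.e. $t \in \R$, the slice $E^{t} := \{z \in \R^{n-1} : (z,t) \in E\}$ has finite perimeter in $\R^{n-1}$, and $\int_{\R} P(E^{t}; G)\, dt$ is bounded by $P(E; G \times \R)$ for every Borel set $G \subset \R^{n-1}$. The segment structure yields, up to an $\h^{n-1}$-null set, the identification
$$
\{u^{+} > t\} \cap \{v > 0\} = \bigl( E^{t} \cup \{u^{-} \geq t\} \bigr) \cap \{v > 0\},
$$
so that after intersecting with the finite-perimeter set $\{v > \delta\}$ a perimeter bound for $\{u^{+}_\delta > t\}$ holds for $\h^{1}$-a.e. $t$; integrating and invoking the coarea characterization of $BV$ shows that each truncation of $u^{\pm}_\delta$ at levels $\pm M$ lies in $BV(\R^{n-1})$, which is precisely the definition of $GBV(\R^{n-1})$.

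With $b_\delta \in GBV(\R^{n-1})$ at hand, approximate differentiability of $b_E$ at $\h^{n-1}$-a.e. point of $\{v > 0\}$ follows by exhausting $\{v > 0\}$ with the sets $\{v > \delta\}$ as $\delta \to 0^{+}$. For the coarea formula (\ref{eq: 1.12 Filippo pag 7}), I would apply the standard $GBV$ coarea formula to $b_\delta$ on $G \cap \{v > \delta\}$ and pass to the limit $\delta \to 0^{+}$: the absolutely continuous and jump terms converge by monotone convergence because $\nabla b_\delta = \nabla b_E$ and $S_{b_\delta} = S_{b_E}$ on $\{v > \delta\}$, while the Cantor contribution converges precisely by the very definition of $|D^{c}b_E|^{+}$ as the monotone limit $\sup_{\delta > 0} |D^{c} b_\delta|(G)$.

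The main obstacle is the perimeter estimate for $\{u^{+}_\delta > t\}$. A naive horizontal projection of $E \cap \{x_n > t\}$ need not have finite perimeter, so one has to exploit the fact that the horizontal cross-section of $\partial^{*} E$ at height $t$ is rectifiable and carefully separate the contributions coming from the lateral set $\partial^{*}\{v > \delta\} \times \R$ from those of the "upper" portion of $\partial^{*} E$. Handling this separation cleanly requires a Vol'pert-type argument adapted to $v$-distributed sets with segment vertical sections; once the estimate is in place, the remaining assertions are standard $BV$ and $GBV$ machinery combined with the coarea formula.
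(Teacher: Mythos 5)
The paper does not actually prove this statement: Theorem \ref{thm: baricentro Filippo} is quoted from \cite[Theorem 1.7]{CCPMSteiner}, so there is no internal proof to compare against, and your proposal must stand on its own — and as written it has a genuine gap at its central reduction. You deduce $b_\delta\in GBV(\R^{n-1})$ from $u^{\pm}_\delta\in GBV(\R^{n-1})$ by invoking stability of $GBV$ under finite linear combinations, but $GBV$ is not a vector space: the sum of two $GBV$ functions need not be $GBV$, precisely because truncation does not commute with addition. The failure is not cosmetic here: on $\{v>2N\}$ both endpoints satisfy $|u^{\pm}|>N$ while $b_E=(u^++u^-)/2$ can take arbitrary moderate values, so $\tau_M(b_\delta)$ cannot be recovered from $\tau_N(u^+_\delta)$ and $\tau_N(u^-_\delta)$ for any $N$; and since $v$ is not assumed bounded, writing $b_\delta=u^+_\delta-\tfrac{1}{2}v\,1_{\{v>\delta\}}$ and adding a $BV$ function does not rescue the step either (that only works for bounded $BV$ perturbations). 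Controlling $\tau_M b_\delta$ requires a joint estimate on $E$ exactly in the region where $v$ is large — the information your reduction to separate endpoint functions discards — and this is the heart of the theorem rather than a formality.

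The second problem is the perimeter estimate for the level sets, which you yourself flag as the main obstacle but do not supply. The identity $\{u^+>t\}\cap\{v>0\}=(E^t\cup\{u^-\ge t\})\cap\{v>0\}$ is circular for your purposes: it controls a superlevel set of $u^+$ through a superlevel set of $u^-$, an object of exactly the same unknown type, while the slicing bound $\int_\R P(E^t;G)\,dt\le P(E;G\times\R)$ only handles the $E^t$ part. Intersecting with the finite-perimeter set $\{v>\delta\}$ does not remove the obstruction, because the degeneration responsible for essential projections failing to have finite perimeter is the thinness of $E_z\cap(t,\infty)$ (i.e. $u^+(z)$ barely above $t$), which occurs freely on $\{v>\delta\}$; the ``Vol'pert-type argument adapted to $v$-distributed sets'' you defer is precisely the missing proof. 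Finally, the passage $\delta\to0^+$ in the coarea formula is not plain monotone convergence: $b_\delta$ has extra jumps along $\partial^{\mathrm{e}}\{v>\delta\}$, the sets $\partial^{\mathrm{e}}\{b_\delta>t\}$ and $\partial^{\mathrm{e}}\{b_E>t\}$ only agree on $\{v>\delta\}^{(1)}$, and the formula is asserted for Borel $G\subset\{v\Upp>0\}$ rather than $G\subset\{v>0\}$, so the $\h^{n-2}$-dimensional bookkeeping on these boundary sets has to be carried out explicitly (only the Cantor term is immediate, by the very definition of $|D^cb_E|^+$).
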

\noindent
Here $GBV$ is the space of functions of generalized bounded variation, $v\Upp$ and $v\Low$ are the approximate limsup and approximate liminf of $v$ respectively, $[b_E]:=b_E\Upp-b_E\Low$ is the jump of $b_E$, $S_{b_E}$ is the jump set of $b_E$, and $D^cb_{\delta}$ is the Cantor part of the distributional derivative $Db_{\delta}$ of $b_{\delta}$ (for more details see Section \ref{preliminaries}). Starting from  this result it is possible to establish a formula for the perimeter of $E$ in terms of $v$ and $b_E$ (see \cite[Corollary 3.3]{CCPMSteiner}). With such formula at hands, as shown in the next result (see \cite[Theorem 1.9]{CCPMSteiner}), a full characterization of $\mathcal{M}(v)$ can be given. 
Below, we set $\tau_M(s):=\max \{-M, \min\{ M,s \} \}$ for every $s\in\R$, and $M\geq 0$, that is 
\begin{align*}
\tau_M(s):=
\begin{cases}
-M  &\mbox{if}\quad s\leq -M,\\
s  &\mbox{if}\quad -M < s < M,\\
M   &\mbox{if}\quad  s \geq M.
\end{cases}
\end{align*}

\begin{theorem}\label{thm:1.2 Filippo}
Let $v$ be as in (\ref{due tilde}), and let $E$ be a $v$-distributed set of finite perimeter. Then, $E\in \mathcal{M}(v)$ if and only if 
\begin{align}
&i)\;E_z \textit{ is $\h^1$-equivalent to a segment;}\quad
 \textit{for $\h^{n-1}$-a.e. $z\in\R^{n-1}$},\label{eq: 1.15 Filippo}\\
&ii)\;\nabla b_E(z)=0,\quad \textit{for $\h^{n-1}$-a.e. $z\in\R^{n-1}$};\label{eq: 1.16 Filippo}\\
&iii)\;2[b_E]\leq [v], \quad \textit{ $\h^{n-2}$-a.e. on $\{v\Low >0  \}$};\label{eq: 1.18 Filippo}\\
&iv)\;\textit{there exists a Borel function $f:\R^{n-1}\rightarrow [-1/2,1/2]$ such that } \nonumber\\
 & \quad D^c\left( \tau_M(b_{\delta}) \right)(G)= \int_{G\cap \{ v>\delta \}^{(1)}\cap \{ |b_E|<M \}^{(1)}}fd(D^cv),\label{eq: 1.18Dc Filippo}
\end{align}
\textit{for every bounded Borel set $G\subset\R^{n-1}$} and $M>0$, and for $\h^1$-a.e. $\delta>0$. In particular, if $E\in\mathcal{M}(v)$ then
\begin{align}
2|D^cb_E|^+(G)\leq |D^cv|(G),\quad \textit{for every Borel set $G\subset \R^{n-1}$},\label{eq: 1.19 Filippo}
\end{align}
and, if $K$ is a concentration set for $D^cv$ and $G$ is a Borel subset of $\{ v\Low >0 \}$, then
\begin{align}
\int_{\R}\h^{n-2}(G\cap \partial^e\{b_E>t  \})dt=\int_{G\cap S_{b_E}\cap S_v}[b_E]d\h^{n-2}+|D^cb_E|^+(G\cap K).\label{eq: 1.20 Filippo}
\end{align}
\end{theorem}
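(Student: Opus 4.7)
The plan is to start from the explicit formula in \cite[Corollary 3.3]{CCPMSteiner} (mentioned just before the statement), which decomposes $P(E)$ for any $v$-distributed set $E$ satisfying (i) into absolutely continuous, jump and Cantor contributions involving $Dv$ and $Db_E$. Applied to $F[v]$, where $b_{F[v]}\equiv 0$, the same formula yields $P(F[v])$. The proof then hinges on showing that the integrand of each of the three pieces for $E$ pointwise dominates the corresponding piece for $F[v]$, so that the equality $P(E)=P(F[v])$ forces equality in each piece separately. Necessity of (i) is furnished by De Giorgi's result (\ref{eq: Ez segment}): no $E\in\mathcal{M}(v)$ can violate the segment property.

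For the absolutely continuous part, the integrand takes the form $\sqrt{1+|\tfrac{1}{2}\nabla v+\nabla b_E|^2}+\sqrt{1+|\tfrac{1}{2}\nabla v-\nabla b_E|^2}$, while for $F[v]$ it equals $2\sqrt{1+|\tfrac{1}{2}\nabla v|^2}$. Strict convexity of $s\mapsto\sqrt{1+s^2}$ gives the pointwise bound with equality if and only if $\nabla b_E=0$, yielding (ii). A parallel analysis on the jump part produces a triangle-type inequality on $S_v\cup S_{b_E}$ that saturates precisely when $2[b_E]\leq[v]$ on $\{v\Low>0\}$, which is (iii); in addition, any portion of $S_{b_E}\setminus S_v$ contributes strictly to the excess perimeter and must therefore be $\h^{n-2}$-negligible.

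The Cantor part is the technical heart. Since $b_E$ is only $GBV$ in general, one works at the level of the truncated and localised function $\tau_M(b_\delta)$, which is a genuine $BV$ function whenever $\{v>\delta\}$ has finite perimeter (Theorem \ref{thm: baricentro Filippo}). The excess perimeter then contains a nonnegative measure built from $D^c(\tau_M(b_\delta))$ and $D^c v$, and its vanishing together with the Radon--Nikod\'ym theorem yields that $D^c(\tau_M(b_\delta))$ is absolutely continuous with respect to $D^c v$ on the relevant set, with a Borel density $f$; the bound $|f|\leq 1/2$ is once again forced by a convexity analysis on the polar vectors of the Cantor measures. This produces (iv). The main obstacle will be controlling the interplay between the parameters $\delta$ and $M$ and the concentration set $K$ of $D^c v$, and verifying that the density $f$ can be chosen independently of the truncation, so that the identity passes to the limits $\delta\to 0^+$ and $M\to\infty$ consistently with the definition of $|D^cb_E|^+$.

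Finally, (\ref{eq: 1.19 Filippo}) is obtained by integrating (iv) against $d|D^cv|$, using $|f|\leq 1/2$ and then sending $\delta\to 0^+$, $M\to\infty$. For (\ref{eq: 1.20 Filippo}) one specialises the coarea formula (\ref{eq: 1.12 Filippo pag 7}) to a Borel set $G\subset\{v\Low>0\}$: (ii) erases the absolutely continuous term, (iv) together with the choice of $K$ reduces the Cantor term to $|D^cb_E|^+(G\cap K)$, and the jump term is already in the required form once one observes that the relevant part of $S_{b_E}$ is contained in $S_v$. The converse direction is essentially a direct check: each of (i)--(iv) saturates the corresponding pointwise inequality in the perimeter decomposition, hence $P(E)=P(F[v])$ and $E\in\mathcal{M}(v)$.
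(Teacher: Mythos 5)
This theorem is not proved in the paper at all --- it is imported verbatim from \cite[Theorem 1.9]{CCPMSteiner} --- but your plan (the perimeter formula for $v$-distributed sets with segment sections, pointwise domination of the absolutely continuous, jump and Cantor pieces over the corresponding pieces for $F[v]$, equality forced piecewise, strict convexity of $s\mapsto\sqrt{1+s^2}$ for (ii), the saturated triangle-type jump inequality for (iii), and the truncation/Radon--Nikod\'ym argument with equality in the measure inequality for (iv)) is precisely the strategy of that original proof, and it is the same scheme this paper itself carries out for the anisotropic analogue in the proof of Theorem \ref{thm:2.2 pag 117} (including the independence of the Cantor density from $\delta,M$ and the treatment of $\{v\Low=0\}$). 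Your sketch is correct and follows essentially that approach, with (\ref{eq: 1.19 Filippo}) obtained by taking total variations in (iv) and (\ref{eq: 1.20 Filippo}) read off from the coarea formula (\ref{eq: 1.12 Filippo pag 7}).
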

\medskip
\noindent
Theorem \ref{thm: baricentro Filippo} and Theorem \ref{thm:1.2 Filippo} play a key role in the study of rigidity. Indeed, (\ref{rigidity steiner}) holds true if and only if the following condition is satisfied:
\begin{align}
E\in\mathcal{M}(v) \quad \Longleftrightarrow \quad \textit{$b_E$ is $\h^{n-1}$-a.e. constant on $\{v>0 \}$}. 
\end{align}

Based on the previous results, several rigidity results are given in \cite{CCPMSteiner}, depending of the regularity assumptions on $v$ (see \cite[Theorems 1.11-1.30]{CCPMSteiner}). In particular, a complete characterization of rigidity is given when $v$ is a special function of bounded variation with locally finite jump set (see \cite[Theorem 1.29]{CCPMSteiner}).

\subsection{Anisotropic perimeter}\label{subsec: 1.5} Let us start by recalling some basic notions. A function $\phi:\R^n \rightarrow [0,\infty)$ is said to be \emph{1-homogeneous} if
\begin{align}\label{eq: 1-homgeneity}
\phi(x)= |x|\phi\left( \frac{x}{|x|} \right)\qquad \forall\,x\in \R^n\setminus\{0\}.
\end{align}
If $\phi$ is 1-homogeneous, then we say that it is \emph{coercive} if there exists $c>0$ such that
\begin{align}
\phi(x)\geq c|x| \qquad\forall\,x\in \R^n.
\end{align}
In the following, we will assume that
\begin{align}\label{HP per K}
\textit{$K\subset \R^n$ is open, bounded, convex and contains the origin.}
\end{align}
Given $K$ as in (\ref{HP per K}), one can define a one-homogeneous, convex and coercive function $\phi_K:\R^n\rightarrow [0,\infty)$ in this way:
\begin{align}\label{def surface tension}
\phi_K(x):=\sup\left\{x\cdot y:\, y\in K  \right\},
\end{align} 
see Figure \ref{fig:phi definitionZ}.
\begin{figure}[!htb]
\centering
\def\svgwidth{7cm}
\begingroup%
  \makeatletter%
  \providecommand\color[2][]{%
    \errmessage{(Inkscape) Color is used for the text in Inkscape, but the package 'color.sty' is not loaded}%
    \renewcommand\color[2][]{}%
  }%
  \providecommand\transparent[1]{%
    \errmessage{(Inkscape) Transparency is used (non-zero) for the text in Inkscape, but the package 'transparent.sty' is not loaded}%
    \renewcommand\transparent[1]{}%
  }%
  \providecommand\rotatebox[2]{#2}%
  \newcommand*\fsize{\dimexpr\f@size pt\relax}%
  \newcommand*\lineheight[1]{\fontsize{\fsize}{#1\fsize}\selectfont}%
  \ifx\svgwidth\undefined%
    \setlength{\unitlength}{810.08390024bp}%
    \ifx\svgscale\undefined%
      \relax%
    \else%
      \setlength{\unitlength}{\unitlength * \real{\svgscale}}%
    \fi%
  \else%
    \setlength{\unitlength}{\svgwidth}%
  \fi%
  \global\let\svgwidth\undefined%
  \global\let\svgscale\undefined%
  \makeatother%
  \begin{picture}(1,0.74976066)%
    \lineheight{1}%
    \setlength\tabcolsep{0pt}%
    \put(0,0){\includegraphics[width=\unitlength,page=1]{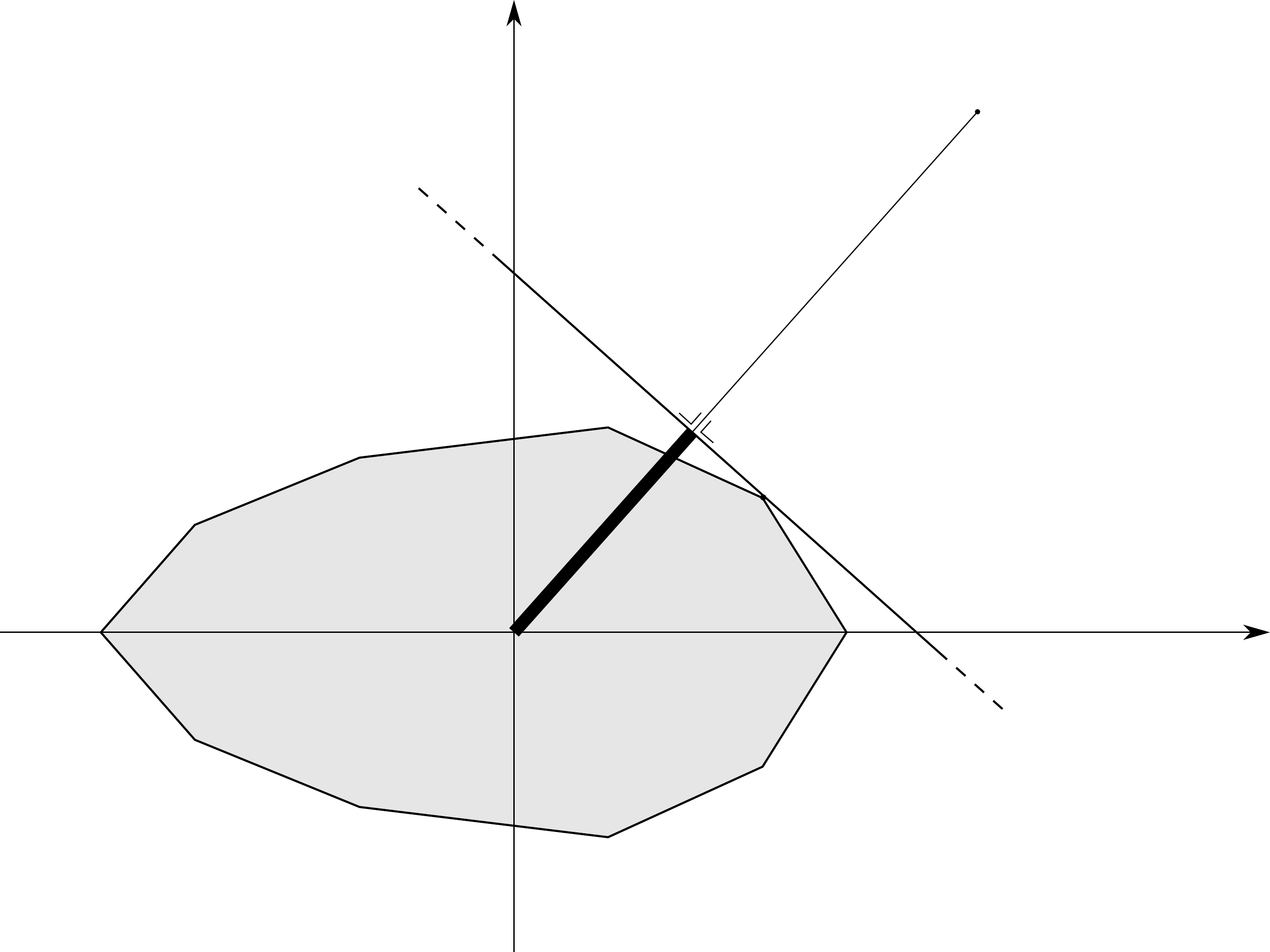}}%
    \put(0.77111894,0.63900264){\color[rgb]{0,0,0}\makebox(0,0)[lt]{\lineheight{0.82591271}\smash{\begin{tabular}[t]{l}\tiny{$x$}\end{tabular}}}}%
    \put(0.42475939,0.2142728){\color[rgb]{0,0,0}\makebox(0,0)[lt]{\lineheight{0.82591271}\smash{\begin{tabular}[t]{l}\tiny{$O$}\end{tabular}}}}%
    \put(0.61875377,0.35225982){\color[rgb]{0,0,0}\makebox(0,0)[lt]{\lineheight{0.82591271}\smash{\begin{tabular}[t]{l}\tiny{$\bar{y}$}\end{tabular}}}}%
    \put(0.18255559,0.27554821){\color[rgb]{0,0,0}\makebox(0,0)[lt]{\lineheight{0.82591271}\smash{\begin{tabular}[t]{l}\tiny{$K$}\end{tabular}}}}%
  \end{picture}%
\endgroup%

\caption{A pictorial description of link between the set $K$ and the function $\phi_K$ in case $n=2$. The length of the segment in bolt equals $\phi_K\left( \frac{x}{|x|} \right)$. Note that $\bar{y}$ is the point such that we have $\phi_{K}(x)=x\cdot \bar{y}$. Therefore, the line passing through $\bar{y}$ orthogonal to the vector $x$ represents the hyperplane $\left\{y\in\R^2:\, y\cdot \frac{x}{|x|}= \phi_{K}\left(\frac{x}{|x|}  \right)  \right\}$.}
\label{fig:phi definitionZ}
\end{figure}
\noindent
By homogeneity, convexity of $\phi_K$ is equivalent to \emph{subadditivity} (see for instance \cite[Remark~20.2]{MaggiBOOK}), namely
\begin{align}\label{eq: phi subadditiva}
\phi_K(x_1+x_2)\leq \phi_K(x_1)+\phi_K(x_2),\qquad \forall\,x_1,x_2\in \R^n.
\end{align}
Let us notice that there is a one to one correspondence between open, bounded and convex sets $K$ containing the origin and one-homogeneous, convex and coercive functions $\phi:\R^n\rightarrow [0,\infty)$. Indeed, given a one-homogeneous, convex and coercive function $\phi:\R^{n}\rightarrow [0,\infty)$, then the set
\begin{align}\label{def: Wulff shape K}
K=\bigcap_{\omega \in \mathbb{S}^{n-1}}\left\{x\in \R^n:\, x\cdot\omega < \phi(\omega)   \right\},
\end{align}
satisfies (\ref{HP per K}), where $\mathbb{S}^{n-1}=\{x\in\R^n:\,|x|=1\}$, and 
$$\phi(x)=\sup\left\{x\cdot y:\, y\in K  \right\}=\phi_K(x),$$
\noindent
where $\phi_K$ is given by (\ref{def surface tension}).
Let $E\subset \R^n$ be a set of finite perimeter and let $G\subset \R^{n}$ be a Borel set. Given $K\subset \R^n$ as in (\ref{HP per K}), we define the \emph{anisotropic perimeter}, with respect to $K$, of $E$ relative to $G$, as 
\begin{align*}
P_K(E;G)= \int_{\E\cap G}\phi_K(\nu^{E}(x)) d\mathcal{H}^{n-1}(x),
\end{align*}
and the anisotropic perimeter $P_K(E)$ of $E$ as $P_K(E;\R^n)$. Observe that in the special case $\phi_K(x)=|x|$, this notion of perimeter agrees with the one of Euclidean perimeter which corresponds to $K=B(1)$. Note that, in general, $\phi_K$ is not a norm, unless $\phi_K(x)=\phi_K(-x)$ for every $x\in \R^n$. \\
In the applications, the anisotropic perimeter can be used to describe the surface tension in the study of equilibrium configurations of solid crystals with sufficiently small grains
\cite{H, Ta, W}, and represents the basic model for surface energies in phase transitions \cite{G}. These applications motivate the study of the the \emph{Wulff problem} (or \emph{anisotropic isoperimetric problem}): 
\begin{align}\label{Wulff Problem}
\inf\left\{\int_{\E}\phi_K(\nu^E(x))d\mathcal{H}^{n-1}(x):\, E\subset \R^n,\, \h^n(E)=\h^n(K) \right\}.
\end{align}
This name comes from the Russian  crystallographer Wulff, who was the first one to study (\ref{Wulff Problem}) and who first conjectured that $K$ is the unique (modulo translations and scalings) minimizer of (\ref{Wulff Problem}) (see \cite{W}). Indeed the anisotropic perimeter inequality holds true (see for instance \cite[Chapter 20]{MaggiBOOK}): 
\begin{align}\label{wulff inequality}
P_K(K)\leq P_K(E) \quad \textit{for every $E\subset\R^n$ with $\h^n(E)=\h^n(K)$},
\end{align}
with equality if and only if $\h^n(K\Delta (E+x))=0$ for some $x\in\R^n$. The proof of the uniqueness was then given by Taylor (see \cite{Ta}) and later, with a different method, by Fonseca and M\"uller (see \cite{FM}). We usually refer to $K$ as the \emph{Wulff shape} for the surface tension $\phi_K$.\\

\subsection{Steiner's inequality for the anisotropic perimeter}\label{subsec: 1.6}
Note that the analogous of inequality (\ref{eq: Classic Steiner inequality}) for the anisotropic perimeter in general fails. Indeed, choose $K$ as in (\ref{HP per K}) such that 
\begin{align*}
\inf_{x\in\R^n}\h^n(K\Delta (K^s+x))> 0,
\end{align*}
where $K^s$ denotes the Steiner symmetral of $K$. Then, by uniqueness of the solution for (\ref{Wulff Problem}), we have that 
$$
P_K(K)<P_K(K^s).
$$


The above considerations show that, for an inequality as in (\ref{eq: Classic Steiner inequality}) to hold true in the anisotropic setting, one should at least consider the perimeter $P_{K^s}$ with respect to the Steiner symmetral $K^s$ of $K$.

\begin{remark}Let us observe that since $K$ is a convex set, by properties of Steiner symmetrisation, $K^s$ is convex too. This general property of Steiner symmetrisation can be summed up in the following statement. Let $v$ be as in (\ref{due tilde}), such that $F[v]$ is not a convex set, then every $v$-distributed set $E$ satisfying (\ref{eq: Ez segment}) cannot be convex. To prove this, let us consider two points $x,y\in F[v]$ such that the segment joining $x$ and $y$, namely $\overline{xy}$ is not fully contained in $F[v]$. It is not restrictive to make the following assumptions on $x,y$, namely $\p x\neq \p y$, $\q x> 0$, and also $\q y>0$. Let us call by $x^-$, $y^-$ the two points of $F[v]$ obtained by reflecting $x$, and $y$ with respect to $\{x_n=0 \}$, namely $x^-=(\p x, -\q x)$, $y^-=(\p y,-\q y)$. Let us consider the quadrilateral $Q$ with vertexes in $x,y,y^-,x^-$. By symmetry of $F[v]$, and since we assumed that $\overline{xy}$ is not fully contained in $F[v]$, there exists $\bar{z}\in \overline{\p x\p y}\setminus \{\p x,\p y\}$ such that 
\begin{align}\label{eq: dajelo}
\h^1(Q_{\bar{z}})> v(\bar{z}),
\end{align}
where we recall $Q_z$ is defined in (\ref{la striscia verticale}). Let us now consider any $v$-distributed set $E$ satisfying condition (\ref{eq: Ez segment}), and let $x_1,y_1,x_1^-,y_1^-$ be the four points obtained from $x,y,x^-,y^-$ in the following way: $x_1=(\p x,\q x+b_E(\p x))$, $y_1=(\p y,\q y+b_E(\p y))$, $x_1^-=(\p x,-\q x+b_E(\p x))$, and $y_1^-=(\p y,-\q y+b_E(\p y))$. Observe that by construction $x_1,y_1,x_1^-,y_1^-\in E$. Let us call $Q^1$ the quadrilateral with vertexes in those four points. By construction of $Q,Q^1$, and since Steiner symmetrisation preserves vertical distances, we have that $\h^1(Q_z)=\h^1((Q^1)_z)$ for every $z\in \overline{\p x\p y}$. In particular, recalling (\ref{eq: dajelo}) we get
\begin{align*}
\h^1((Q^1)_{\bar{z}})> v(\bar{z}).
\end{align*}
As a direct consequence of the above inequality, we get that the quadrilateral $Q^1$, with vertexes in $E$, is not fully contained in $E$, and thus $E$ is not convex. By generality of $E$, we conclude. 
\end{remark}

\begin{remark}\label{rem: K simmetrica e quindi pure phi_K}
Let us observe that since $K^s$ is symmetric with respect to $\{x_n=0\}$, then  $\forall x\in \R^n$ we have that $\phi_{K^s}(\p x,\q x)=\phi_{K^s}(\p x, -\q x )$. 
\end{remark}
What actually can be proved is the following result (for its proof see \cite[Theorem 2.8]{cianchifusco2}).
\begin{theorem}\label{thm:anisotropic perimeter inequality}
Let $K \subset \R^n$ be as (\ref{HP per K}), let $K^s$ be its Steiner symmetral, and let $v$ as in (\ref{due tilde}). Then, for every $E\subset \R^n$ $v$-distributed we have
\begin{align}\tag{AS}\label{eq:anisotropic Steiner inequality}
P_{K^s}(E;G\times \R)\geq P_{K^s}(F[v];G\times \R)\quad \textit{for every Borel set $G\subset \R^{n-1}$}.
\end{align}
\end{theorem}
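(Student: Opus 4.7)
The strategy is to reduce \eqref{eq:anisotropic Steiner inequality} to two elementary features of $\phi_{K^s}$: its subadditivity \eqref{eq: phi subadditiva} and the reflection symmetry $\phi_{K^s}(w,t)=\phi_{K^s}(w,-t)$ from Remark \ref{rem: K simmetrica e quindi pure phi_K}. These are exactly what is gained by replacing $K$ with its Steiner symmetral $K^s$. I would first prove the inequality in a smooth one-interval regime, then extend to smooth sets with finitely many intervals per section, and conclude by approximation.

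\emph{Smooth case, single interval.} Take $v\in C^1(\R^{n-1})$ and $E=\{(z,t):b(z)-v(z)/2<t<b(z)+v(z)/2\}$ for some $b\in C^1(\R^{n-1})$. Applying the area formula on the two graphs $x_n=b\pm v/2$, the $1$-homogeneity of $\phi_{K^s}$, and Remark \ref{rem: K simmetrica e quindi pure phi_K} to flip the sign of the vertical component on the bottom face,
\begin{align*}
P_{K^s}(E;G\times\R)=\int_G\bigl(\phi_{K^s}(-\nabla b-\nabla v/2,1)+\phi_{K^s}(\nabla b-\nabla v/2,1)\bigr)\,dz.
\end{align*}
The two arguments have midpoint $(-\nabla v/2,1)$ independently of $b$, so convexity of $\phi_{K^s}$ produces the pointwise lower bound by $2\phi_{K^s}(-\nabla v/2,1)$, which (again by symmetry) is exactly the integrand of $P_{K^s}(F[v];G\times\R)$.

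\emph{Smooth case, finitely many intervals per section.} If $E_z=\bigsqcup_{i=1}^N[a_i(z),c_i(z)]$ with $a_1<c_1<a_2<\ldots<c_N$ smooth in $z$, the same computation (plus Remark \ref{rem: K simmetrica e quindi pure phi_K}) yields
\begin{align*}
P_{K^s}(E;G\times\R)=\int_G\sum_{i=1}^N\bigl(\phi_{K^s}(-\nabla c_i,1)+\phi_{K^s}(\nabla a_i,1)\bigr)\,dz.
\end{align*}
Iterated subadditivity of $\phi_{K^s}$ over these $2N$ vectors gives the pointwise lower bound $\phi_{K^s}(-\nabla v,2N)$, where $v=\sum_i(c_i-a_i)$. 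The reflection symmetry of $\phi_{K^s}$ further implies that $t\mapsto\phi_{K^s}(w,t)$ is non-decreasing on $[0,\infty)$ (for $(y,s)\in K^s$ realising the sup at height $t_1\in[0,t_2]$ one replaces $s$ by $|s|$ using that $(y,-s)\in K^s$), whence $\phi_{K^s}(-\nabla v,2N)\geq\phi_{K^s}(-\nabla v,2)=2\phi_{K^s}(-\nabla v/2,1)$, which is the integrand corresponding to $F[v]$.

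\emph{General case by approximation.} A general $v$-distributed set $E$ of finite perimeter is approximated by smooth sets $E_k$ of the previous form (with sections consisting of finitely many intervals), with $v_k$-distribution functions $v_k$, chosen so that $E_k\to E$ in $L^1$ and $v_k\to v$ strictly in $BV(\R^{n-1})$. Lower semicontinuity of $E\mapsto P_{K^s}(E;G\times\R)$ under $L^1$-convergence handles the left-hand side from above, and continuity of $v'\mapsto P_{K^s}(F[v'];G\times\R)$ under strict $BV$-convergence handles the right-hand side from below; since the inequality holds along the approximants, it passes to the limit, first for open $G$ and then for Borel $G$ by regularity of the boundary measure. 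The main obstacle is the last continuity: the singular part of $Dv$ (jump and Cantor parts) contributes "vertical" portions of $\partial^*F[v]$ whose $P_{K^s}$-weight depends nonlinearly on $\phi_{K^s}$, so one needs either a $BV$-approximation strong enough to control these parts, or an explicit formula for $P_{K^s}(F[v];G\times\R)$ in terms of $v$. Such a representation is precisely what is developed in Section \ref{section 5} of the paper and would close the argument cleanly.
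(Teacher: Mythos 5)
Your Steps 1 and 2 are correct and isolate the right mechanism: after flipping the vertical component of the normal with Remark \ref{rem: K simmetrica e quindi pure phi_K}, subadditivity of $\phi_{K^s}$ around the midpoint $\left(-\tfrac12\nabla v,1\right)$, combined with the monotonicity of $t\mapsto\phi_{K^s}(w,t)$ on $[0,\infty)$ (your symmetry argument for this is fine), gives the pointwise lower bound $2\phi_{K^s}\left(-\tfrac12\nabla v,1\right)$. Keep in mind, however, that the paper does not prove Theorem \ref{thm:anisotropic perimeter inequality} at all: it is quoted from \cite[Theorem 2.8]{cianchifusco2}, so there is no internal argument to compare with; the closest internal tools are Vol'pert's Theorem \ref{thm: Volpert}, the coarea formula (\ref{eq: Coarea CCF}), Lemma \ref{lem: 3.2 CCF}, and the representation formulas of Section \ref{section 5}.

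The genuine gap is Step 3, which is where the actual content of the theorem lies, and as sketched it does not go through. The limiting scheme is misallocated: lower semicontinuity of $P_{K^s}(\cdot\,;G\times\R)$ under $L^1$ convergence gives $P_{K^s}(E;G\times\R)\le\liminf_k P_{K^s}(E_k;G\times\R)$, which is the wrong direction on the left-hand side; there you need $\limsup_k P_{K^s}(E_k;G\times\R)\le P_{K^s}(E;G\times\R)$, i.e. the approximation must be constructed so that the anisotropic \emph{relative} perimeters converge (strict convergence of $|D1_{E_k}|$ plus a Reshetnyak-type continuity argument, plus control of the mass of $|D1_E|$ on $\partial G\times\R$, first for open $G$ and then for Borel $G$ by outer regularity). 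Lower semicontinuity is instead exactly what suffices on the $F[v_k]$ side, so the strict-$BV$ continuity you invoke there is stronger than needed; but producing approximants that are simultaneously smooth, with finitely many intervals per section, perimeter-converging in the cylinder, and with well-behaved distribution functions $v_k$, is precisely the work that is missing — and, as you acknowledge yourself, the jump and Cantor parts of $Dv$ (the vertical pieces of $\partial^*F[v]$ appearing in Corollary \ref{cor: formula per F[v]}) must be recovered on the right-hand side. Note also that for a general set of finite perimeter the part of $\partial^*E$ where $\q\nu^E=0$ is invisible to your graph computation (it only adds to the left-hand side, but a complete proof has to say so). As it stands, your argument establishes \eqref{eq:anisotropic Steiner inequality} only for the smooth model sets of Steps 1--2; to close it one either needs the representation machinery of Section \ref{section 5}, or the approximation-free route of \cite{CCF}/\cite{cianchifusco2}: work section by section via Theorem \ref{thm: Volpert}, the coarea formula and Lemma \ref{lem: 3.2 CCF}, apply your subadditivity--monotonicity estimate on each section, and treat the absolutely continuous, jump and Cantor contributions to $P_{K^s}(F[v];G\times\R)$ separately.
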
\noindent


\subsection{Statement of the main results} \label{subsec: 1.7} We are now ready to state our main results. Given $v$ as in (\ref{due tilde}), and $K\subset\R^n$ satisfying (\ref{HP per K}) we denote by
\begin{align}
\mathcal{M}_{K^s}(v):=\left\{E\subset \R^n:\, \textit{\emph{$E$ is $v$-distributed and $P_{K^s}(E)=P_{K^s}(F[v])$}}\right\},
\end{align}
the family of sets achieving equality in (\ref{eq:anisotropic Steiner inequality}). In this context, we say that \emph{rigidity} holds true for (\ref{eq:anisotropic Steiner inequality}) if the only elements of $\mathcal{M}_{K^s}(v)$ are vertical translations of $F[v]$, namely
\begin{align}\tag{RAS}\label{rigidity anisotropic steiner}
E\in \mathcal{M}_{K^s}(v) \quad \Longleftrightarrow \quad \h^n(E\Delta (F[v]+te_n))=0 \textit{ for some $t\in\R$}.
\end{align}

\begin{figure}[!htb]
\centering
\def\svgwidth{15cm}
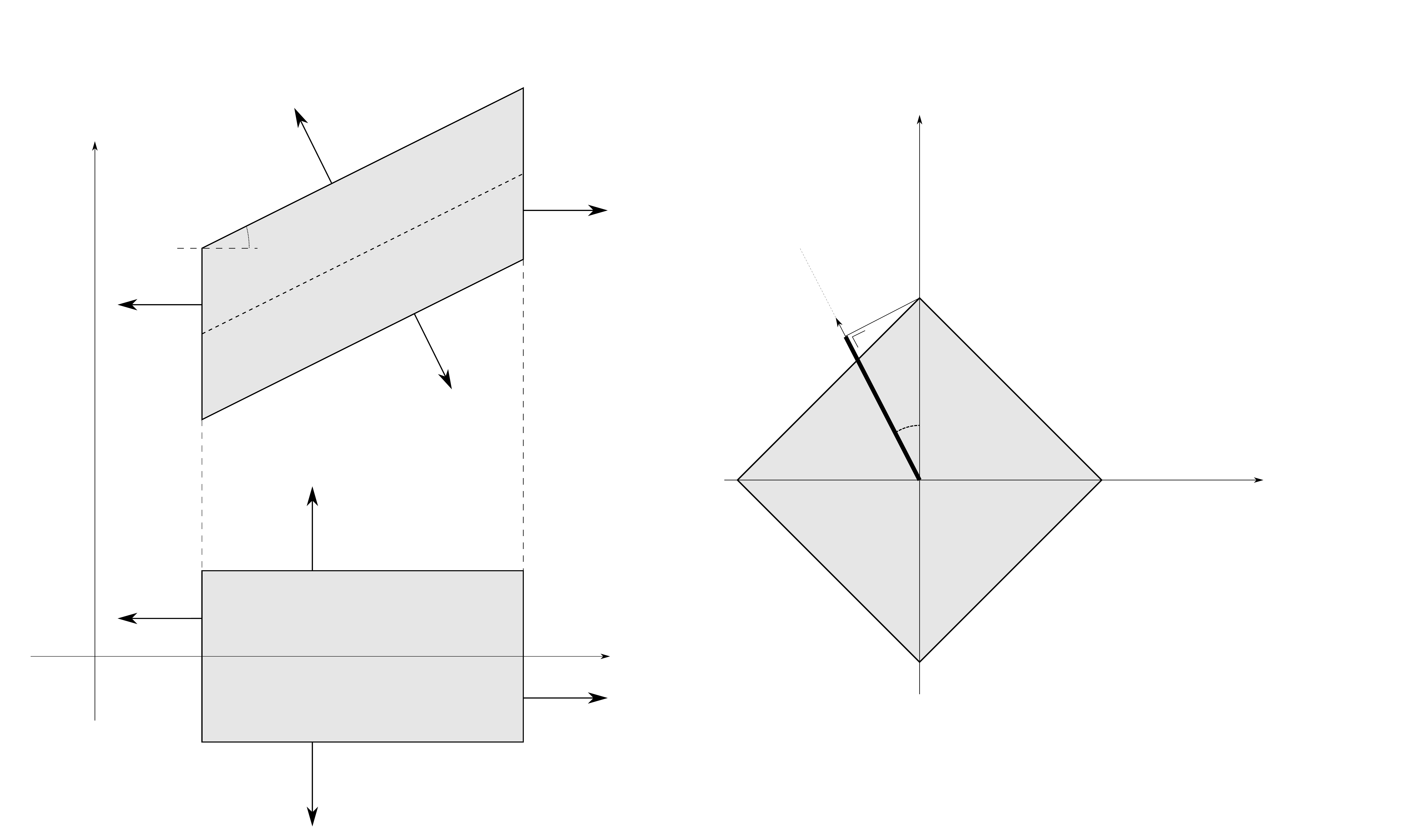
\caption{Suppose that $0< \beta\leq \pi/4$. By definition of $\phi_{K^s}$, one can check that the length of the segment in bolt equals $\phi_{K^s}(\nu^E_{AB})=\phi_{K^s}(\nu^E_{CD})= \cos(\beta)$. As a consequence, we have $P_{K^s}(E)=P_{K^s}(E^s)$, even if $b'_E=\tan\beta\neq0$.}
\label{fig:wulff shapesZ}
\end{figure}

As done for the study of (\ref{rigidity steiner}), we start by characterizing the set $\mathcal{M}_{K^s}(v)$. Note that, in the anisotropic setting, the conditions given in Theorem \ref{thm:1.2 Filippo} do not give a characterization of equality cases of (AS). In particular, let us show with an example in dimension $2$, that condition (\ref{eq: 1.16 Filippo}) fails to be necessary.  Let $K^s, E$, and  $E^s$ be as in Figure \ref{fig:wulff shapesZ}. Observe that, although $\nabla b_E=b'_E=\tan(\beta)\neq 0$ we have $P_{K^s}(E)=P_{K^s}(E^s)$, if $0< \beta\leq \pi/4$. Indeed, in this case, setting $h=\mathcal{H}^1(AD)=\mathcal{H}^1(BC)=\mathcal{H}^1(RU)=\mathcal{H}^1(ST)$, and $l=\mathcal{H}^1(RS)=\mathcal{H}^1(TU)$ we get 
\begin{align*}
P_{K^s}(E)&= \phi_{K^s}(\nu^E_{AB})\h^1(AB) + \phi_{K^s}(\nu^E_{CD})\h^1(CD)+ \phi_{K^s}(\nu^E_{AD})h + \phi_{K^s}(\nu^E_{BC})h \\
&=2\cos(\beta)\h^1(AB)+2h =2 \cos(\beta)\frac{l}{\cos(\beta)}+2h= 2l+2h = P_{K^s}(E^s).
\end{align*}
Interestingly, if $\pi/4 < \beta <\pi/2$ one can see that $P_{K^s}(E)> P_{K^s}(E^s)$.\\

\noindent
We will see that this simple example carries some important features of the general case. In order to characterize $\mathcal{M}_{K^s}(v)$ we start by proving a formula that allows to calculate $P_K(E)$ in terms of $b_E$ and $v$ whenever $E$ is a $v$-distributed set satisfying (\ref{eq: Ez segment}) (see Corollary \ref{cor: formula perimetro con b v}). After that, we need to carefully study under which conditions equality holds true in (\ref{eq: phi subadditiva}), see Proposition \ref{prop:linearityphi}. 

Before stating our results, let us give some definitions. 
If $K\subset \R^n$ is as in (\ref{HP per K}), we define the gauge function $\phi^*_K:\R^n \rightarrow [0,\infty)$ as
\begin{align}\label{def: gauge function}
\phi^*_K(x):=\sup\{x\cdot y:\, \phi_K(y) <1  \}.
\end{align}
It turns out that $\phi_K^*$ is one-homogeneous, convex and coercive on $\R^n$ (see Proposition \ref{prop: Maggi 20.10}).
Let now $x_0\in \partial K$ and let $\partial\phi^*_K(x_0)$ denote the sub-differential of $\phi_K^*$ at $x_0$ (see Definition \ref{def: sub-differential}). We define the \emph{positive cone} generated by $\partial\phi^*_K(x_0)$, as
\begin{align}\label{def:weak sub-differential}
C^*_K(x_0):=\left\{\lambda y:\, y\in \partial\phi^*_K(x_0)\,, \lambda\geq 0  \right\},
\end{align} 
see Figure \ref{fig:C_K^*}. In the following, if $\mu$ is an $\R^n$-valued Radon measure in $\R^{n-1}$, we denote by $|\mu|_K$ the anisotropic total variation (with respect to $K$) of $\mu$, see Definition \ref{def: anisotropic total variation}.

\begin{figure}[!htb]
\centering
\def\svgwidth{13cm}
\input{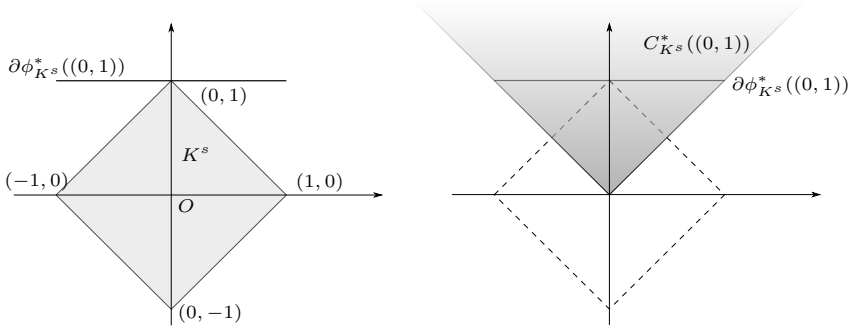}
\caption{On the left $K^s$ and a pictorial idea of the sub-differential $\partial\phi_{K^s}^*((0,1))$, whereas on the right a pictorial representation of  $C_{K^s}^*((0,1))$.}
\label{fig:C_K^*}
\end{figure}   

Our first result gives a complete characterization of $\mathcal{M}_{K^s}(v)$, and can be considered as the anisotropic version of Theorem \ref{thm:1.2 Filippo}. Note that, in particular, this extends \cite[Theorem 2.9]{cianchifusco2}, where necessary conditions for a set to belong to $\mathcal{M}_{K^s}(v)$ were given.

\begin{theorem}\label{thm:2.2 pag 117}
Let $v$ be as in (\ref{due tilde}), let $K\subset \R^n$ satisfy (\ref{HP per K}), and let $E$ be a $v$-distributed set of finite perimeter. Then, $E\in \mathcal{M}_{K^s}(v)$ if and only if
\begin{itemize}
\item[i)]$E_x$ is $\h^1$-equivalent to a segment, for $\h^{n-1}$-a.e. $x\in\R^{n-1}$;
\item[ii)]for $\mathcal{H}^{n-1}$-a.e. $x\in\{v>0\}$ there exists $z(x)\in\partial K^s$ s.t.
\begin{align}\label{eq:1.16 pag 7 Filippo}
\left\{\left(-\frac{1}{2}\nabla v(x)+t \nabla b_E(x),1  \right):\,t\in [-1,1]   \right\} \subset C^*_{K^s}(z(x));
\end{align}
\item[iii)]for $\mathcal{H}^{n-2}$-a.e. $x\in\{v\Low >0\}$ we have that
\begin{align}\label{eq:1.17 pag 7 Filippo}
2[b_E](x)\leq [v](x);
\end{align}
\item[iv)] There exists a Borel function $g:\R^{n-1}\rightarrow\R^{n-1}$ such that 
\begin{align*}
D^c (\tau_Mb_{\delta})(G)=\int_{G\cap \{ v>\delta \}^{(1)}\cap \{ |b_E|<M \}^{(1)}}g(x)d|(D^cv/2,0)|_{K^s}(x),
\end{align*}
for every bounded Borel set $G\subset\R^{n-1}$, every $M>0$, and $\mathcal{H}^1$-a.e. $\delta >0$. Moreover, $g$ satisfies the following property: for $|D^cv|$-a.e. $x\in\{v\Low>0  \}$ there exists $z(x)\in \partial K$ s.t.
\begin{align}\label{eq:1.18 pag 7 Filippo}
\left\{(h(x)+tg(x),0):\,t\in[-1,1]\right\}\subset C^*_{K^s}(z(x)),
\end{align}
where 
\begin{align}\label{h}
h(x):= \frac{-dD^c v/2 }{d\left|(D^c v/2,0) \right|_{K^s}}(x),
\end{align}
is the derivative of $-D^c v/2$ with respect to the anisotropic total variation $\left|(D^c v/2,0) \right|_{K^s}$ in the sense of Radon measures.
\end{itemize}
\end{theorem}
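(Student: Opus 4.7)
The plan is to mirror the argument used for the Euclidean characterization in Theorem \ref{thm:1.2 Filippo}, with the surface tension $\phi_{K^s}$ replacing the Euclidean norm throughout. The essential new tool is the characterization of the equality case of the subadditivity inequality (\ref{eq: phi subadditiva}) proved in Proposition \ref{prop:linearityphi}: two vectors $v_1,v_2\in \mathbb{R}^n$ satisfy $\phi_{K^s}(v_1+v_2)=\phi_{K^s}(v_1)+\phi_{K^s}(v_2)$ exactly when they both lie in the positive cone $C^*_{K^s}(z)$ associated with some common $z\in \partial K^s$. This is the mechanism that will make the pointwise equality cases visible and that produces the cone inclusions appearing in (ii) and (iv).

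First, I would prove that any equality case $E\in\mathcal{M}_{K^s}(v)$ must satisfy (i). This adapts De Giorgi's classical argument: coercivity of $\phi_{K^s}$ together with the fact that (\ref{eq:anisotropic Steiner inequality}) holds for every Borel set $G\subset\mathbb{R}^{n-1}$ lets one localise to vertical tubes, whereupon the strict one-dimensional isoperimetric inequality forces sections to be connected. Once (i) is in place, Theorem \ref{thm: baricentro Filippo} provides the regularity of $b_E$ and identifies $E$ up to a null set with its barycenter-distribution pair $(b_E,v)$.

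The core of the proof is then to apply Corollary \ref{cor: formula perimetro con b v} and Corollary \ref{cor: formula per F[v]} to write $P_{K^s}(E)-P_{K^s}(F[v])$ as a sum of three non-negative contributions, one for each of the absolutely continuous, jump, and Cantor parts of $Dv$ and $Db_E$. Non-negativity of each contribution is precisely the subadditivity of $\phi_{K^s}$ applied to the two vectors describing the upper and lower boundary profiles of $E$. Equality in (\ref{eq:anisotropic Steiner inequality}) then forces each contribution to vanish separately, and Proposition \ref{prop:linearityphi} converts these vanishings into cone inclusions: for the absolutely continuous piece this produces (\ref{eq:1.16 pag 7 Filippo}); for the jump piece, combined with the geometric constraint that at a jump of $v$ the set $E$ cannot poke outside the slab determined by $[v]$, it produces (\ref{eq:1.17 pag 7 Filippo}).

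The most delicate step is (iv). Here I would use Theorem \ref{thm:Charac.anis.total.var.} to rewrite the Cantor contribution as an integral against the single Radon measure $|(D^c v/2,0)|_{K^s}$, which is the natural reference measure dominating both $D^c v$ and $D^c b_E$; a pointwise application of Proposition \ref{prop:linearityphi} followed by Radon-Nikodym differentiation against this measure then produces the Borel function $g$ satisfying (\ref{eq:1.18 pag 7 Filippo}) together with the density $h$ of (\ref{h}). The reverse implication is obtained by running the same chain of equalities backwards: assuming (i)-(iv), each of the three contributions vanishes by the corresponding characterization in Proposition \ref{prop:linearityphi}, hence $E\in \mathcal{M}_{K^s}(v)$. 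I expect the Cantor step to be the main obstacle, because translating a pointwise cone-inclusion condition into a measure-theoretic statement requires careful handling of the mutual singularities of the Cantor parts of $v$ and $b_E$, and it is exactly there that Theorem \ref{thm:Charac.anis.total.var.} becomes indispensable.
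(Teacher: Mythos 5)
Your plan coincides with the paper's proof: condition (i) is quoted from the literature (the paper cites Cianchi--Fusco), and both implications are obtained exactly as you describe, by comparing the perimeter formulas of Corollary \ref{cor: formula perimetro con b v} and Corollary \ref{cor: formula per F[v]} part by part (absolutely continuous, jump, Cantor, after truncating to $\{\delta<v<L\}\cap\{|b_E|<M\}$) and converting the vanishing of each nonnegative difference into cone inclusions via Proposition \ref{prop:linearityphi}. The only bookkeeping differences are that the paper's Cantor step runs through Lemma \ref{lem:1.9 pag 70} and Remark \ref{rem: case equality radon measure} (the absolute continuity of $D^cb_E$ with respect to $\left|(D^cv/2,0)\right|_{K^s}$ is a consequence of equality, not an a priori domination) rather than through Theorem \ref{thm:Charac.anis.total.var.} directly, and that the sufficiency direction additionally uses Proposition \ref{prop:3.7 page 34 Filippo} for the contribution over $\{v\Low=0\}$ and Lemma \ref{lem:3.8 page 37 Filippo} to pass from the truncated condition (iv) to the full Cantor parts.
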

\noindent
\begin{remark}
In Figure \ref{fig:spiegazione condizione su nabla b_E}, we give a pictorial idea of condition (\ref{eq:1.16 pag 7 Filippo}) for the example of Figure \ref{fig:wulff shapesZ}.
\end{remark}

\begin{figure}[!htb]
\centering
\def\svgwidth{13cm}
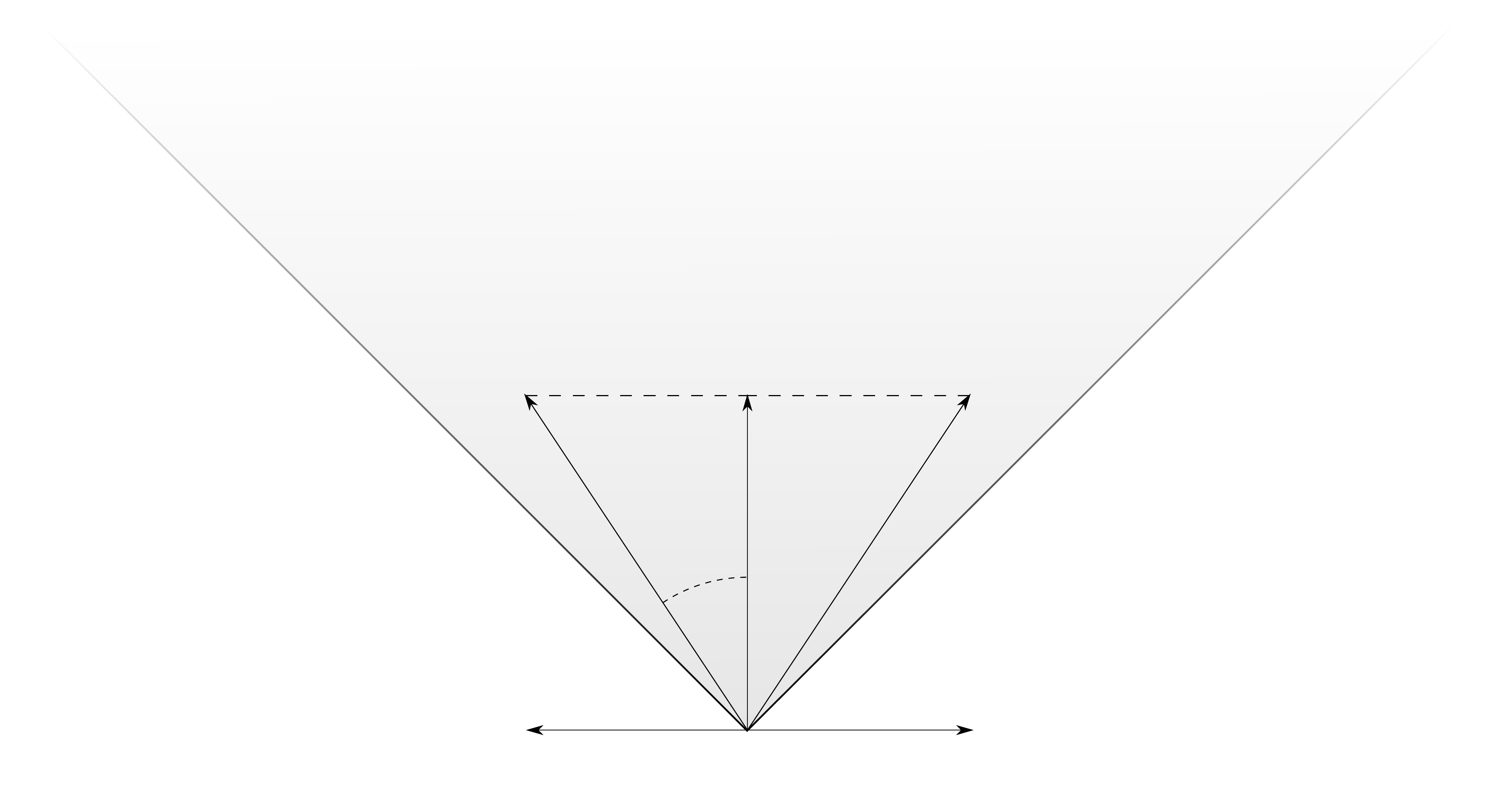
\caption{A pictorial idea of condition (\ref{eq:1.16 pag 7 Filippo}), for the example given in Figure~\ref{fig:wulff shapesZ}. As long as $0\leq \beta\leq \pi/4$, $\nabla b_E = b'_E$ is such that (\ref{eq:1.16 pag 7 Filippo}) is satisfied, and so in this simple example we get that $E\in\mathcal{M}_{K^s}(v)$. Note that $v'=0$, since $v$ is constant.}
\label{fig:spiegazione condizione su nabla b_E}
\end{figure}  
\noindent

If the first step we did, in order to study the rigidity problem in the anisotropic setting, was the characterization of the set $\mathcal{M}_{K^s}(v)$, the second step consists in the understanding of the relation between the sets $\mathcal{M}_{K^s}(v)$ and $\mathcal{M}(v)$. To get how important this is for our goal, let us observe the following fact. Let $v$ be as in (\ref{due tilde}), let $K\subset \R^n$ satisfy (\ref{HP per K}), and let us assume that (\ref{rigidity anisotropic steiner}) holds true. Then, $\mathcal{M}_{K^s}(v)\subset \mathcal{M}(v)$. Indeed, consider $E\in\mathcal{M}_{K^s}(v)$, i.e. $E$ is a $v$-distributed set of finite perimeter such that $P_{K^s}(E)=P_{K^s}(F[v])$. By (\ref{rigidity anisotropic steiner}), we know that $\h^n(E\Delta (F[v]+te_n))=0$ for some $t\in\R$. As a direct consequence we get that $P(E)=P(F[v])$, and so $E\in \mathcal{M}(v)$. So, we have just proved that a necessary condition in order to have rigidity in the anisotropic setting is to require that $\mathcal{M}_{K^s}(v)\subset\mathcal{M}(v)$ holds true. Let us remark that the opposite inclusion, namely $\mathcal{M}(v)\subset\mathcal{M}_{K^s}(v)$ is always verified. This is true because in general, the conditions given in Theorem \ref{thm:1.2 Filippo} are more stringent than those appearing in Theorem \ref{thm:2.2 pag 117}. So, to sum up all the previous observations we obtained, a necessary condition to require in order to get rigidity of equality cases in the anisotropic setting is that $\mathcal{M}_{K^s}(v)=\mathcal{M}(v)$. 

Therefore, to study the rigidity problem in the anisotropic setting, it is crucial to understand when the non trivial inclusion $\mathcal{M}_{K^s}(v)\subset\mathcal{M}(v)$ holds true. To this aim, given $K\subset\R^n$ as in (\ref{HP per K}) and $y\in\R^n$, we set 
\begin{align}\label{def: Z_K(y)}
\mathcal{Z}_K\left( y \right):=\left\{z\in\partial K:\, y\in C^*_{K}(z)  \right\}.
\end{align}
Note that $\emptyset\neq \mathcal{Z}_K\left( y \right)=\mathcal{Z}_K\left( \lambda y \right)$ for ever $y\in\R^n$ and for every $\lambda>0$ (see for instance relation (\ref{eq: step 2 sub-dif bord wulff}) in Lemma \ref{lem: sub-diff e bordo wulff}). The following two conditions will play an important role in the understanding of rigidity.
\vspace*{0.3cm}\\
\textbf{R1}: $\forall\,y\in \R^n$, for $\h^{n-1}$-a.e. $x\in \{v>0\}$, and $\forall\,z\in\mathcal{Z}_{K^s}\left( \left(-\frac{1}{2}\nabla v(x),1  \right) \right)$,
\begin{align*}
\left(-\frac{1}{2}\nabla v(x),1  \right)+ y,\left(-\frac{1}{2}\nabla v(x),1  \right)- y \in C^*_{K^s}(z)\quad \Longrightarrow\quad y=\lambda \left(-\frac{1}{2}\nabla v(x),1  \right),
\end{align*}
for some $\lambda\in [-1,1]$.
\vspace*{0.3cm}\\
\textbf{R2}: $\forall\,y\in \R^n$, for $|D^cv|$-a.e. $x\in\{v\Low>0\}$, and $\forall\,z\in\mathcal{Z}_{K^s}\left((h(x),0) \right)$, 
\begin{align*}
(h(x),0)+ y,\,(h(x),0)-y \in C^*_{K^s}(z)\quad \Longrightarrow\quad y=\lambda (h(x),0),\quad \quad \textit{for some $\lambda\in [-1,1]$},
\end{align*}
where $h$ has been defined in (\ref{h}). Next result shows the importance of conditions \textbf{R1} and \textbf{R2}. We anticipate that although conditions \textbf{R1} and \textbf{R2} may seem quite complicated, they can be characterized in a simple way in terms of the possible value of the normal vectors to $\partial^*F[v]$ (see Proposition \ref{prop: R1,R2}, and Remark \ref{rem: post Prop 1.12 2}).
\begin{theorem}\label{thm: rigidity}
Let $v$ be as in (\ref{due tilde}) and let $K\subset\R^n$ be as in (\ref{HP per K}). In addition, let us assume that \textbf{R1} and \textbf{R2} hold true. Then, $\mathcal{M}_{K^s}(v)\subset\mathcal{M}(v)$. As an immediate consequence, (\ref{rigidity steiner}) and (\ref{rigidity anisotropic steiner}) are equivalent.
\end{theorem}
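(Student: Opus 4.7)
The plan is to take any $E\in\mathcal{M}_{K^s}(v)$, apply the anisotropic characterization (Theorem~\ref{thm:2.2 pag 117}), and use hypotheses \textbf{R1} and \textbf{R2} to upgrade its flexible conditions (ii) and (iv) into the rigid conditions (\ref{eq: 1.16 Filippo}) and (\ref{eq: 1.18Dc Filippo}) of Theorem~\ref{thm:1.2 Filippo}. This will place $E$ in $\mathcal{M}(v)$, so $\mathcal{M}_{K^s}(v)\subset\mathcal{M}(v)$; together with the converse inclusion (discussed just above the statement) the two families coincide, whence (\ref{rigidity steiner}) and (\ref{rigidity anisotropic steiner}) are equivalent by their definitions. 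Since conditions (i) and (iii) of the two characterizations coincide verbatim, they transfer automatically, and the argument reduces to two pointwise applications of \textbf{R1} and \textbf{R2}, followed by a Radon--Nikodym manipulation.

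\textbf{The $\nabla b_E=0$ step (from \textbf{R1}).} Fix $x\in\{v>0\}$ at which Theorem~\ref{thm:2.2 pag 117}(ii) holds, with associated $z(x)\in\partial K^s$. Evaluating (\ref{eq:1.16 pag 7 Filippo}) at $t=0$ shows that $(-\tfrac12\nabla v(x),1)\in C^*_{K^s}(z(x))$, i.e.\ $z(x)\in\mathcal{Z}_{K^s}((-\tfrac12\nabla v(x),1))$, while the endpoints $t=\pm1$ give
\[
\bigl(-\tfrac12\nabla v(x),1\bigr)\pm(\nabla b_E(x),0)\in C^*_{K^s}(z(x)).
\]
Applying \textbf{R1} with $y:=(\nabla b_E(x),0)\in\R^n$ will yield $y=\lambda(-\tfrac12\nabla v(x),1)$ for some $\lambda\in[-1,1]$; comparing $n$-th coordinates forces $\lambda=0$, so $\nabla b_E(x)=0$. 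This is (\ref{eq: 1.16 Filippo}).

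\textbf{The Cantor step (from \textbf{R2}).} For $|D^cv|$-a.e.\ $x\in\{v\Low>0\}$ the same trick applied to the segment in (\ref{eq:1.18 pag 7 Filippo}) produces $z(x)\in\mathcal{Z}_{K^s}((h(x),0))$ with $(h(x),0)\pm(g(x),0)\in C^*_{K^s}(z(x))$. Then \textbf{R2} applied to $y:=(g(x),0)$ delivers a scalar $\lambda(x)\in[-1,1]$ (Borel measurable on $\{h\neq 0\}$, and extended arbitrarily where $h=0$) with $g(x)=\lambda(x)h(x)$. Substituting into the integral formula of Theorem~\ref{thm:2.2 pag 117}(iv) and invoking the defining identity (\ref{h})---equivalently, $h\,d|(D^cv/2,0)|_{K^s}=-d(D^cv/2)$---will give
\[
D^c(\tau_M b_\delta)(G)=\int_{G\cap\{v>\delta\}^{(1)}\cap\{|b_E|<M\}^{(1)}} -\tfrac{\lambda(x)}{2}\,d(D^cv)(x),
\]
which is exactly (\ref{eq: 1.18Dc Filippo}) with $f(x):=-\lambda(x)/2\in[-1/2,1/2]$. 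All four conditions of Theorem~\ref{thm:1.2 Filippo} then hold, so $E\in\mathcal{M}(v)$, and the inclusion---hence the equivalence of the two rigidity notions---follows. The main obstacle will be this Cantor step: once \textbf{R2} collapses the vector identity $g=\lambda h$ to a scalar one, one still has to ensure Borel measurability of $\lambda$ and justify rewriting the anisotropic integral against $|(D^cv/2,0)|_{K^s}$ as a scalar integral against $D^cv$, which is where the Radon--Nikodym definition of $h$ enters decisively.
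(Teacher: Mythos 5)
Your proposal is correct and follows essentially the same route as the paper's proof: invoke Theorem \ref{thm:2.2 pag 117}, apply \textbf{R1} at $t=\pm1$ (with $t=0$ giving $z(x)\in\mathcal{Z}_{K^s}$) to force $\nabla b_E=0$, and apply \textbf{R2} to get $g=\lambda h$, which via the Radon--Nikodym identity $h\,d|(D^cv/2,0)|_{K^s}=-d(D^cv/2)$ yields condition (\ref{eq: 1.18Dc Filippo}) with $f=-\lambda/2$, so Theorem \ref{thm:1.2 Filippo} gives $E\in\mathcal{M}(v)$. Your remark on the measurability of $\lambda$ is a small refinement the paper leaves implicit, but the argument is the same.
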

\begin{remark}
The above result can be seen as a generalization of \cite[Theorem 2.10]{cianchifusco2}.
\end{remark}
\noindent
Thanks to Theorem \ref{thm: rigidity}, all the characterization results for (RS) proved in \cite{CCPMSteiner}, also hold true in the anisotropic setting, provided conditions \textbf{R1} and \textbf{R2} are satisfied. In particular, as a direct consequence of Theorem \ref{thm: rigidity}, we have the following result.

\begin{theorem}\label{thm: eredità della rigidità di Steiner}
Let $v$ be as in (\ref{due tilde}) and let $K\subset\R^n$ be as in (\ref{HP per K}) such that \textbf{R1} and \textbf{R2} are satisfied. Then, the following results from \cite{CCPMSteiner} hold true, provided \emph{rigidity} is substituted with (RAS) and $\mathcal{M}(v)$ is substituted with $\mathcal{M}_{K^s}(v)$: \cite[Theorem 1.11]{CCPMSteiner}, \cite[Theorem 1.13]{CCPMSteiner}, \cite[Theorem~1.16]{CCPMSteiner}, \cite[Theorem~1.20]{CCPMSteiner}, \cite[Theorem 1.29]{CCPMSteiner}, and \cite[Theorem 1.30]{CCPMSteiner}.   
\end{theorem}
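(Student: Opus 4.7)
The plan is to reduce the theorem to the already-established equality $\mathcal{M}_{K^s}(v)=\mathcal{M}(v)$ and then quote each of the listed results verbatim. Concretely, the observation to exploit is that every rigidity statement in \cite{CCPMSteiner} is phrased as an implication of the form ``under suitable assumptions on $v$, every $E\in\mathcal{M}(v)$ is a vertical translate of $F[v]$''; if one can replace $\mathcal{M}(v)$ by $\mathcal{M}_{K^s}(v)$ throughout, one obtains exactly (RAS) under the same assumptions on $v$.

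First I would invoke Theorem \ref{thm: rigidity}: under \textbf{R1} and \textbf{R2} one has the nontrivial inclusion $\mathcal{M}_{K^s}(v)\subset\mathcal{M}(v)$. Next I would recall the opposite inclusion $\mathcal{M}(v)\subset\mathcal{M}_{K^s}(v)$, which is always valid because the conditions characterising $\mathcal{M}(v)$ in Theorem \ref{thm:1.2 Filippo} are strictly stronger than those characterising $\mathcal{M}_{K^s}(v)$ in Theorem \ref{thm:2.2 pag 117}; this was already noted in the discussion preceding Theorem \ref{thm: rigidity}. Combining the two inclusions yields $\mathcal{M}_{K^s}(v)=\mathcal{M}(v)$.

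Third, I would go through the list of cited theorems from \cite{CCPMSteiner} and observe that each of them has hypotheses formulated purely in terms of the function $v$ (for instance, $v$ being a special function of bounded variation with locally finite jump set in \cite[Theorem 1.29]{CCPMSteiner}, or regularity/connectedness assumptions on the level sets of $v$ in the other cited theorems) and does not involve any anisotropy. Hence each of those theorems applies and yields (RS) under its respective hypothesis. The identity $\mathcal{M}_{K^s}(v)=\mathcal{M}(v)$ then shows that $E\in\mathcal{M}_{K^s}(v)$ if and only if $E$ is $\mathcal{H}^n$-equivalent to some vertical translate of $F[v]$, which is exactly (RAS). Performing this substitution $\mathcal{M}(v)\mapsto \mathcal{M}_{K^s}(v)$ and $\text{(RS)}\mapsto\text{(RAS)}$ in the statement of each theorem delivers the claim.

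I do not expect a genuine obstacle here; the entire argument is a bookkeeping corollary of Theorem \ref{thm: rigidity}. The only point demanding care is that the hypotheses of the cited results depend solely on $v$ (and not on $K$), so that they may be invoked verbatim; inspection of \cite[Theorems 1.11, 1.13, 1.16, 1.20, 1.29, 1.30]{CCPMSteiner} confirms this. No further use of the properties of $\phi_{K^s}$ or of the cones $C^*_{K^s}$ is required beyond what has already been absorbed into Theorem \ref{thm: rigidity}.
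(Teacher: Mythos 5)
Your proposal is correct and follows essentially the same route as the paper, which presents this statement as an immediate corollary of Theorem \ref{thm: rigidity}: conditions \textbf{R1} and \textbf{R2} give $\mathcal{M}_{K^s}(v)\subset\mathcal{M}(v)$, the reverse inclusion always holds, hence (\ref{rigidity steiner}) and (\ref{rigidity anisotropic steiner}) are equivalent and the results of \cite{CCPMSteiner}, whose hypotheses involve only $v$, transfer verbatim. Your write-up merely makes the bookkeeping explicit, which matches the paper's intent.
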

\noindent
To check whether conditions \textbf{R1}, \textbf{R2} hold true might be difficult in general. Thus, using well known concepts of convex analysis such as the definition of \emph{extreme point} and of \emph{exposed point} (see Definition \ref{def: exposed point} and Definition \ref{def: extreme point} respectively), we give simple necessary and sufficient conditions for \textbf{R1} and \textbf{R2} to hold true (see Proposition \ref{prop: R1,R2}, Figure \ref{fig:Rigiditacristallino}, and Figure \ref{fig:Rigiditanoncristallino}  below). Indeed, next proposition shows that \textbf{R1} and \textbf{R2} can be expressed in a clear geometric way, by comparing the set of normal vectors to $\partial^* F [v]$ to the set of normal vectors to $\partial^* K^s$. Roughly speaking, conditions \textbf{R1} and \textbf{R2} are both satisfied if and only if the first of these two sets is contained in the closure of the second one (see also Corollary \ref{crystal clear 1}). For the proofs of this and other results about rigidity, we refer to Section \ref{section Rigidity}.  

\noindent
To state our next result, we need another definition. If $K\subset\R^n$ is as in (\ref{HP per K}) we define the following set:
\begin{align}\label{V_{K^s}}
\mathbb{V}_{K^s}:=\left\{ \nu^{K^s}(x):\, x\in\partial^*K^s \right\}.
\end{align}
We indicate with $\overline{\mathbb{V}_{K^s}}$ the topological closure of $\mathbb{V}_{K^s}$.



\begin{proposition}\label{prop: R1,R2}
Let $v$ be as in (\ref{due tilde}) and let $K\subset\R^n$ be as in (\ref{HP per K}). Then, the following statements are equivalent:
\begin{itemize}
\item[i)]conditions \textbf{R1}, \textbf{R2} hold true;
\medskip
\item[ii)]$\exists\, S\subset\{ v\Low>0 \}$ such that $\mathcal{H}^{n-1}(S)=|D^cv|(S)=0$, and 
\begin{align}\label{eq lem R1,R2}
\nu^{F[v]}\left(z,\frac{1}{2}v(z)\right) \in \overline{\mathbb{V}_{K^s}}\quad \forall\,z\in \{v\Low>0\}\setminus S.
\end{align}
\end{itemize}
\end{proposition}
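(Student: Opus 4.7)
The plan is to reformulate conditions \textbf{R1} and \textbf{R2} as a single pointwise geometric statement on the normal fan of $K^s$, and then use convex analysis on $K^s$ to identify this statement with the membership $\nu^{F[v]}(z,v(z)/2)\in\overline{\mathbb{V}_{K^s}}$. Specifically, \textbf{R1} at $x$ is the condition that the vector $y_0(x)=(-\nabla v(x)/2,1)$ satisfy
\begin{align*}
(\star)\qquad \forall\,z\in\mathcal{Z}_{K^s}(y_0),\;\forall\,y\in\R^n\colon\quad y_0\pm y\in C^*_{K^s}(z)\;\Longrightarrow\; y\parallel y_0,
\end{align*}
and similarly \textbf{R2} at $x$ is $(\star)$ with $y_0(x)=(h(x),0)$. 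The constraint $\lambda\in[-1,1]$ on the parallel scaling in \textbf{R1}, \textbf{R2} is automatic, since $y_0, y_0\pm y$ all lie in the convex cone $C^*_{K^s}(z)$. The heart of the argument is the convex-analytic equivalence
\begin{align*}
(\star)\text{ holds for } y_0 \quad\Longleftrightarrow\quad y_0/|y_0|\in\overline{\mathbb{V}_{K^s}}.
\end{align*}

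Proof of the equivalence. Condition $(\star)$ at a fixed $z$ is equivalent to the tangent cone of $C^*_{K^s}(z)$ at $y_0$ having lineality space exactly $\R y_0$; indeed, the set $\{y : y_0\pm y\in C^*_{K^s}(z)\}$ is, after rescaling, precisely the lineality subspace of that tangent cone, and it always contains $\R y_0$. For the implication ``$y_0/|y_0|\in\overline{\mathbb{V}_{K^s}}\Rightarrow(\star)$'', fix $z\in\mathcal{Z}_{K^s}(y_0)$ and pick $\nu_k\in\mathbb{V}_{K^s}$ with $\nu_k\to y_0/|y_0|$, written as $\nu_k=\nu^{K^s}(z_k)$ with $C^*_{K^s}(z_k)=\R_+\nu_k$. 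Up to a subsequence $z_k\to \bar z$, and the closed-graph property of the subdifferential multifunction $z\mapsto \partial\phi^*_{K^s}(z)$ gives that $\R_+y_0$ appears as Kuratowski limit of the rays $\R_+\nu_k$; a separation argument then yields that $\R_+y_0$ is an exposed ray of $C^*_{K^s}(z)$, which forces the lineality of the tangent cone to be $\R y_0$. For the converse, if $y_0/|y_0|\notin\overline{\mathbb{V}_{K^s}}$ then a structural property of the normal fan of $K^s$ (via the duality between faces of $K^s$ and values of the subdifferential of $\phi_{K^s}$) produces $z^*\in\mathcal{Z}_{K^s}(y_0)$ with $y_0$ in the relative interior of a cone $C^*_{K^s}(z^*)$ of dimension $\geq 2$; the tangent cone at such a $y_0$ is then the full affine hull and has lineality of dimension $\geq 2$, so $(\star)$ fails.

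Deduction of (i)$\Leftrightarrow$(ii). For $\mathcal{H}^{n-1}$-a.e.\ $z\in\{v>0\}$, Theorem \ref{thm: baricentro Filippo} applied to $F[v]$ (where the barycenter identically vanishes) gives that $\nabla v(z)$ exists and, by the standard representation of the reduced boundary of a subgraph, that $\nu^{F[v]}(z,v(z)/2)$ is parallel to $(-\nabla v(z)/2,1)$; applying the equivalence with $y_0=(-\nabla v(z)/2,1)$ converts \textbf{R1} into $\nu^{F[v]}(z,v(z)/2)\in\overline{\mathbb{V}_{K^s}}$ for $\mathcal{H}^{n-1}$-a.e.\ such $z$. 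For $|D^cv|$-a.e.\ $z\in\{v\Low>0\}$, Theorem \ref{thm:Charac.anis.total.var.} and the definition of $h$ in (\ref{h}) identify the outer unit normal to $F[v]$ at the corresponding Cantor boundary point as the horizontal vector proportional to $(h(z),0)$; applying the equivalence with $y_0=(h(z),0)$ turns \textbf{R2} into the same membership statement $|D^cv|$-a.e. The exceptional set $S$ in (ii) is then taken to be the union of the $\mathcal{H}^{n-1}$-negligible set on which $\nabla v$ fails to exist, the $|D^cv|$-negligible set on which $h$ is undefined, and the jump set $S_v$ (which is $(n-2)$-rectifiable, hence satisfies $\mathcal{H}^{n-1}(S_v)=|D^cv|(S_v)=0$).

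The main obstacle is the proof of the convex-analytic equivalence in the non-polyhedral case, where $\overline{\mathbb{V}_{K^s}}\supsetneq \mathbb{V}_{K^s}$ may occur and the normal fan of $K^s$ need not be locally finite. The delicate direction requires the structural statement that ``$y_0/|y_0|\notin\overline{\mathbb{V}_{K^s}}$ implies $y_0$ lies in the relative interior of some $C^*_{K^s}(z^*)$ of dimension $\geq 2$,'' which I plan to establish by combining the Kuratowski upper semicontinuity of $z\mapsto \partial\phi^*_{K^s}(z)$ with the classical density of regular directions of the support function $\phi_{K^s}$ in $\mathbb{S}^{n-1}$.
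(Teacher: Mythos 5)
Your overall architecture is the same as the paper's: reduce \textbf{R1}, \textbf{R2} to a pointwise statement about the directions $\left(-\frac{1}{2}\nabla v,1\right)$ and $(h,0)$, identify that statement with membership of the corresponding unit vector in $\overline{\mathbb{V}_{K^s}}$, and finish with the a.e.\ identification $\nu(x)/|\nu(x)|=\nu^{F[v]}\left(x,\frac{1}{2}v(x)\right)$ via Theorem \ref{thm:1.6 pag 57 note} and a union of negligible sets (this last part of your proposal matches Corollary \ref{cor: new R1, R2 equivalence} and the paper's final argument, and your observation that the constraint $\lambda\in[-1,1]$ is automatic is correct, since $C^*_{K^s}(z)$ contains no pair $\pm w$ with $w\neq 0$). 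The gap is in your proof of the central convex-analytic equivalence, which the paper splits into Proposition \ref{prop: R1 e R2 sono estremi} (extreme points of $\overline{(K^s)^*}$) and Lemma \ref{lem: V_{K^s}} (Straszewicz's theorem, Remark \ref{exposed dense in extreme}, plus Lemma \ref{lem: corollary 25.1.3 Rockafellar}). First, your reformulation of $(\star)$ at a fixed $z$ as ``the tangent cone of $C^*_{K^s}(z)$ at $y_0$ has lineality space exactly $\R y_0$'' is false: $(\star)$ at $z$ is equivalent to $\R_+y_0$ being an \emph{extreme ray} of $C^*_{K^s}(z)$, and an extreme ray can sit on a curved piece of the cone's boundary where the tangent cone is a half-space. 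Concretely, if $K^s$ has a conical vertex whose normal cone $C^*_{K^s}(z)$ is a circular cone in $\R^n$, $n\geq 3$, and $y_0$ lies on its boundary, then $(\star)$ holds (every boundary ray is extreme) while the lineality space of the tangent cone is an $(n-1)$-dimensional hyperplane; so both of your uses of this characterization (``exposed ray forces the lineality to be $\R y_0$'', and the failure criterion in the converse) rest on an incorrect equivalence. Also, the set $\{y:\,y_0\pm y\in C^*_{K^s}(z)\}$ contains only $[-1,1]y_0$, not $\R y_0$.

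Second, and more seriously, your forward direction never handles the quantifier ``for all $z\in\mathcal{Z}_{K^s}(y_0)$''. The Kuratowski/closed-graph argument with $\nu_k=\nu^{K^s}(z_k)$, $z_k\to\bar z$, only yields information about the cone at the limit point $\bar z$, which in general is not the arbitrary $z\in\mathcal{Z}_{K^s}(y_0)$ you fixed; the unexplained ``separation argument'' is exactly where the transfer must happen. The paper does this transfer with Lemma \ref{lem: propedeutico lemma R1,R2} (and Corollary \ref{cor: propedeutico Lemma R1,R2}): if $y_0/\phi_{K^s}(y_0)$ is interior to a segment contained in one subdifferential $\partial\phi^*_{K^s}(x_2)$, then that segment lies in \emph{every} $\partial\phi^*_{K^s}(x_1)$ containing the point; equivalently, one works with extreme points of the dual body $\overline{(K^s)^*}$, a notion independent of $z$. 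Moreover, limits of exposed rays are only extreme, not exposed, so even at $\bar z$ your claim is stronger than what the limit procedure gives (extremality would suffice, but you would still need the transfer). Finally, the structural claim in your converse direction (``$y_0/|y_0|\notin\overline{\mathbb{V}_{K^s}}$ implies $y_0$ lies in the relative interior of some $C^*_{K^s}(z^*)$ of dimension $\geq 2$'') is precisely the nontrivial content of Lemma \ref{lem: V_{K^s}}, which requires the density of exposed points in extreme points together with the identification of exposed points of $\overline{(K^s)^*}$ with normalized normals of $\partial^*K^s$; in your proposal this is only announced (``I plan to establish''), so as written the equivalence — and hence the proposition — is not proved.
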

See Figure \ref{fig:Rigiditacristallino} and Figure \ref{fig:Rigiditanoncristallino} for a pictorial idea of condition $ii)$ in Proposition \ref{prop: R1,R2}.

%


\begin{remark}\label{rem: post Prop 1.12 2}
If $K^s$ is crystalline (i.e. $K^s$ is polyhedral) (see Figure \ref{fig:Rigiditacristallino}), or in case $K^s$ has $C^1$ boundary, then $\mathbb{V}_{K^s}$ is closed and so in (\ref{eq lem R1,R2}) we can substitute $\overline{\mathbb{V}_{K^s}}$ with $\mathbb{V}_{K^s}$.
\end{remark}

\begin{figure}[!htb]
\centering
\def\svgwidth{13cm}
\begingroup%
  \makeatletter%
  \providecommand\color[2][]{%
    \errmessage{(Inkscape) Color is used for the text in Inkscape, but the package 'color.sty' is not loaded}%
    \renewcommand\color[2][]{}%
  }%
  \providecommand\transparent[1]{%
    \errmessage{(Inkscape) Transparency is used (non-zero) for the text in Inkscape, but the package 'transparent.sty' is not loaded}%
    \renewcommand\transparent[1]{}%
  }%
  \providecommand\rotatebox[2]{#2}%
  \newcommand*\fsize{\dimexpr\f@size pt\relax}%
  \newcommand*\lineheight[1]{\fontsize{\fsize}{#1\fsize}\selectfont}%
  \ifx\svgwidth\undefined%
    \setlength{\unitlength}{1257.94612553bp}%
    \ifx\svgscale\undefined%
      \relax%
    \else%
      \setlength{\unitlength}{\unitlength * \real{\svgscale}}%
    \fi%
  \else%
    \setlength{\unitlength}{\svgwidth}%
  \fi%
  \global\let\svgwidth\undefined%
  \global\let\svgscale\undefined%
  \makeatother%
  \begin{picture}(1,0.38753462)%
    \lineheight{1}%
    \setlength\tabcolsep{0pt}%
    \put(0,0){\includegraphics[width=\unitlength,page=1]{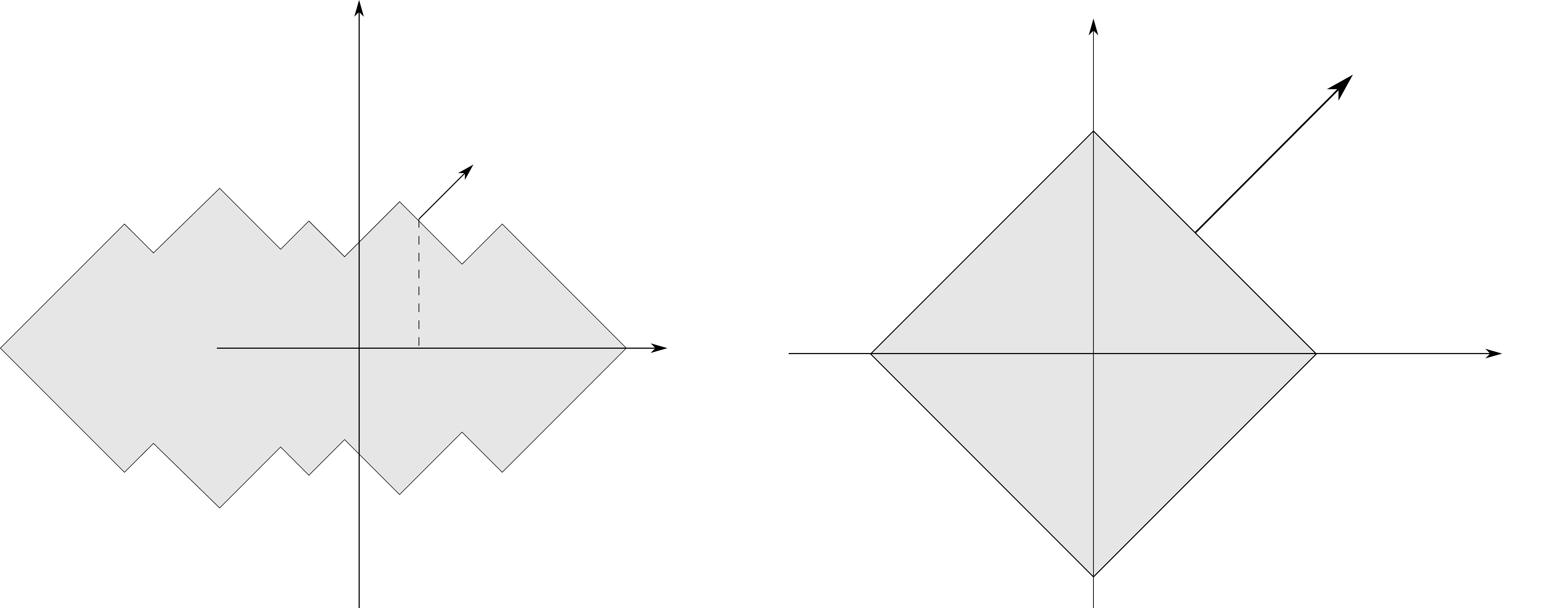}}%
    \put(0.20791974,0.17212721){\color[rgb]{0,0,0}\makebox(0,0)[lt]{\lineheight{0.82591271}\smash{\begin{tabular}[t]{l}\tiny{$O$}\end{tabular}}}}%
    \put(0.11613831,0.12396481){\color[rgb]{0,0,0}\makebox(0,0)[lt]{\lineheight{0.82591271}\smash{\begin{tabular}[t]{l}\tiny{$F[v]$}\end{tabular}}}}%
    \put(0.28267491,0.29558772){\color[rgb]{0,0,0}\makebox(0,0)[lt]{\lineheight{0.82591271}\smash{\begin{tabular}[t]{l}\small{$\nu^{F[v]}\left(z,\frac{1}{2}v(z)\right)$}\end{tabular}}}}%
    \put(0.61833945,0.11311022){\color[rgb]{0,0,0}\makebox(0,0)[lt]{\lineheight{0.82591271}\smash{\begin{tabular}[t]{l}\tiny{$K^s$}\end{tabular}}}}%
    \put(0.70521574,0.14121727){\color[rgb]{0,0,0}\makebox(0,0)[lt]{\lineheight{0.82591271}\smash{\begin{tabular}[t]{l}\tiny{$O$}\end{tabular}}}}%
    \put(0.80486468,0.35654593){\color[rgb]{0,0,0}\makebox(0,0)[lt]{\lineheight{0.82591271}\smash{\begin{tabular}[t]{l}\small{$\nu^{K^s}$}\end{tabular}}}}%
    \put(0.2606094,0.14672583){\color[rgb]{0,0,0}\makebox(0,0)[lt]{\lineheight{0.82591271}\smash{\begin{tabular}[t]{l}\tiny{$z$}\end{tabular}}}}%
  \end{picture}%
\endgroup%

\caption{In this case conditions \textbf{R1} and \textbf{R2} are satisfied because the set of possible normals to $\partial^* F[v]$ is a subset of $\mathbb{V}_{K^s}$ (in fact coincides with it) . See also Remark \ref{rem: post Prop 1.12 2}.}
\label{fig:Rigiditacristallino}
\end{figure}

\begin{figure}[!htb]
\centering
\def\svgwidth{13cm}
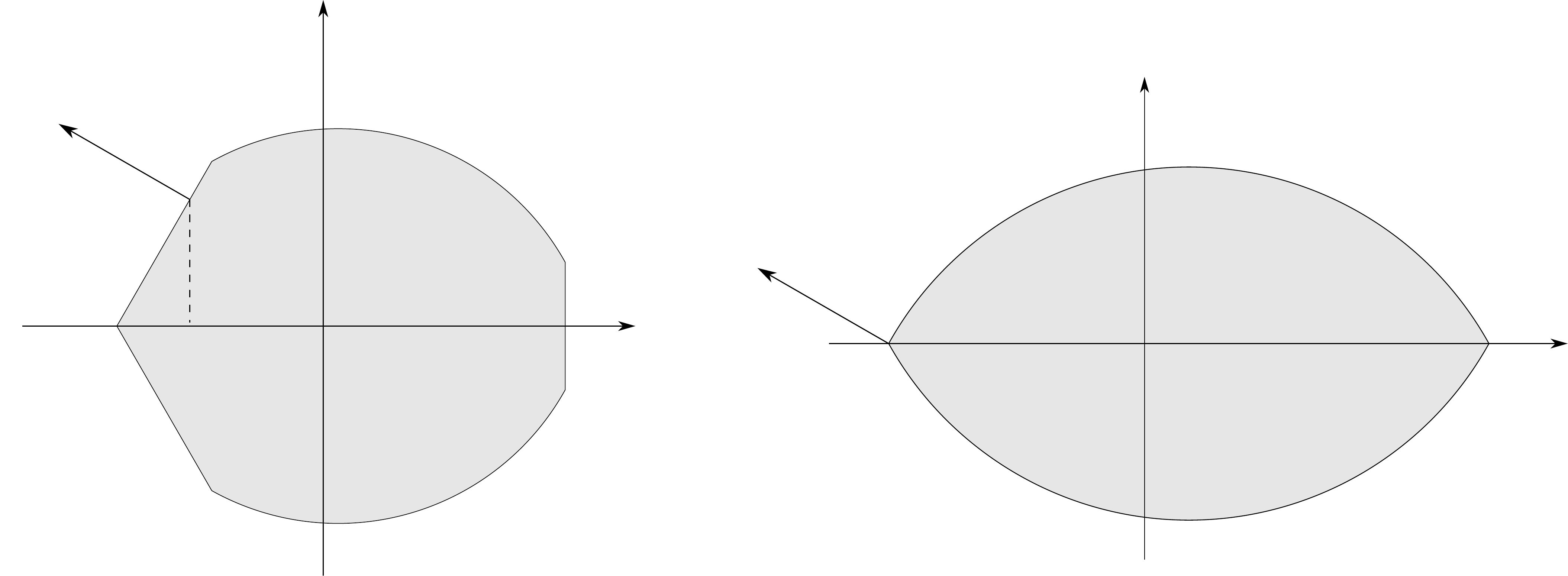
\caption{In this case, for the point $z$ in the left figure, we can find a sequence of vectors in $\mathbb{V}_{K^s}$ (see the dashed vectors in the right figure) whose limit $\nu$, coincides with $\nu^{F[v]}\left(z, \frac{1}{2}v(z)  \right)$. Therefore conditions \textbf{R1} and \textbf{R2} hold true. Note that $\nu\notin \mathbb{V}_{K^s}$.}
\label{fig:Rigiditanoncristallino}
\end{figure}

Let us stress that asking $K^s$ to be polyhedral does not automatically imply that \textbf{R1} and \textbf{R2} hold true. Indeed, by definition, the validity of such conditions depends on both $K^s$, and the function $v$. The importance of the relation between $K^s$ and the function $v$ for the validity of \textbf{R1} and \textbf{R2} is made even more explicit in Proposition \ref{prop: R1,R2}: there, it is given an operative characterization of conditions \textbf{R1} and \textbf{R2} in terms of the relation that have to occur between  the normal vectors to $\partial^*F[v]$, and the normal vectors to $\partial^*K^s$ (see indeed condition \emph{ii)} in Proposition \ref{prop: R1,R2}). An example of \textbf{R1} not being satisfied is indeed presented in Figure \ref{fig:wulff shapesZ}, where despite $K^s$ is a polyhedron, it is clear by construction that $\nu^{F[v]}_{RS},\nu^{F[v]}_{TU}\notin \mathbb{V}_{K^s}$, and so condition \emph{ii)} of Proposition \ref{prop: R1,R2} is not verified, implying that condition \textbf{R1} fails to be true. Nonetheless, in Figure \ref{fig:Rigiditacristallino}, choosing the same $K^s$ as in the previous example, but a different $v$, we show that condition \emph{ii)} of Proposition \ref{prop: R1,R2} holds true.  

\noindent
Different conclusions can be made if instead we have that $K^s$ has $C^1$ boundary. In that case, we have the following result.

\begin{corollary}\label{cor: smooth K^s}
Let $v$ be as in (\ref{due tilde}) and let $K\subset\R^n$ be as in (\ref{HP per K}). In addition, assume that $K^s$ has $C^1$ boundary. Then, conditions \textbf{R1}, \textbf{R2} hold true.
\end{corollary}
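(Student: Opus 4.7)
The plan is to reduce Corollary \ref{cor: smooth K^s} to Proposition \ref{prop: R1,R2}: it suffices to verify condition \emph{ii)} of that proposition, and the simplest route is to prove the stronger statement $\overline{\mathbb{V}_{K^s}}=\s$, which makes condition \emph{ii)} vacuous (one may take $S=\emptyset$, so that the required inclusion $\nu^{F[v]}(z,v(z)/2)\in\overline{\mathbb{V}_{K^s}}$ is trivially satisfied).

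To establish this identity, I would run the standard supporting-hyperplane argument. Fix an arbitrary $w\in \s$. Since $\overline{K^s}$ is compact and convex, the linear functional $y\mapsto y\cdot w$ attains its maximum on $\overline{K^s}$ at some point $x_w\in\partial K^s$, and the affine hyperplane $\{y:\, y\cdot w = x_w\cdot w\}$ supports $K^s$ from outside at $x_w$. Consequently $w$ is an outer unit normal to $K^s$ at $x_w$. Because $\partial K^s$ is $C^1$ and $K^s$ is convex, the outer unit normal at each boundary point is unique and $\partial^* K^s=\partial K^s$, so that $w=\nu^{K^s}(x_w)\in \mathbb{V}_{K^s}$. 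The arbitrariness of $w$ yields $\mathbb{V}_{K^s}=\s$, and Corollary \ref{cor: smooth K^s} then follows directly from Proposition \ref{prop: R1,R2}.

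The argument presents no real obstacle: it rests only on the elementary geometric fact that for a $C^1$ convex body the Gauss map is a continuous surjection onto the unit sphere. The one point meriting a brief check is that $\mathbb{V}_{K^s}$ is defined via $\partial^*K^s$ rather than $\partial K^s$; since these two coincide for $C^1$ convex bodies, no complication arises, and in fact the closure in condition \emph{ii)} is not even needed here, consistently with Remark \ref{rem: post Prop 1.12 2}.
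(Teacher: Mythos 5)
Your proposal is correct and follows the same overall reduction as the paper (verify condition \emph{ii)} of Proposition \ref{prop: R1,R2} by showing $\mathbb{V}_{K^s}=\mathbb{S}^{n-1}$), but it reaches the key identity by a genuinely different route. The paper deduces $\overline{\mathbb{V}_{K^s}}=\mathbb{V}_{K^s}=\mathbb{S}^{n-1}$ from Remark \ref{rem: post Prop 1.12 2}, i.e.\ by passing to the dual body $(K^s)^*$: the Han--Nishimura result is invoked to say that a $C^1$ boundary of $K^s$ forces every point of $\partial (K^s)^*$ to be exposed, and then Lemma \ref{lem: V_{K^s}} translates exposedness of dual boundary points back into surjectivity of the Gauss map of $K^s$. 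You instead argue directly on $K^s$: for each $w\in\mathbb{S}^{n-1}$ maximize $y\mapsto y\cdot w$ over $\overline{K^s}$, obtain a supporting hyperplane with outer normal $w$ at a boundary point $x_w$, and use that $C^1$ regularity plus convexity makes the normal cone at $x_w$ a single ray, so $w=\nu^{K^s}(x_w)$ and $\partial^*K^s=\partial K^s$. Your argument is more elementary and self-contained (no duality, no external citation), while the paper's route buys the additional structural information that $(K^s)^*$ is strictly convex, which is what Remark \ref{rem: post Prop 1.12 2} records and reuses. One small caveat: taking $S=\emptyset$ implicitly assumes $\nu^{F[v]}(z,\tfrac12 v(z))$ is defined at \emph{every} $z\in\{v\Low>0\}$; strictly one should allow an $\mathcal{H}^{n-1}$- and $|D^cv|$-negligible exceptional set where the normal fails to exist, but this is the same level of precision as the paper's own wording and does not affect the conclusion, since membership in $\overline{\mathbb{V}_{K^s}}=\mathbb{S}^{n-1}$ is automatic wherever the normal is defined.
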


To conclude this section, we combine the results obtained in Proposition \ref{prop: R1,R2}, and Theorem \ref{thm: rigidity} to obtain the following proposition that can be considered the main contribution of the present work.


\begin{proposition}\label{final sum up}
Let $v$ be as in (\ref{due tilde}) and let $K\subset\R^n$ be as in (\ref{HP per K}). Let us assume in addition that there exists $S\subset\{ v\Low>0 \}$ such that $\mathcal{H}^{n-1}(S)=|D^cv|(S)=0$, and 
\begin{align*}
\nu^{F[v]}\left(z,\frac{1}{2}v(z)\right) \in \overline{\mathbb{V}_{K^s}}\quad \forall\,z\in \{v\Low>0\}\setminus S.
\end{align*}
Then, $\mathcal{M}_{K^s}(v)\subset\mathcal{M}(v)$. As a consequence, (\ref{rigidity steiner}) and (\ref{rigidity anisotropic steiner}) are equivalent.
\end{proposition}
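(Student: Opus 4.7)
The plan is essentially to chain together the two key results already stated in the introduction: Proposition \ref{prop: R1,R2} and Theorem \ref{thm: rigidity}. The hypothesis of the statement is verbatim condition \emph{ii)} of Proposition \ref{prop: R1,R2}, so by the equivalence proved there, condition \emph{i)} holds, namely the rigidity conditions \textbf{R1} and \textbf{R2}. Invoking Theorem \ref{thm: rigidity} with these conditions immediately yields the inclusion $\mathcal{M}_{K^s}(v)\subset \mathcal{M}(v)$.

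To obtain the equivalence of (\ref{rigidity steiner}) and (\ref{rigidity anisotropic steiner}), I would then pair the inclusion just obtained with the always-valid opposite inclusion $\mathcal{M}(v)\subset\mathcal{M}_{K^s}(v)$, recorded in the introduction as a consequence of the fact that the characterization in Theorem \ref{thm:1.2 Filippo} is more stringent than that in Theorem \ref{thm:2.2 pag 117}. Combining the two inclusions gives $\mathcal{M}(v)=\mathcal{M}_{K^s}(v)$, and since each of the two rigidity statements says exactly that membership in the corresponding class forces $E$ to be a vertical translation of $F[v]$ (and vertical translations of $F[v]$ are trivially in both classes, as both the Euclidean and anisotropic perimeter are translation invariant), the two statements are equivalent.

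There is essentially no genuine obstacle at the level of this final proposition: all the non-trivial content has been absorbed into Proposition \ref{prop: R1,R2} (translating the analytic conditions \textbf{R1}, \textbf{R2} into the geometric condition on $\nu^{F[v]}$ and $\overline{\mathbb{V}_{K^s}}$) and into Theorem \ref{thm: rigidity} (using the characterization of $\mathcal{M}_{K^s}(v)$ in Theorem \ref{thm:2.2 pag 117} to reduce conditions \emph{ii)} and \emph{iv)} of that theorem to the Euclidean conditions \eqref{eq: 1.16 Filippo} and \eqref{eq: 1.18Dc Filippo} whenever \textbf{R1}, \textbf{R2} are in force). The present proof is therefore a one-line bookkeeping argument. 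The only small care I would take is to verify explicitly, when writing it up, that the set $S$ appearing in the hypothesis is exactly the exceptional set needed in Proposition \ref{prop: R1,R2}, so that the quantifiers ``for $\mathcal{H}^{n-1}$-a.e. $x\in\{v>0\}$'' in \textbf{R1} and ``for $|D^cv|$-a.e. $x\in\{v\Low>0\}$'' in \textbf{R2} are delivered simultaneously by the same negligible set.
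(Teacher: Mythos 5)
Your proposal is correct and is exactly the paper's argument: the hypothesis is condition \emph{ii)} of Proposition \ref{prop: R1,R2}, which yields \textbf{R1} and \textbf{R2}, and Theorem \ref{thm: rigidity} then gives $\mathcal{M}_{K^s}(v)\subset\mathcal{M}(v)$ and the equivalence of (\ref{rigidity steiner}) and (\ref{rigidity anisotropic steiner}). Your extra remark on combining with the trivial inclusion $\mathcal{M}(v)\subset\mathcal{M}_{K^s}(v)$ matches the discussion already given in the introduction, so nothing is missing.
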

  
\noindent
A simplified version of the above result is the following.

\begin{corollary}\label{crystal clear 1}
Let $v$ be as in (\ref{due tilde}) and let $K\subset\R^n$ be as in (\ref{HP per K}). Let us assume in addition that the set of outer unit normal vectors to $\partial^*F[v]$ is contained in the closure of the set of outer unit normal vectors to $\partial^*K^s$, namely that
\begin{align}\label{simplest condition}
\nu^{F[v]}(x)\in \overline{\mathbb{V}_{K^s}}\quad \forall\,x\in\partial^*F[v].
\end{align} 
Then, $\mathcal{M}_{K^s}(v)\subset\mathcal{M}(v)$. As a consequence, (\ref{rigidity steiner}) and (\ref{rigidity anisotropic steiner}) are equivalent.
\end{corollary}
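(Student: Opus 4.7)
The strategy is to reduce the claim to Proposition \ref{final sum up} by verifying that the stronger pointwise hypothesis (\ref{simplest condition}) implies the hypothesis appearing in that proposition. Since Proposition \ref{final sum up} only demands $\nu^{F[v]}(z,v(z)/2)\in\overline{\mathbb{V}_{K^s}}$ for $z$ in $\{v\Low>0\}$ minus some set $S$ which is negligible for both $\mathcal{H}^{n-1}$ and $|D^cv|$, while (\ref{simplest condition}) gives the membership in $\overline{\mathbb{V}_{K^s}}$ for \emph{every} point of $\partial^*F[v]$, the whole task reduces to identifying, up to such a set $S$, those $z\in\{v\Low>0\}$ for which $(z,v(z)/2)\in\partial^*F[v]$.

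To build this set $S$, I would rely on the fine structure of the reduced boundary of the subgraph-type set $F[v]$ defined in (\ref{eq:F[v]}). Since $v\in BV(\R^{n-1})$, standard BV-graph theory (see e.g. \cite[\S14--15]{MaggiBOOK}) decomposes $\partial^*F[v]$, up to $\mathcal{H}^{n-1}$-negligible sets, into an upper graph portion sitting over approximate continuity points of $v$ with $v\Low=v\Upp>0$, the symmetric lower graph portion, and a vertical cylinder portion over the jump set $S_v$. In particular, for $\mathcal{H}^{n-1}$-a.e.\ $z\in\{v\Low>0\}$ the point $(z,v(z)/2)$ lies in $\partial^*F[v]$ and the outer unit normal $\nu^{F[v]}(z,v(z)/2)$ is defined. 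To upgrade this to a $|D^cv|$-a.e.\ statement as well, I would use the two standard facts that $D^cv$ is concentrated outside the jump set $S_v$ (so $v\Low=v\Upp$ holds $|D^cv|$-a.e., making the value $v(z)$ and the boundary point $(z,v(z)/2)$ unambiguous on the Cantor support) and that $D^cv$ vanishes on $\mathcal{H}^{n-2}$-$\sigma$-finite sets, which lets us discard any additional exceptional set inherited from De Giorgi's structure theorem. Together these produce a Borel set $S\subset\{v\Low>0\}$ with $\mathcal{H}^{n-1}(S)=|D^cv|(S)=0$ and $(z,v(z)/2)\in\partial^*F[v]$ for every $z\in\{v\Low>0\}\setminus S$.

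With $S$ in hand, applying (\ref{simplest condition}) at the point $x=(z,v(z)/2)$ immediately yields
\begin{align*}
\nu^{F[v]}\!\left(z,\tfrac{1}{2}v(z)\right)\in\overline{\mathbb{V}_{K^s}}\qquad\forall\,z\in\{v\Low>0\}\setminus S,
\end{align*}
which is exactly the hypothesis of Proposition \ref{final sum up}. That proposition then delivers $\mathcal{M}_{K^s}(v)\subset\mathcal{M}(v)$; combined with the reverse inclusion $\mathcal{M}(v)\subset\mathcal{M}_{K^s}(v)$ (already observed in the paragraph preceding Theorem \ref{thm: rigidity}, since the conditions in Theorem \ref{thm:1.2 Filippo} are stronger than those in Theorem \ref{thm:2.2 pag 117}), one obtains $\mathcal{M}(v)=\mathcal{M}_{K^s}(v)$, and hence the equivalence of (\ref{rigidity steiner}) and (\ref{rigidity anisotropic steiner}).

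The main obstacle is the second part of Step two, namely the simultaneous control of the exceptional set for both $\mathcal{H}^{n-1}$ and for the Cantor measure $|D^cv|$: the $\mathcal{H}^{n-1}$ statement is essentially Lebesgue-point theory applied to the subgraph of $v/2$, but the $|D^cv|$ statement requires the finer information that the Cantor part of $Dv$ lives on the approximate-continuity set of $v$, so that vertically translating the $z$-slice to obtain a reduced-boundary point causes no loss of measure. Everything else is a clean reduction to Proposition \ref{final sum up}.
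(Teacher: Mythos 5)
Your overall reduction is exactly the intended one: the paper gives no separate argument for this corollary, treating it as an immediate specialization of Proposition \ref{final sum up}, and the only real content is the point you isolate, namely that for every $z\in\{v\Low>0\}$ outside a set $S$ negligible for both $\mathcal{H}^{n-1}$ and $|D^cv|$ the point $(z,\tfrac12 v(z))$ belongs to $\partial^*F[v]$, so that (\ref{simplest condition}) can be evaluated there and the hypothesis of Proposition \ref{final sum up} follows.

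However, your justification of the $|D^cv|$-half of this claim has a genuine gap. The exceptional set coming from the graph decomposition of $\partial^*F[v]$ (the ``$=_{\mathcal{H}^{n-1}}$'' in the structure statements) is only known to be $\mathcal{H}^{n-1}$-negligible; it is not known to be $\sigma$-finite with respect to $\mathcal{H}^{n-2}$, so the fact that $|D^cv|$ vanishes on $\mathcal{H}^{n-2}$-$\sigma$-finite sets does not let you discard it. This matters because $|D^cv|$ is singular with respect to $\mathcal{L}^{n-1}$: an $\mathcal{H}^{n-1}$-null subset of $\{v\Low>0\}$ can carry all of $|D^cv|$ (think of the Cantor--Vitali function), so an $\mathcal{H}^{n-1}$-a.e.\ statement obtained from Lebesgue-point/graph theory says nothing at Cantor points, and ``$D^cv$ lives off $S_v$'' alone does not produce reduced-boundary membership of $(z,\tfrac12 v(z))$. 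The repair is available inside the paper: Theorem \ref{thm:1.6 pag 57 note} (Giaquinta--Modica--Sou\v{c}ek, applied with $u=v/2$) is stated for $|\eta|$-a.e.\ $x$, where $\eta=(Du,-\mathcal{L}^{n-1})$, and since $|\eta|\geq\mathcal{L}^{n-1}$ and $|\eta|\geq|D^cu|=\tfrac12|D^cv|$, its items $(i)$ and $(iii)$ give simultaneously, for $\mathcal{H}^{n-1}$-a.e.\ and $|D^cv|$-a.e.\ $x\notin J_{v/2}$, that $(x,\tfrac12 v(x))\in\partial^*\Sigma^{v/2}$ with the stated normal; the jump set is disposed of because $\mathcal{H}^{n-1}(S_v)=|D^cv|(S_v)=0$, and since $v\geq 0$ the sets $F[v]$ and $\Sigma^{v/2}$ coincide in the open half-space $\{\q x>0\}$, so reduced-boundary membership and normals at $(z,\tfrac12 v(z))$ with $v(z)>0$ agree for the two sets. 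This is precisely the mechanism the paper itself uses to pass between \textbf{R1}/\textbf{R2} and condition (\ref{eq lem R1,R2}) in Corollary \ref{cor: new R1, R2 equivalence}; with that substitution your argument closes.
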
  

\noindent
Finally, we combine Theorem \ref{thm: rigidity} and Corollary \ref{cor: smooth K^s} to obtain the following result.

\begin{corollary}\label{crystal clear 2}
Let $v$ be as in (\ref{due tilde}) and let $K\subset\R^n$ be as in (\ref{HP per K}). Let us assume in addition that $K^s$ has $C^1$ boundary. Then, $\mathcal{M}_{K^s}(v)\subset\mathcal{M}(v)$. As a consequence, (\ref{rigidity steiner}) and (\ref{rigidity anisotropic steiner}) are equivalent.
\end{corollary}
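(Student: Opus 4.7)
The plan is to derive Corollary \ref{crystal clear 2} as an immediate consequence of two previously established results: Corollary \ref{cor: smooth K^s} and Theorem \ref{thm: rigidity}. There is nothing genuinely new to prove at this stage; the role of the corollary is simply to package the $C^1$ case as a clean standalone statement.

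First, I would invoke Corollary \ref{cor: smooth K^s}: under the hypothesis that $K^s$ has $C^1$ boundary, conditions \textbf{R1} and \textbf{R2} are automatically satisfied. Geometrically, for a convex body with $C^1$ boundary, the Gauss map on $\partial K^s$ is continuous and surjective onto $\mathbb{S}^{n-1}$, so $\mathbb{V}_{K^s}=\mathbb{S}^{n-1}$, which makes the geometric criterion \emph{(ii)} of Proposition \ref{prop: R1,R2} trivially true; this is the mechanism behind Corollary \ref{cor: smooth K^s}.

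Next, with \textbf{R1} and \textbf{R2} now at our disposal, I would apply Theorem \ref{thm: rigidity} directly. Its first conclusion provides the non-trivial inclusion $\mathcal{M}_{K^s}(v)\subset \mathcal{M}(v)$, which is precisely the first claim of the corollary. For the second claim, combining this with the reverse inclusion $\mathcal{M}(v)\subset \mathcal{M}_{K^s}(v)$, observed in the paragraph preceding Theorem \ref{thm: rigidity} and coming from the fact that the conditions in Theorem \ref{thm:1.2 Filippo} are more restrictive than those in Theorem \ref{thm:2.2 pag 117}, yields the equality $\mathcal{M}_{K^s}(v)=\mathcal{M}(v)$. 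Since (\ref{rigidity steiner}) and (\ref{rigidity anisotropic steiner}) both say that the associated family of extremals consists only of vertical translates of $F[v]$, this equality delivers their equivalence.

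Since the statement is a pure combination of Corollary \ref{cor: smooth K^s} and Theorem \ref{thm: rigidity}, there is no real obstacle in this particular argument; all the analytic and geometric content has already been absorbed into those results. The only small point to make explicit is the passage from set equality $\mathcal{M}_{K^s}(v)=\mathcal{M}(v)$ to equivalence of the two rigidity statements, which I would spell out in one line.
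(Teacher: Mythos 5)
Your proposal is correct and matches the paper's intended argument exactly: Corollary \ref{crystal clear 2} is stated in the paper as a direct combination of Corollary \ref{cor: smooth K^s} (which gives \textbf{R1} and \textbf{R2} when $K^s$ has $C^1$ boundary) with Theorem \ref{thm: rigidity}, precisely as you do. Your brief aside on the Gauss map and $\mathbb{V}_{K^s}=\mathbb{S}^{n-1}$ is consistent with the paper's own proof of Corollary \ref{cor: smooth K^s}, so no further comment is needed.
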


It would be actually interesting checking whether conditions \textbf{R1} and \textbf{R2} are also necessary in order to get $\mathcal{M}_{K^s}(v)\subset \mathcal{M}(v)$. This seems quite a delicate problem, which we think is worth further investigation.
\section{Basic notions of Geometric Measure Theory}
\label{preliminaries}
The aim of this section is to introduce some tools from Geometric Measure Theory that will be largely used in the article. 
For more details the reader can have a look in the monographs 
\cite{AFP, GiaMoSou, MaggiBOOK, Simon}.
Note that even if part of the notations we will use, has been already presented across the Introduction, we briefly restate it in the next lines, in such a way that the reader can easily access to them. For $n \in \mathbb{N}$, we denote with $\mathbb{S}^{n-1}$
the unit sphere of $\mathbb{R}^n$, i.e. 
$$
\mathbb{S}^{n-1} = \{ x \in \R^n :  |x| = 1  \},
$$
and we set $\R^n_0:= \R^n \setminus \{ 0 \}$. For every $x=(x_1,\dots,x_n)\in\R^n$ we define $\p x= (x_1,\dots, x_{n-1})$, and $\q x= x_n$ are the "horizontal" and "vertical" projections respectively, so that $x=(\p x,\q x)$.
We denote by $e_1, \ldots, e_n$ the canonical basis in $\R^n$, and 
for every $x, y \in \R^n$, $x \cdot y$ stands for the standard scalar product in $\R^n$ between $x$ and $y$.
For every $r > 0$ and $x \in \R^n$, we denote by $B (x, r)$ the open ball of $\R^n$
with radius $r$ centred at $x$.
In the special case $x = 0$, we set $B(r):= B(0, r)$.
For every $x, y \in \R^n$, $x \cdot y$ stands for the standard scalar product in $\R^n$ between $x$ and $y$. We denote the $(n-1)$-dimensional ball in $\R^{n-1}$ of center $z\in\R^{n-1}$ and radius $r>0$ as
\begin{align}\label{eq: disc}
D_{z,r}&=\left\{\eta\in\R^{n-1}:\, |\eta-z|<r   \right\}.
\end{align}
For $x\in\R^n$ and $\nu\in \mathbb{S}^{n-1}$, we will denote by $H_{x,\nu}^+$ and $H_{x,\nu}^-$ the closed half-spaces
whose boundaries are orthogonal to $\nu$:
\begin{eqnarray}\label{Hxnu+}
  H_{x,\nu}^+:=\Big\{y\in\R^n:(y-x)\cdot\nu\ge 0\Big\},\quad  H_{x,\nu}^-:=\Big\{y\in\R^n:(y-x)\cdot\nu\le 0\Big\}\,.
\end{eqnarray}
If $1 \leq k \leq n$, we denote by $\mathcal{H}^k$ the $k$-dimensional Hausdorff measure in $\R^n$.
If $\{E_h\}_{h\in\mathbb{N}}$ is a sequence of Lebesgue measurable sets in $\R^n$
with finite volume, and $E \subset \R^n$ is also measurable with finite volume, 
we say that $\{E_h\}_{h\in\mathbb{N}}$ converges to $E$ as $h\to\infty$, and write $E_h\to E$, 
if $\mathcal{H}^n(E_h\Delta E)\to 0$ as $h\to\infty$. In the following, we will denote by $\chi_E$
the characteristic function of a measurable set $E \subset \R^n$.

\subsection{Density points} 
Let $E \subset \R^n$ be a Lebesgue measurable set and let $x\in\R^n$. 
The upper and lower $n$-dimensional densities of $E$ at $x$ are defined as
\begin{eqnarray*}
  \theta^*(E,x) :=\limsup_{r\to 0^+}\frac{\mathcal{H}^n(E\cap B(x,r))}{\omega_n\,r^n}\,,
  \qquad
  \theta_*(E,x) :=\liminf_{r\to 0^+}\frac{\mathcal{H}^n(E\cap B(x,r))}{\omega_n\,r^n}\,,
\end{eqnarray*}
respectively. 
It turns out that $x \mapsto  \theta^*(E,x)$ and $x \mapsto  \theta_*(E,x)$
are Borel functions that agree $\mathcal{H}^n$-a.e. on $\R^n$. 
Therefore, the $n$-dimensional density of $E$ at $x$
\[
\theta(E,x) := \lim_{r\to 0^+}\frac{\mathcal{H}^n(E\cap B(x,r))}{\omega_n\,r^n}\,,
\]
is defined for $\mathcal{H}^n$-a.e. $x\in\R^n$, and $x \mapsto  \theta (E,x)$
is a Borel function on $\R^n$.
Given $t \in [0,1]$, we set
$$
E^{(t)} :=\{x\in\R^n:\theta(E,x)=t\}.
$$
By the Lebesgue differentiation theorem, the pair $\{E^{(0)},E^{(1)}\}$ 
is a partition of $\R^n$, up to a $\mathcal{H}^n$-negligible set. 
The set $\partial^{\mathrm{e}} E :=\R^n\setminus(E^{(0)}\cup E^{(1)})$ is called the \textit{essential boundary} of $E$. 

\medskip

\subsection{Rectifiable sets and sets of finite perimeter}\label{section sofp} 
Let $1\le k\le n$, $k\in\mathbb{N}$. 
If $A, B \subset \R^n$ are Borel sets we say that 
$A \subset_{\mathcal{H}^k} B$ if $\mathcal{H}^k (B \setminus A) = 0$, and $A =_{\mathcal{H}^k} B$ if $\mathcal{H}^k (A \Delta B) = 0$,
where $\Delta$ denotes the symmetric difference of sets.
Let $M\subset\R^n$ be a Borel set.
We say that $M$ is {\it countably $\mathcal{H}^k$-rectifiable} if there exist 
Lipschitz functions $f_h:\R^k\to\R^n$ ($h\in\mathbb{N}$) such that $M\subset_{\mathcal{H}^k}\bigcup_{h\in\mathbb{N}}f_h(\R^k)$.
Moreover, we say that $M$ is {\it locally $\mathcal{H}^k$-rectifiable} if is {\it countably $\mathcal{H}^k$-rectifiable} and $\mathcal{H}^k(M\cap K)<\infty$ for every compact set $K\subset\R^n$, or, equivalently, if $\mathcal{H}^k\llcorner M$ is a Radon measure on $\R^n$. 
 Given a $\R^m$-valued Radon measure $\mu$ on $\R^n$, we define its \emph{total variation} $|\mu|$ as 
\begin{align}\label{def: tot.var. for a measure}
|\mu|(\Omega)=\sup\left\{\int_{\R^n}\varphi(x)\cdot d\mu(x):\,\varphi\in C^{\infty}_c(\Omega;\R^m),\,|\varphi|\leq 1  \right\},\quad \forall\,\Omega\subset\R^n\textit{ open.}
\end{align}
If we consider a generic Borel set $B\subset \R^n$ then
\begin{align*}
|\mu|(B)=\inf \left\{|\mu|(\Omega):\, B\subset \Omega,\, \Omega\subset \R^n\textit{\emph{ open set}}  \right\}.
\end{align*}
Let $\mu$ be a Radon measure on $\R^n$, let $1\leq p< \infty$ and $m\geq 1 $ with $m\in\mathbb{N}$. The vector space $L^p(\R^n,\mu;\R^m)$ is defined as 
\begin{align*}
L^p(\R^n,\mu;\R^m)=\left\{f:\R^n\rightarrow \R^m:\,f\textit{ is $\mu$-measurable, }\int_{\R^n}|f|^p d\mu < \infty   \right\},
\end{align*}
equipped with the norm
$$
\|f\|_{L^p(\R^n,\mu;\R^m)}=\left(\int_{\R^n}|f|^p d\mu   \right)^{\frac{1}{p}}.
$$
If $p=\infty$ then $L^\infty(\R^n,\mu;\R^m)$ is defined as
\begin{align*}
L^\infty(\R^n,\mu;\R^m)=\left\{f:\R^n\rightarrow \R^m:\,f\textit{ is $\mu$-measurable, } \esssup_{\R^n}f < \infty   \right\},
\end{align*}
where 
$$
\esssup_{\R^n}f:= \inf \left\{ c>0:\, \mu\left( \{|f|>c \} \right)=0 \right\}.
$$
We equip this space with the norm
$$
\|f\|_{L^\infty(\R^n,\mu;\R^m)}=\esssup_{\R^n}f.
$$
We say that $f\in  L_{loc}^p(\R^n,\mu;\R^m)$, $1\leq p \leq \infty$ if $f\in  L^p(C,\mu;\R^m)$ for every compact set $C\subset \R^n$.
\begin{remark}\label{rem: 4.8 Maggi}
Let $\mu$ be a Radon measure on $\R^n$ and let $f\in L^1_{\textit{loc}}(\R^n,\mu;\R^m)$ with $m\geq 1$, $m\in\mathbb{N}$. Then, we define a $\R^m$-valued Radon measure on $\R^n$ by setting 
\begin{align*}
f\mu(B)=\int_{B}f(x)\,d\mu(x)\quad \forall\,\textit{Borel set }B\subset \R^n.
\end{align*}
Its total variation is then defined as 
\begin{align*}
|f\mu|(B)=\int_B|f(x)|d\mu(x)\quad \forall\,\textit{Borel set }B\subset \R^n.
\end{align*}
For more details see \cite[Example 4.6, Remark 4.8]{MaggiBOOK}.
\end{remark}

\noindent
A Lebesgue measurable set $E\subset\R^n$ is said of {\it locally finite perimeter} in $\R^n$ if there exists a $\R^n$-valued Radon measure $\mu_E$, called the {\it Gauss--Green measure} of $E$, such that
\[
\int_E\nabla\varphi(x)\,dx=\int_{\R^n}\varphi(x)\,d\mu_E(x)\,,\qquad\forall \varphi\in C^1_c(\R^n)\,,
\]
where $C^1_c (\R^n)$ denotes the class of $C^1$ functions in $\R^n$
with compact support.
The relative perimeter of $E$ in $A\subset\R^n$ is then defined by setting $P(E;A):=|\mu_E|(A)$
for any Borel set $A \subset \R^n$.
The perimeter of $E$ is then defined as $P(E):=P(E;\R^n)$.
If $P(E) < \infty$, we say that $E$ is a set of {\it finite perimeter} in $\R^n$.
The {\it reduced boundary} of $E$ is the set $\partial^*E$ of those $x\in\R^n$ such that
\begin{align*} 
\nu^E(x)=\frac{d\mu_E}{d|\mu_E|}(x)=\lim_{r\rightarrow 0^+}\frac{\mu_E(B(x,r))}{|\mu_E|(B(x,r))}\quad \textit{\emph{exists and belongs to $\mathbb{S}^{n-1}$}},
\end{align*}
where $\frac{d\mu_E}{d|\mu_E|}$ indicates the derivative of $\mu_E$ with respect its total variation $|\mu_E|$ in the sense of Radon measure. The Borel function $\nu^E:\partial^*E\to \mathbb{S}^{n-1}$ 
is called the {\it measure-theoretic outer unit normal} to $E$. 
If $E$ is a set of locally finite perimeter, it is possible to show
that $\partial^*E$ is a locally $\mathcal{H}^{n-1}$-rectifiable set in $\R^n$ 
\cite[Corollary 16.1]{MaggiBOOK}, with $\mu_E=\nu^E\,\mathcal{H}^{n-1}\mres\partial^*E$, and
\[
\int_E\nabla\varphi(x)\,dx=\int_{\partial^*E}\varphi(x)\,\nu^E(x)\,d\mathcal{H}^{n-1}(x)\,,\qquad\forall \varphi\in C^1_c(\R^n).
\] 
Thus, $P(E;A)=\mathcal{H}^{n-1}(A\cap\partial^*E)$ for every Borel set $A\subset\R^n$. 
If $E$ is a set of locally finite perimeter, it turns out that 
\begin{equation*}
  \label{inclusioni frontiere}
  \partial^*E  \subset E^{(1/2)} \subset \partial^{\mathrm{e}} E\,.
\end{equation*}
Moreover, {\it Federer's theorem} holds true (see \cite[Theorem 3.61]{AFP} and \cite[Theorem 16.2]{MaggiBOOK}):
\[
\mathcal{H}^{n-1}(\partial^{\mathrm{e}} E\setminus\partial^*E)=0\,,
\]
thus implying that the essential boundary $\partial^{\mathrm{e}} E$ of $E$ is locally $\mathcal{H}^{n-1}$-rectifiable in $\R^n$. 

\subsection{General facts about measurable functions}
Let $f:\R^n\to\R$ be a Lebesgue measurable function.
We define the {\it approximate upper limit} $f^\vee(x)$ and the {\it approximate lower limit} 
$f^\wedge(x)$ of $f$ at $x\in\R^n$ as 
\begin{eqnarray}
  \label{def fvee}
  f^\vee(x)=\inf\Big\{t\in\R: x\in \{f>t\}^{(0)}\Big\}\,,
  \\
  \label{def fwedge}
  f^\wedge(x)=\sup\Big\{t\in\R: x\in \{f<t\}^{(0)}\Big\}\,.
\end{eqnarray}
We observe that $f^\vee$ and $f^\wedge$ are Borel functions that are defined at 
{\it every} point of $\R^n$, with values in $\R\cup\{\pm\infty\}$.
Moreover, if $f_1: \R^n \to \R$ and $f_2: \R^n \to \R$
are measurable functions satisfying $f_1=f_2$ $\mathcal{H}^n$-a.e. on $\R^n$, 
then $f_1^\vee=f_2^\vee$ and $f_1^\wedge=f_2^\wedge$ {\it everywhere} on $\R^n$. 
We define the {\it approximate discontinuity} set $S_f$ of $f$ as
\begin{align}\label{eq: approx disc set}
S_f: =\{f^\wedge<f^\vee\}.
\end{align}
Note that, by the above considerations, it follows that $\mathcal{H}^n(S_f)=0$. 
Although $f^\wedge$ and $f^\vee$ may take infinite values on $S_f$, 
the difference $f^\vee(x)-f^\wedge(x)$ is well defined in $\R\cup\{\pm\infty\}$ for every $x\in S_f$.
Then, we can define the {\it approximate jump} $[f]$ of $f$ as the Borel function $[f]:\R^n\to[0,\infty]$ given by
  \begin{eqnarray*}
    [f](x):=\left\{\begin{array}{l l}
      f^\vee(x)-f^\wedge(x)\,,&\mbox{if $x\in S_f$}\,,
      \vspace{.2cm} \\
      0\,,&\mbox{if $x\in \R^n\setminus S_f$}\,.
    \end{array}
    \right .
  \end{eqnarray*}

The \emph{approximate average} of $f$ is the Borel function 
\begin{align*}
\tilde{f}(x)=
\begin{cases}
\frac{f\Upp(x)+f\Low(x)}{2} , &\mbox{if } x\in \R^n\setminus\{ f\Low=-\infty, f\Upp=+\infty \},\\
0,  &\mbox{if } x\in \{ f\Low=-\infty, f\Upp=+\infty \}.
\end{cases}
\end{align*}
It also holds the following limit relation
\begin{align}\label{eq: limit relation}
\tilde{f}(x) = \lim_{M\rightarrow \infty}\widetilde{\tau_Mf}(x)=\lim_{M\rightarrow \infty} \frac{\tau_M(f\Upp)+\tau_M(f\Low)}{2},\quad \forall x\in \R^n,
\end{align}
that we want to be true for every Lebesgue measurable function $f:\R^n \rightarrow \R$, where, here and in the rest of the work,  
\begin{align}\label{eq: come definita tau_M}
\tau_M(s)=\max \{-M, \min\{ M,s \} \}, \quad s\in\R \cup \{\pm \infty \}.
\end{align}
By definition, $\tau_M$ is equivalently defined as
\begin{align*}
\tau_M(s)=\begin{cases}
M  \quad &s> M \\ s  &-M\leq s\leq M \\ -M  &s<-M
\end{cases}
\end{align*}
and the following properties can be easily proved
\begin{align}
\tau_M(s_2) \geq \tau_M&(s_1)\qquad \forall\,s_2\geq s_1,\, \textit{provided }M>0.\label{eq: alphas}\\
\tau_{M_2}(s) \geq \tau_{M_1}&(s)\qquad \forall\,M_2\geq M_1\geq 0,\, \textit{provided }s\geq0.\label{eq: betas1}\\
\tau_{M_2}(s) \leq \tau_{M_1}&(s)\qquad \forall\,M_2\geq M_1\geq 0,\, \textit{provided }s\leq 0.\label{eq: betas2}\\
(\tau_{M_2}-\tau_{M_1})(s_2) \geq (\tau_{M_2}-\tau_{M_1})&(s_1)\qquad \forall\,s_2\geq s_1,\, \textit{provided } M_2\geq M_1\geq 0.\label{eq: gammas}\\
\tau_{M_2}(s_2)-\tau_{M_2}(s_1) \geq \tau_{M_1}(s_2)-\tau_{M_1}&(s_1)\qquad \forall\,M_2\geq M_1\geq 0,\, \textit{provided } s_2\geq s_1.\label{eq: deltas}
\end{align}
The validity of the limit relation (\ref{eq: limit relation}) can be easily checked noticing that 
\begin{align*}
\tau_M(f)\Low= \tau_M(f\Low),\quad \tau_M(f)\Upp= \tau_M(f\Upp),\quad \widetilde{\tau_M(f)}(x)= \frac{\tau_M(f\Upp)+\tau_M(f\Low)}{2}, \quad \forall x \in \R^n.  
\end{align*}
Using these above definitions, the validity of the following properties can be easily deduced. For every Lebesgue measurable function $f:\R^n \rightarrow \R$ and for every $t\in \R$ we have that
\begin{align}
\{|f|\Upp < t  \} &= \{-t<f\Low  \} \cap \{f\Upp < t   \},\label{eq: 2.7 Filippo page 15}\\
\{ f\Upp < t \}&\subset \{f < t  \}^{(1)} \subset \{ f\Upp \leq t  \},\label{eq: 2.8 Filippo page 15}\\
\{f\Low > t  \} &\subset \{ f> t  \}^{(1)} \subset \{f \Low \geq t  \}.\label{eq: 2.9 Filippo page 15}
\end{align}
Furthermore, if $f,g:\R^n \rightarrow \R$ are Lebesgue measurable functions and $f=g$ $\mathcal{H}^n$-a.e. on a Borel set $E$, then
\begin{align}\label{eq: 2.10 Filippo pag 16}
f\Upp(x)= g\Upp(x), \quad f\Low(x)=g\Low(x),\quad [f](x)=[g](x), \quad \forall x\in E^{(1)}.
\end{align} 
Let $A\subset\R^n$ be a Lebesgue measurable set. 
We say that $t\in\R\cup\{\pm\infty\}$ is the approximate limit of $f$ at $x$ with respect to $A$, 
and write $t=\aplim (f,A,x)$, if
\begin{eqnarray}
  &&\theta\Big(\{|f-t|>\varepsilon\}\cap A;x\Big)=0\,,\qquad\forall\varepsilon>0\,,\hspace{0.3cm}\qquad (t\in\R)\,,
  \\
  &&\theta\Big(\{f<M\}\cap A;x\Big)=0\,,\qquad\hspace{0.6cm}\forall M>0\,,\qquad (t=+\infty)\,,
  \\
  &&\theta\Big(\{f>-M\}\cap A;x\Big)=0\,,\qquad\hspace{0.3cm}\forall M>0\,,\qquad (t=-\infty)\,.
\end{eqnarray}
We say that $x\in S_f$ is a \textit{jump point} of $f$ if there exists $\nu\in \mathbb{S}^{n-1}$ such that
\[
f^\vee(x)=\aplim(f,H_{x,\nu}^+,x)> f^\wedge(x)=\aplim(f,H_{x,\nu}^-,x)\,.
\]
If this is the case, we say that $\nu_f(x):= \nu$ is the approximate jump direction of $f$ at $x$.
If we denote by $J_f$ the set of approximate jump points of $f$, we have that $J_f\subset S_f$ 
and $\nu_f:J_f\to \mathbb{S}^{n-1}$ is a Borel function. 

Consider $f:\R^n\rightarrow \R$ Lebesgue measurable, then we say that $f$ is \emph{approximately differentiable} at $x\in S_f^c$ provided $f\Low(x)=f\Upp(x)\in \R$ if there exists $\xi \in \R^n$ such that
\begin{align*}
\aplim(g,\R^n,x)=0,
\end{align*} 
where $g(y)= (f(y)-\tilde{f}(x)-\xi\cdot(y-x))/|y-x|$ for $y \in \R^n\setminus\{x \}$.
If this is the case, then $\xi$ is uniquely determined, we set $\xi=\nabla f(x)$, and call $\nabla f(x)$ the \emph{approximate differential} of $f$ at $x$. The localization property (\ref{eq: 2.10 Filippo pag 16}) holds true also for the approximate differentials, namely if $g,f:\R^n\rightarrow \R$ are Lebesgue measurable functions, $f=g$ $\mathcal{H}^n$-a.e. on a Borel set $E$, and $f$ is approximately differentiable $\mathcal{H}^n$-a.e. on $E$, then so it is $g$ $\h^n$-a.e. on $E$ with
\begin{align}\label{eq: 2.12 FIlippo pag 16}
\nabla f(x)=\nabla g(x), \quad \textit{\emph{for $\h^n$-a.e. $x\in E$}}.
\end{align}

\subsection{Functions of bounded variation} 
Let $f:\R^n\to\R$ be a Lebesgue measurable function, 
and let $\Omega\subset\R^n$ be open. 
We define the {\it total variation of $f$ in $\Omega$} as
\[
|Df|(\Omega)=\sup\Big\{\int_\Omega\,f(x)\,\diver\,T(x)\,dx:T\in C^1_c(\Omega;\R^n)\,,|T|\le 1\Big\}\,,
\]
where $C^1_c(\Omega;\R^n)$ is the set of $C^1$ functions from 
$\Omega$ to $\R^n$ with compact support.
We also denote by $C^0_c (\Omega ; \R^n)$ the class of all
continuous functions from $\Omega$ to $\R^n$.
Analogously, for any $k \in \mathbb{N}$, the class of $k$ times continuously differentiable functions from $\Omega$ to $\R^n$ is denoted by $C^k_c (\Omega ; \R^n)$. We say that $f$ belongs to the space of functions of bounded variations, $f \in BV(\Omega)$, if $|Df|(\Omega)<\infty$ and $f\in L^1(\Omega)$. Moreover, we say that $f\in BV_{\textnormal{loc}}(\Omega)$ if $f\in BV(\Omega')$ for every open set $\Omega'$ compactly contained in $\Omega$. 
Therefore, if $f\in BV_{\textnormal{loc}}(\R^n)$ the distributional derivative $Df$ of $f$ is an $\R^n$-valued Radon measure. 
In particular, $E$ is a set of locally finite perimeter if and only if $\chi_E\in BV_{\textnormal{loc}}(\R^{n})$. 
If $f\in BV_{loc}(\R^n)$, one can write the  Radon--Nykodim decomposition of $Df$ with respect to $\mathcal{H}^n$
as $Df=D^af+D^sf$, where $D^sf$ and $\mathcal{H}^n$ are mutually singular, and where $D^af\ll\mathcal{H}^n$. 
We denote the density of $D^af$ with respect to $\mathcal{H}^n$ by $\nabla f$, 
so that $\nabla\,f\in L^1(\Omega;\R^n)$ with $D^af=\nabla f\,d\mathcal{H}^n$. Moreover, for a.e. $x\in\R^n$, $\nabla f(x)$ is the approximate differential of $f$ at $x$. 
If $f\in BV_{\textnormal{loc}}(\R^n)$, then $S_f$ is countably $\mathcal{H}^{n-1}$-rectifiable.
Moreover,  we have  $\mathcal{H}^{n-1}(S_f\setminus J_f)=0$, $[f]\in L^1_{loc}(\mathcal{H}^{n-1}\llcorner J_f)$, 
and the $\R^n$-valued Radon measure $D^jf$ defined as
\[
D^jf=[f]\,\nu_f\,d\mathcal{H}^{n-1}\llcorner J_f\,,
\]
is called the {\it jump part of $Df$}. 
If we set $D^cf=D^sf-D^jf$, we have that 
$D f = D^af+D^jf+D^cf$.
The $\R^n$-valued Radon measure $D^cf$ is called the {\it Cantorian part} of $Df$, 
and it is such that $|D^cf|(M)=0$ for every $M \subset \R^n$ 
which is $\sigma$-finite with respect to $\mathcal{H}^{n-1}$. Let us recall some useful properties we will need on the next sections (see \cite[Lemma 2.2, Lemma 2.3]{CCPMSteiner} for further details).
\begin{lemma}\label{lem: cantor total variation of v on v=0 is null}
If $v\in BV(\R^n)$, then $|D^cv|(\{v\Low=0  \})=0$. In particular, if $f=g$ $\mathcal{H}^n$-a.e. on a Borel set $E\subset \R^n$, then $D^cf\mres E^{(1)}= D^cg\mres E^{(1)}$.
\end{lemma}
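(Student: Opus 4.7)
The plan is to establish the first identity via the Fleming--Rishel coarea formula and then deduce the second statement as an immediate consequence by applying the first one to $f-g$.

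First I would reduce the first claim to showing $|D^cv|(\{v\Low=v\Upp=0\})=0$. Writing
\[
\{v\Low=0\}=\bigl(\{v\Low=0\}\cap S_v\bigr)\cup\{v\Low=v\Upp=0\},
\]
the first piece sits inside the approximate discontinuity set $S_v$, which decomposes into the locally $\mathcal{H}^{n-1}$-rectifiable jump set $J_v$ (of locally finite $\mathcal{H}^{n-1}$-measure, as recalled in Section~\ref{preliminaries}) together with an $\mathcal{H}^{n-1}$-null remainder. Since the Cantor part $|D^cv|$ charges no $\mathcal{H}^{n-1}$-$\sigma$-finite set (a standard property of $D^cv$, e.g. \cite[Prop.~3.92]{AFP}), this first piece is $|D^cv|$-negligible.

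To control the second piece I would invoke the Fleming--Rishel coarea formula
\[
|Dv|(B)=\int_{\R}P(\{v>t\};B)\,dt
\]
with $B=\{v\Low=v\Upp=0\}$. Combined with the standard inclusion $\partial^{*}\{v>t\}\subset\{v\Low\leq t\leq v\Upp\}$ (valid up to $\mathcal{H}^{n-1}$-null sets), the intersection $\partial^{*}\{v>t\}\cap B$ is empty for every $t\neq 0$, so the integrand vanishes off a single point and the integral is zero. Since $|D^cv|\leq |Dv|$ as measures, this yields $|D^cv|(B)=0$ and the first claim follows.

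For the second statement I would apply the first one to $w=f-g$ (which belongs to $BV_{\mathrm{loc}}(\R^n)$ under the implicit regularity assumption on $f,g$ required for $D^c f, D^c g$ to be defined). The hypothesis $f=g$ $\mathcal{H}^n$-a.e.\ on $E$ combined with the definition of $E^{(1)}$ forces $w\Low=w\Upp=0$ on $E^{(1)}$, so $E^{(1)}\subset\{w\Low=0\}$; the first part then yields $|D^cw|(E^{(1)})=0$, i.e.\ $D^cf\mres E^{(1)}=D^cg\mres E^{(1)}$. The main technical point I expect to rely on is precisely the fact that $|D^cv|$ vanishes on $\mathcal{H}^{n-1}$-$\sigma$-finite sets: without it, the coarea estimate alone would leave the contribution from the jump portion of $\{v\Low=0\}$ uncontrolled and the initial reduction could not be carried out.
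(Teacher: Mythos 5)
Your argument is correct: splitting $\{v\Low=0\}$ into its intersection with $S_v$ (negligible because $S_v$ is countably $\h^{n-1}$-rectifiable, hence $\h^{n-1}$-$\sigma$-finite, and $|D^cv|$ vanishes on such sets) and the set $\{v\Low=v\Upp=0\}$ (negligible by the coarea formula together with the inclusion $\partial^{\mathrm{e}}\{v>t\}\subset\{v\Low\le t\le v\Upp\}$, which empties the integrand for every $t\neq 0$), and then applying the first part to $w=f-g$, is essentially the proof of \cite[Lemma 2.2]{CCPMSteiner}, which the present paper cites rather than proves. The only step you leave implicit is the identity $D^c(f-g)=D^cf-D^cg$ as measures, which holds because all three Cantor parts vanish on the $\h^{n-1}$-$\sigma$-finite set $S_f\cup S_g$ while outside it the singular parts carry no jump contribution; with that remark (or the obvious truncation argument if $f,g$ are only $GBV$), your proposal is complete.
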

\begin{lemma}\label{lem: 2.3 FIlippo page 18}
If $f,g \in BV(\R^n)$, $E$ is a set of finite perimeter and $f=1_Eg$, then
\begin{align}
\nabla f&=1_E \nabla g,\quad \quad \quad \mathcal{H}^n\textit{\emph{- a.e. on }}\R^n,\\
D^cf&= D^cg\mres E^{(1)},\\
S_f\cap E^{(1)} &= S_g \cap E^{(1)}. 
\end{align}
\end{lemma}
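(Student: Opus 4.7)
The plan is to deduce all three identities as corollaries of the localization properties \eqref{eq: 2.10 Filippo pag 16} and \eqref{eq: 2.12 FIlippo pag 16} and of Lemma \ref{lem: cantor total variation of v on v=0 is null}, applied on the partition $\R^n = E^{(1)} \cup E^{(0)} \cup \partial^{\mathrm{e}} E$. A preliminary observation is that $1_E = 1_{E^{(1)}}$ $\mathcal{H}^n$-a.e.\ by the Lebesgue differentiation theorem, so $f = g$ $\mathcal{H}^n$-a.e.\ on $E^{(1)}$ and $f = 0$ $\mathcal{H}^n$-a.e.\ on $E^{(0)}$, while $\partial^{\mathrm{e}}E$ is $\mathcal{H}^n$-negligible.

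First I would handle the approximate differential. Applying \eqref{eq: 2.12 FIlippo pag 16} to $f$ and $g$ on the Borel set $E^{(1)}$ gives $\nabla f = \nabla g$ $\mathcal{H}^n$-a.e.\ on $E^{(1)}$, and applying it to $f$ and the zero function on $E^{(0)}$ gives $\nabla f = 0$ $\mathcal{H}^n$-a.e.\ on $E^{(0)}$. Since $\partial^{\mathrm{e}} E$ is $\mathcal{H}^n$-negligible, combining these identities and recalling $1_E = 1_{E^{(1)}}$ $\mathcal{H}^n$-a.e.\ yields $\nabla f = 1_E \nabla g$ $\mathcal{H}^n$-a.e., which is the first identity.

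For the jump set, since $f = g$ $\mathcal{H}^n$-a.e.\ on $E$, property \eqref{eq: 2.10 Filippo pag 16} gives $f^{\vee}(x) = g^{\vee}(x)$ and $f^{\wedge}(x) = g^{\wedge}(x)$ for \emph{every} $x \in E^{(1)}$. Intersecting $\{f^{\wedge} < f^{\vee}\} = \{g^{\wedge} < g^{\vee}\}$ on $E^{(1)}$ with $E^{(1)}$ immediately gives the third identity $S_f \cap E^{(1)} = S_g \cap E^{(1)}$.

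For the Cantor part I would argue by splitting $D^c f$ over the three pieces of the partition. On $E^{(1)}$, Lemma \ref{lem: cantor total variation of v on v=0 is null} (applied to $f=g$ $\mathcal{H}^n$-a.e.\ on $E$) yields $D^c f \mres E^{(1)} = D^c g \mres E^{(1)}$. On $E^{(0)}$, the same lemma applied to $f = 0$ $\mathcal{H}^n$-a.e.\ gives $D^c f \mres E^{(0)} = 0$. The only subtlety is the remainder $\partial^{\mathrm{e}} E$: by Federer's theorem $\mathcal{H}^{n-1}(\partial^{\mathrm{e}} E \setminus \partial^* E) = 0$, and $\partial^* E$ is locally $\mathcal{H}^{n-1}$-rectifiable, so $\partial^{\mathrm{e}} E$ is $\sigma$-finite with respect to $\mathcal{H}^{n-1}$; since $|D^c f|$ vanishes on every such set, $|D^c f|(\partial^{\mathrm{e}} E) = 0$. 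Summing the three contributions yields $D^c f = D^c g \mres E^{(1)}$. No single step is the main obstacle; the only point that requires attention is treating the essential boundary via its $\mathcal{H}^{n-1}$-rectifiability so that the Cantor measure sees it as a null set.
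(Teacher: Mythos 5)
Your proof is correct, and there is nothing in the paper to compare it against: the paper states this lemma without proof, recalling it from \cite[Lemma 2.3]{CCPMSteiner}. Your argument is self-contained and uses only facts the paper itself recalls: the localization properties \eqref{eq: 2.10 Filippo pag 16} and \eqref{eq: 2.12 FIlippo pag 16}, the second assertion of Lemma \ref{lem: cantor total variation of v on v=0 is null} (applied once with $f=g$ on $E$ and once with $f=0$ on $E^c$, noting $E^{(0)}=(E^c)^{(1)}$), Federer's theorem, and the fact that $|D^cf|$ vanishes on sets that are $\sigma$-finite with respect to $\mathcal{H}^{n-1}$. The one genuinely delicate point, namely that the decomposition $\R^n=E^{(1)}\cup E^{(0)}\cup\partial^{\mathrm{e}}E$ leaves a remainder $\partial^{\mathrm{e}}E$ on which $D^cf$ must be shown to vanish, is exactly where the finite perimeter hypothesis enters, and you handle it correctly via Federer's theorem and the local $\mathcal{H}^{n-1}$-rectifiability of $\partial^*E$. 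I would only remark, for completeness, that the identity $\nabla f=1_E\nabla g$ is an $\mathcal{H}^n$-a.e.\ statement, so replacing $E$ by $E^{(1)}$ or $E^c$ by $E^{(0)}$ throughout, as you do, is harmless; with that understood, the proof stands as written.
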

\noindent
A Lebesgue measurable function $f:\R^n\rightarrow \R$, it's called of \emph{generalized bounded variation} on $\R^n$, shortly $f\in GBV(\R^n)$ if and only if $\tau_M(u)\in BV_{loc}(\R^{n-1})$ for every $M>0$ (where $\tau_M(s)$ has been defined in the previous subsection). It is interesting to notice that the structure theory of BV-functions holds true for GBV-functions too. Indeed, given $f\in GBV(\R^n)$, then, (see \cite[Theorem 4.34]{AFP}) $\{ f>t \}$ is a set of finite perimeter too for $\mathcal{H}^1$-a.e. $t\in \R$, $f$ is approximately differentiable $\mathcal{H}^n$-a.e. on $\R^n$, $S_f$ is countably $\mathcal{H}^{n-1}$-rectifiable  and $\mathcal{H}^{n-1}$-equivalent to $J_f$ and the usual coarea formula takes the form 
\begin{align*}
\int_{\R}P(\{ f> t \};G)dt = \int_{G}|\nabla f|d\mathcal{H}^n + \int_{G\cap S_f}[f]d\mathcal{H}^{n-1} + |D^cf|(G),
\end{align*}
for every Borel set $G\subset \R^n$, where $|D^c f|$ denotes the Borel measure on $\R^n$ defined as
\begin{align}\label{def: Cantor part GBV}
|D^c f|(G)=\lim_{M\rightarrow +\infty}|D^c(\tau_M(f))|(G)=\sup_{M>0}|D^c(\tau_M)(f)|(G),
\end{align}
whenever $G$ is a Borel set in $\R^n$.

\section{Setting of the problems and preliminary results}\label{section 3}
We recall in here, few results that will be useful later on for the proof of (\ref{eq:anisotropic Steiner inequality}) (for more details see \cite[Section 2 and 3]{CCF}). Let us start with a version of a result by Vol'pert (see \cite[Theorem G]{CCF}).
\begin{theorem}\label{thm: Volpert}
Let $v\in BV(\R^{n-1})$ such that $\h^{n-1}(\{v>0  \})<\infty$. Let $E\subset \R^{n}$ be a $v$-distributed set of finite perimeter. Then, we have for $\mathcal{L}^{n-1}$-a.e. $z\in\R^{n-1}$,
\begin{equation}\label{eq: 2.13 CCF}
E_z\textit{ has finite perimeter in $\R$};
\end{equation}
\begin{equation}\label{eq: 2.14 CCF}
(\partial^e E)_z= (\partial^* E)_z=\partial^*( E_z)=\partial^e( E_z);
\end{equation}
\begin{equation}\label{eq: 2.15 CCF}
\q (\nu^E(z,t)) \neq 0 \textit{ for every $t$ such that }(z,t)\in \partial^*E;
\end{equation}
In particular, there exists a Borel set $G_E\subseteq \{ v>0 \} $ such that $\mathcal{L}^{n-1}(\{v > 0\}\setminus G_E)=0$ and (\ref{eq: 2.13 CCF})-(\ref{eq: 2.15 CCF}) are satisfied for every $z\in G_E$.
\end{theorem}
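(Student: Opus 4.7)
The plan is to establish the three pointwise-a.e.\ conclusions separately, by combining the classical $BV$ slicing theory, Federer's rectifiability theorem, and the coarea formula for rectifiable sets recalled in Section~\ref{preliminaries}. The statements (\ref{eq: 2.13 CCF}) and (\ref{eq: 2.14 CCF}) are of a soft measure-theoretic flavor, while the transversality (\ref{eq: 2.15 CCF}) is the geometric heart of the argument.

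First, I would dispose of (\ref{eq: 2.13 CCF}): since $P(E)<\infty$ we have $\chi_E\in BV(\R^n)$, and the standard slicing theorem for $BV$ functions implies that for $\mathcal{L}^{n-1}$-a.e.\ $z\in\R^{n-1}$ the one-dimensional section $t\mapsto \chi_E(z,t)=\chi_{E_z}(t)$ belongs to $BV(\R)$, i.e.\ $E_z$ has finite perimeter in $\R$.

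Next I would treat (\ref{eq: 2.15 CCF}), which is the geometric heart. Define
\[
Z:=\left\{x\in \partial^*E\,:\,\q(\nu^E(x))=0\right\}.
\]
Since $\partial^*E$ is locally $\h^{n-1}$-rectifiable, at $\h^{n-1}$-a.e.\ $x\in\partial^*E$ the approximate tangent hyperplane is the orthogonal complement of $\nu^E(x)$, so the tangential Jacobian along $\partial^*E$ of the horizontal projection $\p\colon\partial^*E\to\R^{n-1}$ equals $|\q(\nu^E(x))|$. The coarea formula for rectifiable sets then gives
\[
\int_{\R^{n-1}} \h^0\bigl(Z\cap \p^{-1}(z)\bigr)\, d\mathcal{L}^{n-1}(z)=\int_Z |\q(\nu^E)|\, d\h^{n-1}=0,
\]
so that for $\mathcal{L}^{n-1}$-a.e.\ $z$ no $t\in\R$ produces $(z,t)\in Z$; this is exactly (\ref{eq: 2.15 CCF}).

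Finally, for (\ref{eq: 2.14 CCF}), Federer's theorem gives $\h^{n-1}(\partial^e E\setminus \partial^*E)=0$, and the same coarea argument applied to this $\h^{n-1}$-null set yields $(\partial^e E)_z=(\partial^*E)_z$ for $\mathcal{L}^{n-1}$-a.e.\ $z$. To identify $(\partial^*E)_z$ with $\partial^*(E_z)$ I would use the transversality just established: at any $(z,t)\in \partial^*E$ with $\q(\nu^E(z,t))\neq 0$, the blow-up of $E$ at $(z,t)$ is a half-space with normal $\nu^E(z,t)$, whose vertical section at $z$ is a half-line with endpoint $t$; this half-line agrees with the blow-up of $E_z$ at $t$, forcing $t\in\partial^*(E_z)$, and the converse inclusion is analogous using the $BV$-slicing of $\chi_E$. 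The equality $\partial^*(E_z)=\partial^e(E_z)$ is one-dimensional Federer. The set $G_E$ is then produced by intersecting $\{v>0\}$ with the full-measure subset of $\R^{n-1}$ on which (\ref{eq: 2.13 CCF})--(\ref{eq: 2.15 CCF}) hold simultaneously.

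The main obstacle I anticipate is the identification $(\partial^*E)_z=\partial^*(E_z)$: the transversality (\ref{eq: 2.15 CCF}) is exactly the ingredient that lets the blow-up of $E$ at $(z,t)$ descend to a one-dimensional blow-up of the slice without cusp or degeneracy, so the order of the three steps above is essential. Once (\ref{eq: 2.15 CCF}) is in place, the remaining matching of boundaries is careful but routine bookkeeping.
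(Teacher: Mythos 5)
Note first that the paper does not prove this statement at all: it is recalled verbatim as Vol'pert's theorem in the form given in \cite[Theorem G]{CCF}, so your attempt can only be measured against the classical argument. Within your outline, the treatment of (\ref{eq: 2.13 CCF}) (one-dimensional $BV$ slicing of $\chi_E$, which lies in $BV(\R^n)$ since $E$ has finite volume and perimeter) and of (\ref{eq: 2.15 CCF}) (apply the coarea formula (\ref{eq: Coarea CCF}) with $g=\chi_Z$, using that the tangential Jacobian of $\p$ on $\partial^*E$ is $|\q(\nu^E)|$) is correct and is essentially the standard route; the passage of the $\h^{n-1}$-null set $\partial^{\mathrm{e}}E\setminus\partial^*E$ to slices is also fine, though strictly it rests on the Eilenberg (coarea) \emph{inequality} rather than on the rectifiable coarea formula, since that set is not weighted by a Jacobian.

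The genuine gap is the identification $(\partial^*E)_z\subset\partial^*(E_z)$, which you base on the claim that the vertical section of the blow-up half-space ``agrees with the blow-up of $E_z$ at $t$''. This is unjustified and, as a pointwise statement, false: $L^1_{loc}(\R^n)$ convergence of the blow-ups of $E$ at $(z,t)$ carries no information about their sections along the single vertical line through the blow-up point, because that line is $\h^n$-negligible. Concretely, modifying $E$ on an $\h^n$-null set (say adding a segment $\{z_0\}\times[a,b]$) leaves $\partial^*E$ and $\nu^E$ unchanged but alters $E_{z_0}$, so that $(z_0,t)\in\partial^*E$ with $\q(\nu^E)\neq 0$ need not imply $t\in\partial^*(E_{z_0})$; the conclusion can only hold for $\mathcal{L}^{n-1}$-a.e.\ $z$, and no blow-up argument at a fixed point can select the good $z$'s. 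The mechanism that actually delivers the a.e.\ statement is the one-dimensional slicing/disintegration of the vertical derivative, $D\chi_{E_z}=(D_n\chi_E)_z$ and $|D_n\chi_E|(B)=\int_{\R^{n-1}}|D\chi_{E_z}|(B_z)\,dz$ (see \cite[Theorems 3.107--3.108]{AFP}), combined with De Giorgi's structure theorem $D_n\chi_E=\q(\nu^E)\,\h^{n-1}\mres\partial^*E$: concentration of $D_n\chi_E$ on $\partial^*E$ gives $\partial^*(E_z)\subset(\partial^*E)_z$ for a.e.\ $z$ (this is the inclusion your last sentence gestures at), and then the coarea identity $\int_{\R^{n-1}}\h^0\bigl((\partial^*E)_z\bigr)\,dz=|D_n\chi_E|(\R^n)=\int_{\R^{n-1}}\h^0\bigl(\partial^*(E_z)\bigr)\,dz<\infty$ upgrades that inclusion between finite sets to equality for a.e.\ $z$. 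Your outline contains the right ingredients (coarea formula and $BV$ slicing), but it places the weight of the proof on a blow-up step that cannot carry it; replacing that step by the disintegration-plus-counting argument above closes the gap.
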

\noindent
The next result is a version of the \emph{Coarea formula} for rectifiable sets (see \cite[Theorem F]{CCF}).
\begin{theorem}
Let $E$ be a set of finite perimeter in $\R^n$ and let $g:\R^n\rightarrow [0,+\infty]$ be any Borel function. Then,
\begin{align}\label{eq: Coarea CCF}
\int_{\partial^*E}g(x)|\q (\nu^E(x))|d\h^{n-1}(x)=\int_{\R^{n-1}}dz \int_{(\partial^*E)_{z}}g(z,y)d\h^0(y).
\end{align}
\end{theorem}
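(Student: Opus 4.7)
The statement is an instance of the area formula for Lipschitz maps on rectifiable sets, applied to the horizontal projection $\p:\R^n\to\R^{n-1}$ restricted to the reduced boundary $\partial^*E$. Since both $\partial^*E$ and the target have Hausdorff dimension $n-1$, the coarea formula and the area formula coincide here, and the $0$-dimensional fiber integral on the right-hand side is really a counting-measure sum over preimages. The plan has three main steps.

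First, I would invoke the rectifiability recalled in Section~\ref{preliminaries}: because $E$ has finite perimeter, $\partial^*E$ is locally $\h^{n-1}$-rectifiable and admits at $\h^{n-1}$-a.e.\ $x\in\partial^*E$ an approximate tangent hyperplane equal to $\nu^E(x)^{\perp}$. This is precisely the hypothesis needed to apply the area formula on rectifiable sets (see, e.g., \cite[Theorem~2.91]{AFP}).

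Second, I would identify the tangential Jacobian $J^{\partial^*E}\p(x)$ of $\p$ at a point $x\in\partial^*E$. The differential $d\p$ restricted to $\nu^E(x)^{\perp}$ is a linear map between two $(n-1)$-dimensional spaces, so its Jacobian is a single determinant. A short linear-algebra argument (choose an orthonormal basis of $\nu^E(x)^{\perp}$, project it to $\R^{n-1}$, and compute the Gram determinant; equivalently, orthogonal projection of a hyperplane with unit normal $\nu$ onto the coordinate hyperplane $\{x_n=0\}$ has Jacobian equal to $|\cos\theta|$, where $\theta$ is the angle between $\nu$ and $e_n$) gives
$$J^{\partial^*E}\p(x)=|\q(\nu^E(x))|.$$
This is consistent with the expected degeneration: the Jacobian vanishes exactly when $\nu^E(x)$ is horizontal, i.e.\ when the tangent hyperplane contains $e_n$ and the projection collapses a direction.

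Third, I would apply the area formula and rewrite the right-hand side:
$$\int_{\partial^*E}g(x)\,|\q(\nu^E(x))|\,d\h^{n-1}(x)=\int_{\R^{n-1}}\Big(\sum_{x\in\p^{-1}(z)\cap\partial^*E}g(x)\Big)\,dz,$$
and then identify $\p^{-1}(z)\cap\partial^*E$ with $\{z\}\times(\partial^*E)_z$, turning the fiber sum into $\int_{(\partial^*E)_z}g(z,y)\,d\h^0(y)$. The extension from bounded nonnegative Borel $g$ to $g:\R^n\to[0,+\infty]$ is standard by monotone convergence on truncations $g\wedge M$. The only substantive point to keep in mind is that $\partial^*E$ is not in general a smooth submanifold, so one must use the version of the area formula tailored to rectifiable sets (working with approximate tangent planes), rather than the classical manifold version; once this is granted, the three steps above are essentially formal.
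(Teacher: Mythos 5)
Your proposal is correct: the identification of the tangential Jacobian of $\p$ on $\partial^*E$ as $|\q(\nu^E(x))|$ and the application of the generalized area formula on rectifiable sets, with the fiber $\p^{-1}(z)\cap\partial^*E$ identified with $\{z\}\times(\partial^*E)_z$, is exactly the standard argument behind this identity. The paper itself offers no proof — it simply quotes the result from \cite[Theorem F]{CCF} — and your route coincides with the one underlying that reference, so there is nothing to flag beyond the (correctly noted) need to use the rectifiable-set version of the area formula rather than the smooth-manifold one.
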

\noindent
Lastly, next result is a version of \cite[Lemma 3.2]{CCF}.
\begin{lemma}\label{lem: 3.2 CCF}
Let $v\in BV(\R^{n-1})$ such that $\h^{n-1}(\{v>0  \})<\infty$. Let $E\subset \R^{n}$ be a $v$-distributed set of finite perimeter. Then, for $\mathcal{L}^{n-1}$-a.e. $z\in \{v>0  \}$
\begin{align*}
\frac{\partial v}{\partial x_i}(z)=-\int_{(\partial^*E)_z}\frac{\nu_i^E(z,y)}{|\q(\nu^E(z,y))|}\,d\h^0(y),\qquad i=1,\dots,{n-1},
\end{align*}
In particular by (\ref{eq: 2.14 CCF}) and the above relation, we get for $\mathcal{L}^{n-1}$-a.e. $z\in \{v>0  \}$
\begin{align*}
\frac{1}{2}\frac{\partial v}{\partial x_i}(z)=-\frac{\nu_i^{F[v]}(z,y)}{|\q(\nu^{F[x]}(z,y))|}\,d\h^0(y),\qquad i=1,\dots,{n-1},\, y\in(\partial^*F[z])_z.
\end{align*}
\end{lemma}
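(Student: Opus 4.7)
The plan is to identify $\partial v/\partial x_i$ by testing against smooth functions $\psi\in C_c^1(\R^{n-1})$ and then invoking successively Fubini, the Gauss--Green theorem on $E$, and the coarea formula (\ref{eq: Coarea CCF}). Because $v\in BV(\R^{n-1})$ and $\h^{n-1}(\{v>0\})<\infty$, the function $v$ is locally integrable, so Fubini applies to give
\begin{equation*}
\int_{\R^{n-1}}v(z)\,\partial_i\psi(z)\,dz
=\int_{\R^{n-1}}\int_{E_z}\partial_i\psi(z)\,d\h^1(t)\,dz
=\int_E \partial_i(\psi\circ\p)(x)\,dx,
\end{equation*}
where $\psi\circ\p\in C_c^1(\R^n)$ is the horizontal lift of $\psi$. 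Since $E$ has finite perimeter, the Gauss--Green formula then rewrites the right-hand side as
\begin{equation*}
\int_E\partial_i(\psi\circ\p)(x)\,dx=\int_{\partial^*E}\psi(\p x)\,\nu_i^E(x)\,d\h^{n-1}(x).
\end{equation*}

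Next, I would apply the coarea formula (\ref{eq: Coarea CCF}) with the Borel function $g(x):=\psi(\p x)\,\nu_i^E(x)/|\q(\nu^E(x))|$. To make this legal, I would first note that by Vol'pert's theorem (Theorem \ref{thm: Volpert}), property (\ref{eq: 2.15 CCF}) ensures that for $\mathcal{L}^{n-1}$-a.e. $z\in\{v>0\}$ every point $(z,t)\in\partial^*E$ satisfies $\q(\nu^E(z,t))\neq 0$; in particular, the reciprocal $1/|\q\nu^E|$ is well defined on the sections that matter. Multiplying $g$ by $|\q\nu^E|$ produces exactly the integrand $\psi(\p x)\nu_i^E(x)$ appearing above, so (\ref{eq: Coarea CCF}) yields
\begin{equation*}
\int_{\partial^*E}\psi(\p x)\,\nu_i^E(x)\,d\h^{n-1}(x)
=\int_{\R^{n-1}}\psi(z)\left(\int_{(\partial^*E)_z}\frac{\nu_i^E(z,y)}{|\q(\nu^E(z,y))|}\,d\h^0(y)\right)dz.
\end{equation*}

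Chaining the three identities together and recalling that $\int v\,\partial_i\psi\,dz=-\langle D_iv,\psi\rangle$, I obtain
\begin{equation*}
\int_{\R^{n-1}}\psi\,dD_iv = -\int_{\R^{n-1}}\psi(z)\,f_i(z)\,dz,\qquad f_i(z):=\int_{(\partial^*E)_z}\frac{\nu_i^E(z,y)}{|\q(\nu^E(z,y))|}\,d\h^0(y),
\end{equation*}
for every test $\psi\in C_c^1(\R^{n-1})$. Comparing the Radon--Nikodym absolutely continuous parts with respect to $\h^{n-1}$ on the two sides gives $\partial v/\partial x_i(z)=-f_i(z)$ for $\mathcal{L}^{n-1}$-a.e. $z$, and restricting to $\{v>0\}$ yields the first claim. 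The second claim follows immediately by specializing to $E=F[v]$: identity (\ref{eq: 2.14 CCF}) from Vol'pert's theorem guarantees $(\partial^*F[v])_z=\partial^*(F[v])_z$ for $\mathcal{L}^{n-1}$-a.e. $z\in\{v>0\}$, which consists of the two points $\pm v(z)/2$; by symmetry of $F[v]$ under reflection across $\{x_n=0\}$ the horizontal components $\nu_i^{F[v]}$ ($i\le n-1$) and the modulus $|\q\nu^{F[v]}|$ agree at these two points, so the $\h^0$-sum doubles a single contribution, producing the prefactor $1/2$ on the left.

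The main obstacle, minor as it is, lies in justifying the coarea application rigorously: one must verify that the almost everywhere condition from Vol'pert's theorem is enough to make $g$ well defined $\h^{n-1}$-a.e. on $\partial^*E$ (so that (\ref{eq: Coarea CCF}) does not see the exceptional set) and that the resulting function $f_i$ is a legitimate representative of $\nabla v$ on $\{v>0\}$. Once this bookkeeping is done, the identity is forced by an integration-by-parts argument, and no further analytic input is needed beyond the three tools already stated earlier in this section.
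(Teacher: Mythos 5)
Your overall scheme --- test against $\psi\in C^1_c(\R^{n-1})$, use Fubini, Gauss--Green on $E$, and the coarea formula (\ref{eq: Coarea CCF}) --- is the standard route (the paper itself gives no proof here, quoting \cite[Lemma 3.2]{CCF}, whose argument has exactly this skeleton), but your coarea step contains a genuine gap. The function $g=\psi(\p x)\,\nu^E_i/|\q \nu^E|$ is undefined on the ``vertical'' part $\partial^*E\cap\{\q\nu^E=0\}$, and this set in general has \emph{positive} $\h^{n-1}$-measure: property (\ref{eq: 2.15 CCF}) only says that for $\mathcal{L}^{n-1}$-a.e.\ $z\in\{v>0\}$ its section at $z$ is empty, i.e.\ its projection onto $\R^{n-1}$ meets $\{v>0\}$ in a Lebesgue-null set; it does \emph{not} say the set is $\h^{n-1}$-negligible on $\partial^*E$. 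Since $|\p\nu^E|=1$ there, the integrand $\psi(\p x)\nu^E_i$ does not vanish on it, so $g\,|\q\nu^E|$ reproduces $\psi(\p x)\nu^E_i$ only on $\partial^*E\setminus\{\q\nu^E=0\}$, and your displayed identity silently drops the term $\int_{\partial^*E\cap\{\q\nu^E=0\}}\psi(\p x)\nu^E_i\,d\h^{n-1}$. Consequently the resulting identity $\int\psi\,dD_iv=-\int\psi f_i\,dz$ for all $\psi$ is false --- it would force $D_iv\ll\mathcal{L}^{n-1}$ --- and at that point there are no ``absolutely continuous parts'' left to compare, because you have already asserted equality of the measures themselves. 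Concretely: take $v=\chi_D$ for a smooth bounded $D\subset\R^{n-1}$ and $E=F[v]=D\times(-\tfrac12,\tfrac12)$; then $f_i\equiv 0$ on $\{v>0\}$ (the normals on the horizontal faces are $(0,\pm1)$), while $D_iv$ is a nonzero jump measure carried by $\partial D$ --- precisely the contribution of the vertical lateral boundary you discarded.

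The repair is the missing third ingredient of the CCF argument. Split $\partial^*E$ into $\{\q\nu^E\neq0\}$ and $\{\q\nu^E=0\}$, apply (\ref{eq: Coarea CCF}) only on the first piece (e.g.\ set $g=0$ on the second), and keep the second, so that $-\int_{\R^{n-1}}\psi\,dD_iv=\int_{\R^{n-1}}\psi f_i\,dz+\int_{\partial^*E\cap\{\q\nu^E=0\}}\psi(\p x)\nu^E_i\,d\h^{n-1}$. The last term defines a signed measure on $\R^{n-1}$ concentrated on $\p\bigl(\partial^*E\cap\{\q\nu^E=0\}\bigr)$, and by Theorem \ref{thm: Volpert}, i.e.\ (\ref{eq: 2.15 CCF}), this projection intersects $\{v>0\}$ in an $\mathcal{L}^{n-1}$-null set; hence that measure is singular with respect to $\mathcal{L}^{n-1}$ on $\{v>0\}$, and only now does comparing the Radon--Nikodym decompositions give $\partial v/\partial x_i=-f_i$ for $\mathcal{L}^{n-1}$-a.e.\ $z\in\{v>0\}$, where by (\ref{eq: 2.15 CCF}) again $f_i(z)$ may be written as the sum over the full section $(\partial^*E)_z$. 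Two smaller points to patch as well: $\psi\circ\p$ is not compactly supported in $\R^n$, so Gauss--Green must be applied with a vertical cutoff and a passage to the limit using $\h^n(E)<\infty$ and $P(E)<\infty$; and (\ref{eq: Coarea CCF}) is stated for nonnegative $g$, so split into positive and negative parts. Your treatment of the second claim (the reflection symmetry of $F[v]$ giving two equal contributions and the factor $1/2$, via (\ref{eq: 2.14 CCF})) is fine once the first identity is in place.
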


\subsection{Properties of the surface tension $\phi_K$}
Let us start recalling some basic facts about the surface tension $\phi_K$. First of all, let us sum up some known properties of the gauge function in the following result, that can be easily deduced from \cite[Proposition 20.10]{MaggiBOOK}.
\begin{proposition}\label{prop: Maggi 20.10}
Consider $K\subset \R^n$ as in (\ref{HP per K}). Consider $\phi_K\,,\phi_K^*:\R^n\rightarrow [0,\infty)$ the corresponding surface tension and gauge function defined in (\ref{def surface tension}), (\ref{def: gauge function}) respectively. Then the following properties hold true.
\begin{itemize}
\item[i)] The function $\phi_K^*$ is one-homogeneous, convex and coercive on $\R^n$ and there exist positive constants $c$ and $C$ such that 
\begin{align*}
c|x|\leq \phi_K(x)\leq C|x|,\quad &\forall x\in\R^n,\\
\frac{|x|}{C}\leq \phi_K^*(x)\leq \frac{|x|}{c},\quad &\forall x\in\R^n.
\end{align*}
\item[ii)]The so called \emph{Fenchel inequality} holds true i.e.
\begin{align}\label{eq: Fenchel inequality}
x\cdot y \leq \phi_K^*(x)\phi_K(y),\quad \forall x,y\in\R^n.
\end{align}
\item[iii)] The gauge function $\phi_K^*$ provides a new characterization for the Wulff shape $K$ i.e.
\begin{align*}
K=\left\{x\in \R^n:\, \phi_K^*(x)<1 \right\},
\end{align*}
from which we can immediately derive that 
\begin{align*}
\phi_K(x)&= \sup\left\{x\cdot y:\, \phi^*_K(x)< 1  \right\},\\
\phi_K(x)&=(\phi_K^*)^*(x).
\end{align*} 
\item[iv)] If $x\in \partial^*K$ and $y\in \mathbb{S}^{n-1}$, then equality holds in (\ref{eq: Fenchel inequality}) if and only if $y=\nu^{K}(x)$; in particular 
\begin{align}\label{eq: P_K(K)}
P_K(K)=n|K|.
\end{align}
\end{itemize}
\end{proposition}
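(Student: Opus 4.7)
The plan is to establish each of the four items essentially as a consequence of the definitions plus standard convex-analysis facts, closely following the outline of \cite[Proposition 20.10]{MaggiBOOK}; since the paper cites that reference as the source, my goal is only to organise the reasoning in a clean way.

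For item $i)$, I would first handle $\phi_K$: since $K$ is open, bounded and contains the origin, there exist $r,R>0$ with $B(r)\subset K\subset B(R)$, and taking the sup of $x\cdot y$ over $y\in K$ immediately yields $r|x|\le \phi_K(x)\le R|x|$, which also gives coercivity of $\phi_K$ and the first chain of inequalities with $c=r$, $C=R$. One-homogeneity of $\phi_K^*$ is direct from the definition, and convexity follows because $\phi_K^*$ is the supremum of the linear maps $y\mapsto x\cdot y$ over a set independent of $x$. The two-sided bound on $\phi_K^*$ is then obtained by plugging test vectors $y=\pm c\,x/|x|$ (properly rescaled so that $\phi_K(y)<1$) into the defining supremum and using the bound already shown for $\phi_K$.

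For item $ii)$, I would argue by homogeneity: the inequality is trivial when $y=0$, and otherwise, by continuity of $\phi_K$, the definition of $\phi_K^*$ equals $\sup\{x\cdot z:\phi_K(z)\le 1\}$. Plugging $z=y/\phi_K(y)$ gives $x\cdot y/\phi_K(y)\le \phi_K^*(x)$, which is exactly (\ref{eq: Fenchel inequality}). For item $iii)$, one inclusion is immediate: if $x\in K$ then $x\cdot y\le \phi_K(y)$ by definition of $\phi_K$, so for $y$ with $\phi_K(y)<1$ we get $x\cdot y<1$, and by taking suprema and using that $K$ is open (so that one can obtain a strict upper bound by a small enlargement) one gets $\phi_K^*(x)<1$. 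Conversely, if $\phi_K^*(x)<1$, the Hahn--Banach separation theorem applied to the convex open set $K$ shows that $x$ cannot be separated from $K$ by a hyperplane, so $x\in K$. The two consequential identities for $\phi_K$ then come from the bipolar theorem: $\phi_K(x)=\sup\{x\cdot y:y\in K\}=\sup\{x\cdot y:\phi_K^*(y)<1\}=(\phi_K^*)^*(x)$, with the same continuity argument used before.

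Item $iv)$ is the most substantive part. Since $\{\phi_K^*=1\}=\partial K$ by item $iii)$ and continuity, any $x\in\partial^*K$ satisfies $\phi_K^*(x)=1$, so Fenchel's inequality reads $x\cdot y\le \phi_K(y)$ for $y\in\s$. The support function $\phi_K$ of the convex body $K$ is attained in direction $\nu^K(x)$ precisely at the boundary points where $\nu^K(x)$ is an outer normal, so $x\cdot\nu^K(x)=\phi_K(\nu^K(x))$ gives equality for $y=\nu^K(x)$; conversely, equality in Fenchel for some $y\in\s$ forces $y$ to be a unit outward normal to $K$ at $x$, and since $x\in\partial^*K$ this outward normal is uniquely $\nu^K(x)$. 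The identity $P_K(K)=n|K|$ follows by applying the divergence theorem to the vector field $x\mapsto x$ on $K$:
\begin{equation*}
n|K|=\int_K \diver(x)\,dx=\int_{\partial^*K} x\cdot\nu^K(x)\,d\h^{n-1}(x)=\int_{\partial^*K}\phi_K(\nu^K(x))\,d\h^{n-1}(x)=P_K(K).
\end{equation*}
The only delicate point, in my view, is the strict-versus-non-strict inequality in the characterisation $K=\{\phi_K^*<1\}$ (i.e. making sure that openness of $K$ is correctly transferred to the sub-level set), but this is routine via Hahn--Banach separation, and the rest is bookkeeping.
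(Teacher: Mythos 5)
Your argument is correct and follows the standard route: the paper itself gives no proof of this proposition (it is stated as "easily deduced from" \cite[Proposition 20.10]{MaggiBOOK}), and your sketch reproduces exactly that classical argument — two-sided bounds from $B(r)\subset K\subset B(R)$, Fenchel via the rescaled test vector $y/\phi_K(y)$, the identification $K=\{\phi_K^*<1\}$ by openness plus Hahn--Banach separation, and the equality case together with the divergence theorem for $P_K(K)=n|K|$. The only points stated somewhat loosely (the dilation trick giving strictness of $\phi_K^*(x)<1$ for $x\in K$, and the fact that at $x\in\partial^*K$ the measure-theoretic normal is the unique supporting-hyperplane normal) are standard and easily filled in, so no genuine gap.
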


\begin{remark}
By (i) of Proposition \ref{prop: Maggi 20.10} we have that $E$ is a set of locally finite perimeter if and only if $E$ is a set of locally finite anisotropic perimeter i.e. $P_K(E;C)<\infty$ for every $C\subset \R^n$ compact set.
\end{remark}

\begin{figure}[!htb]
\centering
\def\svgwidth{13cm}
\input{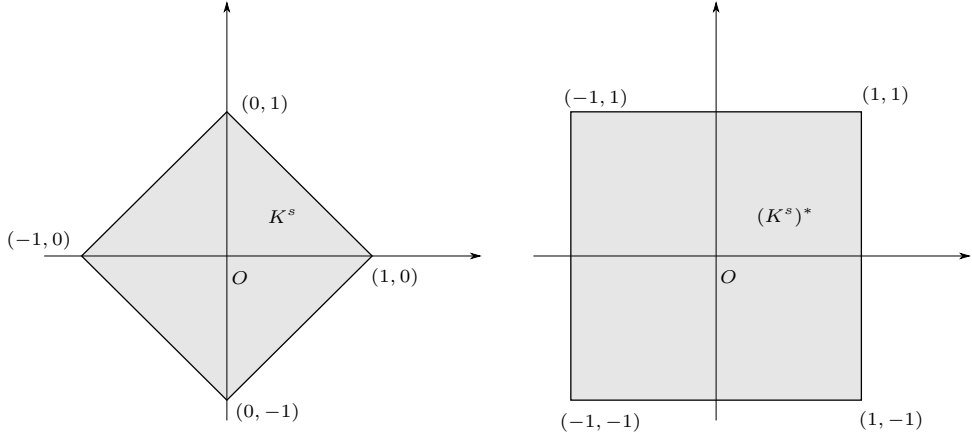}
\caption{A two dimensional example of $K^s$ and its dual $(K^s)^*$. }
\label{fig:K e K^*}
\end{figure}   


\begin{remark}\label{rem: abouth gauge function}
Thanks to $iii)$ of the above
proposition we have
$$K^*=\{x\in \R^n:\, \phi_K(x)<1  \},$$
from which together with (\ref{def: gauge function}) gives 
\begin{align*}
\phi_K^*(x)=\sup\{x\cdot y:\, y\in K^*  \}\quad \forall\, x\in \R^n.
\end{align*}
For a pictorial idea of $K$ and $K^*$ see for instance Figure \ref{fig:K e K^*}. Furthermore, observe that
\begin{align}\label{eq: Phi=1, Phi* =1}
\phi_K(x)=1 \quad \forall\,x\in \partial K^*,\\
\phi^*_K(x)=1 \quad \forall\,x\in \partial K.
\end{align}
\end{remark}
\begin{remark}
Let us consider $K\subset \R^n$ as in  (\ref{HP per K}). According to Proposition \ref{prop: Maggi 20.10}, iii) another way to define the Wulff shape $K$ is
\begin{align*}
K:= \p\left( \Sigma_{\phi^{*}_{K}} \cap \{x_{n+1}=1\} \right),
\end{align*}
where $\Sigma_{\phi^{*}_{K}}$ is the epigraph of $\phi^{*}_{K}$ in $\R^{n+1}$ and $\textbf{p}:\R^{n+1}\rightarrow \R^{n}$ corresponds to the horizontal  projection. By the one-homogeneity of $\phi_{K}$ we get that
\begin{align}\label{linear along radial dir.}
\phi_{K}(tx)=t|x|\phi_{K}\left(\frac{tx}{t|x|}  \right)=t\phi_{K}(x)\quad \forall x\in \R^n\setminus \{0 \},\, \forall t>0.
\end{align}
By (\ref{linear along radial dir.}), we get for every constant $\lambda>0$ that
\begin{align*}
\lambda K:= \p\left( \Sigma_{\phi^{*}_{K}} \cap \{x_{n+1}=\lambda\} \right).
\end{align*}
Another thing we would like to 
observe is that given $x,y\in \R^n$ with $x\in \lambda K$ and $y\in (\lambda K)^{c}$, (for some $\lambda >0$) then $\phi^{*}_{K}(x) < \phi^{*}_{K}(y)$. Naturally, these considerations hold true for $K^*$ and $\phi_{K}$ too.
\end{remark}
\begin{definition}[Sub-differential]\label{def: sub-differential}
Let $\varphi:\R^n\rightarrow [0,\infty]$ be a convex function. Let us fix $x_0 \in \R^n$ and consider all vectors $y_0 \in \R^n$ such that
\begin{align}
\varphi(z)\geq  \phi(x_0) +y_0\cdot (z-x_0)\quad \forall z \in \R^n.
\end{align}
The set of all vectors $y_0$ satisfying the above property is called \emph{sub-differential} of $\varphi$ at $x_0$ and we indicate it by $\partial \varphi(x_0)$.
\end{definition}
\noindent 
Keeping in mind Definition \ref{def:weak sub-differential} we have the following remarks.
\begin{remark}\label{rem: convexity of C_K^*}
For every $x_0\in\R^n$, the sub-differential $\partial\phi(x_0)$ is a closed and convex set of $\R^n$ (see \cite{Rock} chapter 5). From this, it can be proved that, given $x\in \partial K$, also $C^*_K(x)$ is a convex set of $\R^n$, where $C^*_K(x)$ is defined as in (\ref{def:weak sub-differential}).
\end{remark}
\begin{remark}\label{rem: differentiability and sub differential}
Let $\phi:\R^n\rightarrow [0,\infty]$ be a convex function. It is a well known result about convex functions that, $\phi$ is differentiable in $x_0\in\R^n$ if and only if   $\partial \phi(x_0)$ consists of only one element. In that situation, we call $\nabla \phi(x_0)$ is the only element in the sub-differential $\partial\phi(x_0)$. 
\end{remark}

\begin{definition}\label{def: anisotropic total variation}
Fix an integer $m\geq 1$ and let $K\subset \R^n$ be as in (\ref{HP per K}). Given a $\R^n$-valued Radon measure $\mu$ on $\R^m$ and a generic Borel set $F \subset \R^m$, we define the $\phi_K$-anisotropic total variation of $\mu$ on $F$ as 
\begin{align*}
|\mu|_{K}(F)= \int_F \phi_K\left( \frac{d\mu}{d|\mu|}(x) \right) d|\mu|(x).
\end{align*}
\end{definition}

\begin{remark}\label{rem: |mu|_K << |mu|}
By condition $i)$ in Proposition \ref{prop: Maggi 20.10} we have that 
\begin{align*}
|\mu|_K(F)=\int_F \phi_K\left( \frac{d\mu}{d|\mu|}(x) \right) d|\mu|(x)\leq C\int_F d|\mu|(x)= C|\mu|(F).
\end{align*}
Analogously,
\begin{align*}
|\mu|(F)=\int_F d|\mu|(x)\leq \frac{1}{c}\int_F \phi_K\left( \frac{d\mu}{d|\mu|}(x) \right) d|\mu|(x)=\frac{1}{c}|\mu|_K(F).
\end{align*}
Thus, $|\mu|_{K}<\!\!<|\mu|$ and $|\mu|<\!\!<|\mu|_K$.
\end{remark}
\begin{remark}\label{rem: GBV anisotropic cantor part}
Given $f\in GBV(\R^{n-1})$, motivated by (\ref{def: Cantor part GBV}), for every Borel set $G\subset \R^{n-1}$ we define 
\begin{align}\label{def: Cantor part GBV anisotropica}
|(D^c f,0)|_K(G)=\lim_{M\rightarrow +\infty}|(D^c(\tau_M(f),0)|_K(G)=\sup_{M>0}|(D^c(\tau_M)(f),0)|(G).
\end{align}
\end{remark}

\noindent
The following Lemma is the anisotropic version of \cite[Definition 1.4 (b)]{AFP}.
\begin{lemma}\label{lem: AFP page 3}
Fix an integer $m\geq 1$ and let $K\subset \R^n$ be as in (\ref{HP per K}). Given a $\R^n$-valued Radon measure $\mu$ on $\R^m$ we have 
\begin{align}\label{eq: step 1 anisotropic Page 3 AFP}
|\mu|_{K}(G)= \sup\left\{\sum_{h\in\mathbb{N}}\phi_{K}(\mu(G_h)):\,(G_h)_{h\in\mathbb{N}}\textit{ pairwise disjoint,}\,\bigcup_{h}G_h =G  \right\},\quad \forall\,G\subset\R^m \textit{ Borel,}
\end{align}
where $G_h$ are bounded Borel sets.
\end{lemma}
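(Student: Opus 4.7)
The plan is to prove the two inequalities separately, denoting by $S(G)$ the supremum on the right-hand side of \eqref{eq: step 1 anisotropic Page 3 AFP} and writing $\sigma := d\mu/d|\mu|$, so that $|\sigma|=1$ for $|\mu|$-a.e.\ point by the standard polar decomposition of vector-valued Radon measures.

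For the upper bound $S(G)\le |\mu|_K(G)$, I would start from the dual characterization of $\phi_K$ coming from Proposition~\ref{prop: Maggi 20.10}(iii): for every $z\in\R^n$,
\[
\phi_K(z)=\sup\{y\cdot z:\, \phi^*_K(y)\le 1\}.
\]
Given a partition $(G_h)_{h\in\mathbb{N}}$ of $G$ by bounded Borel sets and a vector $y$ with $\phi^*_K(y)\le 1$, I would write
\[
y\cdot \mu(G_h)=\int_{G_h} y\cdot \sigma(x)\,d|\mu|(x)\le \int_{G_h}\phi_K(\sigma(x))\,d|\mu|(x),
\]
where the inequality is Fenchel's inequality \eqref{eq: Fenchel inequality}. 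Taking the supremum over $y$ yields $\phi_K(\mu(G_h))\le \int_{G_h}\phi_K(\sigma)\,d|\mu|$, and summing over $h$ gives $S(G)\le |\mu|_K(G)$.

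For the lower bound $S(G)\ge |\mu|_K(G)$, I would argue by approximation. Fix $\varepsilon>0$. Since $|\mu|$ is a Radon measure on $\R^m$ and $\sigma\in L^\infty(\R^m,|\mu|;\R^n)$, simple Borel functions supported on bounded Borel sets are dense in $L^1(G,|\mu|;\R^n)$; hence one can find pairwise disjoint bounded Borel sets $A_1,\dots,A_N\subset G$ and vectors $a_1,\dots,a_N\in\R^n$ such that, setting $s=\sum_{i=1}^{N}a_i\,1_{A_i}$,
\[
|\mu|\Bigl(G\setminus\bigcup_{i=1}^{N}A_i\Bigr)<\varepsilon,\qquad \int_{G}|\sigma-s|\,d|\mu|<\varepsilon.
\]
Complete $\{A_i\}_{i=1}^N$ to a countable partition $(G_h)_{h\in\mathbb{N}}$ of $G$ by bounded Borel sets (the residual pieces being contained in $G\setminus\bigcup A_i$). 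Using that $\phi_K$ is Lipschitz with some constant $C$ (a consequence of Proposition~\ref{prop: Maggi 20.10}(i) together with subadditivity \eqref{eq: phi subadditiva}), I would estimate, for each $i$,
\[
\bigl|\phi_K(\mu(A_i))-\phi_K(a_i\,|\mu|(A_i))\bigr|\le C\Bigl|\mu(A_i)-a_i\,|\mu|(A_i)\Bigr|\le C\int_{A_i}|\sigma-a_i|\,d|\mu|.
\]
Invoking $1$-homogeneity of $\phi_K$, $\phi_K(a_i\,|\mu|(A_i))=\phi_K(a_i)\,|\mu|(A_i)=\int_{A_i}\phi_K(s)\,d|\mu|$. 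Summing over $i$ and combining with another Lipschitz estimate $|\phi_K(s)-\phi_K(\sigma)|\le C|s-\sigma|$ and with the smallness of $|\mu|(G\setminus\bigcup A_i)$, one obtains
\[
\sum_{h\in\mathbb{N}}\phi_K(\mu(G_h))\ge \sum_{i=1}^{N}\phi_K(\mu(A_i))\ge |\mu|_K(G)-C'\varepsilon
\]
for a constant $C'$ independent of $\varepsilon$. Letting $\varepsilon\to 0$ closes the argument.

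The main technical obstacle I anticipate is the simultaneous control, within a single partition, of the approximation of $\sigma$ in $L^1(|\mu|)$ \emph{and} the requirement that the partitioning sets $G_h$ be bounded when $G$ itself is unbounded and $|\mu|(G)=\infty$. This is handled by first intersecting with balls $B(R)$, exploiting that $|\mu|_K(G\cap B(R))\nearrow |\mu|_K(G)$ by monotone convergence (since $\phi_K(\sigma)$ is non-negative), and only then performing the density-of-simple-functions step on the finite measure $|\mu|\mres (G\cap B(R))$; everything else is an anisotropic transcription of the classical argument in \cite[Proposition~1.47]{AFP}, with $\phi_K$ in place of the Euclidean norm and $1$-homogeneity substituting for the absolute homogeneity $|\lambda z|=|\lambda|\,|z|$.
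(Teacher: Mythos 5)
Your proposal is correct and follows essentially the same route as the paper: one inequality by comparing $\phi_K(\mu(G_h))$ with $\int_{G_h}\phi_K(d\mu/d|\mu|)\,d|\mu|$ (the paper uses Jensen's inequality plus one-homogeneity, you use the dual/Fenchel formulation — the same content), and the reverse inequality by approximating the polar density by simple functions on a partition into bounded Borel sets, reducing first to the bounded case. The only cosmetic differences are that the paper approximates in $L^\infty(|\mu|)$ and controls the error via subadditivity and the bound $\phi_K\le C|\cdot|$, whereas you approximate in $L^1(|\mu|)$ and use the Lipschitz estimate for $\phi_K$, which is derived from exactly those two facts.
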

\begin{proof}
Thanks to Jensen Inequality and 1-homogeneity of $\phi_{K}$ we get
\begin{align*}
\phi_{K}\left(\mu(G_h)\right)&=\phi_{K}\left(\int_{G_h}\frac{d\mu}{d|\mu|}(x)d|\mu|(x) \right)
\leq |\mu|_{K}(G_h),
\end{align*}
so using that $G_h\cap G_k=\emptyset$ $\forall h\neq k$
\begin{align*}
|\mu|_{K}(G)=|\mu|_{K}\left(\cup_{h}G_h\right)= \sum_{h\in\mathbb{N}}|\mu|_{K}(G_h)\geq \sum_{h\in\mathbb{N}}\phi_{K}(\mu(G_h)).
\end{align*}
Taking the $\sup$ on the right hand side we proved that $|\mu|_{K}(G)$ is greater or equal than the right hand side of relation (\ref{eq: step 1 anisotropic Page 3 AFP}). We are then left to prove that 
\begin{align*}
|\mu|_{K}(G)\leq \sup\left\{\sum_{h\in\mathbb{N}}\phi_{K}(\mu(G_h)):\,(G_h)_{h\in\mathbb{N}}\textit{ pairwise disjoint,}\,\bigcup_{h}G_h =G  \right\},
\end{align*}
Let $G\subset \R^n$ be a bounded Borel set. Let us consider the function 
$$  
f(x)= \frac{d\mu}{d|\mu|}(x)\in L^{\infty}(\R^{m},|\mu|;\R^n).
$$
For each $i\in\{1,\dots,n\}$ we also have
\begin{align*}
f_i(x)=\frac{d\mu_i}{d|\mu|}(x) \in L^1_{loc}(\R^{m},|\mu|),
\end{align*}
where $\mu=(\mu_1,\dots,\mu_n)$. Consider $\forall\,i \in $ a sequence of step functions $\{f_{i,h}  \}_{h\in\mathbb{N}}$ such that 
$$
\| f_{i,h}-f_i \|_{L^{\infty}(\R^{m},|\mu|)}\rightarrow 0 \quad \textit{ as }h\rightarrow \infty.
$$
As a consequence, if we set $f_h=(f_{1,h},\dots,f_{n,h})$ we have that $
\| f_{h}-f \|_{L^{\infty}(\R^{m},|\mu|;\R^n)}\rightarrow 0 $ as $h\rightarrow \infty$. Fix $\epsilon>0$, then there exists $h(\epsilon)>0$ such that  
$$ 
\| f_{h}-f \|_{L^{\infty}(\R^{m},|\mu|;\R^n)}< \epsilon\qquad\forall\, h>h(\epsilon). 
$$
Since for each $i\in\{1,\dots,n\}$ the function $f_{h,i}$ is simple, there exists $n(h)\in\mathbb{N}$ and a finite pairwise disjoint partition $\{ G^h_k\}_{k=1,\dots,n(h)}$ of $G$ such that $f_{h}$ is constant $|\mu|$-a.e. in $G^h_k$, $\forall\,k\in\{1,\dots,n(h) \}$, namely $\exists\,a_{h,k}\in\R^n$ s.t. $f_h(x)=a_{h,k}$ for $|\mu|$-a.e. $x\in G_k^h$, $\forall\,k\in\{1,\dots,n(h) \}$. Then thanks to the one-homogeneity and subadditivity of $\phi_K$ we get
\begin{align*}
\int_{G}\phi_K\left( f_h(x)  \right)\,d|\mu|(x)&=\sum_{k=1}^{n(h)}\int_{G_k^h}\phi_K\left( f_h(x)  \right)\,d|\mu|(x)=\sum_{k=1}^{n(h)}\phi_K\left( a_{h,k}  \right)|\mu|(G_k^h)\\&=\sum_{k=1}^{n(h)}\phi_K\left(a_{h,k} |\mu|(G_k^h) \right)=\sum_{k=1}^{n(h)}\phi_K\left(  \int_{G_k^h}f_h(x)\,d|\mu|(x) \right)\\
&=\sum_{k=1}^{n(h)}\phi_K\left( \int_{G_k^h}f(x)\,d|\mu|(x) +\int_{G_k^h}(f_h(x)-f(x))\,d|\mu|(x) \right)\\
&\leq \sum_{k=1}^{n(h)}\phi_K\left( \int_{G_k^h}f(x)\,d|\mu|(x)\right) + \sum_{k=1}^{n(h)}\phi_K\left( \int_{G_k^h}(f_h(x)-f(x))\,d|\mu|(x) \right)\\
&=\sum_{k=1}^{n(h)}\phi_K\left( \mu(G_k^h)\right) + \sum_{k=1}^{n(h)}\left| \int_{G_k^h}(f_h-f)\,d|\mu| \right|\phi_K\left(\frac{\int_{G_k^h}(f_h-f)\,d|\mu| }{\left| \int_{G_k^h}(f_h-f)\,d|\mu| \right|} \right)\\
&\leq \sum_{k=1}^{n(h)}\phi_K\left( \mu(G_k^h)\right)+ C\sum_{k=1}^{n(h)} \int_{G_k^h}\left|f_h(x)-f(x)\right|\,d|\mu|(x) \\
&\leq \sum_{k=1}^{n(h)}\phi_K\left( \mu(G_k^h)\right)+ \epsilon\, C\sum_{k=1}^{n(h)} |\mu|(G_k^h) \\
&= \sum_{k=1}^{n(h)}\phi_K\left( \mu(G_k^h)\right)+ \epsilon\, C |\mu|(G) \qquad \forall\,h>h(\epsilon),
\end{align*}
where $C:=\sup_{\omega\in\mathbb{S}^{n-1}}\phi_K(\omega)$. So we proved that $\forall\, \epsilon>0$ $\exists\, h(\epsilon)>0$ s.t. $\forall\,h>h(\epsilon)$ there are $n(h)\in\mathbb{N}$ and $\{G_k^h\}_{k=1,\dots,n(h)}$ such that the following holds
\begin{align*}
\int_{G}\phi_K\left( f_h(x)  \right)\,d|\mu|(x)&\leq \sum_{k=1}^{n(h)}\phi_K\left( \mu(G_k^h)\right)+ \epsilon\, C |\mu|(G)\\ &\leq \sup\left\{\sum_{h\in\mathbb{N}}\phi_K(\mu(G_h)):\,(G_h)_{h\in\mathbb{N}}\textit{ pairwise disjoint,}\,\bigcup_{h}G_h =G  \right\}\\&+  \epsilon\, C |\mu|(G).
\end{align*}
Taking the limit as $h\rightarrow +\infty$ in the left hand side, by Lebesgue dominated theorem we get
\begin{align*}
|\mu|_{K}(G)&\leq \sup\left\{\sum_{h\in\mathbb{N}}\phi_K(\mu(G_h)):\,(G_h)_{h\in\mathbb{N}}\textit{ pairwise disjoint,}\,\bigcup_{h}G_h =G  \right\}\\&+  \epsilon\, C |\mu|(G). 
\end{align*}  
By the arbitrariness of $\epsilon > 0$ we conclude for $G$ bounded. Thanks to standard considerations we can extend the result also for $G$ unbounded.
\end{proof}

\begin{definition}[Hausdorff distance]
Let $A,B \subset \R^n$. We define the \emph{Hausdorff distance} between A and B as
\begin{align*}
\dist_H(A,B):= \max\left\{\sup_{x\in A}d(x,B);\sup_{x\in B}d(x,A)  \right\},
\end{align*}
where $d(\cdot,A)$ denotes the Euclidean distance from $A$.
\end{definition}

\begin{figure}
\centering
\def\svgwidth{7cm}
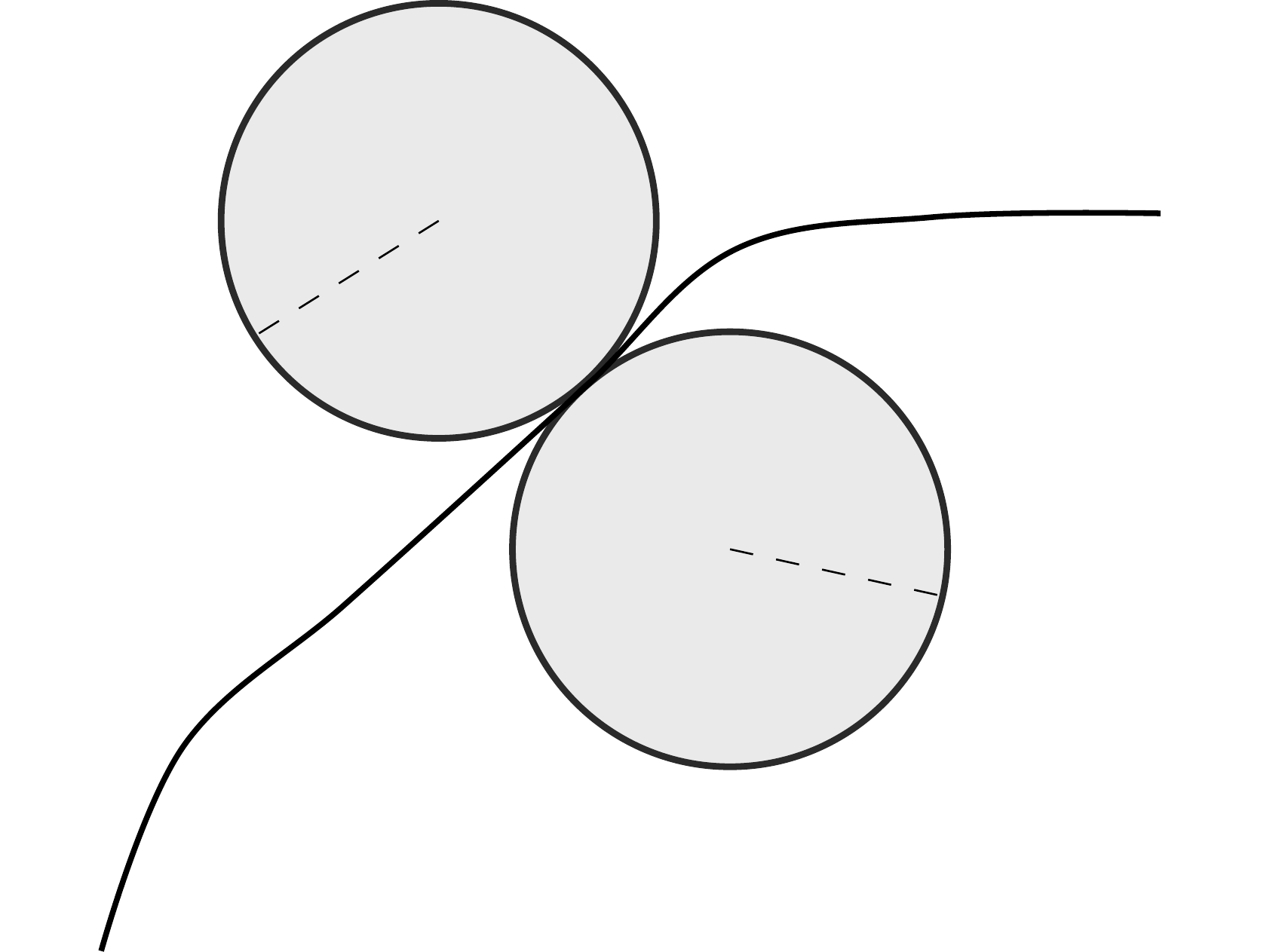
\caption{A pictorial idea of the $\epsilon$- ball property.}
\label{fig:epsilonproperty}
\end{figure}

\begin{definition}[$\epsilon$-ball property]
Let $\epsilon >0$. We say that an open bounded set $\Omega \subset \R^n$ satisfies the $\epsilon$-ball property if for any point $x\in \partial \Omega$ $\exists$ a unit vector $d_x\in \mathbb{S}^{n-1}$ s.t. 
\begin{align*}
& B(x-\epsilon d_x,\epsilon) \subset \Omega,\\
& B(x+\epsilon d_x,\epsilon) \subset \R^n\setminus \overline{\Omega}.
\end{align*}
\end{definition}
\noindent
Roughly speaking, a set satisfies the $\epsilon$-ball property if it is possible to roll two tangent balls, one in the interior and the other one in the exterior part of $\Omega$ (see for instance figure \ref{fig:epsilonproperty}).

\begin{definition}
Let $\mathcal{S}\subset \R^n$ be non-empty. We say that $\mathcal{S}$ is a $C^{1,1}$ hypersurface if for every point $x\in \mathcal{S}$, there exists an open neighbourhood $D$ of $x$, an open set $E$ of $\R^{n-1}$, and a continuously differentiable bijection $\varphi:E\to D\cap \mathcal{S}$ with $\varphi$ and its gradient $\nabla \varphi$ both Lipschitz continuous, and $\mathcal{J}\varphi>0$ on $E$, where $\mathcal{J}\varphi$ stands for the Jacobian of $\varphi$.
\end{definition}
\medskip
Given $K\subset \R^n$ as in (\ref{HP per K}), we will now prove few more properties about the surface tension $\phi_K$. In particular,  the main result we present is Proposition \ref{prop:linearityphi} that gives a characterization of the cases of additivity for the function $\phi_K$. 
\begin{lemma}\label{prop:L sse B}
Let $K\subset \R^n$ be as in (\ref{HP per K}), and let $y_1,y_2 \in \R^n$. Then, the following are equivalent:
\begin{itemize}
\item[(i)]$\phi_K(y_1)+\phi_K(y_2)=\phi_K(y_1+y_2)$; 
\vspace{.1cm}

\item[(ii)] $\exists \, \bar{z}\in \partial K $ s.t. $\phi_K(y_1)=y_1\cdot \bar{z}$ and $\phi_K(y_2)=y_2\cdot \bar{z}$.
\end{itemize}
\end{lemma}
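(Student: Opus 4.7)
The plan is to prove the two implications separately, noting that the hard work has essentially already been done by the definitions and by subadditivity (\ref{eq: phi subadditiva}).

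For (ii) $\Rightarrow$ (i), I would argue directly: if such a $\bar{z} \in \partial K$ exists, then summing the two identities gives
$$
\phi_K(y_1) + \phi_K(y_2) = y_1 \cdot \bar{z} + y_2 \cdot \bar{z} = (y_1 + y_2) \cdot \bar{z} \leq \phi_K(y_1 + y_2),
$$
where the last inequality is from the definition (\ref{def surface tension}), using that $\bar{z} \in \partial K \subset \overline{K}$ (and hence $y \cdot \bar z \leq \phi_K(y)$ for every $y$, by continuity of the linear functional). Combined with the reverse inequality (\ref{eq: phi subadditiva}), this yields equality.

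For (i) $\Rightarrow$ (ii), I first dispose of the trivial cases. If $y_1 = 0$, then $\phi_K(y_1) = 0$, and it suffices to pick any $\bar{z} \in \partial K$ attaining $\phi_K(y_2) = y_2 \cdot \bar{z}$; the case $y_2 = 0$ is symmetric, and if both vanish we may choose any $\bar z \in \partial K$. In the non-degenerate case, I note that $y_1 + y_2 \neq 0$: otherwise (i) together with $\phi_K \geq 0$ would force $\phi_K(y_1) = \phi_K(y_2) = 0$, contradicting the coercivity $\phi_K(y) \geq c|y|$ from Proposition \ref{prop: Maggi 20.10}(i). Then the continuous linear functional $z \mapsto (y_1 + y_2) \cdot z$ on the compact set $\overline{K}$ attains its maximum, and because $K$ has non-empty interior and $y_1 + y_2 \neq 0$ this functional is non-constant on $\overline{K}$, so the maximum is attained at some $\bar z \in \partial K$. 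For this $\bar z$,
$$
\phi_K(y_1) + \phi_K(y_2) = \phi_K(y_1 + y_2) = (y_1 + y_2) \cdot \bar z = y_1 \cdot \bar z + y_2 \cdot \bar z.
$$
Since $\bar z \in \overline{K}$ gives $y_i \cdot \bar z \leq \phi_K(y_i)$ for $i = 1, 2$, and these two inequalities sum to an equality, each of them must be an equality, which is exactly (ii).

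There is no real obstacle here; the argument is a standard consequence of the supporting-hyperplane characterisation of the support function. The only points worth stating carefully are the existence of the maximiser on $\partial K$ (which uses that $K$ is bounded, open, convex with non-empty interior, so that $\overline{K}$ is compact and the linear functional is non-constant whenever its defining vector is non-zero) and the handling of the degenerate cases via coercivity.
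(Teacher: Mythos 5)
Your proof is correct and follows essentially the same route as the paper: both directions rest on the attainment of the supremum defining $\phi_K$ at a boundary point of $K$ and on the elementary fact that the inequalities $y_i\cdot \bar z\leq \phi_K(y_i)$ sum to an equality only if each is an equality. The only difference is cosmetic — you argue (i) $\Rightarrow$ (ii) directly via the maximiser of $(y_1+y_2)\cdot z$ on $\overline{K}$, while the paper runs the same computation as a proof by contradiction with separate maximisers for $y_1$, $y_2$ and $y_1+y_2$; your treatment of the degenerate cases and of the attainment of the maximum is slightly more explicit but adds nothing essentially new.
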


\begin{proof}
Assume \textit{(ii)} is satisfied. Then, 
\begin{align*}
\phi_K(y_1+y_2) &= \max_{z\in \partial K} [(y_1+y_2) \cdot z] 
\geq \bar{z} \cdot (y_1+y_2)= \phi_K(y_1)+\phi_K(y_2), 
\end{align*}
which gives \textit{(i)}.
Let now \textit{(i)} be satisfied and suppose, by contradiction, that 
\begin{equation} \label{contradiction}
\nexists \, \overline{z} \quad \text{ such that } \quad \, \phi_K(y_1)=y_1\cdot \bar{z} \quad \text{ and } \quad \phi_K(y_2)=y_2\cdot \bar{z}.
\end{equation}
Let $z_1, z_2, z_3 \in \partial K$ be such that $\phi_K(y_1)=y_1 \cdot z_1$ and $\phi_K(y_2)=y_2\cdot z_2$, and
$$ 
\phi_K(y_1+y_2)= (y_1+y_2)\cdot z_3.
$$ 
Then, 
\begin{align*}
y_1\cdot z_3 \leq y_1\cdot z_1 \qquad \text{ and } \qquad y_2 \cdot z_3 \leq y_2 \cdot z_2.
\end{align*}
Note that, in particular, from \eqref{contradiction} we have 
that at least one of the above inequalities is strict. 
Thus, 
\begin{align*}
\phi_K(y_1+y_2)< \phi_K(y_1)+\phi_K(y_2), 
\end{align*}
which is a contradiction to $(i)$.
\end{proof}
\begin{lemma}\label{thm: page 16 mie note}
Let $K\subset \R^n$ be as in (\ref{HP per K}) and consider $\phi_K$ the associated surface tension. Let $y_0 \in \R^n$ and let $x_0 \in \partial K$. Then, 
$$
\phi_K(y_0) = y_0 \cdot x_0 \quad \Longleftrightarrow \quad \frac{y_0}{\phi_K(y_0)} \in \partial \phi^*_K(x_0),
$$
where, we recall, $\partial \phi^*_K(x_0)$ is the sub differential of $\phi^*_K$ at $x_0$.
\end{lemma}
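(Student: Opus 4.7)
The plan is to prove both directions directly from the definition of the sub-differential, using two inputs only: (a) the Fenchel inequality $y\cdot z\le \phi_K(y)\phi_K^*(z)$ from Proposition~\ref{prop: Maggi 20.10}(ii), and (b) the boundary identity $\phi_K^*(x_0)=1$, which holds for every $x_0\in\partial K$ by Remark~\ref{rem: abouth gauge function}. Throughout I assume $y_0\neq 0$ (and hence $\phi_K(y_0)>0$ by coercivity), so the quotient $y_0/\phi_K(y_0)$ is well defined; the case $y_0=0$ is vacuous.

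For the forward implication, suppose $\phi_K(y_0)=y_0\cdot x_0$, and set $v:=y_0/\phi_K(y_0)$. Writing out the definition of $\partial\phi^*_K(x_0)$, I need to show
\begin{equation*}
\phi_K^*(w)\ \ge\ \phi_K^*(x_0) + v\cdot(w-x_0) \qquad \forall\,w\in\R^n.
\end{equation*}
Since $\phi_K^*(x_0)=1$ and $v\cdot x_0 = (y_0\cdot x_0)/\phi_K(y_0)=1$ by hypothesis, the right-hand side collapses to $v\cdot w$. The required inequality $\phi_K^*(w)\ge v\cdot w = (y_0\cdot w)/\phi_K(y_0)$ is then exactly Fenchel's inequality divided by the positive quantity $\phi_K(y_0)$.

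For the reverse implication, assume $v:=y_0/\phi_K(y_0)\in\partial\phi_K^*(x_0)$, so that
\begin{equation*}
\phi_K^*(w)\ \ge\ 1 + v\cdot w - v\cdot x_0 \qquad \forall\,w\in\R^n,
\end{equation*}
again using $\phi_K^*(x_0)=1$. I plug in two carefully chosen test points. Taking $w=0$ gives $0\ge 1 - v\cdot x_0$, i.e.\ $v\cdot x_0\ge 1$. Taking $w=2x_0$ and invoking the $1$-homogeneity of $\phi_K^*$ gives $2\ge 1 + v\cdot x_0$, i.e.\ $v\cdot x_0\le 1$. Combining the two inequalities yields $v\cdot x_0=1$, which upon multiplying by $\phi_K(y_0)$ is precisely $y_0\cdot x_0=\phi_K(y_0)$.

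There is no serious obstacle here: both implications are one-line consequences of Fenchel plus the 1-homogeneity of $\phi_K^*$, once one notices that $\phi_K^*(x_0)=1$ allows the sub-differential inequality to be tested at $w=0$ and $w=2x_0$ to pin down $v\cdot x_0$ exactly. The only (trivial) care needed is to separate the degenerate case $y_0=0$ before forming the quotient $y_0/\phi_K(y_0)$.
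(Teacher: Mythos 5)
Your proof is correct. The forward implication is exactly the paper's argument: Fenchel's inequality (\ref{eq: Fenchel inequality}) divided by $\phi_K(y_0)>0$, combined with $\phi_K^*(x_0)=1$ and $v\cdot x_0=1$. The reverse implication, however, is handled differently. The paper tests the sub-differential inequality at \emph{all} points $z\in\partial K$, where $\phi^*_K\equiv 1$, to get $y_0\cdot x_0\ge y_0\cdot z$ for every $z\in\partial K$, and then concludes via the representation of $\phi_K(y_0)$ as the supremum of $y_0\cdot z$ over (the closure of) $K$, attained on the boundary. You instead test only at $w=0$ and $w=2x_0$, using $\phi^*_K(0)=0$ and the $1$-homogeneity of $\phi^*_K$ to squeeze $v\cdot x_0=1$, which is precisely the desired equality. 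Both routes are valid one-liners; yours needs slightly less input (only the value of $\phi^*_K$ at $x_0$, not on all of $\partial K$, and no attainment-of-supremum remark), and it is in fact the same homogeneity trick the paper itself deploys in Step 1 of Lemma \ref{lem: sub-diff e bordo wulff}, while the paper's version of this direction is more geometric, comparing $x_0$ against every boundary point. Your explicit exclusion of the degenerate case $y_0=0$ (where $y_0/\phi_K(y_0)$ is undefined) is a small point of care the paper leaves implicit.
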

\begin{proof}
We divide the proof into two steps, one for each implications.

\vspace{.2cm}

\noindent
\textbf{Step 1} Suppose 
$$
\frac{y_0}{\phi_K(y_0)} \in \partial \phi^*_K(x_0).
$$
Then, since by (\ref{eq: Phi=1, Phi* =1}) we have $\phi^*_K(x_0) = 1$, we deduce that for every $z\in \R^n$
\begin{align*}
 \phi^*_K(z) &\geq \phi^*_K(x_0) + \frac{y_0}{\phi_K(y_0)} \cdot(z-x_0) 
 = 1 + \frac{y_0}{\phi_K(y_0)} \cdot(z-x_0).
\end{align*}
In particular, if $z \in \partial K$ we have $\phi^*_K(z) = 1$, and therefore 
$$
1 \geq 1 + \frac{y_0}{\phi_K(y_0)} \cdot(z-x_0), \qquad \text{ for every } z\in \partial K,
$$
so that $y_0 \cdot x_0 \geq y_0 \cdot z$ for every $z\in \partial K$.
Thus, $\phi_K(y_0) = y_0 \cdot x_0$.

\vspace{.2cm}

\noindent
\textbf{Step 2} Assume that $\phi_K(y_0) = y_0 \cdot x_0$. Then, by the Fenchel inequality, 
for every $z \in \R^n$ we have 
\begin{align*}
\phi_K(y_0) \phi_K^* (z) \geq y_0 \cdot z 
\quad \Longleftrightarrow \quad 
\phi_K^* (z) \geq \frac{y_0 \cdot z}{y_0 \cdot x_0}
\quad \Longleftrightarrow \quad
\phi_K^* (z) \geq 1+ \frac{y_0 \cdot (z-x_0)}{y_0 \cdot x_0}.
\end{align*}
Recalling that $\phi^*_K(x_0) =1$, we conclude.
\end{proof}

\begin{remark}\label{rem: lem page 16 mie note}
Let us observe that, given $y_0\in\R^n$ and $x_0\in\partial K$ then
\begin{align*}
\phi_K(y_0) = y_0 \cdot x_0 \quad \Longleftrightarrow \quad y_0 \in C^*_K(x_0),
\end{align*}
where $C^*_K(x_0)$ has been defined in (\ref{def:weak sub-differential}). Indeed, by the Lemma above and Definition \ref{def:weak sub-differential}, we immediately derive that if $\phi_K(y_0)=y_0\cdot x_0$ then $y_0/\phi_K(y_0)\in\partial \phi^*_K(x_0)$ that implies $y_0\in C^*_K(x_0)$. Whereas, if $y_0\in C^*_K(x_0)$ then there exists $\lambda=\lambda(y_0)>0$ such that $\lambda y_0\in\partial \phi^*_K(x_0)$ i.e.
\begin{align*}
\phi^*_K(z)\geq 1 +\lambda y_0\cdot(z-x_0)\qquad \forall\, z\in \R^n.
\end{align*}
In particular, if we choose $z\in \partial K$ we get 
\begin{align*}
\lambda y_0\cdot x_0\geq \lambda y_0\cdot z\qquad \forall\, z\in \partial K,
\end{align*} 
that implies $\phi_K(y_0)=y_0\cdot x_0$.
\end{remark}
\noindent
As a direct consequence of Lemmas \ref{prop:L sse B} and \ref{thm: page 16 mie note} we get the following proposition.
\begin{proposition}\label{prop:linearityphi}
Let $K\subset \R^n$ be as in (\ref{HP per K}), and let $y_1,y_2 \in \R^n$. Then, the following are equivalent:
\begin{itemize}
\item[(i)]$\phi_K(y_1)+\phi_K(y_2)=\phi_K(y_1+y_2)$; 
\vspace{.1cm}

\item[(ii)] $\exists \, \bar{z}\in \partial K $ s.t. $\phi_K(y_1)=y_1\cdot \bar{z}$ and $\phi_K(y_2)=y_2\cdot \bar{z}$,
\vspace{.1cm}

\item[(iii)]$\exists \bar{z}\in \partial K$ s.t. $\frac{y_1}{\phi_K(y_1)},\frac{y_2}{\phi_K(y_2)} \in \partial \phi^*_K(\bar{z}) $.
\end{itemize}
\end{proposition}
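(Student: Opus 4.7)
The plan is to deduce the three-way equivalence by chaining the two lemmas that immediately precede it. The equivalence of (i) and (ii) is literally the content of Lemma \ref{prop:L sse B}, so no further work is needed there. For (ii) $\Longleftrightarrow$ (iii), I would apply Lemma \ref{thm: page 16 mie note} coordinate-wise: given any $\bar z\in\partial K$, the identities $\phi_K(y_1)=y_1\cdot\bar z$ and $\phi_K(y_2)=y_2\cdot\bar z$ are equivalent, respectively, to $y_1/\phi_K(y_1)\in\partial\phi_K^*(\bar z)$ and $y_2/\phi_K(y_2)\in\partial\phi_K^*(\bar z)$. Since the \emph{same} point $\bar z$ is used on both sides, the conjunction of the two identities is equivalent to the conjunction of the two subgradient inclusions, proving (ii) $\Longleftrightarrow$ (iii).

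The only routine caveat I would address is the possibility that $\phi_K(y_i)=0$. By the coercivity bound $\phi_K(y)\ge c|y|$ in Proposition \ref{prop: Maggi 20.10}(i), this happens precisely when $y_i=0$, in which case (i), (ii), (iii) are all trivially true (take any $\bar z\in\partial K$ and interpret the quotient in (iii) as $0$, which clearly belongs to $\partial\phi_K^*(\bar z)$ via the one-homogeneity of $\phi_K^*$). So one may assume $y_1,y_2\neq 0$ in the main argument.

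There is no genuine obstacle here: both implications are direct substitutions. The only thing one must be careful with is that in passing from (ii) to (iii) — and back — the same $\bar z$ has to serve both $y_1$ and $y_2$ simultaneously, which is exactly what Lemma \ref{thm: page 16 mie note} permits because it characterizes the equality $\phi_K(y)=y\cdot x_0$ in terms of a condition on $x_0$ alone. Hence the proof reduces to writing a short chain: (i) $\stackrel{\text{Lem.\ \ref{prop:L sse B}}}{\Longleftrightarrow}$ (ii) $\stackrel{\text{Lem.\ \ref{thm: page 16 mie note}}}{\Longleftrightarrow}$ (iii).
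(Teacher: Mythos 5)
Your proposal is correct and follows exactly the paper's route: the paper states Proposition \ref{prop:linearityphi} as a direct consequence of Lemma \ref{prop:L sse B} (for (i)$\Leftrightarrow$(ii)) and Lemma \ref{thm: page 16 mie note} (for (ii)$\Leftrightarrow$(iii)), with the same fixed $\bar z$ serving both $y_1$ and $y_2$. The only blemish is your degenerate-case remark: $0\notin\partial\phi^*_K(\bar z)$ (the subgradient inequality tested at $z=0$ would force $\phi^*_K(0)\geq 1$), but this concerns a case in which the quotient in (iii) is ill-defined anyway, and the paper likewise tacitly assumes $y_1,y_2\neq 0$.
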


\begin{remark}\label{rem: more appealing way (iii)}
By Definition \ref{def:weak sub-differential} condition $(iii)$ in the above Proposition is equivalent to say that 
\begin{align}\label{eq: (iii) con il cono}
\exists \bar{z}\in \partial K \quad \textit{ s.t.}\quad y_1,y_2\in C^*_K(\bar{z}).
\end{align}
As noticed in Remark \ref{rem: convexity of C_K^*}, $C^*_K(\bar{z})$ is a convex set and so condition (\ref{eq: (iii) con il cono}) is equivalent to say that 
\begin{align}\label{eq: more appealing way (iii)}
\exists \bar{z}\in \partial K \quad \textit{ s.t.}\quad \left\{\lambda y_1+(1-\lambda)y_2:\,\lambda\in [0,1]\right\} \subset C^*_K(\bar{z}).
\end{align}
\end{remark}

\begin{lemma}\label{lem: sub-diff e bordo wulff}
Let $K\subset \R^n$ be as in (\ref{HP per K}) and consider $\phi_K$ the associated surface tension. Let $x_0 \in \partial K$ then, 
\begin{align}\label{eq: step 1 sub-dif bord wulff}
\phi_K(y)=1\qquad \forall\, y\in \partial \phi^*_K(x_0).
\end{align}
Moreover, 
\begin{align}\label{eq: step 2 sub-dif bord wulff}
\bigcup_{x\in\partial K}\partial \phi^*_K(x)=\partial K^*.
\end{align}
\end{lemma}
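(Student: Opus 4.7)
For the first identity, I would unfold the definition of the sub-differential: $y\in\partial\phi^*_K(x_0)$ means
\[
\phi^*_K(z)\ge \phi^*_K(x_0)+y\cdot(z-x_0)=1+y\cdot(z-x_0)\qquad\forall z\in\R^n,
\]
where I used $\phi^*_K(x_0)=1$ from \eqref{eq: Phi=1, Phi* =1}. Testing this at $z=0$ (giving $y\cdot x_0\ge 1$) and at $z=2x_0$ (giving, via $1$-homogeneity, $2\ge 1+y\cdot x_0$) pins down $y\cdot x_0=1$. Substituting back, the sub-differential inequality reduces to the pointwise bound $y\cdot z\le\phi^*_K(z)$ for every $z\in\R^n$. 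Since $K=\{\phi^*_K<1\}$ by Proposition~\ref{prop: Maggi 20.10}(iii), this yields
\[
\phi_K(y)=\sup\{y\cdot z:\,z\in K\}\le 1.
\]
For the reverse inequality I would take $z_n:=(1-1/n)x_0$, which lies in $K$ by $1$-homogeneity of $\phi^*_K$, and note $y\cdot z_n=(1-1/n)\to 1$, giving $\phi_K(y)\ge 1$. Thus $\phi_K(y)=1$, i.e.\ $y\in\partial K^*$ by \eqref{eq: Phi=1, Phi* =1}.

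For the second identity, the inclusion $\bigcup_{x\in\partial K}\partial\phi^*_K(x)\subset\partial K^*$ is immediate from the first part together with the characterisation $\partial K^*=\{\phi_K=1\}$. For the reverse inclusion, given $y\in\partial K^*$ (so $\phi_K(y)=1$, in particular $y\neq 0$) I would use compactness of $\overline K$ and continuity of $z\mapsto y\cdot z$ to pick $\bar z\in\overline K$ with $\phi_K(y)=y\cdot\bar z=1$. Then I would argue $\bar z\in\partial K$: if $\bar z$ were in the interior $K$, a small perturbation $\bar z+\varepsilon y$ would still lie in $K$ and produce a strictly larger value of $y\cdot z$, contradicting maximality. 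Finally, to verify $y\in\partial\phi^*_K(\bar z)$ I would use the Fenchel inequality \eqref{eq: Fenchel inequality}: since $\phi_K(y)=1$, it gives $y\cdot z\le\phi^*_K(z)$ for every $z$, which, combined with $y\cdot\bar z=1=\phi^*_K(\bar z)$, rewrites as
\[
\phi^*_K(z)\ge \phi^*_K(\bar z)+y\cdot(z-\bar z)\qquad\forall z\in\R^n,
\]
i.e.\ $y\in\partial\phi^*_K(\bar z)$, concluding the proof.

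I do not expect any serious obstacle here: the argument is a textbook convex duality computation, and all the needed ingredients ($1$-homogeneity, $\phi^*_K(x_0)=1$ on $\partial K$, Fenchel inequality, $K=\{\phi^*_K<1\}$) are already recorded in Proposition~\ref{prop: Maggi 20.10} and Remark~\ref{rem: abouth gauge function}. The only small technical care is checking that the maximiser $\bar z$ in the second part lies on $\partial K$ rather than inside $K$, which is the perturbation argument sketched above.
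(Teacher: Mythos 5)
Your proof is correct, but it follows a genuinely different route from the paper's. For the first identity the paper does not pin down $y\cdot x_0=1$ directly: it first invokes Lemma \ref{thm: page 16 mie note} and Remark \ref{rem: lem page 16 mie note} to get $\phi_K(y)=y\cdot x_0\geq 1$, then uses convexity of the sub-differential (Remark \ref{rem: convexity of C_K^*}) to place the whole segment $\lambda y$, $\lambda\in[1/\phi_K(y),1]$, inside $\partial\phi^*_K(x_0)$, and only then tests $z=0$ and $z=2x_0$ to force $\phi_K(y)=1$. You instead test $z=0$ and $z=2x_0$ immediately to get $y\cdot x_0=1$, reduce the sub-gradient inequality to $y\cdot z\leq\phi^*_K(z)$, and bound $\phi_K(y)$ from above via $K=\{\phi^*_K<1\}$ and from below via the interior approximation $(1-1/n)x_0$; this is more elementary and bypasses the duality lemma entirely. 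For the reverse inclusion in the second identity, the paper takes a supporting hyperplane to $K^*$ at $y$, obtains $\phi^*_K(\nu(y))=\nu(y)\cdot y$, and again channels the conclusion through Lemma \ref{thm: page 16 mie note}; you maximize $y\cdot z$ over the compact set $\overline{K}$, show the maximizer lies on $\partial K$ by a perturbation (legitimate since $y\neq 0$ and $K$ is open), and verify the sub-gradient inequality directly from the Fenchel inequality \eqref{eq: Fenchel inequality}. What the paper's route buys is that both halves stay inside the duality machinery (Lemma \ref{thm: page 16 mie note}, the cones $C^*_K$) that is reused throughout Section \ref{section 3}; what yours buys is self-containedness, using only the definition of the sub-differential, one-homogeneity, Fenchel, and Proposition \ref{prop: Maggi 20.10}(iii). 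One small point: the identification $\partial K^*=\{\phi_K=1\}$ that you quote is slightly more than what \eqref{eq: Phi=1, Phi* =1} literally records, but it follows at once from $K^*=\{\phi_K<1\}$ (Remark \ref{rem: abouth gauge function}) together with continuity and homogeneity, and the paper uses the same identification implicitly, so this is not a gap.
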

\begin{proof} We divide the proof in two steps.\\
\textbf{Step 1} In this first part we prove (\ref{eq: step 1 sub-dif bord wulff}). Let $y\in\partial \phi_K^*(x_0)$. By definition of sub-differential, we have that 
\begin{align*}
\phi_K^*(z)\geq 1 + y\cdot(z-x_0)\qquad \forall\, z\in\R^n.
\end{align*}
So, choosing $z=0$ we get that $y\cdot x_0 \geq 1$. Observe that $y\in \partial \phi^*_K(x_0)$ implies $y\in C^*_K(x_0)$ so that $\phi_K(y)=y\cdot x_0$ by Remark \ref{rem: lem page 16 mie note}. So, $\phi_K(y)=y\cdot x_0 \geq 1$. At the same time, by Lemma \ref{thm: page 16 mie note}, the fact that $\phi_K(y)=y\cdot x_0$  is equivalent to say that $y/\phi_K(y)\in \partial \phi^*_K(x_0)$. By the convexity property of the sub-differential of a convex function (see Remark \ref{rem: convexity of C_K^*}), we have $\lambda y \in \partial \phi^*_K(x_0)$ for every $\lambda\in [1/\phi_K(y),1]$, namely
\begin{align*}
\phi_K^*(z)\geq 1+\lambda y\cdot(z-x_0)\qquad \forall\,z\in \R^n,\,\forall\,\lambda\in [1/\phi_K(y),1].
\end{align*}  
Note that choosing $z=0$ we get $\lambda\geq 1/\phi_K(y)$, while choosing $z=2x_0$ we get, thanks to 1-homogeneity of $\phi_K^*$, that $\lambda\leq 1/\phi_K(y)$.  Thus, we deduce that $1/\phi_K(y)=1$. This concludes the proof of the first step.\\
\textbf{Step 2} In the last step we prove (\ref{eq: step 2 sub-dif bord wulff}). Thanks to step 1 and Remark \ref{rem: abouth gauge function} we have that 
$$
\bigcup_{x\in\partial K}\partial \phi^*_K(x)\subseteq \partial K^*.
$$
We are left to prove the other inclusion. Let $y\in \partial K^*$. By properties of convex sets there exists $\nu(y)\in\mathbb{S}^{n-1}$ such that $K^* \subset H^-_{y,\nu(y)}$ (see relations (\ref{Hxnu+})). So, $\forall\, z\in H^-_{y,\nu(y)}$ , and in particular $\forall\, z\in K^*$ we have
\begin{align*}
z\cdot \nu(y)\leq y\cdot \nu(y),
\end{align*} 
that implies, recalling Remark \ref{rem: abouth gauge function} that $\phi_K^*(\nu(y))= \nu(y)\cdot y$. Thus, thanks to Lemma \ref{thm: page 16 mie note}, recalling that $\phi_K(y)=1$ we get
\begin{align*}
\phi_K^*(\nu(y))=\nu(y)\cdot y \quad&\Leftrightarrow\quad \phi_K^*\left(\frac{\nu(y)}{\phi_K^*(\nu(y))} \right) =\frac{\nu(y)}{\phi_K^*(\nu(y))}\cdot y \quad\Leftrightarrow\quad 1=\frac{\nu(y)}{\phi_K^*(\nu(y))}\cdot y\\ &\Leftrightarrow\quad \phi_K(y)=\frac{\nu(y)}{\phi_K^*(\nu(y))}\cdot y\quad\Leftrightarrow\quad y\in \partial \phi_K^*\left(\frac{\nu(y)}{\phi_K^*(\nu(y))} \right).
\end{align*}
Since $\nu(y)/\phi_K^*(\nu(y))\in \partial K$ we conclude.
\end{proof}

\begin{figure}[!htb]
\centering
\def\svgwidth{13cm}
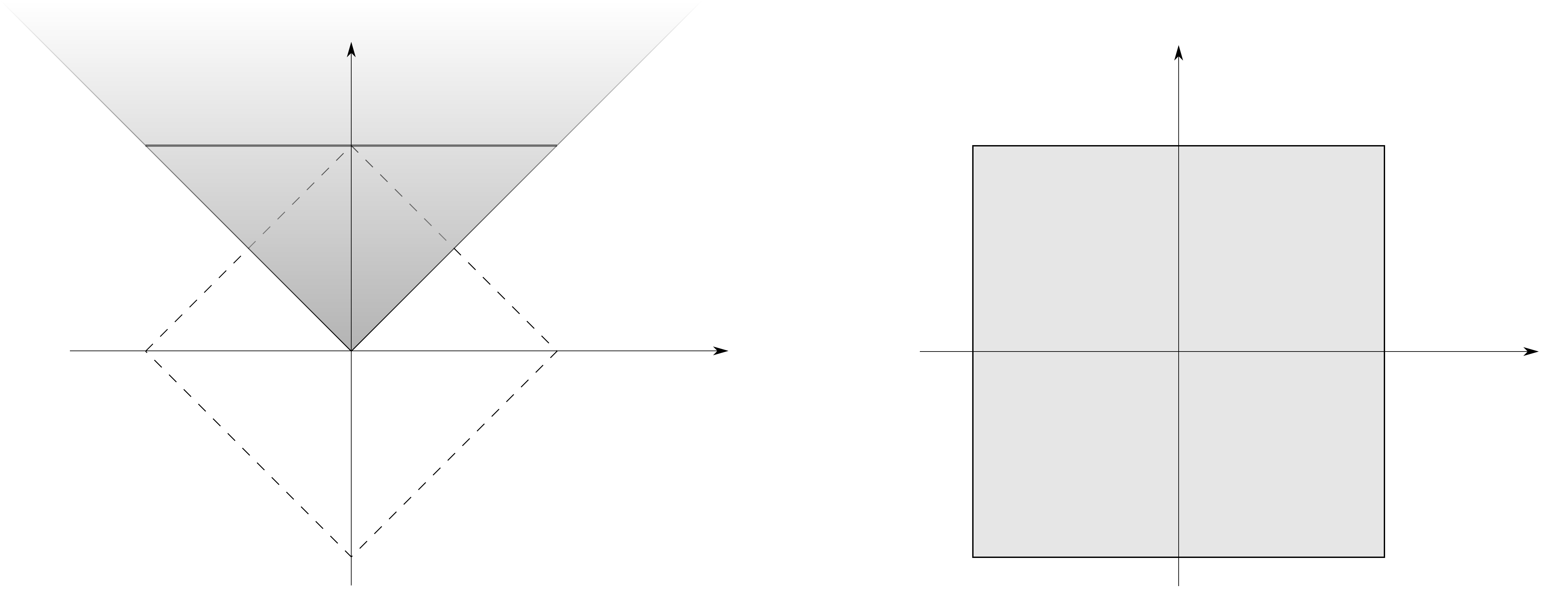
\caption{A pictorial idea of condition (\ref{eq: step 2 sub-dif bord wulff}) with respect to the Wulff shape $K^s$ presented in Figure \ref{fig:K e K^*}. Indeed, according to Lemma \ref{lem: sub-diff e bordo wulff} and (\ref{altra caratterizzazione del sub diff}), we see that $\partial\phi_{K^s}^*((0,1))$ is a convex subset of the boundary of $(K^s)^*$. The fact that $\partial\phi_{K^s}^*((0,1))$ actually contains the point $(0,1)$ is just a consequence of the specific Wulff shape considered in the example.}
\label{fig:K e K^* e C^*_K}
\end{figure}   

\begin{remark}
Having in mind the definition of $C^*_K(x)$ (see (\ref{def:weak sub-differential}), and as a consequence of (\ref{eq: step 2 sub-dif bord wulff}),  we have that 
\begin{align}\label{eq: unione coni = Rn}
\bigcup_{x\in\partial K}C^*_K(x)=\R^n.
\end{align}
\end{remark}

\begin{corollary}\label{cor:Euler equations}
Let $K\subset \R^n$ be as in (\ref{HP per K}) and consider $\phi_K$ the associated surface tension. Assume in addition that $\phi_K \in C^1(\R^n_0)$. Then,
\begin{align}\label{eq:Euler equations}
\phi_K(x)=\nabla\phi_K(x)\cdot x \quad\textit{and}\quad \phi^*_K(\nabla\phi_K(x))=1\qquad \forall\,x\in \R^n_0.
\end{align}
\end{corollary}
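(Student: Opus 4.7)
The plan is to derive both identities from first principles, using only the $1$-homogeneity and convexity of $\phi_K$ together with the $C^1$ regularity hypothesis away from the origin. First, the identity $\phi_K(x) = \nabla \phi_K(x)\cdot x$ is the classical Euler relation: since $\phi_K(tx) = t\,\phi_K(x)$ holds for every $t>0$ (by $1$-homogeneity) and both sides are $C^1$ in $t$ on $(0,\infty)$ for fixed $x \in \R^n_0$, one differentiates with respect to $t$ and evaluates at $t=1$, obtaining
\begin{equation*}
\nabla \phi_K(x)\cdot x = \phi_K(x), \qquad \forall\, x\in\R^n_0.
\end{equation*}

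For the second identity, set $y := \nabla \phi_K(x)$. Convexity of $\phi_K$ together with the $C^1$ hypothesis yields the subgradient inequality $\phi_K(z)\ge \phi_K(x)+ y\cdot(z-x)$ for every $z\in\R^n$; substituting the Euler relation $\phi_K(x)=y\cdot x$ causes the two constant terms to cancel and leaves simply
\begin{equation*}
\phi_K(z)\ \ge\ y\cdot z\qquad \forall\, z\in\R^n.
\end{equation*}
Now I would recall from Remark \ref{rem: abouth gauge function} (see also Proposition \ref{prop: Maggi 20.10} (iii)) that $K^*=\{\phi_K<1\}$ and $\phi_K^*(y)=\sup\{y\cdot z:\,z\in K^*\}$. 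For every $z\in K^*$ one has $y\cdot z\le \phi_K(z)<1$, which forces the upper bound $\phi_K^*(y)\le 1$.

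It remains to produce the matching lower bound $\phi_K^*(y)\ge 1$. By coercivity of $\phi_K$ (Proposition \ref{prop: Maggi 20.10} (i)) we have $\phi_K(x)>0$, so $\tilde x:=x/\phi_K(x)$ satisfies $\phi_K(\tilde x)=1$ by $1$-homogeneity, and for every $\epsilon\in(0,1)$ the point $(1-\epsilon)\tilde x$ lies in $K^*$. Applying Euler once more,
\begin{equation*}
y\cdot\bigl((1-\epsilon)\tilde x\bigr)\ =\ (1-\epsilon)\,\frac{y\cdot x}{\phi_K(x)}\ =\ (1-\epsilon)\,\frac{\phi_K(x)}{\phi_K(x)}\ =\ 1-\epsilon\ \le\ \phi_K^*(y),
\end{equation*}
and letting $\epsilon\to 0^+$ gives $\phi_K^*(y)\ge 1$. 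Combining, $\phi_K^*(\nabla \phi_K(x))=1$, as claimed. I do not anticipate any genuine obstacle here: both steps are routine applications of convex analysis once the Euler relation is in hand. One could alternatively try to invoke Lemma \ref{thm: page 16 mie note} with the roles of $K$ and $K^*$ swapped, but that route turns out to be essentially circular (it requires $\phi_K^*(y)=1$ to conclude $y/\phi_K^*(y)\in\partial\phi_K(\tilde x)$), so the direct approach via the subgradient inequality is the cleanest path.
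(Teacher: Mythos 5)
Your proof is correct, but it follows a genuinely different route from the paper. You derive the Euler relation by differentiating $\phi_K(tx)=t\phi_K(x)$ in $t$, and then obtain $\phi_K^*(\nabla\phi_K(x))=1$ directly: the gradient inequality for the convex function $\phi_K$ plus the Euler relation give $\phi_K(z)\ge \nabla\phi_K(x)\cdot z$ for all $z$, which bounds $\phi_K^*(\nabla\phi_K(x))$ by $1$ via $K^*=\{\phi_K<1\}$, and testing against $(1-\epsilon)x/\phi_K(x)\in K^*$ gives the matching lower bound. The paper instead fixes $x\in\partial K^*$, uses a supporting hyperplane of $K^*$ to produce $\nu(x)$ with $\phi_K^*(\nu(x))=\nu(x)\cdot x$, applies Lemma \ref{thm: page 16 mie note} (with the roles of $K$ and $K^*$ swapped, via $\phi_K=(\phi_K^*)^*$) and Remark \ref{rem: differentiability and sub differential} to identify $\nabla\phi_K(x)=\nu(x)/\phi_K^*(\nu(x))$, and then invokes the $0$-homogeneity of $\nabla\phi_K$ to cover all of $\R^n_0$. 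Your argument is more self-contained (only the subgradient inequality and the definition of the gauge), while the paper's reuses the duality machinery it has already set up. One small caveat: your parenthetical claim that the lemma-based route is \emph{essentially circular} is not accurate as a description of the paper's argument --- circularity only arises if one tries to use the backward implication of the lemma starting from $\nabla\phi_K(\tilde x)$; the paper uses the forward implication, starting from the supporting-hyperplane identity $\phi_K^*(\nu(x))=\nu(x)\cdot x$, so no prior knowledge of $\phi_K^*(\nabla\phi_K(x))$ is needed. This does not affect the validity of your own proof.
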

\begin{proof}
Firstly, let us observe it is a well known fact that the first relation in (\ref{eq:Euler equations}) holds true for every positive and 1-homogeneous function. So, we are left to prove the second relation in (\ref{eq:Euler equations}). Let $x\in \partial K^*$. As we observed in the above Lemma, by properties of convex sets there exists $\nu(x)\in\mathbb{S}^{n-1}$ such that $K^* \subset H^-_{x,\nu(x)}$ and $\phi_K^*(\nu(x))= \nu(x)\cdot x$. By Lemma \ref{thm: page 16 mie note}, having in mind Remark \ref{rem: differentiability and sub differential} we have that
\begin{align}\label{eq: Euler preliminaries}
\phi_K^*(\nu(x))=\nu(x)\cdot x\quad
\Longleftrightarrow\quad \frac{\nu(x)}{\phi^*_K(\nu(x))}=\nabla\phi_K(x).
\end{align} 
By the 1-homogeneity of $\phi_K$ it follows that
\begin{equation}\label{eq: gradiente 0-homogeneo}
\nabla \phi_K(\lambda x)= \nabla\phi_K(x)\qquad \forall\, \lambda>0,\,\forall\,x\in \R^n_0,
\end{equation} 
therefore $\phi^*_K(\nabla\phi_K(x))=1$ for all $x\in\R^n_0$. This concludes the proof.
\end{proof}

\begin{remark}
Let $K\subset \R^n$ be as in (\ref{HP per K}), and consider $x\in\partial K$. Note that, thanks the above results we can deduce the following equivalent characterization for the subdifferential $\partial\phi_K^*(x)$, namely
\begin{align}\label{altra caratterizzazione del sub diff}
\partial \phi^*_K(x)=\left\{y\in\partial K^*:\,y\cdot\frac{x}{|x|}=\phi_K^*\left(\frac{x}{|x|}\right) \right\}.
\end{align} 
Indeed,  thanks to Lemma \ref{lem: sub-diff e bordo wulff} we know that $\partial\phi_K^*(x)\subset \partial K^*$ so that $\phi_K(y)=1$ for all $y\in \partial \phi^*_K(x)$. Whereas, thanks to Lemma \ref{thm: page 16 mie note} we have that $y\in \partial\phi_K^*(x)$ if and only if $1=\phi^*_K(x)\phi_K(y)=y\cdot x$, from which, we get $y\cdot\frac{x}{|x|}=\phi_K^*\left(\frac{x}{|x|}\right)$.
\end{remark}

\noindent
The following two results will be used for the proof of Proposition \ref{prop: R1,R2}.
\begin{lemma}\label{lem: propedeutico lemma R1,R2}
Let $K\subset\R^n$ be as in (\ref{HP per K}). Let $x_1,x_2\in\partial K$ and $\bar{y}\in\partial K^*$ be such that $\bar{y}\in \partial \phi^*_K(x_1)\cap \partial \phi^*_K(x_2)$. Let us now assume that there exist $y_1,y_2\in\partial \phi^*_K(x_2)$, with $y_1\neq \bar{y} \neq y_2$, such that $\bar{y}= (1-\bar{\lambda})y_1+\bar{\lambda} y_2$ for some $\bar{\lambda}\in(0,1)$. Then,
\begin{align}
(1-\lambda)y_1+ \lambda y_2\in\partial \phi^*_K(x_1)\quad \forall\,\lambda\in[0,1].
\end{align} 
\end{lemma}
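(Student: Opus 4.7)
My plan is to exploit the characterization (\ref{altra caratterizzazione del sub diff}) of the sub-differential, together with Fenchel's inequality (\ref{eq: Fenchel inequality}). The key observation is that, for $x\in\partial K$, the set $\partial\phi_K^*(x)$ can be described as the intersection of $\partial K^*$ with the supporting hyperplane $\{y\in\R^n:\,y\cdot x=1\}$ of $K^*$. Once this is made precise, the statement reduces to showing that a chord of $\partial K^*$ whose interior meets such a hyperplane lies entirely in it.

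First, using (\ref{eq: Phi=1, Phi* =1}) we have $\phi_K^*(x_1)=\phi_K^*(x_2)=1$, so (\ref{altra caratterizzazione del sub diff}) together with the $1$-homogeneity of $\phi_K^*$ gives
\begin{align*}
\partial\phi_K^*(x_i)=\{y\in\partial K^*:\,y\cdot x_i=1\},\qquad i=1,2.
\end{align*}
In particular $\bar y\cdot x_1=1$ and $y_1,y_2\in\partial K^*$, so Lemma \ref{lem: sub-diff e bordo wulff} yields $\phi_K(y_1)=\phi_K(y_2)=1$, and Fenchel's inequality gives $y_j\cdot x_1\leq \phi_K^*(x_1)\phi_K(y_j)=1$ for $j=1,2$.

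Second, since $\bar y=(1-\bar\lambda)y_1+\bar\lambda y_2$ with $\bar\lambda\in(0,1)$, the identity
\begin{align*}
1=\bar y\cdot x_1=(1-\bar\lambda)(y_1\cdot x_1)+\bar\lambda(y_2\cdot x_1)
\end{align*}
combined with $y_1\cdot x_1,\,y_2\cdot x_1\leq 1$ forces $y_1\cdot x_1=y_2\cdot x_1=1$. Hence for every $\lambda\in[0,1]$ the point $y_\lambda:=(1-\lambda)y_1+\lambda y_2$ satisfies $y_\lambda\cdot x_1=1$, and by convexity of $\overline{K^*}$ we also have $y_\lambda\in\overline{K^*}$, i.e.\ $\phi_K(y_\lambda)\leq 1$.

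Finally, I need to upgrade $y_\lambda\in\overline{K^*}$ to $y_\lambda\in\partial K^*$. This is the only subtlety: if $y_\lambda$ were in the open set $K^*$, then $\phi_K(y_\lambda)<1$, and Fenchel would give $y_\lambda\cdot x_1\leq \phi_K^*(x_1)\phi_K(y_\lambda)<1$, contradicting $y_\lambda\cdot x_1=1$. Therefore $y_\lambda\in\partial K^*$, and the characterization in the first step yields $y_\lambda\in\partial\phi_K^*(x_1)$, completing the argument. The main (minor) obstacle is precisely this last step, distinguishing boundary from interior via the strict form of Fenchel on $K^*$; the hypothesis $\bar y\in\partial\phi_K^*(x_1)\cap\partial\phi_K^*(x_2)$ is used exactly to localize $\bar y$ on the supporting hyperplane of $K^*$ associated to $x_1$, while the role of $x_2$ is only to guarantee that $y_1,y_2,\bar y$ are genuinely points of $\partial K^*$.
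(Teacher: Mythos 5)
Your proof is correct, and while it rests on the same two ingredients as the paper's argument — the Fenchel inequality (\ref{eq: Fenchel inequality}) and the characterization (\ref{altra caratterizzazione del sub diff}) of $\partial\phi^*_K(x_1)$ as the part of $\partial K^*$ where $y\mapsto y\cdot x_1$ attains the value $1$ — it is organized along a genuinely different route. The paper proceeds by contradiction: it assumes some point $\tilde{y}$ of the segment lies outside $\partial\phi^*_K(x_1)$, deduces $\tilde{y}\cdot\frac{x_1}{|x_1|}<\phi_K^*\left(\frac{x_1}{|x_1|}\right)$, and then lets the affine function $\lambda\mapsto((1-\lambda)y_1+\lambda y_2)\cdot\frac{x_1}{|x_1|}$ overshoot the support value beyond $\bar{\lambda}$, contradicting that the whole segment lies in $\partial\phi^*_K(x_2)\subset\overline{K^*}$; this requires the convexity of the sub-differential (Remark \ref{rem: convexity of C_K^*}) and a symmetric case split $\tilde{\lambda}\in[0,\bar{\lambda}]$ versus $\tilde{\lambda}\in[\bar{\lambda},1]$. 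You instead argue directly at the endpoints: since $y_1\cdot x_1,\,y_2\cdot x_1\le 1$ and their strict convex combination $\bar{y}$ attains $1$, both endpoints must attain $1$, so the whole chord lies on the supporting hyperplane $\{y\cdot x_1=1\}$; membership in $\partial K^*$ then follows from the strict form of Fenchel on the open set $K^*=\{\phi_K<1\}$ (Remark \ref{rem: abouth gauge function}), after which (\ref{altra caratterizzazione del sub diff}) concludes. This avoids the contradiction, the case split, and any appeal to the convexity of $\partial\phi^*_K(x_2)$; in fact, as you observe, your argument only uses that $y_1,y_2\in\partial K^*$ (and would even go through assuming only $y_1,y_2\in\overline{K^*}$), so it establishes a slightly stronger statement. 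At bottom the two proofs are contrapositives of the same geometric fact — that an exposed face of $\overline{K^*}$ is a face — but yours is the more economical rendering.
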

\begin{proof}
Let us suppose by contradiction that there exists $\tilde{\lambda}\in [0,\bar{\lambda}]$ such that $\tilde{y}=(1-\tilde{\lambda})y_1+ \tilde{\lambda} y_2\notin\partial \phi^*_K(x_1)$. By the Fenchel inequality (\ref{eq: Fenchel inequality}), (\ref{altra caratterizzazione del sub diff}), and using that $\phi_K(\tilde{y})\leq (1-\tilde{\lambda})\phi_K(y_1)+ \tilde{\lambda}\phi_K(y_2)\leq 1$ we get 
\begin{align}\label{eq: lemma propedeutico 1}
\tilde{y}\cdot\frac{x_1}{|x_1|}<\bar{y}\cdot\frac{x_1}{|x_1|}=\phi_K^*\left(\frac{x_1}{|x_1|}  \right).
\end{align}
Recall that, by (\ref{def: Wulff shape K}) applied to $K^*$ we have that 
$$
\overline{K^*}=\bigcap_{\omega \in \mathbb{S}^{n-1}}\left\{x\in \R^n:\, x\cdot\omega \leq \phi^*_K(\omega)   \right\}.
$$
By relation (\ref{eq: lemma propedeutico 1}) we have that the continuous linear function 
$$
\varphi(\lambda):= ((1-\lambda)y_1+ \lambda y_2)\cdot\frac{x_1}{|x_1|}>\phi_K^*\left(\frac{x_1}{|x_1|}  \right)
$$ 
for every $\lambda\in (\bar{\lambda},1]$, but this is a contradiction since
$$\{(1-\lambda)y_1+ \lambda y_2:\,\lambda\in [0,1] \}\subset\partial \phi^*_K(x_2)\subset \overline{K^*}.
$$ 
The case when $\tilde{\lambda}\in [\bar{\lambda},1]$ is symmetric, and thus the proof is complete.
\end{proof}

\begin{corollary}\label{cor: propedeutico Lemma R1,R2}
Let $K\subset\R^n$ be as in (\ref{HP per K}). Let $x\in\partial K$ be such that the subdifferential of $\phi_K^*$ in $x$ has only one point, namely $\partial \phi^*_K(x)=\{y\}$. Then, $\forall\, z\in\mathcal{Z}_K(y)$, where $\mathcal{Z}_K(y)$ is defined in (\ref{def: Z_K(y)}), and for every $y_1,y_2\in C_K^*(z)$, with $y_1/\phi_K(y_1)\neq y_2/\phi_K(y_2)$, if $\exists\,\lambda\in(0,1)$ s.t. $y=(1-\lambda)y_1+\lambda y_2$, then $y_1=\lambda_1 y$, $y_2=\lambda_2 y$ for some $\lambda_1,\lambda_2>0$.
\end{corollary}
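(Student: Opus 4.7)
The plan is to reduce the statement to a direct application of Lemma~\ref{lem: propedeutico lemma R1,R2}, in which the hypothesis $\partial\phi_K^*(x)=\{y\}$ will collapse the conclusion of that lemma to a single point. First I would observe that $y\in\partial\phi_K^*(z)$: since $z\in\mathcal{Z}_K(y)$, we can write $y=\mu\tilde y$ with $\tilde y\in\partial\phi_K^*(z)$ and $\mu\geq 0$; Lemma~\ref{lem: sub-diff e bordo wulff} gives $\phi_K(y)=\phi_K(\tilde y)=1$, which forces $\mu=1$ and hence $y=\tilde y\in\partial\phi_K^*(z)$.

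Next, set $\alpha_i:=\phi_K(y_i)>0$ and $\tilde y_i:=y_i/\alpha_i\in\partial\phi_K^*(z)$, so that $\tilde y_1\neq\tilde y_2$ by hypothesis. The assumed identity $y=(1-\lambda)y_1+\lambda y_2$ rewrites as
\[
y=\beta\bigl[(1-t)\tilde y_1+t\tilde y_2\bigr],\qquad \beta:=(1-\lambda)\alpha_1+\lambda\alpha_2,\quad t:=\frac{\lambda\alpha_2}{\beta}\in(0,1).
\]
By convexity of $\partial\phi_K^*(z)$ (Remark~\ref{rem: convexity of C_K^*}), the bracketed vector lies in $\partial\phi_K^*(z)\subset\partial K^*$, so its $\phi_K$-value equals $1$; comparing with $\phi_K(y)=1$ yields $\beta=1$. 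Thus $y=(1-t)\tilde y_1+t\tilde y_2$ with $y,\tilde y_1,\tilde y_2$ all in $\partial\phi_K^*(z)$ and $t\in(0,1)$.

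Now I would invoke Lemma~\ref{lem: propedeutico lemma R1,R2} with $x_1=x$, $x_2=z$, $\bar y=y$, and the lemma's $y_1,y_2$ replaced by $\tilde y_1,\tilde y_2$; note that $y\in\partial\phi_K^*(x)\cap\partial\phi_K^*(z)$. The main step is a contradiction argument: if $\tilde y_1\neq y$ and $\tilde y_2\neq y$, the lemma would yield $(1-s)\tilde y_1+s\tilde y_2\in\partial\phi_K^*(x)=\{y\}$ for every $s\in[0,1]$, forcing the whole segment $[\tilde y_1,\tilde y_2]$ to collapse to the single point $y$, against $\tilde y_1\neq\tilde y_2$. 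Consequently $\tilde y_1=y$ or $\tilde y_2=y$; substituting back into $y=(1-t)\tilde y_1+t\tilde y_2$ with $t\in(0,1)$ immediately forces the other equality as well. This yields $y_1=\alpha_1 y$ and $y_2=\alpha_2 y$ with $\alpha_1,\alpha_2>0$, which is the desired parallelism.

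The only mildly delicate point is the renormalization showing $\beta=1$ and $y\in\partial\phi_K^*(z)$; both rely on the fact, from Lemma~\ref{lem: sub-diff e bordo wulff}, that $\phi_K\equiv 1$ on $\partial K^*$. After that, the single-element hypothesis $\partial\phi_K^*(x)=\{y\}$ is exactly what trivializes the output of Lemma~\ref{lem: propedeutico lemma R1,R2} and closes the argument.
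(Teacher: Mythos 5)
Your proof is correct and takes essentially the same route as the paper: normalize $y_1,y_2$, show that $y$ and $y_i/\phi_K(y_i)$ all lie in $\partial\phi^*_K(z)$ with $y$ a strict convex combination of the normalized vectors, and then apply Lemma \ref{lem: propedeutico lemma R1,R2} so that the whole segment is forced into $\partial\phi^*_K(x)=\{y\}$, yielding $y_i=\phi_K(y_i)\,y$. Your explicit renormalization argument ($\beta=1$) and the case distinction handling $\tilde y_i=y$ (where the lemma's hypothesis $y_1\neq\bar y\neq y_2$ would fail) are slightly more careful than the paper's write-up, but the underlying argument is the same.
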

\begin{proof}
Let us fix $z\in \mathcal{Z}_K(y)$ and $y_1,y_2\in C_K^*(z)$ and let us assume that $y=(1-\bar{\lambda})y_1+\bar{\lambda} y_2$, for some $\bar{\lambda}\in(0,1)$. Since $y\in \mathcal{Z}_K(y)$, then $y_1,y,y_2\in C^*_K(z)$, and thus
\begin{align}\label{eq: y1, y,y2 nel sub}
\frac{y_1}{\phi_K(y_1)},\frac{y}{\phi_K(y)}, \frac{y_2}{\phi_K(y_2)} \in \partial \phi^*_K(z).
\end{align}
Let us observe that $\phi_K(y)=1$ since we know $y\in \partial\phi^*_K(x)$. As a consequence of (\ref{eq: y1, y,y2 nel sub}), together with the convexity of $\partial\phi^*_K(z)$, we deduce that 
\begin{align}
y&\in \partial\phi^*_K(z)\cap \partial\phi^*_K(x),\\
y&= (1-t)\frac{y_1}{\phi_K(y_1)}+t\frac{y_2}{\phi_K(y_2)},\label{eq: famigerata}
\end{align}
where $t\in (0,1)$. Therefore, thanks to Lemma \ref{lem: propedeutico lemma R1,R2}, we have that 

$$
(1-t)\frac{y_1}{\phi_K(y_1)}+t\frac{y_2}{\phi_K(y_2)}\in \partial\phi^*_K(x)\quad \forall\,t\in [0,1],
$$
but this is possible if and only if $y_i/\phi_K(y_i)=y$ for $i=1,2$. This concludes the proof.
\end{proof}

We conclude this section recalling few more definitions and a couple of results very well known in convex analysis. Such tools, will play a key role in the understanding of (\ref{rigidity anisotropic steiner}).

\begin{definition}\label{def: extreme point}
Let $C\subset\R^n$ be a convex set. We say that $x\in C$ is an \emph{extreme point} of $C$ if and only if there is no way to express  $x$ as a convex combination $(1-\lambda)y + \lambda z$ such that $y,z \in C$ and $0<\lambda<1$, except by taking $y=z=x$. 
\end{definition}

\begin{definition}\label{def: exposed point}
Let $C\subset\R^n$ be a convex set. We say that $x\in C$ is an \emph{exposed point} of $C$ if and only if there exists an hyperplane  of the form $H_{x,\nu}$, with $\nu\in\mathbb{S}^{n-1}$, such that $C\subset H^-_{x,\nu}$ and $\overline{C}\cap H_{x,\nu}=\{x\}$. Observe that if $x$ is an exposed point of $C$, then $x$ belongs to the boundary of $C$.
\end{definition}

\begin{figure}[!htb]
\centering
\def\svgwidth{10cm}
\begingroup%
  \makeatletter%
  \providecommand\color[2][]{%
    \errmessage{(Inkscape) Color is used for the text in Inkscape, but the package 'color.sty' is not loaded}%
    \renewcommand\color[2][]{}%
  }%
  \providecommand\transparent[1]{%
    \errmessage{(Inkscape) Transparency is used (non-zero) for the text in Inkscape, but the package 'transparent.sty' is not loaded}%
    \renewcommand\transparent[1]{}%
  }%
  \providecommand\rotatebox[2]{#2}%
  \newcommand*\fsize{\dimexpr\f@size pt\relax}%
  \newcommand*\lineheight[1]{\fontsize{\fsize}{#1\fsize}\selectfont}%
  \ifx\svgwidth\undefined%
    \setlength{\unitlength}{820.77788427bp}%
    \ifx\svgscale\undefined%
      \relax%
    \else%
      \setlength{\unitlength}{\unitlength * \real{\svgscale}}%
    \fi%
  \else%
    \setlength{\unitlength}{\svgwidth}%
  \fi%
  \global\let\svgwidth\undefined%
  \global\let\svgscale\undefined%
  \makeatother%
  \begin{picture}(1,0.47843407)%
    \lineheight{1}%
    \setlength\tabcolsep{0pt}%
    \put(0,0){\includegraphics[width=\unitlength,page=1]{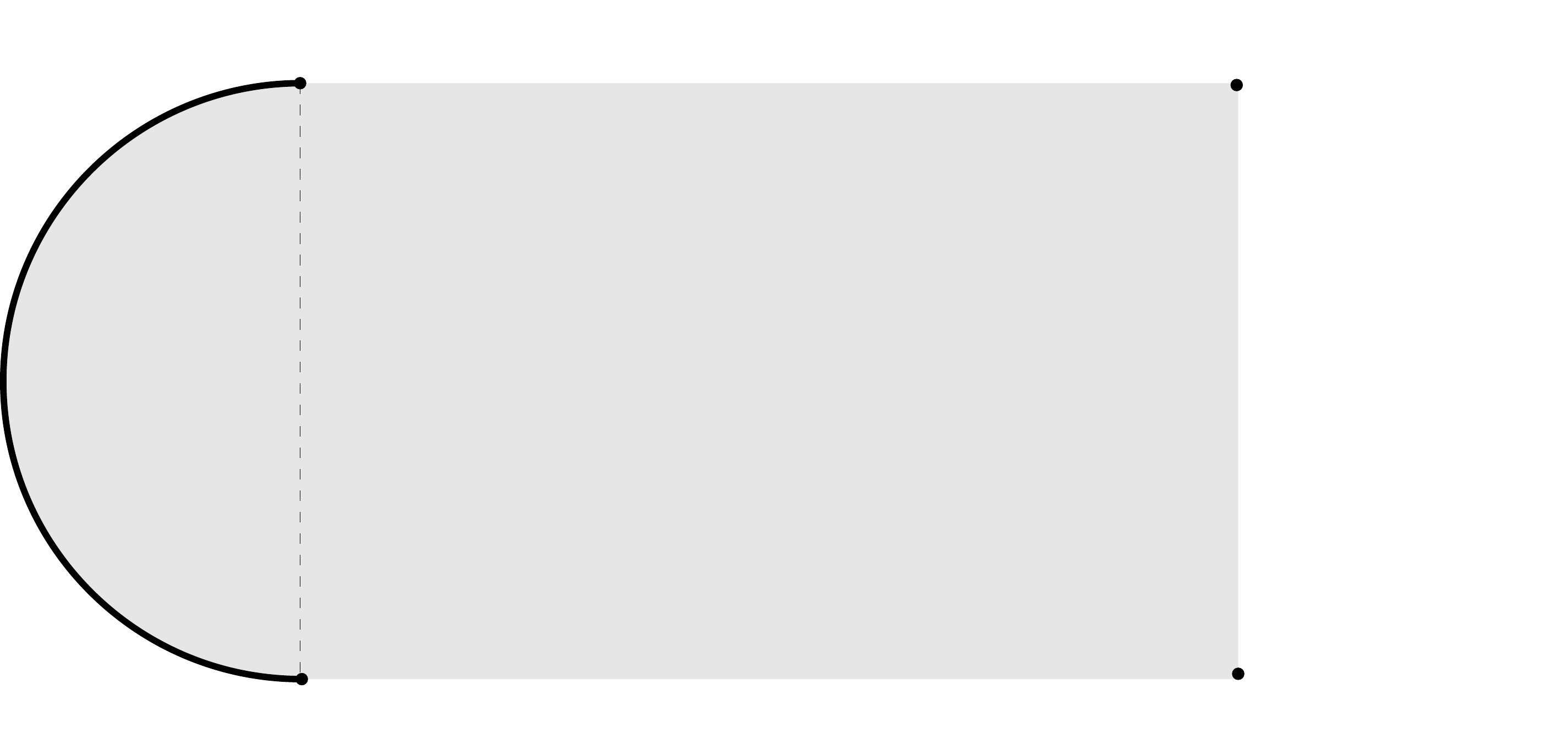}}%
    \put(0.1765202,0.45066411){\color[rgb]{0,0,0}\makebox(0,0)[lt]{\lineheight{1.25}\smash{\begin{tabular}[t]{l}\tiny{$L$}\end{tabular}}}}%
    \put(0.79126774,0.448818){\color[rgb]{0,0,0}\makebox(0,0)[lt]{\lineheight{1.25}\smash{\begin{tabular}[t]{l}\tiny{$F$}\end{tabular}}}}%
    \put(0.80234428,0.02237152){\color[rgb]{0,0,0}\makebox(0,0)[lt]{\lineheight{1.25}\smash{\begin{tabular}[t]{l}\tiny{$H$}\end{tabular}}}}%
    \put(0.18575065,0.00760283){\color[rgb]{0,0,0}\makebox(0,0)[lt]{\lineheight{1.25}\smash{\begin{tabular}[t]{l}\tiny{$G$}\end{tabular}}}}%
    \put(0.38799207,0.20003075){\color[rgb]{0,0,0}\makebox(0,0)[lt]{\lineheight{1.25}\smash{\begin{tabular}[t]{l}\small{$C$}\end{tabular}}}}%
  \end{picture}%
\endgroup%

\caption{Given a closed convex set $C$ as in the figure above, its set of \emph{extreme} points is the one that contains the parts of the boundary of $C$ that are in bold (the four points $L,F,H,G$ are included). Whereas, the set of \emph{exposed} points of $C$ is the set of extreme points of $C$ without the two points $L$ and $G$.}
\label{fig: exposed extreme}
\end{figure}

\begin{remark}\label{exposed dense in extreme}
If $C\subset\R^n$ is a closed convex set, then by \cite[Theorem 18.6]{Rock}, the set of exposed points of $C$ is dense in the set of extreme points of $C$, namely, every extreme point is the limit of a sequence of exposed points (see for instance Figure \ref{fig: exposed extreme}). 
\end{remark}
\noindent
Let us now recall a useful result about the characterization of the exposed points of a closed convex set (see for instance \cite[Corollary 25.1.3]{Rock}).

\begin{lemma}\label{lem: corollary 25.1.3 Rockafellar}
Let $C\subset \R^n$ be a non empty, closed, convex set, and let $g:\R^n\rightarrow [0,\infty)$ be any 1-homogeneous, convex function, such that 
$$
C=\{z\in\R^n:\, z\cdot y\leq g(y) \quad \forall\,y\in\R^n    \}.
$$
Then, $z\in C$ is an exposed point of $C$ if and only if there exists a point $y\in\R^n$ such that $g$ is differentiable at $y$ and $\nabla g(x)=z$.
\end{lemma}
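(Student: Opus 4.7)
The plan is to bridge the two notions via the subdifferential of $g$: I will first show that for every $y\in\R^n$,
\[
\partial g(y)=\{z\in C:\, z\cdot y=g(y)\},\qquad (*)
\]
and then combine this with the classical criterion that a convex function is differentiable at a point exactly when its subdifferential there is a singleton (Remark \ref{rem: differentiability and sub differential}). The identity $(*)$ follows directly from the definition of subdifferential together with the $1$-homogeneity of $g$: given $z\in\partial g(y)$, i.e.\ $g(w)\geq g(y)+z\cdot(w-y)$ for all $w\in\R^n$, testing with $w=2y$ gives $g(y)\geq z\cdot y$ and testing with $w=0$ (recall $g(0)=0$) gives $g(y)\leq z\cdot y$; substituting $g(y)=z\cdot y$ back into the inequality yields $z\cdot w\leq g(w)$ for every $w$, that is $z\in C$. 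The reverse inclusion is immediate from the defining property of $C$. As a by-product, since finite-valued convex functions on $\R^n$ have non-empty subdifferential everywhere, $(*)$ implies that for every $y$ there exists $z\in C$ with $z\cdot y=g(y)$, so $g$ coincides with the support function of $C$.

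For the forward implication, let $z\in C$ be an exposed point and let $\nu\in\mathbb{S}^{n-1}$ be the associated normal from Definition \ref{def: exposed point}, so that $w\cdot\nu\leq z\cdot\nu$ for every $w\in C$, with equality only when $w=z$. Because $g$ is the support function of $C$, we have $g(\nu)=z\cdot\nu$, and by $(*)$ the set $\partial g(\nu)$ is exactly the set of maximizers of $w\mapsto w\cdot\nu$ on $C$, which is $\{z\}$. Hence $g$ is differentiable at $y:=\nu$ with $\nabla g(y)=z$.

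Conversely, assume that $g$ is differentiable at some $y\in\R^n$ with $\nabla g(y)=z$. Then $\partial g(y)=\{z\}$ and by $(*)$ the point $z$ is the unique element of $C$ attaining $z\cdot y=g(y)$. If $y\neq 0$, set $\nu:=y/|y|$; every $w\in C$ satisfies $w\cdot\nu\leq g(\nu)=z\cdot\nu$ with equality only when $w=z$, which gives $C\subset H_{z,\nu}^{-}$ and $\overline{C}\cap H_{z,\nu}=\{z\}$, so $z$ is exposed. The degenerate case $y=0$ is immediate from $(*)$: we have $\partial g(0)=C$, so $\partial g(0)=\{z\}$ forces $C=\{z\}$, in which case $z$ is trivially exposed.

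The main technical step I expect to be the real content is the subdifferential identity $(*)$, which encodes precisely that $g$ is the support function of $C$; everything else is a direct unpacking of the definitions (exposed point, differentiability via singleton subdifferential) once $(*)$ is established.
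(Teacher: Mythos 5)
Your proof is correct, but note that the paper does not actually prove this lemma: it is quoted directly from Rockafellar (Corollary 25.1.3 in \cite{Rock}), so there is no internal argument to compare against. Your blind proof is essentially the standard support-function duality argument behind that reference, and it is complete: the identity $\partial g(y)=\{z\in C:\, z\cdot y=g(y)\}$ identifies $g$ with the support function of $C$ (two small points worth making explicit: $g(0)=0$ follows from $1$-homogeneity together with the continuity of a finite convex function, and $\partial g(y)\neq\emptyset$ because $g$ is finite everywhere, which is what makes the supremum over $C$ attained), and then the chain ``$z$ is an exposed point of $C$'' $\Leftrightarrow$ ``$z$ is the unique maximizer of some linear functional $w\mapsto w\cdot\nu$ over $C$'' $\Leftrightarrow$ ``$\partial g(\nu)=\{z\}$'' is closed by the singleton-subdifferential criterion of Remark \ref{rem: differentiability and sub differential}. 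Your treatment of the degenerate case $y=0$ (where $\partial g(0)=C$, so differentiability at $0$ forces $C$ to be a singleton, trivially exposed) is also fine, and under the hypothesis $g\geq 0$ that case can only occur with $C=\{0\}$. The only cosmetic issue is in the statement itself, where ``$\nabla g(x)=z$'' should read ``$\nabla g(y)=z$'', which you implicitly corrected.
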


\section{Characterization of the anisotropic total variation}\label{section 4}\noindent
In this section we will study some properties of the anisotropic total variation (see Definition \ref{def: anisotropic total variation}), proving also a characterization result (see Theorem \ref{thm:Charac.anis.total.var.}). Such characterization result is already known in the literature but we decided to give a proof for the sake of completeness since we couldn't find a precise reference. The main result of this Section \ref{section 4} is the following.

\begin{theorem}\label{thm:Charac.anis.total.var.}
Let $K\subset \R^n$ be as in (\ref{HP per K}).
Let $\mu$ be a $\R^n$-valued Radon measure on $\R^m$, $m\geq 1$, $m\in\mathbb{N}$. Then, we have
\begin{align*}
|\mu|_K(\Omega)=\sup \left\{\int_{\Omega}\varphi(x)\cdot d\mu(x):\, \varphi\in C^1_c(\Omega;\R^n),\,\phi^*_K(\varphi)\leq 1 \right\}\quad \forall\, \Omega \subset \R^m\textit{ open.} 
\end{align*} 
\end{theorem}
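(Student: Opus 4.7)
The plan is to establish the two inequalities separately. For the easy direction, fix an admissible $\varphi \in C^1_c(\Omega;\R^n)$ with $\phi^*_K(\varphi) \leq 1$, write $\mu$ in its polar decomposition $\mu = \frac{d\mu}{d|\mu|}\,|\mu|$, and apply the Fenchel inequality (\ref{eq: Fenchel inequality}) pointwise to obtain
$$
\int_\Omega \varphi \cdot d\mu \;\leq\; \int_\Omega \phi^*_K(\varphi(x))\, \phi_K\!\left(\tfrac{d\mu}{d|\mu|}(x)\right) d|\mu|(x) \;\leq\; |\mu|_K(\Omega).
$$
Taking the supremum over such $\varphi$ yields one inequality immediately.

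For the reverse inequality, I would combine Lemma~\ref{lem: AFP page 3} with a smoothing argument. Given $\epsilon > 0$, Lemma~\ref{lem: AFP page 3} supplies a finite pairwise disjoint collection of bounded Borel sets $\{G_1,\dots,G_N\} \subset \Omega$, whose closure is compactly contained in $\Omega$ (after a small shrinking and truncation exploiting the fact that $|\mu|$ is Radon), such that $\sum_{h=1}^N \phi_K(\mu(G_h)) \geq |\mu|_K(\Omega) - \epsilon$. For each $h$, by definition of $\phi_K$ I would pick $z_h \in \partial K$ attaining the supremum, so that $\phi_K(\mu(G_h)) = z_h \cdot \mu(G_h)$ and $\phi^*_K(z_h) = 1$ by (\ref{eq: Phi=1, Phi* =1}). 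The simple function $\psi := \sum_{h=1}^N z_h \chi_{G_h}$ then takes values in the closed convex set $\overline{K} = \{\phi^*_K \leq 1\}$ (Proposition~\ref{prop: Maggi 20.10}, item (iii)) and satisfies
$$
\int_\Omega \psi \cdot d\mu \;=\; \sum_{h=1}^N z_h \cdot \mu(G_h) \;=\; \sum_{h=1}^N \phi_K(\mu(G_h)) \;\geq\; |\mu|_K(\Omega) - \epsilon.
$$

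The remaining step is to pass from $\psi$ to an admissible $C^1_c(\Omega;\R^n)$ test function without losing either the pointwise constraint $\phi^*_K \leq 1$ or much of the integral against $\mu$. I would first apply Lusin's theorem relative to $|\mu|$ to obtain a compact set, inside the prescribed support, on which $\psi$ is continuous and off which $|\mu|$ is arbitrarily small. Extending this continuous restriction via Tietze and composing with the $1$-Lipschitz nearest-point projection onto the compact convex set $\overline{K}$ produces a continuous, $\overline{K}$-valued, compactly-supported-in-$\Omega$ function $\tilde\psi$ that approximates $\psi$ in $L^1(|\mu|;\R^n)$. I would then mollify $\tilde\psi$ by a standard mollifier $\rho_\delta$; since $\rho_\delta$ is a probability density and $\phi^*_K$ is convex, Jensen's inequality gives the crucial bound $\phi^*_K(\rho_\delta * \tilde\psi) \leq \rho_\delta * \phi^*_K(\tilde\psi) \leq 1$, so the constraint survives mollification. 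Uniform convergence $\rho_\delta * \tilde\psi \to \tilde\psi$ on its compact support then forces $\int (\rho_\delta * \tilde\psi)\cdot d\mu \to \int \tilde\psi \cdot d\mu$; combining these estimates shows that the supremum in the statement exceeds $|\mu|_K(\Omega) - 2\epsilon$, and sending $\epsilon \to 0$ concludes.

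The main obstacle I anticipate is the simultaneous preservation of the convex constraint $\phi^*_K \leq 1$ and of $L^1(|\mu|)$-proximity to $\psi$: since $\mu$ may be singular with respect to Lebesgue measure, the usual $L^1$-convergence estimates for convolutions do not apply directly, and naive constructions (e.g. mollifying $\psi$ itself) risk violating the pointwise constraint. The composition Lusin $+$ Tietze $+$ projection onto $\overline{K}$ $+$ convolution is precisely tuned to circumvent both difficulties, and convexity of $\overline{K}$ is what makes the convolution step automatic.
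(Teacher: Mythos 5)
Your proposal is correct, but it takes a genuinely different route from the paper's. The paper proves the hard inequality in two stages: first, assuming $\phi_K\in C^1(\R^n_0)$, it uses the Euler relations $\phi_K(x)=\nabla\phi_K(x)\cdot x$ and $\phi^*_K(\nabla\phi_K(x))=1$ (Corollary \ref{cor:Euler equations}) to identify $\nabla\phi_K\bigl(\tfrac{d\mu}{d|\mu|}\bigr)$ as an optimal bounded density, approximates it in $L^1(\Omega,|\mu|;\R^n)$ by continuous functions and renormalizes by $\phi_K^*(g_h)+1/h$ to restore admissibility; second, it removes the smoothness assumption by approximating $K$ with $C^{1,1}$ Wulff shapes and using the uniform convergence $\phi_{K_h}\to\phi_K$ (Lemmas \ref{lem: 1.2 of my notes}, \ref{lem:1.3 of my notes}, Proposition \ref{prop:1.4b mie note}). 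You instead work directly at the level of the measure: Lemma \ref{lem: AFP page 3} reduces $|\mu|_K(\Omega)$ to finite sums $\sum_h\phi_K(\mu(G_h))$, the maximizing directions $z_h\in\partial K$ give a simple $\overline{K}$-valued competitor, and Lusin--Tietze--projection--mollification upgrades it to an admissible $C^1_c$ test function, Jensen's inequality preserving the constraint $\phi^*_K\le 1$. Your route avoids any regularization of $K$ and treats all surface tensions uniformly, which is arguably more elementary; the paper's route builds and reuses approximation machinery that serves it elsewhere. Two details you gloss over should be made explicit: when you shrink the $G_h$ to compact subsets $C_h$ of $\Omega$, the loss is controlled by subadditivity together with $\phi_K\le C|\cdot|$, namely $\phi_K(\mu(G_h))\le\phi_K(\mu(C_h))+C\,|\mu|(G_h\setminus C_h)$, so inner regularity of $|\mu|$ suffices; and Tietze alone does not give compact support, so you must also multiply by a continuous cutoff equal to $1$ on the Lusin compact $F$ and supported in an open set $U$ compactly contained in $\Omega$ with $|\mu|(U\setminus F)$ small (outer regularity), which keeps the values in $\overline{K}$ because $0\in K$ and $\overline{K}$ is convex. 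Note finally that once the $G_h$ are replaced by pairwise disjoint compact sets, your simple function is already continuous on their union (disjoint compacta are at positive distance), so the Lusin step can be dispensed with altogether.
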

\noindent
In order to prove Theorem \ref{thm:Charac.anis.total.var.} we need some intermediate results. 
\begin{lemma}\label{lem: 1.2 of my notes}
Let $\left\{ K_h \right\}_{h\in \mathbb{N}}\subset \R^n$, $K\subset \R^n$ be such that $K_h, K$ are as in (\ref{HP per K}) $\forall\, h\in\mathbb{N}$. Assume moreover that
\begin{itemize}
\item[i)] the sequence $(K_h)_{h\in\mathbb{N}}$ is either of the form $K_h \subset K_{h+1}\subset K$, or $K \subset K_{h+1} \subset K_{h}$, $\forall h\in \mathbb{N}$, 
\item[ii)]$ \lim_{h\rightarrow +\infty} \dist_H(K_h,K)=0$.
\end{itemize}
Then, the sequence  $\{\phi_{K_h} \}$ converges uniformly to $\phi_{K}$ in $\mathbb{S}^{n-1}$.
\end{lemma}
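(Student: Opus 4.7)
The plan is to reduce the statement to a simple Lipschitz-type estimate for the map $K\mapsto \phi_K$ with respect to the Hausdorff distance. Concretely, I would first establish that for any two sets $A,B\subset\R^n$ satisfying \eqref{HP per K},
\begin{align*}
|\phi_A(x)-\phi_B(x)|\le |x|\,\dist_H(A,B),\qquad \forall\,x\in\R^n.
\end{align*}
The argument is direct: since $A$ and $B$ are bounded, the supremum in the definition \eqref{def surface tension} is attained on the closures. Given $y\in\overline{A}$, pick $z\in\overline{B}$ with $|y-z|\le \dist_H(A,B)$, which exists by definition of Hausdorff distance; then
\begin{align*}
x\cdot y = x\cdot z + x\cdot(y-z) \le \phi_B(x) + |x|\,\dist_H(A,B).
\end{align*}
Taking the sup over $y\in\overline{A}$ gives $\phi_A(x)\le \phi_B(x)+|x|\,\dist_H(A,B)$, and swapping the roles of $A$ and $B$ gives the other inequality.

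With this bound at hand, I would apply it with $A=K_h$ and $B=K$, obtaining
\begin{align*}
\sup_{x\in\mathbb{S}^{n-1}}|\phi_{K_h}(x)-\phi_K(x)| \le \dist_H(K_h,K),
\end{align*}
so hypothesis \emph{ii)} immediately yields uniform convergence of $\phi_{K_h}$ to $\phi_K$ on $\mathbb{S}^{n-1}$.

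I want to emphasize that hypothesis \emph{i)} (the monotone inclusion of the $K_h$) is not actually needed for this conclusion; the Lipschitz estimate uses only the Hausdorff distance. An alternative route, which does use \emph{i)}, would be to observe that $A\subset B$ implies $\phi_A\le \phi_B$ pointwise, so the sequence $\phi_{K_h}|_{\mathbb{S}^{n-1}}$ is monotone, and to deduce from \emph{ii)} that $\phi_{K_h}\to \phi_K$ pointwise on $\mathbb{S}^{n-1}$; since each $\phi_{K_h}$ and $\phi_K$ are continuous (being convex and finite on $\R^n$), one could then invoke Dini's theorem on the compact set $\mathbb{S}^{n-1}$ to upgrade pointwise to uniform convergence. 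I expect no real obstacle in either route; the only detail to watch is that the supremum in \eqref{def surface tension} is attained on $\overline{K}$ rather than $K$ itself (since the $K_h$ and $K$ are open), which is used in constructing the competitor $z\in\overline{B}$ from $y\in\overline{A}$.
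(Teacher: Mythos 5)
Your proof is correct and follows essentially the same route as the paper: both hinge on the estimate $\sup_{x\in\mathbb{S}^{n-1}}|\phi_{K_h}(x)-\phi_K(x)|\le \dist_H(K_h,K)$, obtained by comparing the maximizing point for one body with a nearest point of the other. The only (minor) difference is that the paper uses the monotone inclusion $K_h\subset K$ to reduce to a one-sided bound, whereas your symmetric Lipschitz estimate shows, correctly, that hypothesis \emph{i)} is not needed for the conclusion.
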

\begin{proof}Without loss of generality we can consider the case when $K_h \subset K_{h+1}\subset K$ $\forall h\in\mathbb{N}$.\\
For every $x\in \mathbb{S}^{n-1}$ and $h\in\mathbb{N}$, let $y(x)\in \partial K$ and $y_h(x)\in \partial K_h$ be such that $\phi_K(x)=y(x)\cdot x$ and $\phi_{K_h}(x)=y_h(x)\cdot x$, respectively. Then, since $K_h\subset K$,
$$ \sup_{x\in \mathbb{S}^{n-1}}|\phi_K(x)-\phi_{K_h}(x)|=\sup_{x\in \mathbb{S}^{n-1}}\left[x\cdot (y(x)-y_h(x))  \right]. $$
Note now that, by definition of $y_h$, we have  $-x\cdot y_h(x)\leq -x\cdot \bar{y}\;$ $\forall\, \bar{y}\in \partial K_h$. In particular, choosing $\bar{y}=z(x)\in\partial K_h$ such that $|y(x)-z(x)|= \dist(y(x),\partial K_h)$, we have 
\begin{align*}
\sup_{x\in\mathbb{S}^{n-1}}|\phi_K(x)-\phi_{K_h}(x)|\leq \sup_{x\in \mathbb{S}^{n-1}}\left[x\cdot (y(x)-z(x))  \right]\leq  \dist(y(x),\partial K_h)\leq\dist_H(K,K_h),
\end{align*}
where in the last inequality we used the fact that $K_h\subset K$. Passing to the limit as $h\rightarrow +\infty$ we conclude.  
\end{proof}

\begin{lemma}\label{lem:1.3 of my notes}
Let $K\subset \R^n$ be as in (\ref{HP per K}). Then there exists a sequence $\{K_h  \}_{h\in \mathbb{N}} \subset \R^n$ with $K_h$ as in (\ref{HP per K}) for every $h\in\mathbb{N}$,  such that 
\begin{itemize}
\item[i)]$K_h$ is $C^{1,1}$, $\forall\,h\in \mathbb{N}$;
\item[ii)] $K \subset \dots \subset K_{h+1}\subset K_{h}\quad \forall h \in \mathbb{N}$;
\item[iii)]$\lim_{h\rightarrow +\infty}\dist_H(K_h,K)=0$.
\end{itemize}
\end{lemma}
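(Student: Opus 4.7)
The natural candidate is the sequence of Minkowski thickenings
\[
K_h := K + B(1/h) = \bigl\{x+y : x\in K,\ y\in B(1/h)\bigr\}, \qquad h\in\mathbb{N},
\]
and most of the verification is routine. The plan is to dispatch the easy properties first and then concentrate on the regularity of $\partial K_h$, which is the only delicate point.

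First I would check that each $K_h$ satisfies \eqref{HP per K}. Openness is immediate from openness of $B(1/h)$; boundedness follows from boundedness of $K$ together with $K_h\subset K+\overline{B(1/h)}$; convexity is preserved by Minkowski sums of convex sets; and $0\in K\subset K_h$. Next, $K\subset K_h$ by choosing $y=0$, and $K_{h+1}\subset K_h$ because $B(1/(h+1))\subset B(1/h)$, giving (ii). For (iii), every $z\in K_h$ decomposes as $z=x+y$ with $x\in K$ and $|y|<1/h$, hence $\mathrm{dist}(z,K)<1/h$, while $K\subset K_h$ gives the reverse inequality trivially; thus $\mathrm{dist}_H(K_h,K)\le 1/h\to 0$.

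The main obstacle is (i), namely that $\partial K_h$ is of class $C^{1,1}$. The plan is to establish a uniform \emph{outer rolling ball} of radius $1/h$ at every boundary point. Given $x\in\partial K_h$, take $y\in\overline{K}$ realizing $\mathrm{dist}(x,\overline K)$; since $\overline{K}$ is closed and convex and $x\notin K$, the minimizer is unique and $|x-y|=1/h$. Setting $\nu_x:=(x-y)/|x-y|\in\mathbb{S}^{n-1}$, one checks that
\[
\overline{B}\bigl(x+(1/h)\nu_x,\ 1/h\bigr)\cap \overline{K_h}=\{x\},
\]
because any other common point would lie at distance strictly less than $1/h$ from a point of $\overline K$, violating the definition of $K_h$. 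Thus $K_h^{c}$ satisfies a uniform $1/h$-ball condition along $\partial K_h$, while convexity of $K_h$ itself provides the inner supporting halfspaces at every boundary point.

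The final step is to pass from the outer rolling ball property to $C^{1,1}$ regularity. The strategy is the classical one: uniqueness of $y$ (and hence of $\nu_x$) identifies the Gauss map of $\partial K_h$ as a well-defined continuous map into $\mathbb{S}^{n-1}$, and the rolling ball condition translates into a Lipschitz estimate for this map, which is equivalent to $\partial K_h$ being a $C^{1,1}$ hypersurface with principal curvatures bounded above by $h$. I would either invoke this directly (it is a standard result in convex geometry, see for instance Schneider's monograph on the Brunn--Minkowski theory) or, if a self-contained argument is preferred, parametrize $\partial K_h$ locally as the graph of the convex function obtained from the support function restricted to a neighborhood of a normal direction and verify Lipschitz continuity of the gradient by hand using the rolling ball. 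This is the only step where care is required; everything else reduces to bookkeeping on Minkowski sums.
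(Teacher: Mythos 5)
Your candidate $K_h=K+B(1/h)$ is exactly the paper's $\epsilon$-neighbourhood $K_{\epsilon}=\bigcup_{x\in K}B(x,\epsilon)$ with $\epsilon=1/h$, and your treatment of (\ref{HP per K}), of the nesting (ii), and of the Hausdorff convergence (iii) matches the paper's argument. The gap is in step (i), and it is not cosmetic: the ball condition you actually prove is on the wrong side of the boundary. The exterior tangent ball $\overline{B}(x+(1/h)\nu_x,1/h)$ meeting $\overline{K_h}$ only at $x$ is available for \emph{every} convex set -- at a corner of a polytope one can also place exterior tangent balls of arbitrary radius -- so it carries no $C^1$, let alone $C^{1,1}$, information. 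What forces regularity of a convex body is the \emph{interior} tangent ball of uniform radius: it is what excludes corners and what yields the upper curvature bound, i.e. the Lipschitz estimate for the Gauss map. Your sentence ``convexity of $K_h$ itself provides the inner supporting halfspaces'' conflates supporting halfspaces (which convexity does give, and which only reproduce the exterior side) with interior balls, and the asserted implication ``outer rolling ball $\Rightarrow$ Lipschitz Gauss map $\Rightarrow$ $C^{1,1}$'' is false as stated. Likewise, uniqueness of the metric projection $y$ of $x$ onto $\overline{K}$ does not by itself give uniqueness of the outer normal of $K_h$ at $x$; that, too, comes from the interior structure. (A minor additional point: your one-line justification of the displayed identity is off, since points of $\overline{K_h}$ are at distance $\le 1/h$ from $\overline{K}$, not $<1/h$; the clean argument is that the supporting hyperplane of $\overline{K}$ at $y$ with normal $\nu_x$ gives $\overline{K_h}\subset\{w:(w-x)\cdot\nu_x\le 0\}$, while the closed exterior ball lies in the opposite halfspace and meets the hyperplane only at $x$.)

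The repair is short and uses objects you already have. For $x\in\partial K_h$ with nearest point $y\in\partial K$, the ball $B(y,1/h)\subseteq \overline{K}+B(1/h)=K_h$ is an interior tangent ball with $x\in\partial B(y,1/h)$; together with your exterior ball this is precisely the two-sided $\epsilon$-ball property, and one then concludes $\partial K_h\in C^{1,1}$ exactly as the paper does, by invoking the characterization of the uniform ball property in \cite[Theorem 1.8]{PolarettiD}. Alternatively, a self-contained route exploits that $B(1/h)$ is a Minkowski summand of $K_h$: if $u_i$ is any outer unit normal of $K_h$ at $x_i\in\partial K_h$, then $x_i-(1/h)u_i\in\overline{K}$ with $u_i$ normal to $\overline{K}$ there, and the monotonicity of normal cones gives $(x_1-x_2)\cdot(u_1-u_2)\ge (1/h)|u_1-u_2|^2$, hence $|u_1-u_2|\le h\,|x_1-x_2|$; taking $x_1=x_2$ yields uniqueness of the normal and in general the Gauss map is $h$-Lipschitz, which is the $C^{1,1}$ bound you were after. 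Either way, the decisive ingredient is the interior ball coming from the thickening, not the exterior ball coming from convexity.
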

\begin{proof}
We divide the proof in few steps. Take any $\epsilon >0$ and let $K_\epsilon=\bigcup_{x\in K}B(x,\epsilon)$ denote the $\epsilon$-neighbourhood of $K$.\\
\textbf{Step 1} In this Step we want to prove that $K_\epsilon$ is convex, open, bounded and it contains the origin. By construction, we need just to prove that it is convex. Consider two generic points $x_1,x_2 \in K_{\epsilon}$, let us show that 
\begin{align*}
\lambda x_1 + (1-\lambda)x_2 \in K_{\epsilon}\quad \forall \lambda \in [0,1].
\end{align*}
Observe that, since $x_1, x_2\in K_\epsilon $ there exist $c_1, c_2\in K$ such that $|x_1-c_1|< \epsilon $ and  $|x_2-c_2|< \epsilon $. Thus,
\begin{align*}
\lambda x_1
+ (1-\lambda)x_2 &= \lambda[c_1+(x_1-c_1)]+(1-\lambda)[c_2 + (x_2-c_2)]\\ &= \lambda c_1 + (1-\lambda)c_2 + \lambda(x_1-c_1) + (1-\lambda)(x_2-c_2).
\end{align*}
Since $\lambda c_1 + (1-\lambda) c_2 \in K$ and $|\lambda (x_1-c_1)+(1-\lambda)(x_2-c_2)|<\epsilon$ we conclude the proof of step 1.\\
\textbf{Step 2} In this step we are going to prove that $K_\epsilon$ satisfies the $\epsilon$-ball property. This is true by construction. Indeed, since $K_\epsilon$ is as in (\ref{HP per K}), we can associate to it the function $\phi_{K_\epsilon}$. So, having in mind (\ref{def: Wulff shape K}) we know that for every $y \in \partial K_\epsilon$ there exists $\nu \in \mathbb{S}^{n-1}$ and an hyperplane $H_{\phi_{K_\epsilon}(\nu)}=\{z\in \R^n:\, z\cdot \nu = \phi_{K_\epsilon}(\nu)  \}$ such that $y \in H_{\phi_{K_\epsilon}(\nu)}$ and $K_\epsilon$ lies on one side of $H_{\phi_{K_\epsilon}(\nu)}$ (this is because $K_\epsilon$ is a convex set). So, we can construct on the exterior of $K_\epsilon$ a ball of arbitrary radius tangent to the hyperplane $H_{\phi_{K_\epsilon}(\nu)}$ in the point $y$. Let us now consider $z\in K_\epsilon$ such that $|z-y|=\epsilon$ in particular, $z\in \partial K$. By construction we have that $B(z,\epsilon)\subset K_\epsilon$ and this concludes the proof of step 2.

\begin{figure}[!htb]
\centering
\def\svgwidth{7cm}
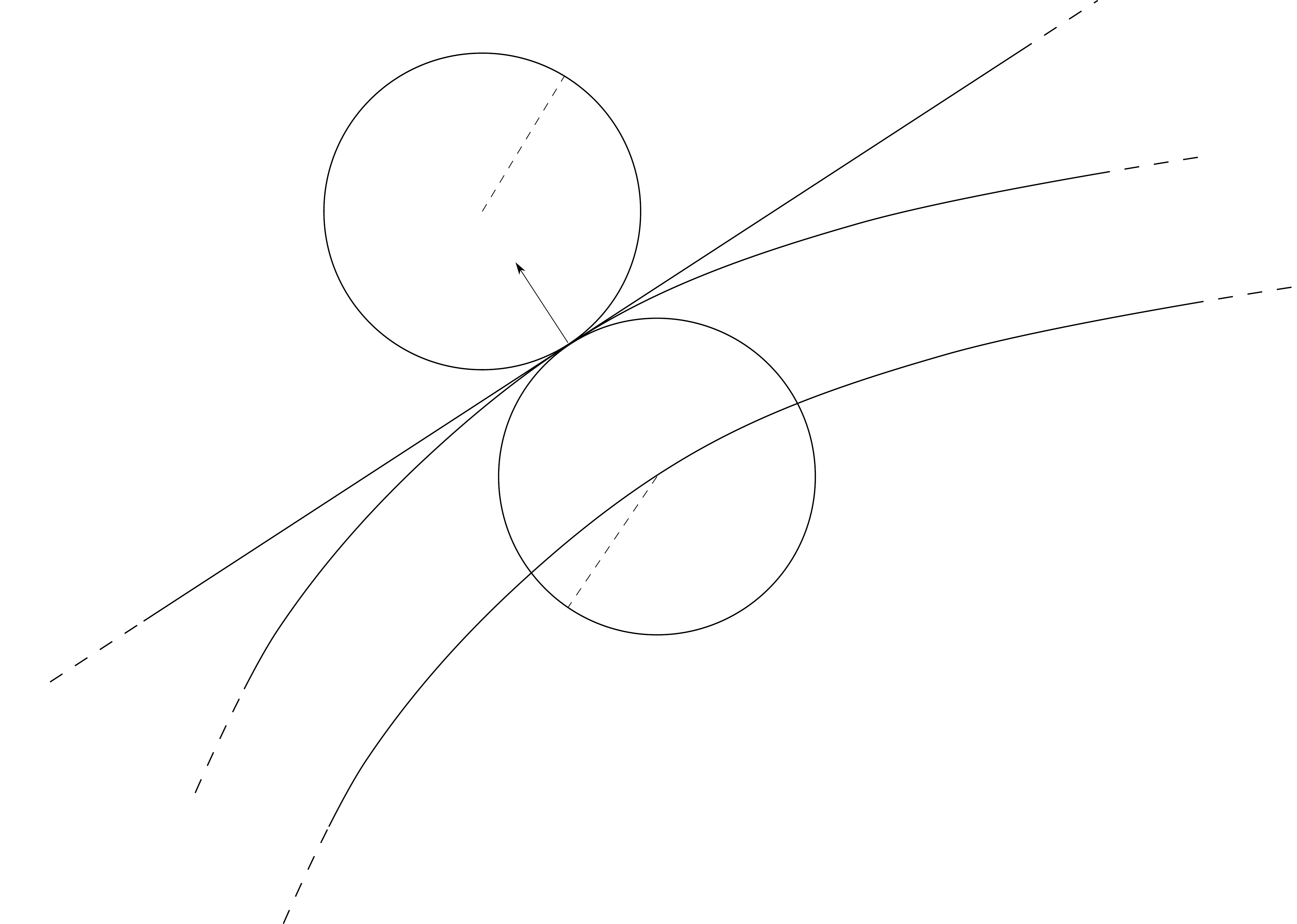
\caption{A pictorial idea for the proof of Lemma \ref{lem:1.3 of my notes}.}
\label{fig:palladentropallafuori}
\end{figure}
\noindent
\textbf{Step 3} We have to prove that $\partial K_{\epsilon} $ is an hypersurface $C^{1,1}$ regular. This result is a straightforward consequence of \cite[Theorem 1.8]{PolarettiD}. \\
\textbf{Step 4} We are left to prove that $\dist_H(\bigcup_{x\in K}B(x,\epsilon),K)\leq \epsilon$. By definition of Hausdorff distance we have that 
\begin{align*}
\dist_H\left(K_{\epsilon},K\right)&= \max \left\{ \sup_{y\in K_{\epsilon}}d(y,K);\sup_{y\in K}d(y,K_{\epsilon})  \right\}\\
&=\max\left\{\epsilon;0  \right\}.
\end{align*}
To conclude the proof of the Lemma let us observe the following. Let us fix a decreasing sequence of positive real numbers $(\epsilon_h)_{h\in\mathbb{N}}$. We can construct the sequence $(K_h)_{h\in\mathbb{N}}$ where $K_h=K_{\epsilon_h}$ is the $\epsilon_h$-neighbourhood of $K$ $\forall\, h\in \mathbb{N}$. By all previous steps, the sequence $(K_h)_{h\in\mathbb{N}}$ satisfies $i), ii)$ and $iii)$ of the Lemma and this concludes the proof. 
\end{proof}
\begin{proposition}\label{prop:1.4b mie note}
Let $K$ be as in (\ref{HP per K}) and let $K^*$ be its dual. Consider $(K_h^*)_{h\in\mathbb{N}}$ a sequence as in (\ref{HP per K}), such that either $K^*_h\subset K^*_{h+1}\subset K^*$ or $K^*\subset K^*_{h+1}\subset K^*_h$, $\forall h\in\mathbb{N}$. Then, denoting with $K_h= (K_h^*)^*$ we have 
\begin{align*}
\lim_{h\rightarrow +\infty} \dist_H(K^*_h,K^*)=0 \quad\textit{ if and only if }\quad \lim_{h\rightarrow +\infty} \dist_H(K_h,K)=0.
\end{align*}
\end{proposition}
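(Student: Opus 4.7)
The plan is to reduce the two Hausdorff convergences to uniform convergence of (support and gauge) functions on $\mathbb{S}^{n-1}$ via the identity $\phi^*_K=\phi_{K^*}$, which follows directly from (\ref{def: gauge function}), (\ref{def surface tension}) and Remark \ref{rem: abouth gauge function}.

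First I would establish that, for a monotone sequence of convex bodies satisfying (\ref{HP per K}), Hausdorff convergence is equivalent to uniform convergence of the associated support functions on $\mathbb{S}^{n-1}$. The direction ``Hausdorff $\Rightarrow$ uniform'' is precisely Lemma \ref{lem: 1.2 of my notes}. For the reverse, I would use the representation $\overline{K}=\bigcap_{\nu\in\mathbb{S}^{n-1}}\{x:x\cdot\nu\le\phi_K(\nu)\}$ together with $\phi_{K+B(\varepsilon)}=\phi_K+\varepsilon$ on $\mathbb{S}^{n-1}$: if $\varepsilon_h:=\sup_{\mathbb{S}^{n-1}}|\phi_{K_h}-\phi_K|\to 0$, then $\overline{K_h}\subset\overline{K+B(\varepsilon_h)}$ and $\overline{K}\subset\overline{K_h+B(\varepsilon_h)}$, giving $\dist_H(K_h,K)\le\varepsilon_h\to 0$.

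Since polarity reverses inclusions (and by biduality $(K_h^*)^*=K_h$, as $\phi^*_{K^*}=\phi_K$ from Proposition \ref{prop: Maggi 20.10}), the monotonicity assumed on $(K_h^*)$ transfers to the opposite monotonicity of $(K_h)$. Combining this with $\phi_K^*=\phi_{K^*}$, the statement reduces to
\begin{align*}
\phi_{K_h}\to\phi_K\ \text{uniformly on }\mathbb{S}^{n-1}\ \Longleftrightarrow\ \phi^*_{K_h}\to\phi^*_K\ \text{uniformly on }\mathbb{S}^{n-1},
\end{align*}
and by biduality it suffices to prove one implication, say ``$\Rightarrow$''.

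For the key step I would use the monotonicity and (\ref{HP per K}) to produce $r>0$ with $K_h\supset B(r)$ for every $h$ (in the case $K_h\subset K_{h+1}\subset K$ via $K_h\supset K_0\supset B(r)$; in the case $K\subset K_{h+1}\subset K_h$ via $K_h\supset K\supset B(r)$). By Proposition \ref{prop: Maggi 20.10}(i) this yields $\phi_{K_h}(y)\ge r|y|$ and $\overline{K_h^*},\overline{K^*}\subset B(1/r)$ uniformly in $h$. Setting $\varepsilon_h:=\sup_{\mathbb{S}^{n-1}}|\phi_{K_h}-\phi_K|$, by $1$-homogeneity $|\phi_{K_h}(x)-\phi_K(x)|\le\varepsilon_h|x|\le\varepsilon_h/r$ for every $x\in\overline{K_h^*}\cup\overline{K^*}$. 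In the case $K_h\subset K$ (so $\overline{K^*}\subset\overline{K_h^*}$), every $x\in\overline{K_h^*}$ satisfies $\phi_K(x)\le 1+\varepsilon_h/r$, hence $x/(1+\varepsilon_h/r)\in\overline{K^*}$; for any $y\in\mathbb{S}^{n-1}$,
\begin{align*}
x\cdot y\le\Bigl(1+\frac{\varepsilon_h}{r}\Bigr)\phi_{K^*}(y)=\Bigl(1+\frac{\varepsilon_h}{r}\Bigr)\phi^*_K(y).
\end{align*}
Taking the supremum over $x\in\overline{K_h^*}$ and using $\phi^*_K(y)\le 1/r$ on $\mathbb{S}^{n-1}$ gives
\begin{align*}
0\le\phi^*_{K_h}(y)-\phi^*_K(y)\le\frac{\varepsilon_h}{r^2}\qquad\forall\,y\in\mathbb{S}^{n-1}.
\end{align*}
The opposite monotone case is symmetric with the roles of $K$ and $K_h$ swapped. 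The main technical point, and essentially the only one, is this last estimate; it rests on the uniform inradius bound granted by (\ref{HP per K}) together with monotonicity, without which the dual bodies could fail to remain uniformly bounded. Everything else is formal or already available in the excerpt.
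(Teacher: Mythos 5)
Your proposal is correct, and it shares the paper's skeleton (Lemma \ref{lem: 1.2 of my notes} to pass from Hausdorff convergence to uniform convergence of the support/gauge functions, and biduality $(\phi_K^*)^*=\phi_K$ to get the converse implication for free), but the key quantitative step is implemented differently. The paper, after obtaining uniform convergence of $\phi^*_{K_h}$ to $\phi^*_K$ on $\mathbb{S}^{n-1}$, estimates $\dist_H(K_h,K)$ directly and geometrically: for $x\in\partial K$ it takes the radial point $x_{K_h}=\{tx:t>0\}\cap\partial K_h$ and uses the identity $|x-x_{K_h}|=|x_{K_h}|\bigl(\phi^*_{K_h}(x)-\phi^*_K(x)\bigr)$, so no converse ``uniform $\Rightarrow$ Hausdorff'' lemma and no inradius bound are needed (only the upper bound $|x_{K_h}|\le\diam K$, which is immediate from $K_h\subset K$). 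You instead reduce both Hausdorff convergences to uniform convergence of support functions (adding the standard Minkowski-sum argument $\overline{K_h}\subset\overline{K+B(\varepsilon_h)}$, $\overline{K}\subset\overline{K_h+B(\varepsilon_h)}$ for the missing direction) and then transfer uniform convergence from $\phi_{K_h}$ to $\phi^*_{K_h}$ via the scaling inclusion $\overline{K_h^*}\subset(1+\varepsilon_h/r)\,\overline{K^*}$, which genuinely requires the uniform inradius $B(r)\subset K_h$ supplied by monotonicity and (\ref{HP per K}); your verification of this point is correct, and the biduality step applies legitimately to the family $(K_h^*)$ because it is monotone and satisfies (\ref{HP per K}) as well. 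What your route buys is an explicit rate, $0\le\phi^*_{K_h}-\phi^*_K\le\varepsilon_h/r^2$ on $\mathbb{S}^{n-1}$ (hence $\dist_H(K_h^*,K^*)\lesssim\varepsilon_h$), and a cleanly isolated, reusable equivalence between Hausdorff convergence and uniform convergence of support functions; what the paper's route buys is brevity, since the radial-projection identity converts gauge convergence into the Hausdorff estimate in one line without the auxiliary lemma or the inradius discussion.
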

\begin{proof} Let us assume that $\lim_{h\rightarrow +\infty} \dist_H(K^*_h,K^*)=0$ and, without loss of generality, that $K^*\subset K^*_{h+1}\subset K^*_h$, $\forall h\in\mathbb{N}$. We can apply immediately Lemma \ref{lem: 1.2 of my notes} to the sequence $\left\{ K_h^* \right\}_{h\in \mathbb{N}}$ to obtain that $\phi_{K_h^*}$ uniformly converges to $\phi_{K^*}$.
Consider the following quantity
\begin{align*}
\dist_H(K_h,K)=\max\left\{\sup_{x\in K_h}d(x,K); \sup_{x\in K}d(x,K_h)   \right\}.
\end{align*}
Now, by the way the $K_h^*$ are constructed, and having in mind $iii)$ of Proposition \ref{prop: Maggi 20.10}, we have 
\begin{align*}
K_h\subset K_{h+1}\subset \dots \subset K \quad \forall h \in \mathbb{N}.
\end{align*} 
This fact immediately tells us that 
$$ \sup_{x \in K_h}d(x,K)=0. $$
Let us focus our attention now on $\sup_{x\in K}d(x,K_h)$, thus
\begin{align*}
\sup_{x\in K}d(x,K_h)&= \sup_{x\in \partial K}d(x,K_h)= \max_{x\in \partial K}d(x,K_h)\leq \max_{x\in \partial K}|x-x_{K_h}|,
\end{align*}
where $x_{K_h}= \left\{tx:\, t>0   \right\}\cap \partial K_h$. By observing that  
$\phi_{K_h}^*(x)=\frac{|x|}{|x_{K_h}|}\phi_{K_h}^*(x_{K_h})=\frac{|x|}{|x_{K_h}|}$, and since $|x|-|x_{K_h}|=|x-x_{K_h}|$, we get
\begin{align*}
|x-x_{K_h}|\frac{1}{|x_{K_h}|}= \left(\phi_{K_h}^*(x)- \phi_{K}^*(x)  \right).
\end{align*}

\begin{figure}
\centering
\def\svgwidth{7cm}
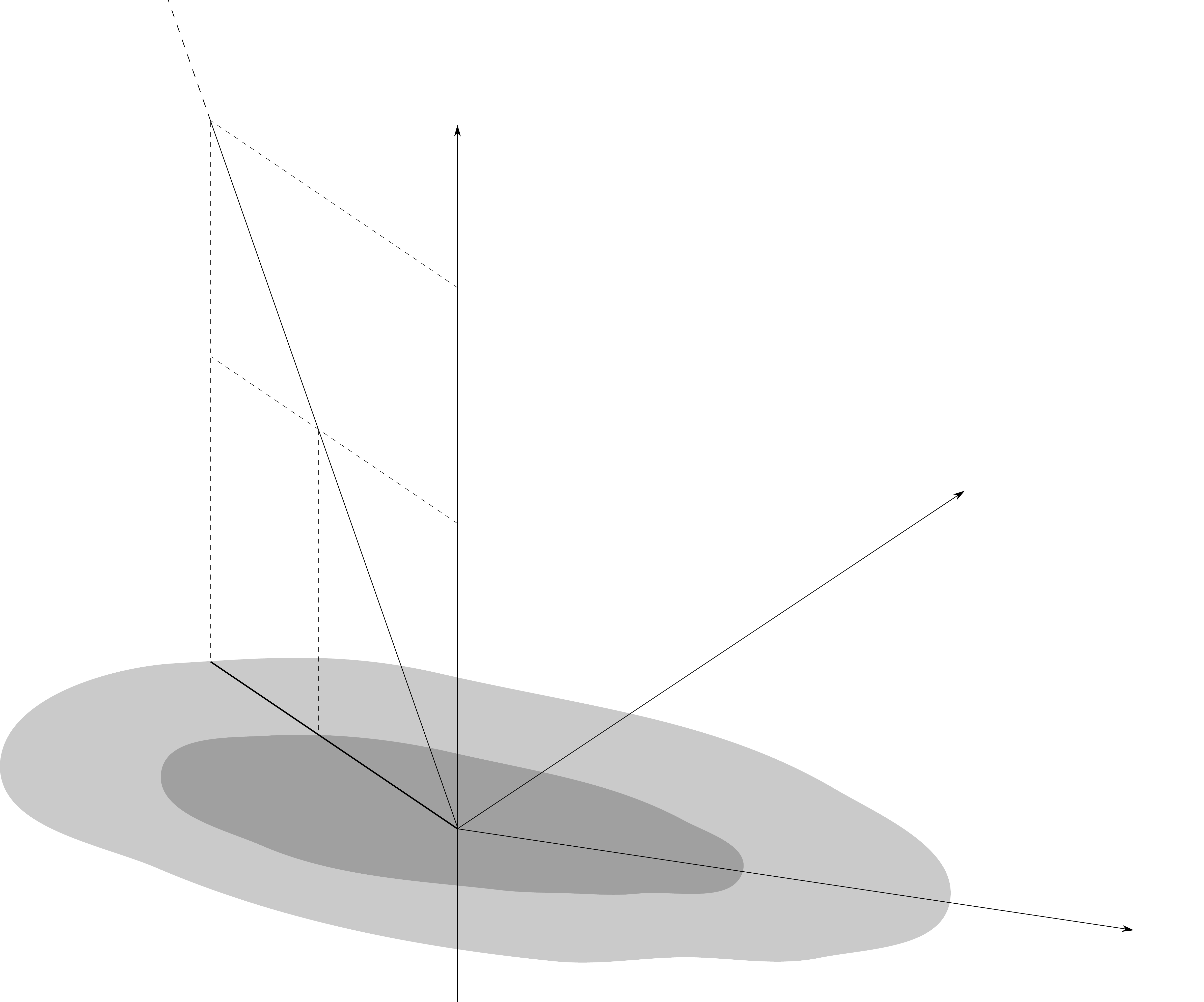
\caption{A pictorial idea for the proof of Proposition \ref{prop:1.4b mie note}.}
\label{fig:pagina50}
\end{figure}

Thus,
\begin{align*}
\lim_{h\rightarrow +\infty}|x-x_{ K_h}|= \lim_{h \rightarrow +\infty} |x_{K_h}|\left(\phi_{K_h}^*(x)- \phi_{K}^*(x)  \right)= 0\quad \forall\,x\in\partial K
\end{align*}
thanks to the uniform convergence of $\phi_{K_h}^*$ to $\phi_{K}^*$. This shows that $\left\{ K_h  \right\}\subset \R^n$ converges in Hausdorff distance to $K$. Since $(K^*)^*=K$, $(K_h^*)^*=K_h$ the proof is complete.
\end{proof}
\noindent
We can now prove Theorem \ref{thm:Charac.anis.total.var.}.
\begin{proof} For the sake of clarity we decided to divide the proof in several steps.\\ \noindent
\textbf{Step 1} Assume $\Omega\subset \R^n$ to be an open, bounded set. We start proving
\begin{align*}
\int_{\Omega}\phi_{K}\left(\frac{d\mu}{d|\mu|}(x)  \right)d|\mu|(x)\geq \sup \left\{\int_{\Omega}\varphi(x)\cdot d\mu(x):\, \varphi\in C^1_c(\Omega;\R^n),\,\phi_{K}^*(\varphi)\leq 1 \right\}.
\end{align*}
Let us observe that by definition of $\phi_K$ we have 

\begin{align*}
|\mu|_K(\Omega)&=\int_{\Omega}\phi_{K}\left(\frac{d\mu}{d|\mu|}(x)  \right) d|\mu|(x)
=\int_{\Omega}\left(\sup_{y\in K}y\cdot \frac{d\mu}{d|\mu|}(x)\right) d|\mu|(x)\\
&
\geq \int_{\Omega}\varphi(x)\cdot \frac{d\mu}{d|\mu|}(x)\,d|\mu|(x),
\end{align*}
where $\varphi \in C^1_c(\Omega;\R^n)$, $\phi_{K}^*(\varphi)\leq 1$. Passing to the sup on the right hand side we conclude the first step.\\
\textbf{Step 2} We want to prove the reverse inequality, namely
\begin{align*}
|\mu|_{K}(\Omega)\leq \sup \left\{\int_{\Omega}\varphi(x)\cdot d\mu(x):\, \varphi\in C^1_c(\Omega;\R^n),\,\phi_{K}^*(\varphi)\leq 1 \right\},
\end{align*}
In order to do so, we consider at first the case when $\phi_K$ is in addition $C^1(\R^n_0)$. Recalling relations (\ref{eq:Euler equations}), we have 
\begin{align*}
|\mu|_K(\Omega)= \int_{\Omega}\phi_K\left(\frac{d\mu}{|d\mu|}(x)\right)d|\mu|(x)=\int_{\Omega}\nabla\phi_K\left(\frac{d\mu}{|d\mu|}(x)  \right)\cdot \frac{d\mu}{|d\mu|}(x)\,d|\mu|(x).
\end{align*}
Since $\nabla\phi_K \in C^0(\R^n_0)$, the composition $\nabla\phi_K\left(\frac{d\mu}{|d\mu|}(x)  \right)$ is well defined, and moreover, 
$$\nabla\phi_K\left(\frac{d\mu}{|d\mu|}(\cdot)  \right)\in L^1_{loc}(\Omega,|\mu|;\R^n),$$
with $\phi_K^*\left( \nabla\phi_K\left(\frac{d\mu}{|d\mu|}(x)  \right) \right)=1$ for $|\mu|$-a.e. $x \in \Omega$. Recall that 
\begin{align*}
\phi_K^*\left( \nabla\phi_K\left(\frac{d\mu}{|d\mu|}(x)  \right) \right)=1\quad \textit{\emph{implies} }\quad \nabla\phi_K\left(\frac{d\mu}{|d\mu|}(x)\right) \in \partial K,\quad \textit{for $|\mu|$-a.e. $x \in \Omega$,}
\end{align*} 
so that $\nabla\phi_K\left(\frac{d\mu}{|d\mu|}(x)\right)\in L^{\infty}(\Omega,|\mu|;\R^n)$. By the fact that $\Omega$ is a bounded set we have that 
\begin{align*}
\nabla\phi_K\left( \frac{d\mu}{|d\mu|}(\cdot) \right) \in L^p(\Omega,|\mu|;\R^n) \quad \forall p \geq 1.
\end{align*} 
Let us call $f:=\nabla\phi_K\left( \frac{d\mu}{|d\mu|} \right) $. By \cite[Remark 1.46]{AFP} there exist a sequence $(g_h)_h\in C^0_c(\Omega;\R^n)$ such that $g_h\to f$ in $L^1(\Omega,|\mu|;\R^n)$. Since every function in $C^0_c$ can be uniformly approximated 
by functions in $C^1_c$ we can suppose without loss of generality that the sequence $(g_h)_h\in C^1_c(\Omega;\R^n)$. Now we consider the sequence $(\tilde{g})_h\in C^0_c(\Omega;\R^n)$ defined as
$$
\tilde{g}_h(x):=\frac{g_h(x)}{\phi_K^*(g_h(x))+1/h}\quad \forall\,h\in \mathbb{N}.
$$
By construction, up to a subsequence, we have that $\tilde{g}_h\to f$ $|\mu|$-a.e. on $\Omega$ and, thanks to the term $1/h$ in the denominator, $\tilde{g}_h(x)\in \mathring{K}$, so that $\phi_K^*(\tilde{g}_h(x))<1$ for every $h\in\mathbb{N}$ and for $|\mu|$-a.e. $x\in\Omega$. By the continuity of the functions $\tilde{g}_h$, for every $h\in\mathbb{N}$ there exists $\lambda=\lambda(h)>0$ such that $0<\lambda(h)<1$ and $\tilde{g}_h(x)\in \lambda(h)K$ for every $x\in \Omega$. Again, using the fact that $C^1_c(\Omega;\R^n)$ is dense in $C^0_c(\Omega;\R^n)$ we can proceed as follow: let $(\epsilon_h)_{h\in\mathbb{N}}$ be such that $\epsilon_h>0$ for every $h\in\mathbb{N}$ and $\epsilon_h\to 0$ for $h\to \infty$. For every $h\in\mathbb{N}$ let $f_h\in C^1_c(\Omega;\R^n)$ be such that 
$$
\sup_{x\in\Omega}\left|f_h(x)-\tilde{g}_h(x) \right|<\epsilon_h.
$$
Since $\dist(\partial (\lambda(h)K);\partial K)>0$ for every $h\in\mathbb{N}$ , choosing $\epsilon_h$ small enough we get that $\forall\, h\in\mathbb{N}$ $f_h(x)\in K$ for every $x\in\Omega$ . 
Thus, by the Lebesgue dominated convergence theorem
\begin{align*}
|\mu|_K(\Omega)&=\int_{\Omega} \phi_K\left(\frac{d\mu}{d|\mu|}(x)  \right)d|\mu|(x)=\int_{\Omega}\lim_{h\rightarrow \infty}f_h(x)\cdot \frac{d\mu}{d|\mu|}(x)d|\mu|(x)
\\ &= \lim_{h\rightarrow \infty}\int_{\Omega}f_h(x)\cdot \frac{d\mu}{d|\mu|}(x)d|\mu|(x)\leq \sup_{h\in \mathbb{N}}\int_{\Omega}f_h(x)\cdot \frac{d\mu}{d|\mu|}(x)d|\mu|(x)\\
&\leq \sup_{\underset{\phi_K^*(\varphi)\leq 1}{\varphi \in C^1_c(\Omega;\R^n),}}\int_{\Omega}\varphi(x)\cdot \frac{d\mu}{d|\mu|}(x)d|\mu|(x).
\end{align*}
This concludes step 2.\\
\textbf{Step 3} We want now to prove the statement for a generic $\phi_K$. Thus, thanks to Lemma \ref{lem:1.3 of my notes} consider $\{K_h  \}_{h\in \mathbb{N}}\subset \R^n$ a sequence as in (\ref{HP per K}) with $K_h\subset K_{h+1}\subset \dots \subset K$ and such that the sequence satisfies the assumptions of Lemma \ref{lem: 1.2 of my notes} so that  $\phi_{K_h}$ uniformly converges to $\phi_{K}$.
Therefore, applying step 2 we get
\begin{align*}
|\mu|_{K_h}(\Omega)&= \int_{\Omega}\phi_{K_h}\left(\frac{d\mu}{d|\mu|}  \right)d|\mu|(x)
=\sup_{\underset{\phi^*_{K_h}(\varphi)\leq 1}{\varphi \in C^1_c(\Omega;\R^n),}}\int_{\Omega}\varphi(x)\cdot \frac{d\mu}{d|\mu|}(x)d|\mu|(x)\\
&\leq \sup_{\underset{\phi^*_K(\varphi)\leq 1}{\varphi \in C^1_c(\Omega;\R^n),}}\int_{\Omega}\varphi(x)\cdot \frac{d\mu}{d|\mu|}(x)d|\mu|(x),
\end{align*}
where we used the fact that $\phi^*_K(\varphi)\leq 1$ as a consequence of $\phi^*_{K_h}(\varphi)\leq 1$ and of $K_h\subset K$. Now, thanks to the uniform convergence of the functions $\phi_{K_h}$ to $\phi_K$ we get
\begin{align*}
|\mu|_K(\Omega)&=\int_{\Omega}\phi_{K}\left(\frac{d\mu}{d|\mu|}  \right)d|\mu|(x)=\lim_{h\rightarrow +\infty}\int_{\Omega}\phi_{K_h}\left(\frac{d\mu}{d|\mu|}  \right)d|\mu|(x)\\
&\leq \sup_{\underset{\phi^*_K(\varphi)\leq 1}{\varphi \in C^1_c(\Omega;\R^n),}}\int_{\Omega}\varphi(x)\cdot \frac{d\mu}{d|\mu|}(x)d|\mu|(x).
\end{align*}
This concludes the proof in the case $\Omega$ open and bounded. From standard considerations about outer measures, the extension of this result for unbounded open set follows.
\end{proof}
\noindent
The following result is the anisotropic version of \cite[Lemma 3.7]{CCPMSteiner}.
\begin{lemma}\label{lem:1.9 pag 70}
If $\nu$ and $\mu$ are $\R^n$-valued Radon measure on $\R^{m}$, then
\begin{align}\label{eq:1.9 pag 70}
2|\mu|_{K}(G)\leq |\mu+\nu|_{K}(G)+ |\mu-\nu|_{K}(G)
\end{align}
for every Borel set $G\subset \R^{m}$.
\end{lemma}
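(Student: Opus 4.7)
The plan is to use the duality characterization of the anisotropic total variation provided by Theorem \ref{thm:Charac.anis.total.var.}, together with a standard outer-regularity argument to pass from open sets to arbitrary Borel sets.

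First I would prove the inequality when $G = \Omega$ is open. Let $\varphi \in C^1_c(\Omega;\R^n)$ be any test function with $\phi_K^*(\varphi) \leq 1$. The elementary identity
\begin{equation*}
2\int_{\Omega} \varphi(x) \cdot d\mu(x) = \int_{\Omega} \varphi(x) \cdot d(\mu+\nu)(x) + \int_{\Omega} \varphi(x) \cdot d(\mu-\nu)(x)
\end{equation*}
holds trivially. Since $\phi_K^*(\varphi) \leq 1$, Theorem \ref{thm:Charac.anis.total.var.} applied to each of $\mu+\nu$ and $\mu-\nu$ yields
\begin{equation*}
\int_{\Omega} \varphi \cdot d(\mu+\nu) \leq |\mu+\nu|_K(\Omega), \qquad \int_{\Omega} \varphi \cdot d(\mu-\nu) \leq |\mu-\nu|_K(\Omega),
\end{equation*}
so that $2\int_{\Omega} \varphi \cdot d\mu \leq |\mu+\nu|_K(\Omega) + |\mu-\nu|_K(\Omega)$. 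Taking the supremum over all admissible $\varphi$ on the left-hand side and invoking again Theorem \ref{thm:Charac.anis.total.var.} gives $2|\mu|_K(\Omega) \leq |\mu+\nu|_K(\Omega) + |\mu-\nu|_K(\Omega)$.

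To extend the inequality to an arbitrary Borel set $G \subset \R^m$, I would use the outer regularity of the Radon measures $|\mu+\nu|_K$ and $|\mu-\nu|_K$, which follows from Remark \ref{rem: |mu|_K << |mu|} (they are absolutely continuous with respect to $|\mu+\nu|$ and $|\mu-\nu|$, respectively, and hence Radon). Fix $\varepsilon > 0$ and choose open sets $\Omega_1, \Omega_2 \supset G$ with $|\mu+\nu|_K(\Omega_1) \leq |\mu+\nu|_K(G) + \varepsilon$ and $|\mu-\nu|_K(\Omega_2) \leq |\mu-\nu|_K(G) + \varepsilon$. Setting $\Omega := \Omega_1 \cap \Omega_2$, which is open and still contains $G$, monotonicity and the open-set case yield
\begin{equation*}
2|\mu|_K(G) \leq 2|\mu|_K(\Omega) \leq |\mu+\nu|_K(\Omega) + |\mu-\nu|_K(\Omega) \leq |\mu+\nu|_K(G) + |\mu-\nu|_K(G) + 2\varepsilon.
\end{equation*}
Letting $\varepsilon \to 0^+$ gives the desired inequality.

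The only delicate point is the passage to Borel sets: one has to produce a single open neighbourhood $\Omega$ of $G$ that is nearly optimal for both $|\mu+\nu|_K$ and $|\mu-\nu|_K$ simultaneously, which is handled by intersecting two separately chosen neighbourhoods. All other steps are direct applications of Theorem \ref{thm:Charac.anis.total.var.} and the linearity of the pairing $\varphi \mapsto \int \varphi \cdot d\mu$.
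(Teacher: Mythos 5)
Your proof is correct, but it follows a genuinely different route from the paper. The paper's argument works with arbitrary Borel sets from the start: it invokes Lemma \ref{lem: AFP page 3}, which represents $|\mu|_K(G)$ as a supremum of $\sum_h \phi_K(\mu(G_h))$ over countable partitions $(G_h)$ of $G$, and then applies the subadditivity (\ref{eq: phi subadditiva}) of $\phi_K$ to the vectors $(\mu\pm\nu)(G_h)$ termwise; summing and taking suprema gives $|2\mu|_K(G)\leq|\mu+\nu|_K(G)+|\mu-\nu|_K(G)$ directly, with no open-set reduction or regularity step. You instead use the duality formula of Theorem \ref{thm:Charac.anis.total.var.} (legitimately, since it is established earlier in the same section), where the inequality on open sets becomes a one-line consequence of the linearity of $\varphi\mapsto\int\varphi\cdot d\mu$ under the single constraint $\phi_K^*(\varphi)\leq 1$, and you then pass to Borel sets by outer regularity, correctly noting via Remark \ref{rem: |mu|_K << |mu|} that $|\mu\pm\nu|_K$ are Radon and handling the simultaneous approximation by intersecting two near-optimal open neighbourhoods. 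What each approach buys: the paper's partition argument is more elementary (Jensen plus subadditivity, no test functions, no approximation) and yields the Borel-set statement in one pass; your duality argument is conceptually cleaner in that it exhibits the inequality as a trivial consequence of convex duality, but it leans on the heavier Theorem \ref{thm:Charac.anis.total.var.} and requires the extra regularity step to leave the class of open sets.
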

\begin{proof}
Fix a generic partition of $G$ made by bounded Borel sets $\{ G_i \}_{i\in \mathbb{N}}$, by subadditivity we have
\begin{align*}
\phi_{K}\left(2\mu(G_i)\right)&= \phi_{K}\left(\mu(G_i)+ \nu(G_i) +\mu(G_i) -\nu(G_i)   \right)\\
&\leq \phi_{K}\left((\mu+\nu)(G_i)  \right) + \phi_{K}\left((\mu-\nu)(G_i)  \right).
\end{align*} 
Thus,
\begin{align*}
\sum_{i\in\mathbb{N}}\phi_{K}\left(2\mu(G_i)\right)&\leq \sum_{i\in\mathbb{N}}\left[ \phi_{K}\left((\mu+\nu)(G_i)  \right) + \phi_{K}\left((\mu-\nu)(G_i)  \right) \right].
\end{align*}
Then thanks to Lemma \ref{lem: AFP page 3} and passing to the $\sup$ in both sides we get
\begin{align*}
|2\mu|_{K}(G)&\leq \sup_{\{G_i\}}\sum_{i\in\mathbb{N}}\left[ \phi_{K}\left((\mu+\nu)(G_i)  \right) + \phi_{K}\left((\mu-\nu)(G_i)  \right) \right]\\
&\leq \sup_{\{G_i\}}\sum_{i\in\mathbb{N}} \phi_{K}\left((\mu+\nu)(G_i)  \right) + \sup_{\{G_k\}}\sum_{k\in\mathbb{N}}\phi_{K}\left((\mu-\nu)(G_k)  \right)\\
&=|\mu+\nu|_{K}(G)+|\mu-\nu|_{K}(G). 
\end{align*}
This concludes the proof.
\end{proof}
\begin{remark}
Let $\mu_1,\mu_2$ be $\R^n$-valued Radon measures on $\R^{m}$. Let us observe that, by (\ref{eq:1.9 pag 70}) with $\mu=\mu_1+\mu_2$ and $\nu=\mu_1-\mu_2$ we obtain
\begin{align}\label{eq: disuguaglianza triangola |mu|K}
|\mu_1+\mu_2|_K\leq |\mu_1|_K+|\mu_2|_K.
\end{align}
On the other hand, let $\nu_1,\nu_2$ be $\R^n$-valued Radon measures on $\R^{m}$.
Then, by the above relation with $\mu_1=\nu_1+\nu_2$ and $\mu_2=-\nu_2$ we get
\begin{align}\label{eq: disuguaglianza triangola inversa |mu|K}
|\nu_1+\nu_2|_K\geq |\nu_1|_K-|-\nu_2|_K.
\end{align}
\end{remark}

\begin{remark}\label{rem: case equality radon measure}
In this Remark we discuss the equality case for relation (\ref{eq:1.9 pag 70}). Let us assume that
\begin{align}\label{eq: case equality radon measure}
2|\mu|_K(G)= |\mu+\nu|_K(G)+ |\mu-\nu|_K(G)\qquad \forall\,\textit{Borel set }G\subset \R^{m}.
\end{align} 
We immediately observe that  if $|\mu|_{K}(G)=0$ then $|\mu+\nu|_{K}(G)=|\mu-\nu|_{K}(G)=|\nu|_{K}(G)=0$, so that
\begin{align*}
|\nu|_{K}\ll|\mu|_{K}.
\end{align*}
Thanks to Radon-Nikodym Theorem we know that $\exists\, g,h\in L^1_{loc}(\R^{m},|\mu|_{K};\R^n)$ s.t.
\begin{align*}
\nu=g|\mu|_{K}\quad\textit{and}\quad
\mu=h|\mu|_{K},
\end{align*}
thus, 
\begin{align*}
\mu\pm\nu=(h\pm g)|\mu|_{K}.
\end{align*}
Observing that 
\begin{align*}
|\mu \pm \nu|_{K}(G)=\int_G \phi_{K}\left(\frac{d(\mu\pm \nu)}{d|\mu\pm\nu|}(x)  \right)d|\mu\pm\nu|(x)=\int_G \phi_{K}\left(\frac{(h\pm g)(x)}{|h\pm g|(x)}  \right)|h\pm g|(x)\,d|\mu|_{K}(x),
\end{align*}
we can now rewrite 
(\ref{eq: case equality radon measure}) as
\begin{align*}
\int_{G}2\phi_{K}\left(h(x)\right)\,d|\mu|_{K}(x)= \int_{G}\phi_{K}\left((h + g)(x)\right)\,d|\mu|_{K}(x)+\int_{G}\phi_{K}\left((h- g)(x)\right)\,d|\mu|_{K}(x).
\end{align*}
that is 
\begin{align*}
\int_{G}\phi_{K}\left(2h(x)\right)-\phi_{K}\left((h + g)(x)\right)-\phi_{K}\left((h- g)(x)\right)\,d|\mu|_{K}(x)=0\quad \forall\,G\subset\R^{m}\textit{ Borel.}
\end{align*}
By subadditivity we get 
\begin{align*}
\phi_{K}\left(2h(x)\right)-\phi_{K}\left((h + g)(x)\right)-\phi_{K}\left((h- g)(x)\right)\leq 0\quad |\mu|_{K}\textit{-a.e.$\,x\in\R^{m}$},
\end{align*}
thus,
\begin{align}\label{eq:1.9star equality}
\phi_{K}(2h(x))=\phi_{K}\left((h+g)(x)\right)+\phi_{K}\left((h-g)(x)  \right) \quad |\mu|_{K}\textit{-a.e.$\,x\in\R^{m}$}.
\end{align}
Thus condition (\ref{eq: case equality radon measure}) is equivalent to (\ref{eq:1.9star equality}) that is equivalent to say, thanks to Proposition \ref{prop:linearityphi}, Remark \ref{rem: more appealing way (iii)} and relation (\ref{eq: more appealing way (iii)}) with $y_1=h+g$ and $y_2=h-g$, that for $|\mu|_{K}\textit{-a.e.$\,x\in\R^{m}$}$ $\exists\, z(x)\in \partial K$ s.t.
\begin{align}\label{eq:conecondition}
\left\{h(x)+tg(x):\,t\in[-1,1]\right\}\subset C^*_{K}(z(x)).
\end{align}
\begin{figure}
\centering
\def\svgwidth{9cm}
\begingroup%
  \makeatletter%
  \providecommand\color[2][]{%
    \errmessage{(Inkscape) Color is used for the text in Inkscape, but the package 'color.sty' is not loaded}%
    \renewcommand\color[2][]{}%
  }%
  \providecommand\transparent[1]{%
    \errmessage{(Inkscape) Transparency is used (non-zero) for the text in Inkscape, but the package 'transparent.sty' is not loaded}%
    \renewcommand\transparent[1]{}%
  }%
  \providecommand\rotatebox[2]{#2}%
  \newcommand*\fsize{\dimexpr\f@size pt\relax}%
  \newcommand*\lineheight[1]{\fontsize{\fsize}{#1\fsize}\selectfont}%
  \ifx\svgwidth\undefined%
    \setlength{\unitlength}{781.72318106bp}%
    \ifx\svgscale\undefined%
      \relax%
    \else%
      \setlength{\unitlength}{\unitlength * \real{\svgscale}}%
    \fi%
  \else%
    \setlength{\unitlength}{\svgwidth}%
  \fi%
  \global\let\svgwidth\undefined%
  \global\let\svgscale\undefined%
  \makeatother%
  \begin{picture}(1,0.66490274)%
    \lineheight{1}%
    \setlength\tabcolsep{0pt}%
    \put(0,0){\includegraphics[width=\unitlength,page=1]{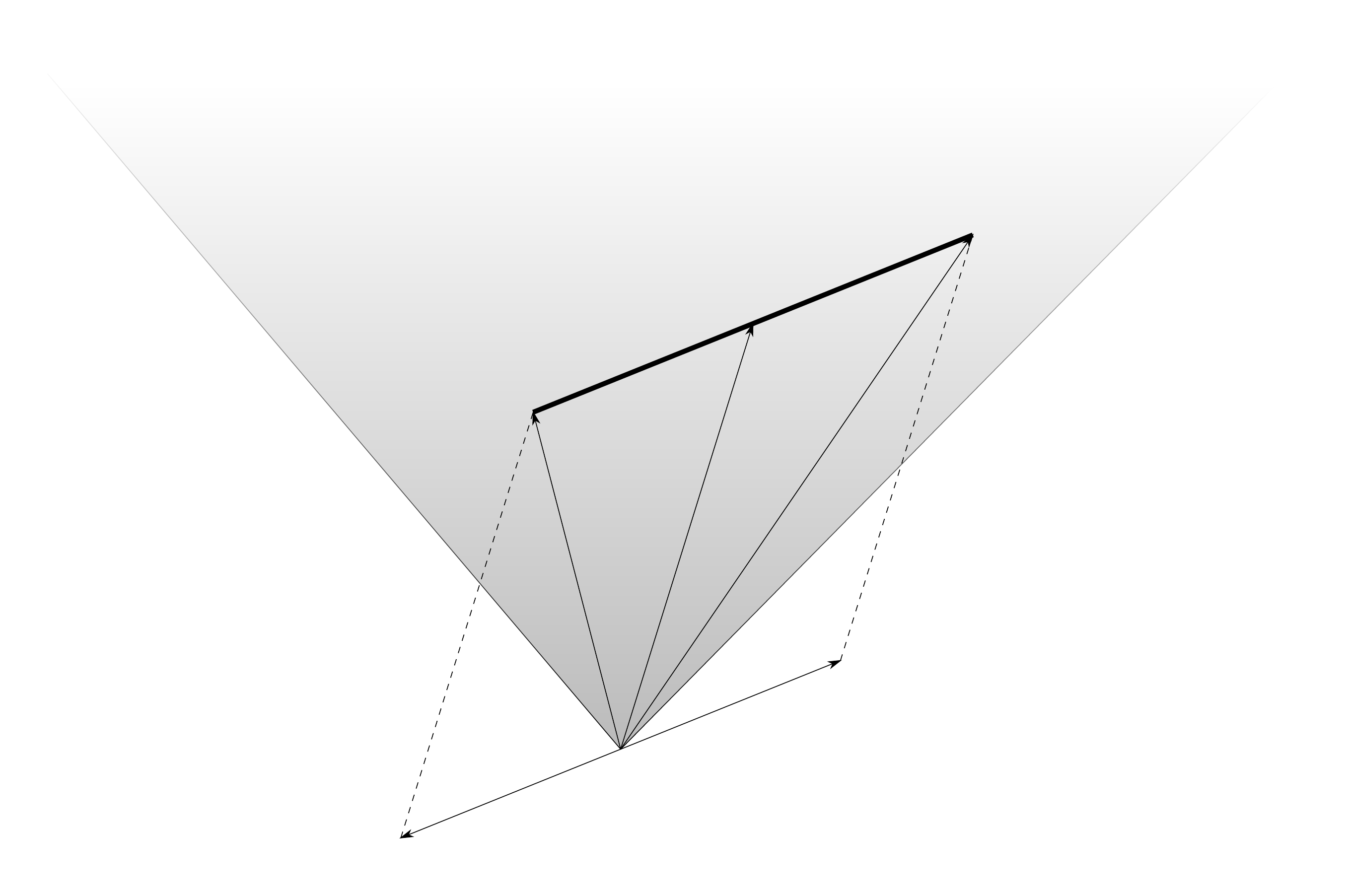}}%
    \put(0.13936692,0.52528448){\color[rgb]{0,0,0}\makebox(0,0)[lt]{\lineheight{0.82591271}\smash{\begin{tabular}[t]{l}\tiny{$C_K^*(\bar{z})$}\end{tabular}}}}%
    \put(0.55612493,0.37797221){\color[rgb]{0,0,0}\makebox(0,0)[lt]{\lineheight{0.82591271}\smash{\begin{tabular}[t]{l}\tiny{$h$}\end{tabular}}}}%
    \put(0.62396616,0.13955903){\color[rgb]{0,0,0}\makebox(0,0)[lt]{\lineheight{0.82591271}\smash{\begin{tabular}[t]{l}\tiny{$-g$}\end{tabular}}}}%
    \put(0.29251356,0.01162995){\color[rgb]{0,0,0}\makebox(0,0)[lt]{\lineheight{0.82591271}\smash{\begin{tabular}[t]{l}\tiny{$g$}\end{tabular}}}}%
    \put(0.46890063,0.06977949){\color[rgb]{0,0,0}\makebox(0,0)[lt]{\lineheight{0.82591271}\smash{\begin{tabular}[t]{l}\tiny{$O$}\end{tabular}}}}%
  \end{picture}%
\endgroup%

\caption{In this picture we give a 2-dimensional representation of condition (\ref{eq:conecondition}) where $h\in C^*_K(\bar{z})$ and $\bar{z}$ is a fixed point in the boundary of the Wulff shape $K$.}
\label{fig:ilgrandeK}
\end{figure}
%
%
%
%
\end{remark}

\section{A formula for the anisotropic perimeter}\label{section 5}\noindent
In this section we will prove a formula for the anisotropic perimeter valid for a specific class of sets of finite perimeter. We recall that, given $u:\R^{n-1}\rightarrow \R$, we denote by $\Sigma_u=\{x\in \R^n:\, \q x > u(\p x)  \}$ and $\Sigma^u=\{x\in \R^n:\, \q x < u(\p x)  \}$ the epigraph and the subgraph of $u$, respectively. As proved in \cite[Proposition 3.1]{CCPMGauss}, $\Sigma_u$ is a set of locally finite perimeter if and only if $\tau_M(u)\in BV_{loc}(\R^{n-1})$ for every $M>0$.
\noindent
Through all this section, given $u\in BV_{loc}(\R^{n-1})$ we consider $\eta:=(Du,-\mathcal{L}^{n-1})$ a $\R^n$-valued Radon measure on $\R^{n-1}$.
\begin{theorem}\label{thm: 1 pag 59 note}
Let $K\subset\R^n$ as in (\ref{HP per K}) and let $u\in BV_{loc}(\R^{n-1})$, then
\begin{align*}
|\eta|_K(B) = |D1_{\Sigma^u}|_{K}(B\times \R) \quad \forall B\subset \R^{n-1}\; Borel.
\end{align*}
\end{theorem}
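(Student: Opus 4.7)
The plan is to first verify the identity in the smooth case by a direct parametrization of the graph, and then to extend it to general $u\in BV_{loc}(\R^{n-1})$ by mollification combined with Reshetnyak's continuity theorem. By outer regularity of Radon measures it suffices to prove the equality when $B=\Omega$ is an open subset of $\R^{n-1}$. I set $\mu:=D1_{\Sigma^u}$, which is a well-defined $\R^n$-valued Radon measure since the hypothesis $u\in BV_{loc}(\R^{n-1})$ implies $\tau_M(u)\in BV_{loc}(\R^{n-1})$ for every $M>0$ and hence that $\Sigma^u$ is of locally finite perimeter.

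When $u\in C^1(\R^{n-1})$, the reduced boundary $\partial^*\Sigma^u$ is the graph of $u$, with outer unit normal $\nu^{\Sigma^u}(x',u(x'))=(-\nabla u(x'),1)/\sqrt{1+|\nabla u(x')|^2}$, and the Gauss--Green identity forces $\mu=-\mu_{\Sigma^u}$, so the density of $\mu$ with respect to $|\mu|=\h^{n-1}\mres\partial^*\Sigma^u$ equals $-\nu^{\Sigma^u}$. Parametrizing the graph by $x'\in\Omega$ and using the $1$-homogeneity of $\phi_K$ yields
\[
|\mu|_K(\Omega\times\R)=\int_\Omega\phi_K(\nabla u(x'),-1)\,dx'=|\eta|_K(\Omega),
\]
since for smooth $u$ the measure $\eta$ equals $(\nabla u,-1)\mathcal{L}^{n-1}$.

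For general $u\in BV_{loc}(\R^{n-1})$ I mollify with $u_h:=u*\rho_{1/h}\in C^\infty(\R^{n-1})$ and fix an open set $\Omega$ with $|\eta|(\partial\Omega)=0$ (such sets are cofinal among open subsets of $\R^{n-1}$). Standard mollification properties give $u_h\to u$ in $L^1_{loc}$ and $Du_h\weakstarconv Du$, and the classical Miranda--Serrin result on the relaxation of the area functional for BV subgraphs yields the strict convergence $|\eta_h|(\Omega)\to|\eta|(\Omega)$, where $\eta_h:=(Du_h,-\mathcal{L}^{n-1})$; equivalently, $\int_\Omega\sqrt{1+|\nabla u_h|^2}\,dx'$ converges to $\int_\Omega\sqrt{1+|\nabla u|^2}\,dx'+|D^s u|(\Omega)$. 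Simultaneously $1_{\Sigma^{u_h}}\to 1_{\Sigma^u}$ in $L^1_{loc}(\R^n)$, so $\mu_h:=D1_{\Sigma^{u_h}}\weakstarconv\mu$, and the classical (unweighted) version of the present theorem, namely $P(\Sigma^v;\Omega\times\R)=|(Dv,-\mathcal{L}^{n-1})|(\Omega)$ for $v\in BV$, upgrades this to strict convergence on the cylinder: $|\mu_h|(\Omega\times\R)\to|\mu|(\Omega\times\R)$.

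Because $\phi_K$ is continuous, convex, and $1$-homogeneous by Proposition \ref{prop: Maggi 20.10}, Reshetnyak's continuity theorem applies to both strictly convergent sequences and delivers
\[
|\eta_h|_K(\Omega)\longrightarrow|\eta|_K(\Omega)\qquad\text{and}\qquad|\mu_h|_K(\Omega\times\R)\longrightarrow|\mu|_K(\Omega\times\R).
\]
Combining these with the smooth-case identity $|\eta_h|_K(\Omega)=|\mu_h|_K(\Omega\times\R)$, valid for every $h$, yields the desired equality on $\Omega$; the extension to all Borel $B\subset\R^{n-1}$ is then immediate by outer regularity. I expect the most delicate step to be securing the strict convergence of $\mu_h$ to $\mu$ on the vertical cylinder $\Omega\times\R$ rather than on the full space $\R^n$: this hinges on the classical Euclidean identity $P(\Sigma^v;\Omega\times\R)=|(Dv,-\mathcal{L}^{n-1})|(\Omega)$ together with the continuity of the BV area functional under mollification, and motivates the cofinality choice $|\eta|(\partial\Omega)=0$, which isolates the boundary from the measure and keeps Reshetnyak's hypotheses satisfied.
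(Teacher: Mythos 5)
Your proposal is correct, but it follows a genuinely different route from the paper. The paper's proof is a duality argument: it invokes Theorem \ref{thm:Charac.anis.total.var.} (the characterization of $|\cdot|_K$ as a supremum over test fields $\varphi$ with $\phi_K^*(\varphi)\leq 1$) and then redoes the classical Giaquinta--Modica--Sou\v{c}ek proof of the Euclidean identity with the constraint $|\varphi|\leq 1$ replaced by $\phi_{K}^*(\varphi)\leq 1$. You instead prove the identity directly for smooth $u$ by parametrizing the graph, and transfer it to general $u\in BV_{loc}$ by mollification: strict convergence of $\eta_h=(Du_h,-\mathcal{L}^{n-1})$ on sets with $|\eta|(\partial\Omega)=0$, strict convergence of $D1_{\Sigma^{u_h}}$ on $\Omega\times\R$ (which you correctly obtain from the \emph{Euclidean} identity $|D1_{\Sigma^u}|(\Omega\times\R)=|\eta|(\Omega)$ used as a black box, giving convergence of the masses), and then Reshetnyak's continuity theorem applied to the $1$-homogeneous continuous integrand $\phi_K$ on both sequences. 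This is sound: the anisotropic statement is reduced to the classical one plus a general continuity theorem, whereas the paper's route is self-contained modulo its own Theorem \ref{thm:Charac.anis.total.var.} and avoids Reshetnyak altogether; amusingly, the paper approximates $K$ (when proving Theorem \ref{thm:Charac.anis.total.var.}) while you approximate $u$. Two minor points to tighten: attributing the strict convergence $|\eta_h|(\Omega)\to|\eta|(\Omega)$ to the relaxation theorem is slightly indirect (it follows from lower semicontinuity plus the Jensen/convolution upper bound $|\eta_h|(\Omega)\leq|\eta|(\overline{\Omega}_{1/h})$, which is where the hypothesis $|\eta|(\partial\Omega)=0$ enters); and the final passage ``to all Borel $B$'' needs, besides outer regularity, the observation that both set functions $B\mapsto|\eta|_K(B)$ and $B\mapsto|D1_{\Sigma^u}|_K(B\times\R)$ are Radon measures and an exhaustion of arbitrary open sets by boundary-negligible ones, which is routine.
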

\begin{proof}
Thanks to Theorem \ref{thm:Charac.anis.total.var.}, the identity follows from a careful inspection of the proof of \cite[Theorem 1 in (part 4, Section 1.5)]{GiaMoSou}. It is important to notice that in the present situation one should replace condition $|\varphi|\leq 1$ with $\phi_{K^s}^*(\varphi)\leq 1$ with $\varphi \in C^1_c(\R^n;\R^n)$.
\end{proof}
\noindent
We recall now an important result concerning how to determine $\nu^{\Sigma^u}$ i.e. the outer normal to the reduced boundary of  the subgraph of the function $u$. Recall that thanks to Radon-Nykodym Theorem we have 
\begin{align*}
Du=D^au+ D^ju + D^cu.
\end{align*}
With a little abuse of notation let us call
$D^{ac}u=D^au+D^cu$, so that 
$$
D^cu=D^{ac}u\mres Z_u
$$
where, 
\begin{align*}
Z_u=\left\{x \in \Omega:\, \frac{d|D^{ac}u|}{d\mathcal{L}^{n-1}}(x)=+\infty  \right\}.
\end{align*}
\begin{theorem}\label{thm:1.6 pag 57 note}
Let $u\in BV(\Omega)$ with $\Omega \subset \R^{n-1}$ open and bounded, then
\begin{itemize}
\item[i)] for $|\eta|$-a.e. $x \in \Omega\setminus J_u$ we have
\begin{align*}
\frac{d\eta}{d|\eta|}(x)= -\nu^{\Sigma^u}(x,u(x)),
\end{align*}
\item[ii)] for $|\eta|$-a.e. $x \in J_u$ we have
\begin{align*}
\frac{d\eta}{d|\eta|}(x)=\left(\frac{dD^ju}{d|D^ju|}(x),0  \right)= (\nu_u(x),0) =-\nu^{\Sigma^u}(x,y) \quad \forall\, y\,s.t.\,(x,y)\in\partial^{*}\Sigma^u,
\end{align*}
\item[iii)] for $|\eta|$-a.e. $x \in \left((\Omega\setminus J_u )\cap \left\{x\in \Omega:\, \q\nu^{\Sigma^u}(x,u\Upp(x))=0  \right\}\right)$ we have 
\begin{align*}
\frac{d\eta}{d|\eta|}(x)= \left(\frac{dD^cu}{d|D^cu|}(x),0  \right).
\end{align*}
\end{itemize}
\end{theorem}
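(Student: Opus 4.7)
The plan is to extract the three identities from the correspondence between $\eta=(Du,-\mathcal{L}^{n-1})$ and the Gauss--Green measure $\mu_{\Sigma^u}=D\chi_{\Sigma^u}$, exploiting Theorem~\ref{thm: 1 pag 59 note} (in the Euclidean case $K=B(1)$, so that $|\eta|(B)=|\mu_{\Sigma^u}|(B\times\R)$ for every Borel $B\subset\Omega$) together with the classical slicing/structure theorems for subgraphs of $BV$ functions. The key observation is that the canonical decomposition $Du=D^a u+D^j u+D^c u$ induces a corresponding decomposition of $\partial^{*}\Sigma^u$ into an ``approximate graph'' piece, a ``vertical wall'' piece over $J_u$, and a ``Cantor'' piece, and that each of these three pieces projects onto $\Omega$ in a way compatible with the three parts of $\eta$. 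I would also polarize Theorem~\ref{thm: 1 pag 59 note} by testing against different convex sets $K$, which allows one to read off the direction of $d\eta/d|\eta|$ from that of $d\mu_{\Sigma^u}/d|\mu_{\Sigma^u}|$.

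For case (i), away from $J_u$, Vol'pert-type slicing (as in Theorem~\ref{thm: Volpert} applied to $\Sigma^u$) implies that for $|\eta|$-a.e.\ $x\in\Omega\setminus J_u$ the fiber $(\partial^{*}\Sigma^u)_x$ reduces to a single point $(x,\tilde u(x))$ with $\mathbf{q}\nu^{\Sigma^u}(x,\tilde u(x))\neq 0$. On the absolutely continuous part one writes $\nu^{\Sigma^u}=(-\nabla u,1)/\sqrt{1+|\nabla u|^2}$ by the standard subgraph formula; comparing with $\frac{d\eta}{d|\eta|}=(\nabla u,-1)/\sqrt{1+|\nabla u|^2}$ yields $\frac{d\eta}{d|\eta|}=-\nu^{\Sigma^u}$. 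For case (ii), on $J_u$ the reduced boundary $\partial^{*}\Sigma^u$ contains the vertical segment $\{x\}\times(u^{\wedge}(x),u^{\vee}(x))$, with outer normal $-\nu_u(x)$ extended by a zero vertical component; since $D^j u=[u]\,\nu_u\,\mathcal{H}^{n-1}\mres J_u$, the restriction of $\eta$ to $J_u$ equals $([u]\nu_u,0)\,\mathcal{H}^{n-1}\mres J_u$, whose Radon--Nikodym derivative against its total variation is exactly $(\nu_u,0)=(\,dD^j u/d|D^j u|,0)$.

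For case (iii), the Cantor part, the strategy is to work on the set $N:=(\Omega\setminus J_u)\cap\{x:\mathbf{q}\nu^{\Sigma^u}(x,u^{\vee}(x))=0\}$. On $N$ the tangent hyperplane to $\partial^{*}\Sigma^u$ is ``vertical'' so the fiber $(\partial^{*}\Sigma^u)_x$ is no longer a single point but a vertical subsegment whose $\mathcal{H}^1$-measure captures exactly the Cantor increment of $u$ at $x$. Since $\mathcal{L}^{n-1}(N)=0$ (as $\nabla u$ exists and is finite $\mathcal{L}^{n-1}$-a.e.) the vertical component of $\eta$ restricted to $N$ vanishes, i.e.\ $\eta\mres N=(D^c u\mres N,0)$. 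Then $\frac{d\eta}{d|\eta|}\big|_N=\big(\frac{dD^c u}{d|D^c u|},0\big)$ follows from the standard Radon--Nikodym calculus, using Lemma~\ref{lem: cantor total variation of v on v=0 is null}-style decompositions to rule out contributions from $\{v^{\wedge}=0\}$-type degenerate sets.

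The main obstacle is the Cantor case (iii): justifying rigorously that $|\eta|$-a.e.\ point of $N$ lies above a Cantor concentration set, and that the horizontal projection of $\mu_{\Sigma^u}$ restricted over $N$ really coincides with $D^c u\mres N$, requires a careful slicing argument combined with the polar decomposition for $D\chi_{\Sigma^u}$ and the coarea formula of Theorem~\ref{thm: 1 pag 59 note}. One has to rule out any residual absolutely continuous or jump contribution on $N$ by an argument based on approximate continuity of $u$ outside $J_u$ and on the characterisation of $Z_u$ as the set where $|D^{ac}u|$ has infinite density. This step is where the technicalities of $GBV$ theory and the subgraph rectifiability results, recalled in Section~\ref{preliminaries}, will do the real work; the rest of the proof is then essentially a bookkeeping of the decomposition of $\eta$ versus that of $\partial^{*}\Sigma^u$.
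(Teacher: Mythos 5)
Your treatment of part (iii) --- the only part of the theorem the paper actually proves rather than cites --- is correct, but it follows a genuinely different route. The paper argues by differentiation at points of $Z_u$: it sandwiches $|Du|(D_{x,\rho})\le|\eta|(D_{x,\rho})\le|Du|(D_{x,\rho})+\mathcal{L}^{n-1}(D_{x,\rho})$, notes that $\mathcal{L}^{n-1}(D_{x,\rho})/|Du|(D_{x,\rho})\to 0$ when $x\in Z_u$, and concludes that $\tfrac{d\eta}{d|\eta|}(x)=\bigl(\tfrac{dD^cu}{d|D^cu|}(x),0\bigr)$ by Besicovitch differentiation. You instead restrict $\eta$ to $N=(\Omega\setminus J_u)\cap\{\q\nu^{\Sigma^u}(\cdot,u^\vee(\cdot))=0\}$, observe that $\mathcal{L}^{n-1}(N)=0$ and $N\cap J_u=\emptyset$, so $\eta\mres N=(D^cu\mres N,0)$, and then use the locality of the polar decomposition ($\tfrac{d\mu}{d|\mu|}=\tfrac{d(\mu\mres E)}{d|\mu\mres E|}$ at $|\mu|$-a.e.\ point of $E$). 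This is valid and arguably more direct, provided you substantiate $\mathcal{L}^{n-1}(N)=0$ (approximate differentiability of $u$ at $\mathcal{L}^{n-1}$-a.e.\ $x$ forces a non-horizontal blow-up normal at $(x,\tilde u(x))$), which you only gesture at. Two caveats: at $|D^cu|$-a.e.\ point of $N$ one has $x\notin S_u$, so the fiber of $\partial^*\Sigma^u$ over $x$ is a single point; vertical segments of positive length ``capturing an increment'' occur over jump points, not Cantor points --- your measure-theoretic argument luckily never uses that picture. Also the proposed ``polarization'' of Theorem \ref{thm: 1 pag 59 note} over varying $K$ is unnecessary and delicate as stated (the theorem requires $0\in K$); the Euclidean identity $|\eta|(B)=|D1_{\Sigma^u}|(B\times\R)$ together with the subgraph structure theory already suffices.

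There is, however, a genuine gap in your sketch of part (i). Statement (i) is for $|\eta|$-a.e.\ $x\in\Omega\setminus J_u$, which includes the Cantor set $Z_u$: this set is Lebesgue-null but in general $|\eta|$-non-negligible, and there $\nu^{\Sigma^u}$ is horizontal, so Vol'pert-type slicing (an $\mathcal{L}^{n-1}$-a.e.\ statement, not an $|\eta|$-a.e.\ one) does not give single-point fibers with $\q\nu^{\Sigma^u}\neq 0$, and the formula $\nu^{\Sigma^u}=(-\nabla u,1)/\sqrt{1+|\nabla u|^2}$ fails. To obtain (i) on $Z_u$ one must identify $-\nu^{\Sigma^u}(x,u(x))$ (with the precise representative of $u$) with $\bigl(\tfrac{dD^cu}{d|D^cu|}(x),0\bigr)$, which is exactly the blow-up analysis in Giaquinta--Modica--Sou\v{c}ek that the paper cites for (i) and (ii); your part (iii) alone does not provide it. A minor slip in (ii): since $u\in BV(\Omega)$ with $\Omega\subset\R^{n-1}$, the jump part is $D^ju=[u]\,\nu_u\,\mathcal{H}^{n-2}\mres J_u$, not $\mathcal{H}^{n-1}\mres J_u$; the rest of your (ii) is the standard vertical-wall structure of $\partial^*\Sigma^u$, which is again the cited GMS result rather than something you derive.
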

\begin{proof}
Statement $(i)$ is proved in $(i)$ of \cite[Theorem 4 in (Part 4, Section 1.5)]{GiaMoSou}. Statement $(ii)$ follows by combining $(ii)$ of \cite[Theorem 4 in (Part 4, Section 1.5)]{GiaMoSou} with $(ii)$ of \cite[Theorem~3 in (Part 4, Section 1.5)]{GiaMoSou}. We will give a proof of point $iii)$. Let $x\in\Omega$ and consider $\rho>0$, and recall (\ref{eq: disc}), then
\begin{align*}
|\eta|(D_{x,\rho})&= \underset{f\in C^0_c(D_{x,\rho},\R^n)}{\sup_{|f|\leq 1}}\int_{D_{x,\rho}}f(y)\cdot d\eta(y)\\
&=\underset{f\in C^0_c(D_{x,\rho},\R^n)}{\sup_{|f|\leq 1}}\left( \int_{D_{x,\rho}}(f_1(y),\dots,f_{n-1}(y))\cdot d Du(y)- \int_{D_{x,\rho}}f_n(y) dy\right)\\
&\leq \underset{f\in C^0_c(D_{x,\rho},\R^n)}{\sup_{|f|\leq 1}}\int_{D_{x,\rho}}(f_1(y),\dots,f_{n-1}(y))\cdot d Du(y) + \underset{f\in C^0_c(D_{x,\rho},\R^n)}{\sup_{|f|\leq 1}}\int_{D_{x,\rho}}f_n(y) dy\\
&= |Du|(D_{x,\rho})+\mathcal{L}^{n-1} (D_{x,\rho}). 
\end{align*}
At the same time we get
\begin{align*}
|\eta|(D_{x,\rho})&= \underset{f\in C^0_c(D_{x,\rho},\R^n)}{\sup_{|f|\leq 1}}\int_{D_{x,\rho}}f(y)\cdot d\eta(y)\\
&\geq \int_{D_{x,\rho}}(f_1(y),\dots,f_{n-1}(y))\cdot d Du(y),
\end{align*}
so that, passing to the sup in the right hand side, it holds
\begin{align*}
|\eta|(D_{x,\rho}) \geq |Du|(D_{x,\rho}).
\end{align*}
Putting together these two inequalities we get
\begin{align}\label{eq: PRE 2 carabienieri}
|Du|(D_{x,\rho})\leq |\eta|(D_{x,\rho})\leq |Du|(D_{x,\rho}) + \mathcal{L}^{n-1}(D_{x,\rho}).
\end{align}
Let now $x\in Z_u$ and let $\rho>0$. Then,
\begin{align*}
\frac{\eta(D_{x,\rho})}{|\eta|(D_{x,\rho})}=\frac{\eta(D_{x,\rho})}{|Du|(D_{x,\rho})}\frac{|Du|(D_{x,\rho})}{|\eta|(D_{x,\rho})}.
\end{align*}
Since 
$$
\lim_{\rho\rightarrow 0^+}\frac{\eta(D_{x,\rho})}{|Du|(D_{x,\rho})}=\left( \frac{d D^cu}{d| D^cu|}(x),0\right),
$$
we are left to prove that 
\begin{align}\label{eq: Cantor e Ballor}
\lim_{\rho\rightarrow 0^+}\frac{|Du|(D_{x,\rho})}{|\eta|(D_{x,\rho})}=1.
\end{align}
Thanks to (\ref{eq: PRE 2 carabienieri}) we have 
\begin{align}\label{eq:due carabienieri}
\frac{|Du|(D_{x,\rho})}{|Du|(D_{x,\rho})+|D_{x,\rho}|}\leq \frac{|Du|(D_{x,\rho})}{|\eta|(D_{x,\rho})}\leq \frac{|Du| (D_{x,\rho})}{|Du|(D_{x,\rho})}=1.
\end{align}
Recall that $x\in Z_u$, so that 
\begin{align*}
\lim_{\rho\rightarrow 0^+} \frac{|D_{x,\rho}|}{|Du|(D_{x,\rho})}= 0.
\end{align*}
Thus, we can calculate the following limit for the left hand side of (\ref{eq:due carabienieri})
\begin{align*}
\lim_{\rho\rightarrow 0^+}\frac{|Du|(D_{x,\rho})}{|Du|(D_{x,\rho})+|D_{x,\rho}|}&= \lim_{\rho\rightarrow 0^+}\frac{1}{1+\frac{|D_{x,\rho}|}{|Du|(D_{x,\rho})}}
= 1.
\end{align*}
By the above calculation and relation (\ref{eq:due carabienieri}) we proved (\ref{eq: Cantor e Ballor}) and so we conclude the proof.
\end{proof}

\begin{proposition}\label{prop: perimetro anisotropico sopragrafico di u}
Let $u\in BV_{loc}(\R^{n-1})$ and let $K\subset \R^n$ be as in (\ref{HP per K}). Then, for every Borel set $B\subset \R^{n-1}$ we have
\begin{align}
P_{K}(\Sigma^u;B\times \R)&= \int_{B\setminus (J_u \cup Z_u)}\phi_{K}(-\nabla u(x),1)dx\\
&+ \int_{B\cap J_u}[u](x) \phi_{K}\left(-\frac{dD^ju}{d|D^ju|}(x),0\right) d\mathcal{H}^{n-2}(x)\nonumber\\
&+\int_{B\cap Z_u}\phi_{K}\left( -\frac{dD^cu}{d|D^cu|}(x),0 \right) d|D^cu|(x),\nonumber
\end{align} 
where $Z_u$ has been defined at the beginning of this Section.
\end{proposition}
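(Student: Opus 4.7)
The plan is to identify $P_K(\Sigma^u; B\times \R)$ with the anisotropic total variation $|\eta|_K(B)$ via Theorem \ref{thm: 1 pag 59 note}, where $\eta := (Du, -\mathcal{L}^{n-1})$, and then to compute $|\eta|_K(B)$ by splitting $\eta$ according to the Radon--Nikodym decomposition of $Du$.

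Concretely, write
\begin{align*}
Du = \nabla u\,\mathcal{L}^{n-1} + [u]\,\nu_u\,\mathcal{H}^{n-2}\mres J_u + D^cu;
\end{align*}
this induces a decomposition of $\eta$ into three mutually singular summands, concentrated respectively on $B\setminus(J_u\cup Z_u)$, on $B\cap J_u$, and on $B\cap(Z_u\setminus J_u)$. By countable additivity of the scalar measure $|\eta|_K$, we have
\begin{align*}
|\eta|_K(B) = |\eta|_K\bigl(B\setminus(J_u\cup Z_u)\bigr) + |\eta|_K(B\cap J_u) + |\eta|_K\bigl(B\cap(Z_u\setminus J_u)\bigr),
\end{align*}
and it suffices to show that these three contributions match the three integrals in the statement.

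On each piece I would apply Theorem \ref{thm:1.6 pag 57 note} to identify the density $d\eta/d|\eta|$ together with the scalar reference measure $|\eta|$, and then use the 1-homogeneity of $\phi_K$. On $B\setminus(J_u\cup Z_u)$, point (i) of Theorem \ref{thm:1.6 pag 57 note} gives $d\eta/d|\eta| = -\nu^{\Sigma^u}(x,u(x))$; since here $u$ is approximately differentiable with finite gradient, the outer normal to $\Sigma^u$ at $(x,u(x))$ is proportional to $(-\nabla u(x), 1)$ while $|\eta|$ has Lebesgue density $\sqrt{1+|\nabla u(x)|^2}$, so 1-homogeneity produces the integrand $\phi_K(-\nabla u(x), 1)$. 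On $B\cap J_u$, point (ii) identifies the unit direction as $(dD^ju/d|D^ju|,0)$ and the restriction $|\eta|\mres J_u$ as $[u]\,\mathcal{H}^{n-2}\mres J_u$, yielding the jump integral. On $B\cap(Z_u\setminus J_u)$, point (iii) identifies the unit direction as $(dD^cu/d|D^cu|,0)$ and $|\eta|$ there with $|D^cu|$, yielding the Cantor integral. Summing the three contributions gives the desired formula.

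The main technical step is to rigorously justify the splitting of the scalar measure $|\eta|$ into $\sqrt{1+|\nabla u|^2}\,\mathcal{L}^{n-1}$ on the regular part, $[u]\,\mathcal{H}^{n-2}\mres J_u$ on the jump part, and $|D^cu|$ on the Cantor part, so that the Radon--Nikodym structure of $\eta$ precisely matches the piecewise description of the reduced boundary of $\Sigma^u$ coming from Theorem \ref{thm:1.6 pag 57 note}. Once this matching and the sign conventions in Theorem \ref{thm:1.6 pag 57 note}(i)--(iii) are in place, the three integrands follow by direct application of the 1-homogeneity of $\phi_K$.
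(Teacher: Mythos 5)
Your proposal is correct and follows essentially the same route as the paper: reduce to the measure $\eta=(Du,-\mathcal{L}^{n-1})$ via Theorem \ref{thm: 1 pag 59 note} and Theorem \ref{thm:1.6 pag 57 note}, split according to the decomposition of $Du$ into absolutely continuous, jump and Cantor parts, and conclude by the $1$-homogeneity of $\phi_K$. One caveat: since $\phi_K$ need not be even, the precise identification is $P_K(\Sigma^u;B\times\R)=\int_B\phi_K\left(-\frac{d\eta}{d|\eta|}(x)\right)d|\eta|(x)$ (the anisotropic variation of $-\eta$, which is how the paper writes it), not $|\eta|_K(B)$; using the outer normal $\nu^{\Sigma^u}$ on each piece, as you do, the stated integrands then come out with the correct signs.
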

\begin{proof}
Let us consider a generic Borel set $B\subset \R^{n-1}$. Then, thanks to the De Giorgi structure Theorem, Theorem \ref{thm: 1 pag 59 note}, and Theorem \ref{thm:1.6 pag 57 note} we get 
\begin{align*}
P_{K}(\Sigma^u;B \times \R)&=\int_{\partial^* \Sigma^u \cap (B\times \R)}\phi_{K}(\nu^{\Sigma^u}(x)) d\mathcal{H}^{n-1}(x)\\
&= \int_{\partial^* \Sigma^u \cap (B\times \R)}\phi_{K}\left( -\frac{dD1_{\Sigma^u}}{d|D1_{\Sigma^u}|}(x) \right)d|D1_{\Sigma^u}|(x)\\
&=\int_{B}\phi_{K}\left(-\frac{d\eta}{d|\eta|}(x)  \right) d|\eta|(x).
\end{align*} 
Let us split the last integral in the following way
\begin{align}\label{eq: alpha}
\int_{B}\phi_{K}\left(-\frac{d\eta}{d|\eta|}(x)  \right) d|\eta|(x)&= \int_{B\setminus (J_u \cup Z)}\phi_{K}\left(-\frac{d\eta}{d|\eta|}(x)  \right) d|\eta|(x)\nonumber\\
&+\int_{B\cap J_u}\phi_{K}\left(-\frac{d\eta}{d|\eta|}(x)  \right) d|\eta|(x)\nonumber\\
&+\int_{B\cap Z}\phi_{K}\left(-\frac{d\eta}{d|\eta|}(x)  \right) d|\eta|(x).
\end{align}
About the first integral on the right hand side we observe that
\begin{align*}
\eta \mres \R^{n-1}\setminus(J_u \cup Z_u)= (D^au,-\mathcal{L}^{n-1})\mres \R^{n-1} = (\nabla u,-1)\mathcal{L}^{n-1}\mres \R^{n-1}.
\end{align*}
Therefore, recalling Remark \ref{rem: 4.8 Maggi} we have
\begin{align*}
\eta(B)= \int_B (\nabla u,-1)\,dx\quad\textit{and}\quad |\eta|(B)=\int_B\sqrt{|\nabla u|^2+1}\, dx\quad\forall\,\textit{Borel set }B\subset \R^{n-1}\setminus(J_u \cup Z).
\end{align*}
Thus,
\begin{align}
\int_{B\setminus (J_u \cup Z)}\phi_{K}\left(-\frac{d\eta}{d|\eta|}(x)  \right) d|\eta|(x)&=\int_{B\setminus (J_u \cup Z)}\phi_{K}\left( \frac{(-\nabla u(x),1)}{\sqrt{|\nabla u|^2+1}} \right)\sqrt{|\nabla u|^2 +1}\, dx\nonumber\\
&=\int_{B\setminus (J_u \cup Z)}\phi_{K}(-\nabla u(x),1)\, dx.\label{eq:abscont}
\end{align}
Let us observe now that, thanks to $(ii)$ of Theorem \ref{thm:1.6 pag 57 note} 
\begin{align*}
\eta \mres J_u = (D^ju,-\mathcal{L}^{n-1})\mres J_u = (D^ju,0)\mres J_u.
\end{align*}
Thus, 
\begin{align*}
|\eta|(B)= |D^ju|(B)\quad \forall\,\textit{Borel set } B\subset J_u.
\end{align*}
Then,
\begin{align}\label{eq:jump}
\int_{B\cap J_u}\phi_{K}\left(-\frac{d\eta}{d|\eta|}(x)  \right) d|\eta|(x)&=\int_{B\cap J_u}\phi_{K}\left(-\frac{d D^ju}{d| D^ju|}(x),0  \right) d|D^ju|(x)\nonumber\\
&=\int_{B\cap J_u}\phi_{K}\left(-\frac{d D^ju}{d| D^ju|}(x),0  \right)[u](x) d\mathcal{H}^{n-2}(x).
\end{align}
A similar argument holds for the integral over $B\cap Z_u$, so that 
\begin{align}\label{eq:Cantor}
\int_{B\cap Z_u}\phi_{K}\left(-\frac{d\eta}{d|\eta|}(x)  \right) d|\eta|(x)= \int_{B\cap Z_u}\phi_{K}\left(-\frac{d D^cu}{d| D^cu|}(x),0  \right) d|D^cu|(x).
\end{align}
Combining equations (\ref{eq: alpha}), (\ref{eq:abscont}), (\ref{eq:jump}) and (\ref{eq:Cantor}) we conclude.
\end{proof}
\begin{remark}
We can also use the notation of the anisotropic total variation to obtain a more compact formula for the perimeter,
\begin{align*}
P_{K}(\Sigma^u;B\times \R)&=\int_{B}\phi_{K}(-\nabla u(x),1)dx + |(-D^ju,0)|_K(B) + |(-D^cu,0)|_K(B).
\end{align*}
\end{remark}
\begin{remark}\label{rem:10 Correzioni filippo}
Note that, since $\Sigma_u=\R^n\setminus \Sigma^u$, we have $\partial^*\Sigma_u=\partial^*\Sigma^u$ and $\nu^{\Sigma_u}(x)=-\nu^{\Sigma^u}(x)$ for $\h^{n-1}$-a.e. $x\in\partial^*\Sigma_u$, and so
$$
P_{K}(\Sigma_u;B\times \R)=\int_{B}\phi_{K}(\nabla u(x),-1)dx + |(D^ju,0)|_K(B) + |(D^cu,0)|_K(B)
$$
for every Borel set $B\subset \R^{n-1}$. 

\end{remark}

\noindent
Before stating the next result, we recall that given a Borel function $f:\R^{n-1}\to \R$, we indicate with $\tilde{f}$ the approximate average of $f$ defined as in (\ref{eq: limit relation}).
\begin{lemma}\label{lem:1.8 pag 65}
Let $K\subset \R^n$ be as in (\ref{HP per K}). If $u_1,u_2 \in BV_{loc}(\R^{n-1})$ with $u_1\leq u_2$ and $E=\Sigma_{u_1}\cap \Sigma^{u_2}$ has finite volume, then $E$ is a set of locally finite perimeter in $\R^n$ and for every Borel set $B\subset \R^{n-1}$  
\begin{align}\label{eq:formula perimeter BV}
P_{K}(E;B\times \R)
&=\int_{B\cap \{\widetilde{u_1} < \widetilde{u_2}\}}\phi_{K}(\nabla u_1(x),-1) dx +\int_{B\cap \{\widetilde{u_1} < \widetilde{u_2}\}}\phi_{K}(-\nabla u_2(x),1) dx \\
&+\int_{B\cap J_{u_1}}\phi_{K}\left(\nu_{u_1}(z),0  \right)\left(\min(u_1\Upp(z),u_2\Low(z))-u_1\Low(z)  \right)d\mathcal{H}^{n-2}(z)\nonumber \\
&+\int_{B\cap J_{u_2}}\phi_{K}\left(\nu_{u_2}(z),0  \right)\left(u_2\Upp(z)-\max( u_2\Low(z),u_1\Upp(z))  \right)d\mathcal{H}^{n-2}(z)\nonumber \\
&+|(D^cu_1,0)|_K(B\cap \{\widetilde{u_1} < \widetilde{u_2}\})+|(-D^cu_2,0)|_K(B\cap \{\widetilde{u_1} < \widetilde{u_2}\})\nonumber
\end{align}
\end{lemma}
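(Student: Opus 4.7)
The starting point is that $u_1\le u_2$ implies $\Sigma^{u_1}\subset\Sigma^{u_2}$ up to an $\mathcal{H}^n$-null set, so $E=\Sigma^{u_2}\setminus\Sigma^{u_1}$ $\mathcal{H}^n$-a.e., and hence $1_E=1_{\Sigma^{u_2}}-1_{\Sigma^{u_1}}$. Since $u_i\in BV_{loc}(\R^{n-1})$ forces $\tau_M(u_i)\in BV_{loc}(\R^{n-1})$ for every $M>0$, each $\Sigma^{u_i}$ (and hence also $\Sigma_{u_i}$) is a set of locally finite perimeter, and their intersection $E$ inherits this property. My plan is then to identify $\partial^*E\cap(B\times\R)$, up to $\mathcal{H}^{n-1}$-null sets, with the essentially disjoint union $T\sqcup R$, where
\[
T:=\partial^*\Sigma^{u_2}\cap(\Sigma_{u_1})^{(1)}\cap(B\times\R),\qquad R:=\partial^*\Sigma_{u_1}\cap(\Sigma^{u_2})^{(1)}\cap(B\times\R),
\]
with $\nu^E=\nu^{\Sigma^{u_2}}$ on $T$ and $\nu^E=\nu^{\Sigma_{u_1}}=-\nu^{\Sigma^{u_1}}$ on $R$. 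Once established, this reduces $P_K(E;B\times\R)$ to separately evaluating the $K$-perimeter contributions of $\Sigma^{u_2}$ over $T$ and of $\Sigma_{u_1}$ over $R$, both of which admit explicit expressions via Proposition \ref{prop: perimetro anisotropico sopragrafico di u} and Remark \ref{rem:10 Correzioni filippo}.

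Next, I would match each piece in the decomposition of $Du_i=\nabla u_i\,\mathcal{L}^{n-1}+D^j u_i+D^c u_i$ to the corresponding term in the formula, relying throughout on the pointwise inequalities $u_1\Low\le u_2\Low$ and $u_1\Upp\le u_2\Upp$ (direct consequences of $u_1\le u_2$). For the absolutely continuous parts: at $\mathcal{L}^{n-1}$-a.e.\ $z\in B$ both $u_i$ are approximately continuous at $z$, so the vertical section of $(\Sigma_{u_1})^{(1)}$ (resp.\ $(\Sigma^{u_2})^{(1)}$) at $z$ is $\mathcal{H}^1$-equivalent to $(\tilde{u}_1(z),+\infty)$ (resp.\ $(-\infty,\tilde{u}_2(z))$); consequently the AC contributions of $T$ and $R$ localise to $B\cap\{\tilde{u}_1<\tilde{u}_2\}$, yielding the two Lebesgue integrals. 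For the jump parts: at $z\in J_{u_2}$ the vertical segment $\{z\}\times(u_2\Low(z),u_2\Upp(z))$ lies in $\partial^*\Sigma^{u_2}$ with the appropriate horizontal normal, and its intersection with $(\Sigma_{u_1})^{(1)}$ has $\mathcal{H}^1$-length $u_2\Upp(z)-\max(u_2\Low(z),u_1\Upp(z))$ whenever this is positive; symmetrically, at $z\in J_{u_1}$ one obtains the effective length $\min(u_1\Upp(z),u_2\Low(z))-u_1\Low(z)$, producing exactly the two jump integrands. For the Cantor parts: $|D^c u_i|$ is concentrated on points where $u_i$ is approximately continuous, and $|D^c u_i|$-a.e.\ point of the coincidence set $\{\tilde{u}_1=\tilde{u}_2\}$ lies in $\{v\Low=0\}$ for $v:=u_2-u_1\in BV_{loc}(\R^{n-1})$, a set of zero Cantor mass by Lemma \ref{lem: cantor total variation of v on v=0 is null}; this localises both Cantor contributions to $B\cap\{\tilde{u}_1<\tilde{u}_2\}$.

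The main obstacle is rigorously justifying the decomposition $\partial^*E=T\sqcup R$ modulo $\mathcal{H}^{n-1}$-null sets together with the identification of $\nu^E$ on each piece. Any point in the potential overlap $\partial^*\Sigma^{u_2}\cap\partial^*\Sigma_{u_1}$ must be shown to be $\mathcal{H}^{n-1}$-negligible outside the coincidence set $\{\tilde{u}_1=\tilde{u}_2\}$: since the last components of $\nu^{\Sigma^{u_2}}$ and $\nu^{\Sigma_{u_1}}$ are respectively nonnegative and nonpositive, any such overlap point lies on vertical segments in both reduced boundaries, hence corresponds to a projection at which both $u_i$ exhibit jump or Cantor behaviour with compatible vertical positions, which ultimately forces $\tilde{u}_1=\tilde{u}_2$ at the projection and produces no contribution by the previous paragraph. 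Summing the piecewise contributions then delivers the claimed formula.
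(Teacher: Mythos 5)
Your overall architecture is the same as the paper's: reduce $\partial^*E$ (up to $\mathcal{H}^{n-1}$-null sets) to $\bigl(\partial^*\Sigma_{u_1}\cap(\Sigma^{u_2})^{(1)}\bigr)\cup\bigl(\partial^*\Sigma^{u_2}\cap(\Sigma_{u_1})^{(1)}\bigr)$, then compute each piece through the description of $\partial^*\Sigma_{u_i}$, $(\Sigma_{u_1})^{(1)}$, $(\Sigma^{u_2})^{(1)}$ in terms of $\tilde u_i,u_i\Low,u_i\Upp$ and the subgraph perimeter formula. The genuine gap is exactly at the step you yourself flag as the main obstacle: your treatment of the overlap $\partial^*\Sigma_{u_1}\cap\partial^*\Sigma^{u_2}$ is based on two false claims. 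First, overlap points need not lie on vertical segments: if $u_1=u_2=u$ on a set of positive measure, every graph point there belongs to both reduced boundaries with non-horizontal (opposite) normals; the sign argument on $\q\nu$ only forces horizontality at overlap points where the two normals \emph{coincide}. Second, the overlap is in general \emph{not} $\mathcal{H}^{n-1}$-negligible outside $\{\tilde u_1=\tilde u_2\}$: take $u_1$ jumping from $0$ to $2$ and $u_2$ from $1$ to $3$ across the same hyperplane; the vertical band at heights in $(1,2)$ lies in both reduced boundaries, has positive $\mathcal{H}^{n-1}$-measure, and there $\tilde u_1=1\neq 2=\tilde u_2$. What you actually need is not that the overlap is small, but that its intersection with $\partial^*E$ is: by \cite[Theorem 16.3]{MaggiBOOK} the only possible extra piece of $\partial^*(\Sigma_{u_1}\cap\Sigma^{u_2})$ is $\partial^*\Sigma_{u_1}\cap\partial^*\Sigma^{u_2}\cap\{\nu^{\Sigma_{u_1}}=\nu^{\Sigma^{u_2}}\}$, and this is $\mathcal{H}^{n-1}$-negligible because $u_1\le u_2$ gives $\Sigma_{u_2}\subset\Sigma_{u_1}$, hence $\nu^{\Sigma_{u_1}}=\nu^{\Sigma_{u_2}}=-\nu^{\Sigma^{u_2}}$ $\mathcal{H}^{n-1}$-a.e.\ on the common reduced boundary. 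This is how the paper closes the argument, and without it (or an equivalent argument, e.g.\ using that $\Sigma_{u_1}\cup\Sigma^{u_2}=_{\mathcal{H}^n}\R^n$) your decomposition $T\sqcup R$, and the identification of $\nu^E$ on it, is unjustified.

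A second, more minor, flaw is your localisation of the Cantor terms. The lemma $|D^c v|(\{v\Low=0\})=0$ applied to $v=u_2-u_1$ only tells you that $D^cu_1=D^cu_2$ as measures on the coincidence set; it does not rule out a Cantor-type contribution of each $u_i$ over $\{\tilde u_1=\tilde u_2\}$ (indeed, for $u_1=u_2$ with nontrivial Cantor part the individual Cantor masses are nonzero there, yet $P_K(E)=0$). The correct reason is the same one you use for the absolutely continuous part: since $|D^cu_i|$ vanishes on $S_{u_1}\cup S_{u_2}$, at $|D^cu_i|$-a.e.\ point the relevant boundary point of $\Sigma_{u_1}$ (resp.\ $\Sigma^{u_2}$) sits at height $\tilde u_1$ (resp.\ $\tilde u_2$), and it lies in $(\Sigma^{u_2})^{(1)}$ (resp.\ $(\Sigma_{u_1})^{(1)}$) only if $\tilde u_1<\tilde u_2$; this is precisely the content of the paper's relations of type (\ref{eq: 3.6 Filippo})--(\ref{eq: 3.11 Filippo}), which also underpin the jump-length computations you sketch.
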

\begin{proof}
We will follow the strategy of \cite[Theorem 3.1]{CCPMSteiner}. By \cite[Theorem 16.3]{MaggiBOOK}, if $F_1$, $F_2$ are sets of locally finite perimeter in $\R^n$, then
\begin{align}\label{eq:3.12 Cagnetti}
\partial^*(F_1\cap F_2)=_{\mathcal{H}^{n-1}} \left(F_1^{(1)}\cap \partial^*F_2  \right)\cup \left(F_2^{(1)}\cap \partial^*F_1  \right)\cup \left(\partial^*F_1 \cap \partial^*F_2 \cap\{\nu^{F_1}=\nu^{F_2}  \}  \right).
\end{align} 
Moreover, in the particular case of $F_1\subset F_2$, then $\nu^{F_1}=\nu^{F_2}$ $\mathcal{H}^{n-1}$-a.e. on $\partial^*F_1 \cap \partial^*F_2$. Let us observe that $u_1\leq u_2$ implies $\Sigma_{u_2}\subset \Sigma_{u_1}$ and that $\Sigma^{u_2}=\R^n\setminus \Sigma_{u_2}$ implying $\nu_{\Sigma_{u_2}}=-\nu_{\Sigma^{u_2}}$ $\mathcal{H}^{n-1}$-a.e. on $\partial^* \Sigma_{u_2}$. We thus find
\begin{align}\label{eq:3.13 Cagnetti}
\nu^{\Sigma_{u_1}}=-\nu^{\Sigma^{u_2}},\quad \mathcal{H}^{n-1}\textit{-a.e. on }\partial^*\Sigma_{u_1}\cap \partial^*\Sigma^{u_2} .
\end{align}
By, (\ref{eq:3.12 Cagnetti}) and (\ref{eq:3.13 Cagnetti}), since $E=\Sigma_{u_1}\cap \Sigma^{u_2}$ we find
\begin{align*}
\E=_{\mathcal{H}^{n-1}}\left(\partial^*\Sigma_{u_1}\cap (\Sigma^{u_2})^{(1)}  \right) \cup \left(\partial^*\Sigma^{u_2}\cap (\Sigma_{u_1})^{(1)}  \right).
\end{align*}
Recall the definition of the \emph{approximate discontinuity set} of $u_i$ with $i=1,2$, that we denote $S_{u_i}$ (see (\ref{eq: approx disc set})). Thanks to \cite[Section 4.1.5]{GiaMoSou} we know that $\Sigma_{u_1}$ and $\Sigma^{u_2}$ are sets of locally finite perimeter in $\R^n$ with
\begin{align}
\partial^*\Sigma^{(1)}_{u_1}\cap (S^c_{u_1}\times \R)&=_{\h^{n-1}}\left\{x\in\R^n:\,\tilde{u}_1(\p x)=\q x  \right\},\label{eq: 3.6 Filippo}\\
\partial^*\Sigma^{(1)}_{u_1}\cap (S_{u_1}\times \R)&=_{\h^{n-1}}\left\{x\in\R^n:\,u\Low_1(\p x)<\q x <u\Upp_1(\p x) \right\},\label{eq: 3.7 Filippo}\\
\Sigma^{(1)}_{u_1}\cap (S^c_{u_1}\times \R)&=_{\h^{n-1}}\left\{x\in\R^n:\,\tilde{u}_1(\p x)<\q x  \right\},\label{eq: 3.8 Filippo}\\
\Sigma^{(1)}_{u_1}\cap (S_{u_1}\times \R)&=_{\h^{n-1}}\left\{x\in\R^n:\,u\Upp_1(\p x)<\q x  \right\},\label{eq: 3.9 Filippo}\\
(\Sigma^{u_2})^{(1)}\cap (S^c_{u_1}\times \R)&=_{\h^{n-1}}\left\{x\in\R^n:\,\tilde{u}_2(\p x)>\q x  \right\},\label{eq: 3.10 Filippo}\\
(\Sigma^{u_2})^{(1)}\cap (S_{u_1}\times \R)&=_{\h^{n-1}}\left\{x\in\R^n:\,u\Low_2(\p x)>\q x  \right\}.\label{eq: 3.11 Filippo}
\end{align}
We now focus on the set $\partial^*\Sigma_{u_1}\cap (\Sigma^{u_2})^{(1)}$. Observe that,
\begin{align*}
P_{K}\left(\Sigma_{u_1}; (\Sigma^{u_2})^{(1)}\cap (B\times \R)  \right)&= P_{K}\left(\Sigma_{u_1}; (\Sigma^{u_2})^{(1)}\cap [(B\cap J_{u_1}^c \cap J_{u_2}^c )\times \R]  \right)\\
&+P_{K}\left(\Sigma_{u_1}; (\Sigma^{u_2})^{(1)}\cap [(B\cap J_{u_1} \cap J_{u_2}^c )\times \R]  \right)\\
&+P_{K}\left(\Sigma_{u_1}; (\Sigma^{u_2})^{(1)}\cap [(B\cap J_{u_1}\cap J_{u_2} )\times \R]  \right)\\
&+P_{K}\left(\Sigma_{u_1}; (\Sigma^{u_2})^{(1)}\cap [(B\cap J_{u_1}^c \cap J_{u_2} )\times \R]  \right).
\end{align*}
Applying (\ref{eq: 3.6 Filippo}) to $u_1$ and (\ref{eq: 3.10 Filippo}) to $u_2$ we find
\begin{align}\label{eq: 3.14 Filippo}
\left(\partial^*\Sigma_{u_1}\cap (\Sigma^{u_2})^{(1)}\right)\cap \left((J_{u_1}^c\cap J_{u_2}^c)\times\R\right)=_{\h^{n-1}}\left\{(z,\tilde{u}_1(z)):\,z\in (J_{u_1}^c\cap J_{u_2}^c),\,  \tilde{u}_1(z)<\tilde{u}_2(z) \right\}.
\end{align}
Applying (\ref{eq: 3.7 Filippo}) to $u_1$ and (\ref{eq: 3.10 Filippo}) to $u_2$ we obtain
\begin{align}\label{eq: 3.15 Filippo}
&\left(\partial^*\Sigma_{u_1}\cap (\Sigma^{u_2})^{(1)}\right)\cap \left((J_{u_1}\cap J_{u_2}^c)\times\R\right)\\&=_{\h^{n-1}}\left\{(z,t):\,z\in (J_{u_1}\cap J_{u_2}^c),\, u_1\Low (z)< t< \min(u_1\Upp(z),\widetilde{u}_2(z)) \right\}.  \nonumber
\end{align}
Combining (\ref{eq: 3.7 Filippo}) to $u_1$ and (\ref{eq: 3.11 Filippo}) to $u_2$ we obtain
\begin{align}\label{eq: 3.16 Filippo}
&\left(\partial^*\Sigma_{u_1}\cap (\Sigma^{u_2})^{(1)}\right)\cap \left((J_{u_1}\cap J_{u_2})\times\R\right)\\&=_{\h^{n-1}}\left\{(z,t):\,z\in (J_{u_1}\cap J_{u_2}),\, u_1\Low (z)< t< \min(u_1\Upp(z),u\Low_2(z)) \right\}.  \nonumber
\end{align}
Finally, applying (\ref{eq: 3.6 Filippo}) to $u_1$ and (\ref{eq: 3.11 Filippo}) to $u_2$ we get
\begin{align}\label{eq: 3.17 Filippo}
&\left(\partial^*\Sigma_{u_1}\cap (\Sigma^{u_2})^{(1)}\right)\cap \left((J_{u_1}^c\cap J_{u_2})\times\R\right)\\&=_{\h^{n-1}}\left\{(z,\widetilde{u}_1(z)):\,z\in (J_{u_1}^c\cap J_{u_2}),\, \widetilde{u}_1(z)< u\Low_2(z) \right\}.  \nonumber
\end{align}
Thus, thanks to Remark \ref{rem:10 Correzioni filippo} and (\ref{eq: 3.14 Filippo}) we get
\begin{small}
\begin{align*}
P_{K}\left(\Sigma_{u_1}; (\Sigma^{u_2})^{(1)}\cap [(B\cap J_{u_1}^c \cap J_{u_2}^c )\times \R]  \right)= \int_{\partial^*\Sigma_{u_1}\cap \left[(B\cap J_{u_1}^c\cap J_{u_2}^c \cap \{\widetilde{u_1} < \widetilde{u_2}\})\times \R\right]}\phi_{K}(-\nu^{\Sigma^{u_1}}(x))d\mathcal{H}^{n-1}(x)\\
=\int_{B\cap \{\widetilde{u_1} < \widetilde{u_2}\}}\phi_{K}(\nabla u_1(x),1) dx 
+ |(D^cu_1,0)|_K(B\cap \{\widetilde{u_1} < \widetilde{u_2}\}).
\end{align*}
\end{small}
Using Fubini theorem and (\ref{eq: 3.15 Filippo}) we get
\begin{align*}
&P_{K}\left(\Sigma_{u_1}; (\Sigma^{u_2})^{(1)}\cap [(B\cap J_{u_1} \cap J_{u_2}^c )\times \R]  \right)\\
&= \int_{\partial^*\Sigma_{u_1}\cap [((\Sigma^{u_2})^{(1)}\cap B\cap J_{u_1} \cap J_{u_2}^c )\times \R]}\phi_{K}(-\nu^{\Sigma^{u_1}}(y))d\mathcal{H}^{n-1}(y)\\
&=\int_{\{x\in \R^n:\,\p x\in B\cap J_{u_1} \cap J_{u_2}^c,\, u_1\Low (\p x)< \q x< \min(u_1\Upp(\p x),\widetilde{u}_2(\p x))   \}}\phi_{K}(-\nu^{\Sigma^{u_1}}(y))d\mathcal{H}^{n-1}(y)\\
&=\int_{(B\cap J_{u_1} \cap J_{u_2}^c)\times \R}\phi_{K}(-\nu^{\Sigma^{u_1}}(y))1_{\{\q x> u_1\Low(\p x)\}}(y)1_{\{\q x< \min(u_1\Upp(\p x),\widetilde{u}_2(\p x))\}}(y)d\mathcal{H}^{n-1}(y)\\
&=\int_{B\cap J_{u_1} \cap J_{u_2}^c}d\mathcal{H}^{n-2}(z)\int_{\R}\phi_{K}(-\nu^{\Sigma^{u_1}}(z,t))1_{\{s> u_1\Low(z)\}}(z,t)1_{\{s< \min(u_1\Upp(z),\widetilde{u}_2(z))\}}(z,t)d\mathcal{H}^{1}(t)\\
&=\int_{B\cap J_{u_1} \cap J_{u_2}^c}d\mathcal{H}^{n-2}(z)\int_{\R}\phi_{K}(\nu_{u_1}(z),0)1_{\{t> u_1\Low(z)\}}(z,t)1_{\{t< \min(u_1\Upp(z),\widetilde{u}_2(z))\}}(z,t)d\mathcal{H}^{1}(t)\\
&=\int_{B\cap J_{u_1} \cap J_{u_2}^c}\phi_{K}\left(\nu_{u_1}(z),0  \right)\left(\min(u_1\Upp(z),\widetilde{u}_2(z))-u_1\Low(z)  \right)d\mathcal{H}^{n-2}(z).
\end{align*}
Observe that we could have used $u_2\Low$ or $u_2\Upp$ instead of $\widetilde{u_2}$ since we are working in $B\cap J_{u_1} \cap J_{u_2}^c$. For similar arguments, using (\ref{eq: 3.16 Filippo}) we get that 
\begin{align*}
&P_{K}\left(\Sigma_{u_1}; (\Sigma^{u_2})^{(1)}\cap [(B\cap J_{u_1}\cap J_{u_2} )\times \R]  \right)\\&=\int_{B\cap J_{u_1} \cap J_{u_2}}\phi_{K}\left(\nu_{u_1}(z),0  \right)\left(\min(u_1\Upp(z),u_2\Low(z))-u_1\Low(z)  \right)d\mathcal{H}^{n-2}(z).
\end{align*}
Furthermore, thanks to (\ref{eq: 3.17 Filippo}) we deduce that $\h^{n-1}\left(\partial^*\Sigma_{u_1}\cap (\Sigma^{u_2})^{(1)}\cap (J_{u_1}^c \cap J_{u_2} )\times \R]  \right)=0$.  Thus, we have that
\begin{align*}
P_{K}\left(\Sigma_{u_1}; (\Sigma^{u_2})^{(1)}\cap [(B\cap J_{u_1}^c \cap J_{u_2} )\times \R]  \right)=0.
\end{align*} 
Therefore, 
\begin{align}\label{eq: alpha1}
P_{K}\left(\Sigma_{u_1}; (\Sigma^{u_2})^{(1)}\cap (B\times \R)  \right)&=\int_{B\cap \{\widetilde{u_1} < \widetilde{u_2}\}}\phi_{K}(\nabla u_1(x),-1) dx \\
&+\int_{B\cap J_{u_1}}\phi_{K}\left(\nu_{u_1}(z),0  \right)\left(\min(u_1\Upp(z),u_2\Low(z))-u_1\Low(z)  \right)d\mathcal{H}^{n-2}(z)\nonumber\\
&+ |(D^cu_1,0)|_K(B\cap \{\widetilde{u_1} < \widetilde{u_2}\}).
\end{align}
By symmetry, we got that 
\begin{align}\label{eq: beta1}
P_{K}\left(\Sigma^{u_2}; (\Sigma_{u_1})^{(1)}\cap (B\times \R)  \right)&=\int_{B\cap \{\widetilde{u_1} < \widetilde{u_2}\}}\phi_{K}(-\nabla u_2(x),1) dx \\
&+ \int_{B\cap J_{u_2}}\phi_{K}\left(\nu_{u_2}(z),0  \right)\left(u_2\Upp(z)-\max( u_2\Low(z),u_1\Upp(z))  \right)d\mathcal{H}^{n-2}(z)\nonumber\\
&+ |(-D^cu_2,0)|_K(B\cap \{\widetilde{u_1} < \widetilde{u_2}\}).\nonumber
\end{align}
Putting together (\ref{eq: alpha1}) and (\ref{eq: beta1}) we obtain the formula for $P_{K}(E;B\times \R)$. 
\end{proof}
\noindent
We now extend Lemma \ref{lem:1.8 pag 65} to the case of $GBV$ functions.
\begin{theorem}\label{thm:perimetro GBV}
Let $K\subset \R^n$ be as in (\ref{HP per K}). If $u_1,u_2 \in GBV(\R^{n-1})$ with $u_1\leq u_2$ and $E=\Sigma_{u_1}\cap \Sigma^{u_2}$ has finite volume, then $E$ is a set of locally finite perimeter and for every Borel set $B\subset \R^{n-1}$  
\begin{align}\label{eq:formula perimetro GBV}
P_{K}(E;B\times \R)
&=\int_{B\cap \{u_1 < u_2\}}\phi_{K}(\nabla u_1(x),-1) dx +\int_{B\cap \{u_1 < u_2\}}\phi_{K}(-\nabla u_2(x),1) dx \nonumber \\
&+\int_{B\cap J_{u_1}}\phi_{K}\left(\nu_{u_1}(z),0  \right)\left(\min(u_1\Upp(z),u_2\Low(z))-u_1\Low(z)  \right)d\mathcal{H}^{n-2}(z) \nonumber\\
&+\int_{B\cap J_{u_2}}\phi_{K}\left(-\nu_{u_2}(z),0  \right)\left(u_2\Upp(z)-\max( u_2\Low(z),u_1\Upp(z))  \right)d\mathcal{H}^{n-2}(z) \\
&+ |(D^cu_1,0)|_K(B\cap \{\tilde{u_1} < \tilde{u_2}\})+ |(-D^cu_2,0)|_K(B\cap \{\tilde{u_1} < \tilde{u_2}\}). \nonumber 
\end{align}
\end{theorem}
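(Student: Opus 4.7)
The plan is to reduce the statement to the $BV_{\mathrm{loc}}$ case handled in Lemma~\ref{lem:1.8 pag 65} via a truncation argument. For each $M > 0$ I set $u_i^M := \tau_M(u_i) \in BV_{\mathrm{loc}}(\R^{n-1})$, which is available by the defining property of $GBV$; the monotonicity of $\tau_M$ (see~\eqref{eq: alphas}) preserves $u_1 \le u_2$, giving $u_1^M \le u_2^M$. Put $E_M := \Sigma_{u_1^M} \cap \Sigma^{u_2^M}$. Since $E$ has finite volume, the functions $u_1,u_2$ are $\mathcal{H}^{n-1}$-a.e.\ finite, from which $\mathcal{H}^n(E \Delta E_M) \to 0$. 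Lower semicontinuity of $P_K$ under $L^1$-convergence (immediate from Theorem~\ref{thm:Charac.anis.total.var.}) then shows that $E$ is of locally finite perimeter and $P_K(E; B \times \R) \le \liminf_M P_K(E_M; B \times \R)$.

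Next I apply Lemma~\ref{lem:1.8 pag 65} to $u_1^M, u_2^M$ to obtain \eqref{eq:formula perimeter BV} for $P_K(E_M; B \times \R)$, and pass to the limit $M \to \infty$ termwise. For the absolutely continuous parts, the localization property~\eqref{eq: 2.12 FIlippo pag 16} yields $\nabla u_i^M = 1_{\{|u_i| < M\}} \nabla u_i$ $\mathcal{H}^{n-1}$-a.e.; monotone convergence based on \eqref{eq: deltas} identifies the limit. For the jump parts, \eqref{eq: 2.10 Filippo pag 16} gives $(u_i^M)^\vee = \tau_M(u_i^\vee)$, $(u_i^M)^\wedge = \tau_M(u_i^\wedge)$ and $\nu_{u_i^M} = \nu_{u_i}$ on $J_{u_i^M} \subset J_{u_i}$; combined with the monotonicity \eqref{eq: gammas}--\eqref{eq: deltas}, the integrands are non-decreasing in $M$ and converge to their un-truncated analogues. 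For the Cantor parts, the identification is built into the definition~\eqref{def: Cantor part GBV anisotropica}. Consequently $\lim_M P_K(E_M; B \times \R)$ equals the right-hand side of \eqref{eq:formula perimetro GBV}, yielding one inequality.

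The reverse inequality $P_K(E; B \times \R) \ge$ RHS of \eqref{eq:formula perimetro GBV} is obtained by repeating the decomposition argument in the proof of Lemma~\ref{lem:1.8 pag 65}: once $E$ is known to be of locally finite perimeter, the structural identities \eqref{eq: 3.6 Filippo}--\eqref{eq: 3.11 Filippo} for $\partial^* \Sigma_{u_i}$ and $(\Sigma_{u_i})^{(1)}$ depend only on $\Sigma_{u_i}$ (respectively $\Sigma^{u_i}$) being of locally finite perimeter in a neighbourhood of the relevant piece of $\partial^* E$, and not on $u_i$ being $BV_{\mathrm{loc}}$. Hence the decomposition of $\partial^* E$ into the four pieces used in Lemma~\ref{lem:1.8 pag 65} applies verbatim, and the subsequent integration over each piece reproduces exactly the terms of \eqref{eq:formula perimetro GBV}. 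Combining the two inequalities completes the proof.

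\textbf{Main obstacle.} The delicate step is the jump-term analysis, where the two integrands mix values of $u_1$ and $u_2$ through $\min(u_1^\vee, u_2^\wedge)$ and $\max(u_2^\wedge, u_1^\vee)$. For $\mathcal{H}^{n-2}$-a.e.\ point of $J_{u_i}$ one must verify that the truncated expressions converge monotonically to the correct un-truncated limits; this uses crucially the $\mathcal{H}^{n-1}$-a.e.\ finiteness of $u_1, u_2$ granted by $\mathcal{H}^n(E) < \infty$, together with a careful bookkeeping of how $J_{u_i^M}$ relates to $J_{u_i}$ on the exceptional sets where $|u_i| \ge M$.
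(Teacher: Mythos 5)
Your truncation scheme and the termwise limits for the right--hand side (jump, Cantor and absolutely continuous parts) are essentially the paper's Steps 1--3, but the way you identify $\lim_M P_K(E_M;B\times\R)$ with $P_K(E;B\times\R)$ has a genuine gap. Lower semicontinuity (which in any case only applies to open sets, hence to $B$ open) gives just the inequality $P_K(E;B\times\R)\le\liminf_M P_K(E_M;B\times\R)$. For the reverse inequality you claim that the proof of Lemma \ref{lem:1.8 pag 65} ``applies verbatim'' to $u_1,u_2\in GBV(\R^{n-1})$. It does not: that proof rests on Remark \ref{rem:10 Correzioni filippo}, Proposition \ref{prop: perimetro anisotropico sopragrafico di u} and Theorems \ref{thm: 1 pag 59 note}, \ref{thm:1.6 pag 57 note}, all of which start from the $\R^n$-valued Radon measure $\eta=(Du_i,-\mathcal{L}^{n-1})$. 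For a $GBV$ function which is not $BV_{\mathrm{loc}}$ the distributional derivative $Du_i$ is not a measure, $D^ju_i$ and $D^cu_i$ do not exist as measures, and in fact the Cantor terms $|(D^cu_i,0)|_K$ appearing in \eqref{eq:formula perimetro GBV} are themselves only \emph{defined} through the truncation limit \eqref{def: Cantor part GBV anisotropica}. So ``reproducing exactly the terms'' of the $GBV$ formula by rerunning the $BV$ computation is circular: if that step really worked verbatim, the whole truncation argument (including your first inequality) would be unnecessary.

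What is needed instead, and what the paper does in its Step 4, is the exact identity $P_K(E;B\times\R)=\lim_h P_K(E\cap\{|\q x|<M_h\};B\times\R)$ along a well-chosen sequence $M_h\to\infty$: since $\mathcal{H}^n(E)<\infty$ and $\mathcal{H}^{n-1}\mres\partial^e E$ is a Radon measure, one can pick $M_h$ with $\mathcal{H}^{n-1}(E^{(1)}\cap\{|\q x|=M_h\})\to 0$ and $\mathcal{H}^{n-1}(\partial^e E\cap\{|\q x|=M_h\})=0$; then \cite[Theorem 16.3]{MaggiBOOK} splits $P_K(E\cap\{|\q x|<M_h\};B\times\R)$ into the integral of $\phi_K(\nu^E)$ over $\partial^*E\cap(B\times\R)\cap\{|\q x|<M_h\}$ plus a term bounded by $C\,\mathcal{H}^{n-1}(E^{(1)}\cap\{|\q x|=M_h\})$, and the exact limit follows by monotone convergence. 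Since $E\cap\{|\q x|<M\}$ coincides with your $E_M$ up to an $\mathcal{H}^n$-null set, this closes the argument without ever extending the $BV$ structure theory to $GBV$ functions; as written, your reverse-inequality step leaves the proof incomplete.
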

\begin{proof}
To prove (\ref{eq:formula perimetro GBV}) it suffices to consider the case where $B$ is bounded since (\ref{eq:formula perimetro GBV}) is an identity between Borel measures on $\R^{n-1}$. Given $M>0$, let $E_M=\Sigma_{\tau_M(u_1)}\cap \Sigma^{\tau_M(u_2)}$. Since $\tau_M(u_i)\in BV_{loc}(\R^{n-1})$ for every $M>0$, $i=1,2$, by Lemma \ref{lem:1.8 pag 65} we find that $E_M$ is a set of locally finite perimeter and that (\ref{eq:formula perimeter BV}) holds true on $E_M$ with $\tau_M(u_1)$ and $\tau_M(u_2)$ in place of $u_1$ and $u_2$. To complete the proof of the theorem we are going to show the following identities
\begin{align}
P_{K}(E;B\times\R)&=\lim_{M\rightarrow +\infty}P_{K}(E_M;B\times \R) \label{eq:lim perimetro P(...)}\\
\int_{B\cap \{u_1 < u_2\}}\phi_{K}(\nabla u_1(x),-1) dx&= \lim_{M\rightarrow +\infty}\int_{B\cap \{\tau_{M}(u_1) < \tau_{M}(u_2)\}}\phi_{K}(\nabla \tau_{M}(u_1)(x),-1) dx\label{eq:lim perimetro AC u_1}\\
\int_{B\cap \{u_1< u_2\}}\phi_{K}(-\nabla u_2(x),1) dx&= \lim_{M\rightarrow +\infty}\int_{B\cap \{\tau_{M}(u_1) < \tau_{M}(u_2)\}}\phi_{K}(-\nabla \tau_{M}(u_2)(x),1) dx\label{eq:lim perimetro AC u_2}
\end{align}
\begin{align}\label{eq:lim perimetro Cantor u_1}
&|(D^cu_1,0)|_K(B\cap \{\tilde{u_1} < \tilde{u_2}\})=\\ &\lim_{M\rightarrow +\infty}\int_{B\cap \{\widetilde{\tau_{M}(u_1)} < \widetilde{\tau_{M}(u_2)}\}}\phi_{K}\left(\frac{dD^c\tau_{M}(u_1)}{d|D^c\tau_{M}(u_1)|}(x),0\right) d|D^c\tau_{M}(u_1)|(x)\nonumber
\end{align}
\begin{align}\label{eq:lim perimetro Cantor u_2}
&|(-D^cu_2,0)|_K(B\cap \{\tilde{u_1} < \tilde{u_2}\})=\\ &\lim_{M\rightarrow +\infty}\int_{B\cap \{\widetilde{\tau_{M}(u_1)} < \widetilde{\tau_{M}(u_2)}\}}\phi_{K}\left(-\frac{dD^c\tau_{M}(u_2)}{d|D^c\tau_{M}(u_2)|}(x),0\right) d|D^c\tau_{M}(u_2)|(x)\nonumber
\end{align}
\begin{small}
\begin{align}\label{eq:lim perimetro Jump u_1}
&\int_{B\cap J_{u_1}}\phi_{K}\left(\nu_{u_1}(z),0  \right)\left(\min(u_1\Upp(z),u_2\Low(z))-u_1\Low(z)  \right)d\mathcal{H}^{n-2}(z)=\\ &\lim_{M\rightarrow +\infty}
\int_{B\cap J_{\tau_{M}(u_1)}}\phi_{K}\left(\frac{dD^j\tau_{M}(u_1)}{d|D^j\tau_{M}(u_1)|}(z),0  \right) \left(\min(\tau_{M}(u_1)\Upp(z),\tau_{M}(u_2)\Low(z))-\tau_{M}(u_1)\Low(z)  \right)d\mathcal{H}^{n-2}(z)\nonumber
\end{align} 
\begin{align}\label{eq:lim perimetro Jump u_2}
&\int_{B\cap J_{u_2}}\phi_{K}\left(-\nu_{u_2}(z),0  \right)\left(u_2\Upp(z)-\max( u_2\Low(z),u_1\Upp(z))  \right)d\mathcal{H}^{n-2}(z)=\\ &\lim_{M\rightarrow +\infty}
\int_{B\cap J_{\tau_{M}(u_2)}}\phi_{K}\left(-\frac{dD^j\tau_{M}(u_2)}{d|D^j\tau_{M}(u_2)|}(z),0  \right)\left(\tau_{M}(u_2)\Upp(z)-\max( \tau_{M}(u_2)\Low(z),\tau_{M}(u_1)\Upp(z))  \right)d\mathcal{H}^{n-2}(z)\nonumber.
\end{align} 
\end{small}
Observe that by \cite[Theorem 3.99]{AFP} with $f=\tau_M$ we have for $i=1,2$
\begin{align}\label{eq: chain rule AFP 3.99}
D\left(\tau_M(u_i)  \right)= 1_{\{|u_i|<M\}}\nabla u_i\,\mathcal{L}^{n-1} + \left(\tau_M(u_i\Upp)-\tau_M(u_i\Low) \right)\nu_{u_i}\,\mathcal{H}^{n-2}\mres S_{u_i} + 1_{\{|\tilde{u_i}|<M\}}\,D^cu_i 
\end{align}
We divide the proof in few steps.\\
\textbf{Step 1 (Jump part)} By relations (\ref{eq: alphas})-(\ref{eq: gammas}) and relation (\ref{eq: chain rule AFP 3.99}) we get that $\{J_{\tau_{M}(u_i)}\}_{M>0}$ is a monotone increasing family of sets whose union is $J_{u_i}$, $i=1,2$. Moreover, observing that 
\begin{align*}
\min\left(\tau_M(s);\tau_M(t)  \right)=\tau_M\left(\min(s;t)  \right) \qquad\forall\, s,t\in\R\\
\max\left(\tau_M(s);\tau_M(t)  \right)=\tau_M\left(\max(s;t)  \right)\qquad\forall\, s,t\in\R
\end{align*}
and taking into account relation (\ref{eq: gammas}) we deduce that both 
\begin{align*}
\left(\min(\tau_{M}(u_1)\Upp(z),\tau_{M}(u_2)\Low(z))-\tau_{M}(u_1)\Low(z)  \right)&_{M>0},\\
\left(\tau_{M}(u_2)\Upp(z)-\max( \tau_{M}(u_2)\Low(z),\tau_{M}(u_1)\Upp(z))  \right)&_{M>0}
\end{align*}
are increasing family of functions. Thus, the proof of (\ref{eq:lim perimetro Jump u_1}) and (\ref{eq:lim perimetro Jump u_2}) is completed.\\
\textbf{Step 2 (Cantor part)} Firstly, let us notice that by definition of \emph{approximate average} (see Section 2) and relation (\ref{eq: alphas})
\begin{align*}
\left\{\widetilde{\tau_M(u_1)}<\widetilde{\tau_M(u_2)}  \right\}=\left\{\tau_M(u_2\Upp)-\tau_M(u_1\Upp)>0  \right\}\cup \left\{\tau_M(u_2\Low)-\tau_M(u_1\Low)>0  \right\}.
\end{align*}
Thus, by relation (\ref{eq: deltas}) we deduce that $\{\widetilde{\tau_M(u_1)}<\widetilde{\tau_M(u_2)}  \}_{M>0}$ is a monotone increasing family of sets whose union is  $\{\widetilde{u_1}<\widetilde{u_2}  \}$. Let us call $A_M=\{\widetilde{\tau_M(u_1)}< \widetilde{\tau_M(u_2)}  \}$ and $A=\{\widetilde{u_1}< \widetilde{u_2}  \}$. By relation (\ref{def: Cantor part GBV}) and by the monotonicity of the sets $\{A_M\}_{M>0}$ we have that
\begin{align}\label{eq: Canton mix}
\lim_{M\rightarrow +\infty}|D^cu_i|\left(B\cap \{A_M \} \right)= |D^cu_i|(B\cap A) 
=\lim_{M\rightarrow +\infty}|D^c\tau_Mu_i|(B\cap A).
\end{align}
Again by the monotonicity of the family of sets $\{A_M\}_{M>0}$ and by (\ref{eq: chain rule AFP 3.99}) we have  
\begin{align*}
|D^cu_i|(A_{M})\leq |D^c\tau_{M}u_i|(A_{M})\leq |D^c\tau_{M}u_i|(A).
\end{align*}
Thus, taking the limit for $M\rightarrow +\infty$ in the above relation we obtain
\begin{align*}
|D^cu_i|(A)\leq \liminf_{M\rightarrow \infty} |D^c\tau_{M}u_i|(A_{M})\leq \limsup_{M\rightarrow \infty} |D^c\tau_{M}u_i|(A_{M})\leq |D^cu_i|(A),
\end{align*}
proving that 
\begin{align*}
\lim_{M\rightarrow +\infty} |D^c\tau_{M}u_i|(A_{M})=|D^cu_i|(A).
\end{align*}
Analogously, having in mind Remark \ref{rem: GBV anisotropic cantor part} we get that
\begin{align*}
&|(D^cu_1,0)|_K(B\cap A)= \lim_{M\rightarrow +\infty}|(D^c\tau_Mu_1,0)|_K(B\cap \{A_M\}),\\
&|(-D^cu_2,0)|_K(B\cap A)= \lim_{M\rightarrow +\infty}|(-D^c\tau_Mu_2,0)|_K(B\cap \{A_M\}).
\end{align*}
This concludes the proof for both (\ref{eq:lim perimetro Cantor u_1}) and (\ref{eq:lim perimetro Cantor u_2}).\\ \\
\textbf{Step 3 (Absolutely Continuos part)}  By (\ref{eq: chain rule AFP 3.99}) we get 
\begin{align*}
\int_{B\cap \{\tau_{M}(u_1) < \tau_{M}(u_2)\}}\phi_{K}(\nabla \tau_{M}(u_1)(x),-1) dx&=\int_{B\cap \{\tau_{M}(u_1)< \tau_{M}(u_2)\}\cap \{|u_1|<M \}}\phi_{K}(\nabla u_1(x),-1)\, dx \\&+\int_{B\cap \{\tau_{M}(u_1) < \tau_{M}(u_2)\}\cap\{|u_1|\geq M \}}\phi_{K}(0,-1) \,dx\\&=I_1^M+I_2^M.
\end{align*}
Notice that
\begin{align*}
|I_2^M|&=\phi_K(0,-1)\mathcal{L}^{n-1}\left( B\cap \{\tau_{M}(u_1) < \tau_{M}(u_2)\}\cap \{|u_1| \geq M \}  \right)\\&\leq \phi_K(0,-1)\mathcal{L}^{n-1}\left( B\cap \{|u_1|\geq M \}  \right).
\end{align*}
By the fact that $\{|u_1|\geq M \}_{M>0}$ is a decreasing family of sets whose intersection is $\{|u_1|=+\infty \}$ we deduce that 
\begin{align*}
\lim_{M\rightarrow \infty}|I_2^M|=0.
\end{align*}
Since both $\{ |u|<M \}_{M>0}$ and  $\{\tau_M(u_1)<\tau_M(u_2)  \}_{M>0}$ are increasing family of sets, we apply the monotone convergence theorem to get that 
\begin{align*}
\lim_{M\rightarrow \infty}I_1^M= \int_{B\cap \{u_1 < u_2\}}\phi_{K}(\nabla u_1(x),-1)\, dx.  
\end{align*} 
An analogous argument can be used for relation (\ref{eq:lim perimetro AC u_2}) and so this concludes the proof for both (\ref{eq:lim perimetro AC u_1}) and (\ref{eq:lim perimetro AC u_2}).\\
\\
\textbf{Step 4 (Perimeter functional part)}  Lastly, let us consider the family of sets $E_{M_h}=E\cap \{|x_n|<M_h  \}$ where the sequence of real numbers $\{M_h\}_{h\in\mathbb{N}}$ has been chosen s.t. 
\begin{align}\label{eq:3.23 Filippo page 24}
\lim_{h\rightarrow +\infty}\mathcal{H}^{n-1}\left(E^{(1)} \cap \{|\q x|=M_h  \}  \right)=0, \quad \mathcal{H}^{n-1}\left(\partial^e E\cap \{|\q x|=M_h  \}  \right)=0 \quad \forall h\in \mathbb{N}.
\end{align} 
Observe that the the existence of such a sequence $\{M_h\}_{h\in\mathbb{N}}$ is guaranteed by the fact that $|E|<\infty$ and by the fact that $\h^{n-1}\mres\partial^e E$ is a Radon measure. Thanks to the above two relations and \cite[Theorem 16.3]{MaggiBOOK} we have that
\begin{align*}
P_{K}\left(E_{M_h}; B\times \R  \right)&= \int_{\partial^* E_{M_h}\cap (B\times \R)}\phi_{K}(\nu^{E_{M_h}}(x))d\mathcal{H}^{n-1}(x)\\
&=\int_{\partial^* E_{M_h}\cap (B\times \R)\cap \{|\q x|<M_h  \}}\phi_{K}(\nu^{E_{M_h}}(x))d\mathcal{H}^{n-1}(x)\\&+ \int_{E^{(1)}\cap \{ |\q x|=M_h \}\cap (B\times
\R)}\phi_{K}(\nu^{E_{M_h}}(x))d\mathcal{H}^{n-1}(x).
\end{align*}
Observing that,
\begin{align*}
\int_{E^{(1)}\cap \{ |\q x|=M_h \}\cap (B\times
\R)}\phi_{K}(\nu^{E_{M_h}}(x))d\mathcal{H}^{n-1}(x)\leq C\mathcal{H}^{n-1}(E^{(1)}\cap \{ |\q x|=M_h \}),
\end{align*}
and considering the first relation in (\ref{eq:3.23 Filippo page 24}) we finally get
\begin{align*}
\lim_{h\rightarrow +\infty}\int_{\partial^* E_{M_h}\cap (B\times \R)\cap \{|\q x|<M_h  \}}\phi_{K}(\nu^{E_{M_h}}(x))d\mathcal{H}^{n-1}(x)= P_{K}(E;B\times\R).
\end{align*}
This concludes the proof.
\end{proof}
 
\begin{figure}[!htb]
\centering
\def\svgwidth{12cm}
\input{saltavsaltabprova.pdf_tex}
\caption{}
\label{fig:jumps}
\end{figure}
\noindent
Before stating the next result, we recall that given a Borel function $f:\R^{n-1}\to \R$, we indicate with $\nu_{f}(x)$ the approximate jump direction of $f$ at $x$ (see Section \ref{preliminaries}).
\begin{lemma}\label{lem:2.1 page 102 mie note}
If $v\in (BV\cap L^{\infty})(\R^{n-1};[0,\infty)) $, $b\in GBV(\R^{n-1})$ and we set $u_1=b-(v/2)\in GBV(\R^{n-1})$, $u_2 = b+ (v/2)\in GBV(\R^{n-1})$, then for $\mathcal{H}^{n-2}$-a.e. $x\in J_v\cap J_b$ we have
\begin{align}
\textit{if }x\in \left\{[b]< \left[ \frac{v}{2} \right]:\,\nu_b=\nu_v\right\} \cup \left\{ \nu_b=-\nu_v \right\}\quad\textit{then }\quad\frac{dD^ju_1}{d|D^ju_1|}(x)=-\nu_v(x)\label{eq:1 page 102}\\
\textit{if }x\in \left\{[b]> \left[ \frac{v}{2} \right]:\, \nu_b=\nu_v \right\}\quad\textit{then }\quad\frac{dD^ju_1}{d|D^ju_1|}(x)=+\nu_v(x) \label{eq: 2page 102}\\
\textit{if }x\in \left\{[b]< \left[ \frac{v}{2} \right]:\, \nu_b=-\nu_v  \right\}\cup \left\{\nu_b=\nu_v  \right\}\quad\textit{then }\quad\frac{dD^ju_2}{d|D^ju_2|}(x)=+\nu_v(x) \label{eq: 3page 102}\\
\textit{if }x\in \left\{[b]> \left[ \frac{v}{2} \right]:\, \nu_b=-\nu_v  \right\}\quad\textit{then }\quad\frac{dD^ju_2}{d|D^ju_2|}(x)=-\nu_v(x). \label{eq: 4page 102}
\end{align}
Moreover,
\begin{align}
\textit{if }x\in\left\{[b]=\frac{1}{2}[v]:\,\nu_b=\nu_v \right\}\quad\textit{then }\quad x\notin J_{u_1}\label{eq: J*1}\\
\textit{if }x\in\left\{[b]=\frac{1}{2}[v]:\,\nu_b=-\nu_v \right\}\quad\textit{then }\quad x\notin J_{u_2}.\label{eq: J*2}
\end{align}
\end{lemma}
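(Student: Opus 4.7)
The plan is to reduce the proof to a straightforward sign-tracking argument by exploiting the rectifiability of the jump sets and the structure of the approximate limits of $b\pm v/2$ at a common jump point.

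First I would recall that both $J_v$ and $J_b$ are countably $\mathcal{H}^{n-2}$-rectifiable, and since $u_1,u_2\in GBV(\R^{n-1})$ we have $J_{u_i}\subset J_{b}\cup J_v$. A standard consequence of the Federer--Vol'pert type classification for the jump set (see e.g.\ \cite[Theorem~3.78]{AFP}) is that at $\mathcal{H}^{n-2}$-a.e.\ $x\in J_v\cap J_b$, the two rectifiable sets share the same approximate tangent hyperplane, so $\nu_b(x)=\pm \nu_v(x)$. Therefore the analysis splits into the two cases $\nu_b(x)=\nu_v(x)$ and $\nu_b(x)=-\nu_v(x)$, and within each case one has the explicit side-identifications: if $\nu_b=\nu_v$ then $b^\vee$, $v^\vee$ are the approximate limits from $H_{x,\nu_v}^+$ and $b^\wedge$, $v^\wedge$ from $H_{x,\nu_v}^-$; if $\nu_b=-\nu_v$ then $b^\vee$, $v^\wedge$ come from $H_{x,\nu_v}^-$ and $b^\wedge$, $v^\vee$ from $H_{x,\nu_v}^+$.

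Next I would compute, case by case, the approximate limits of $u_1=b-v/2$ and $u_2=b+v/2$ by linearity of the approximate-limit operation along fixed half-spaces. In the case $\nu_b=\nu_v$, the approximate limits of $u_1$ from $H_{x,\nu_v}^\pm$ are $b^{\vee,\wedge}-v^{\vee,\wedge}/2$, so the signed jump of $u_1$ in direction $\nu_v$ equals $[b]-[v]/2$. If $[b]<[v]/2$ this quantity is negative, so the ``upper side'' of $u_1$ is $H_{x,\nu_v}^-$, giving $\nu_{u_1}=-\nu_v$ and hence $\frac{dD^ju_1}{d|D^ju_1|}(x)=\nu_{u_1}=-\nu_v$, which is \eqref{eq:1 page 102} in that subcase; symmetrically $[b]>[v]/2$ yields \eqref{eq: 2page 102}. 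For $u_2=b+v/2$ the signed jump is $[b]+[v]/2>0$, always giving $\nu_{u_2}=\nu_v$, contributing to \eqref{eq: 3page 102}.

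In the case $\nu_b=-\nu_v$ the analogous computation gives, for $u_1$, a signed jump (in the direction $\nu_v$) equal to $-[b]-[v]/2<0$, so $\nu_{u_1}=-\nu_v$ (the remaining subcase of \eqref{eq:1 page 102}); for $u_2$ the signed jump is $-[b]+[v]/2$, which is positive when $[b]<[v]/2$ (giving the remaining subcase of \eqref{eq: 3page 102}) and negative when $[b]>[v]/2$ (giving \eqref{eq: 4page 102}). In each case I would use that $\frac{dD^ju_i}{d|D^ju_i|}(x)=\nu_{u_i}(x)$, valid at $\mathcal{H}^{n-2}$-a.e.\ point of $J_{u_i}$, so that the conclusion reduces to identifying $\nu_{u_i}$.

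Finally, for \eqref{eq: J*1} and \eqref{eq: J*2}, I would observe that in the degenerate situation $[b]=[v]/2$ with $\nu_b=\nu_v$ the two side-limits of $u_1$ coincide, $b^\vee-v^\vee/2=b^\wedge-v^\wedge/2$; since $x$ is a Lebesgue point of the sum along complementary half-spaces that now share a common limit, $x\notin S_{u_1}$, hence $x\notin J_{u_1}$. The symmetric computation with $\nu_b=-\nu_v$ gives $x\notin J_{u_2}$. The main obstacle I anticipate is pedantic bookkeeping: keeping track of the finitely many exceptional $\mathcal{H}^{n-2}$-null sets (from rectifiability, from $\nu_b=\pm\nu_v$ failing, from non-jump discontinuities, and from the boundary cases $u_i^\vee=u_i^\wedge$), and verifying that these are negligible simultaneously so that the ``for $\mathcal{H}^{n-2}$-a.e.'' conclusion is legitimate on each of the six disjoint subsets described in the statement.
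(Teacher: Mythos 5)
Your proof is correct, and for the four jump-direction identities it is essentially the paper's argument: what you invoke as ``linearity of the approximate limit along a fixed half-space'' is precisely what the paper verifies by hand, via the triangle-inequality splitting into the sets $A_{>\epsilon}$, $A_{<\epsilon}$, $A_{=\epsilon}$ and the separate use of the approximate limits of $b$ and $v$; the sign bookkeeping you carry out (signed jump $[b]-[v]/2$, $[b]+[v]/2$, $-[b]\mp[v]/2$ in the four configurations) matches (\ref{eq:1 page 102})--(\ref{eq: 4page 102}) exactly. Where you genuinely diverge is in the degenerate cases (\ref{eq: J*1})--(\ref{eq: J*2}): the paper handles these by perturbing $b$ to the auxiliary functions $b_k$, $u_{1,k}$ (subtracting $\frac{1}{k}[b](x)$ on $H^+_{x,\nu_b(x)}$), applying the already-proved strict-inequality case to $u_{1,k}$, and then squeezing $u_1\Low(x)$ and $u_1\Upp(x)$ using the monotonicity of $u_{1,k}$ in $k$; you instead observe directly that when $2[b](x)=[v](x)$ the two one-sided approximate limits of $u_1$ (resp.\ $u_2$) coincide, so the full approximate limit exists at $x$ and hence $x\notin S_{u_1}\supset J_{u_1}$. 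Your route is shorter and avoids the auxiliary functions; to make it airtight you should state explicitly the (elementary) fact that equal approximate limits of $u_1$ from the two complementary half-spaces force $u_1\Upp(x)=u_1\Low(x)$ (split $\{|u_1-\ell|>\epsilon\}\cap B(x,\rho)$ along the hyperplane and use that each half contributes zero density), and note that $b\Upp(x),b\Low(x)$ are finite on the sets in question since $[b]=\frac{1}{2}[v]\le \|v\|_{L^\infty}<\infty$ and $v$ is bounded --- a finiteness which both your argument and the paper's use implicitly when applying the sum rule for approximate limits.
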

\begin{proof}
Firstly, let us notice that thanks to \cite[Proposition 10.5]{MaggiBOOK} we already know that for $\mathcal{H}^{n-2}$-a.e. $x\in J_v\cap J_b$ either we have 
\begin{align*}
\nu_v(x)=\nu_b(x)\qquad\textit{or} \qquad\nu_v(x)=-\nu_b(x).
\end{align*}
Let us start by proving relation (\ref{eq:1 page 102}). In particular, using the definition of upper and lower limits, we want to prove that when $x\in \left\{[b]< \left[ \frac{v}{2} \right]:\,\nu_b=\nu_v\right\}$ (see figure \ref{fig:jumps} \textbf{C}) then 
\begin{align}\label{eq: come scomporre sopra e sotto di u}
u\Upp_1(x) = -\left(\frac{v}{2}\right)\Low(x) + b\Low(x),\quad
u_1\Low(x)= -\left(\frac{v}{2}\right)\Upp(x) + b\Upp(x),\quad
\nu_{u_1}(x)=-\nu_v(x).
\end{align}
As we said, we just need to verify if the definition of jump direction for the upper and lower limit is satisfied, namely if for every $\epsilon > 0$ we have that
\begin{align}\label{eq:def jump direction}
\lim_{\rho\rightarrow +\infty}\frac{\mathcal{H}^{n-1}\left(\left\{y\in \R^{n-1}:\,\left|u_1(y)-\left(-\left(\frac{v}{2}\right)\Low(x)+b\Low(x)  \right)\right|>\epsilon  \right\}\cap H^+_{x,-\nu_v}\cap D_{x,\rho}  \right)}{\omega_{n-1}\rho^{n-1}}=0.
\end{align}
Let us substitute in the numerator of (\ref{eq:def jump direction}) $u_1= b-\frac{v}{2}$ and observe that by the triangular inequality we have that 
\begin{align*}
&\left\{y\in \R^{n-1}:\,\left|b(y)-\frac{v}{2}+\left(\frac{v}{2}\right)(y)\Low(x)-b\Low(x) \right|>\epsilon  \right\}\\&\subseteq \left\{y\in \R^{n-1}:\,\left|b(y)-b\Low(x)\right|+\left|\frac{v}{2}(y)-\left(\frac{v}{2}\right)\Low(x)\right|>\epsilon  \right\}:=A.
\end{align*}
Consider now the following partition of $A$,
\begin{align}
&\left\{y\in \R^{n-1}:\,|b(y)-b\Low(x)|>\frac{\epsilon}{2}  \right\}\cap A:=A_{>\epsilon}\label{eq:A>epsilon1},\\
&\left\{y\in \R^{n-1}:\,|b(y)-b\Low(x)|\leq\frac{\epsilon}{2}  \right\}\cap A:=A_{<\epsilon},\\
&\left\{y\in \R^{n-1}:\,|b(y)-b\Low(x)|=\frac{\epsilon}{2}  \right\}\cap A:=A_{=\epsilon}.
\end{align}
So, using the above partition we can estimate the quantity in the limit relation (\ref{eq:def jump direction}) as follows
\begin{align}\label{eq:estimate jump direction}
&\frac{\mathcal{H}^{n-1}\left(\left\{y\in \R^{n-1}:\,\left|u_1(y)-\left(-\left(\frac{v}{2}\right)\Low(x)+b\Low(x)  \right)\right|>\epsilon  \right\}\cap H^+_{x,-\nu_v}\cap D_{x,\rho}  \right)}{\omega_{n-1}\rho^{n-1}}\nonumber\\&\leq
\frac{\mathcal{H}^{n-1}\left(A\cap H^+_{x,-\nu_v}\cap D_{x,\rho}  \right)}{\omega_{n-1}\rho^{n-1}}\leq \frac{\mathcal{H}^{n-1}\left(A_{>\epsilon}\cap H^+_{x,-\nu_v}\cap D_{x,\rho}  \right)}{\omega_{n-1}\rho^{n-1}}\\&+\frac{\mathcal{H}^{n-1}\left(A_{<\epsilon}\cap H^+_{x,-\nu_v}\cap D_{x,\rho}  \right)}{\omega_{n-1}\rho^{n-1}}+\frac{\mathcal{H}^{n-1}\left(A_{=\epsilon}\cap H^+_{x,-\nu_v}\cap D_{x,\rho}  \right)}{\omega_{n-1}\rho^{n-1}}\nonumber.
\end{align}
By relation (\ref{eq:A>epsilon1}) we have that  
\begin{align*}
A_{>\epsilon}\subseteq \left\{y\in\R^{n-1}:\,|b(y)-b\Low(x)|>\frac{\epsilon}{2}  \right\}.
\end{align*}
Thus,
\begin{align}\label{eq: A magg epsilon}
&\lim_{\rho\rightarrow +\infty}\frac{\mathcal{H}^{n-1}\left(A_{>\epsilon}\cap H^+_{x,-\nu_v}\cap D_{x,\rho}  \right)}{\omega_{n-1}\rho^{n-1}}\nonumber\\&\leq \lim_{\rho\rightarrow +\infty} \frac{\mathcal{H}^{n-1}\left(\left\{y\in \R^{n-1}:\,|b(y)-b\Low(x)|>\frac{\epsilon}{2}  \right\}\cap H^+_{x,-\nu_v}\cap D_{x,\rho} \right)}{\omega_{n-1}\rho^{n-1}}=0, 
\end{align}
where the latter equality holds true by definition of $b\Low(x)$ having in mind that $\nu_b=\nu_v$ by assumption.
Concerning $A_{<\epsilon}$ we have that 
\begin{align*}
A_{<\epsilon}&=\left\{y\in\R^{n-1}:\,\left|\frac{v}{2}(y)-\left(\frac{v}{2} \right)\Low(x)  \right|>\epsilon- |b(y)-b\Low(x)|\geq \frac{\epsilon}{2}  \right\}\\
&\subseteq \left\{y\in \R^{n-1}:\, \left|\frac{v}{2}(y)-\left(\frac{v}{2}  \right)\Low(x)  \right|>\frac{\epsilon}{2}  \right\}.
\end{align*}
Thus,
\begin{align}\label{eq: A less epsilon}
&\lim_{\rho\rightarrow +\infty}\frac{\mathcal{H}^{n-1}\left(A_{<\epsilon}\cap H^+_{x,-\nu_v}\cap D_{x,\rho}  \right)}{\omega_{n-1}\rho^{n-1}}\nonumber\\&\leq \lim_{\rho\rightarrow +\infty} \frac{\mathcal{H}^{n-1}\left(\left\{y\in \R^{n-1}:\, \left|\frac{v}{2}(y)-\left(\frac{v}{2}  \right)\Low(x)  \right|>\frac{\epsilon}{2}  \right\}\cap H^+_{x,-\nu_v}\cap D_{x,\rho} \right)}{\omega_{n-1}\rho^{n-1}}=0.
\end{align}
Thanks to the estimate (\ref{eq:estimate jump direction}), putting together (\ref{eq: A magg epsilon}) and (\ref{eq: A less epsilon}) we get that (\ref{eq:def jump direction}) holds true for every $\epsilon>0$. To conclude we have to prove estimate (\ref{eq:def jump direction}) for $u_1\Low(x)$ namely we have to prove that 
\begin{align*}
\lim_{\rho\rightarrow +\infty}\frac{\mathcal{H}^{n-1}\left(\left\{y\in \R^{n-1}:\,\left|u_1(y)-\left(-\left(\frac{v}{2}\right)\Upp(x)+b\Upp(x)  \right)\right|>\epsilon  \right\}\cap H^-_{x,-\nu_v}\cap D_{x,\rho}  \right)}{\omega_{n-1}\rho^{n-1}}=0\quad\forall\,\epsilon>0.
\end{align*}
In order  to prove that, just use the same argument used for (\ref{eq:def jump direction}), noticing that $H^-_{x,-\nu_{v}}=H^+_{x,\nu_{v}}=H^+_{x,\nu_{b}}$. To prove the remaining statements (\ref{eq: 2page 102})-(\ref{eq: 4page 102}), it is sufficient to consider the same argument adopted for (\ref{eq:def jump direction}), considering in each case the right function either $\frac{v}{2}$ or $b$ with which construct the partition $A_{>\epsilon}$ and $A_{<\epsilon}$.\\
Let us now prove relation (\ref{eq: J*1}). Let $x\in \{[b]=\frac{1}{2} [v]:\, \nu_b=\nu_v  \}$ and let us consider the functions $b_k, u_{1,k}\in GBV(\R^{n-1})$, $k\in \mathbb{N}$ defined as
\begin{align*}
b_k(z)=
\begin{cases}
 b(z), & \mbox{if }z\in H^-_{x,\nu_b(x)}\\ \\ b(z)-\frac{1}{k}[b](x), & \mbox{if }z\in H^+_{x,\nu_b(x)}.
\end{cases}\quad
u_{1,k}(z)=
\begin{cases}
 u_1(z), & \mbox{if }z\in H^-_{x,\nu_b(x)}\\ \\ u_1(z)-\frac{1}{k}[b](x), & \mbox{if }z\in H^+_{x,\nu_b(x)}.
\end{cases}
\end{align*}
Let us note that $u_{1,k}=b_k-\frac{1}{2} v$. Moreover, note that, $b_k\Low(x)=b\Low(x)$, $b_k\Upp(x)=b\Upp(x)-\frac{1}{k}[b](x)$ and so $[b_k](x)=[b](x)-\frac{1}{k}[b](x)$. In particular, we have that  $x\in \{[b_k]<1/2[v]:\,\nu_b=\nu_v  \}$. Thus, by relations (\ref{eq:1 page 102}) and (\ref{eq: come scomporre sopra e sotto di u}) applied to $u_{1,k}$ we get that
\begin{align}
u\Upp_{1,k}(x) &= -\frac{1}{2}v\Low(x) + b_k\Low(x)=-\frac{1}{2}v\Low(x) + b\Low(x)\label{eq: come scomporre u_k sopra} ,\\
u_{1,k}\Low(x)&= -\frac{1}{2}v\Upp(x) + b_k\Upp(x)=-\frac{1}{2}v\Upp(x) + b\Upp(x)-\frac{1}{k}[b](x)\nonumber\\&=-\frac{1}{2}v\Upp(x) + b\Low(x)+\left(1-\frac{1}{k}\right)[b](x)\label{eq: come scomporre u_k sotto} 
\end{align} 
Moreover, by (\ref{def fvee}) and (\ref{def fwedge}) we have that 
\begin{align}
u_{1,k}\Upp(x)&=\inf\left\{t\in\R:\,\lim_{\rho\rightarrow 0^+}\frac{\h^{n-1}\left(\{u_{1,k}>t \}\cap D_{x,\rho} \right)}{\omega_{n-1}\rho^{n-1}}=0\right\}\label{eq: armadio1}\\
u_{1}\Upp(x)&=\inf\left\{t\in\R:\,\lim_{\rho\rightarrow 0^+}\frac{\h^{n-1}\left(\{u_{1}>t \}\cap D_{x,\rho} \right)}{\omega_{n-1}\rho^{n-1}}=0\right\}\label{eq: armadio2}\\
u_{1,k}\Low(x)&=\sup\left\{t\in\R:\,\lim_{\rho\rightarrow 0^+}\frac{\h^{n-1}\left(\{u_{1,k}<t \}\cap D_{x,\rho} \right)}{\omega_{n-1}\rho^{n-1}}=0
\right\}\label{eq: armadio3}\\
u_{1}\Low(x)&=\sup\left\{t\in\R:\,\lim_{\rho\rightarrow 0^+}\frac{\h^{n-1}\left(\{u_{1}<t \}\cap D_{x,\rho} \right)}{\omega_{n-1}\rho^{n-1}}=0
\right\}.\label{eq: armadio4}
\end{align}
Observe that the sequence $(u_{1,k})_{k\in \mathbb{N}}$ is non decreasing in $k$. Thus, we can deduce the following inclusions $\forall\,k>1$
\begin{align*}
\left\{t\in\R:\,\lim_{\rho\rightarrow 0^+}\frac{\h^{n-1}\left(\{u_{1,k}>t \}\cap D_{x,\rho} \right)}{\omega_{n-1}\rho^{n-1}}=0\right\}&\subset\left\{t\in\R:\,\lim_{\rho\rightarrow 0^+}\frac{\h^{n-1}\left(\{u_{1}>t \}\cap D_{x,\rho} \right)}{\omega_{n-1}\rho^{n-1}}=0\right\} \\
\left\{t\in\R:\,\lim_{\rho\rightarrow 0^+}\frac{\h^{n-1}\left(\{u_{1,k}<t \}\cap D_{x,\rho} \right)}{\omega_{n-1}\rho^{n-1}}=0
\right\}&\subset\left\{t\in\R:\,\lim_{\rho\rightarrow 0^+}\frac{\h^{n-1}\left(\{u_{1}<t \}\cap D_{x,\rho} \right)}{\omega_{n-1}\rho^{n-1}}=0
\right\}.
\end{align*} 
Thanks to the above inclusions, having in mind definitions (\ref{eq: armadio1})-(\ref{eq: armadio4}) together with relations (\ref{eq: come scomporre u_k sopra}), (\ref{eq: come scomporre u_k sotto}) we get
\begin{align*}
-\frac{1}{2}v\Upp(x)+b\Low(x)+\left(1-\frac{1}{k}  \right)[b](x)=u_{1,k}\Low(x)\leq u_1\Low(x)\leq u_1\Upp(x)\leq u_{1,k}\Upp(x)=-\frac{1}{2}v\Low(x)+b\Low(x).
\end{align*}
Since $-\frac{1}{2}v\Upp(x)=-\frac{1}{2}v\Low(x)-\frac{1}{2}[v](x)$, passing through the limit as $k\rightarrow +\infty$ in the above relation, we conclude that $u_1\Low(x)=u_1\Upp(x)$ and so $x\notin J_{u_1}$. This concludes the proof of (\ref{eq: J*1}). Using a similar argument as the one used for (\ref{eq: J*1}), we can prove (\ref{eq: J*2}).   
\end{proof}

\begin{remark}\label{rem: Jv/Jb e Jb/Jv}
The cases where $[b](x)=0$ i.e. $x\in J_v\setminus J_b$  can be seen as degenerate situations in Lemma \ref{lem:2.1 page 102 mie note} considering in those characterizations $[b]=0$. A similar argument can be applied to show that for $\mathcal{H}^{n-2}$-a.e. $x\in J_b\setminus J_v$ we have $\nu_{u_i}=\nu_b $, $i=1,2$.
\end{remark}

\begin{remark}\label{rem: A B1 B2 B3 B4 B5 B6 C}
Let us introduce the following compact notation.
\begin{align*}
&\textbf{A}=J_v\setminus J_b,\\
&\textbf{B}_1=\left\{J_v\cap J_b:\,\nu_v=\nu_b,\, [b]<\frac{1}{2}[v] \right\},\quad\textbf{B}_2=\left\{J_v\cap J_b:\,\nu_v=\nu_b,\, [b]=\frac{1}{2}[v] \right\},\\&\textbf{B}_3=\left\{J_v\cap J_b:\,\nu_v=\nu_b,\, [b]>\frac{1}{2}[v] \right\},\\
&\textbf{B}_4=\left\{J_v\cap J_b:\,\nu_v=-\nu_b,\, [b]<\frac{1}{2}[v] \right\}, \quad\textbf{B}_5=\left\{J_v\cap J_b:\,\nu_v=-\nu_b,\, [b]=\frac{1}{2}[v] \right\},
\\&\textbf{B}_6=\left\{J_v\cap J_b:\,\nu_v=-\nu_b,\, [b]>\frac{1}{2}[v] \right\},
\\&\textbf{C}=J_b\setminus J_v.
\end{align*}
Note that we have
\begin{align}\label{eq: come spezzare Jv U Jb}
J_v\cup J_b= \textbf{A}\cup \left(\bigcup_{i=1}^6\textbf{B}_i\right) \cup \textbf{C}.
\end{align}
Moreover, following the argument explained in the proof of Lemma \ref{lem:2.1 page 102 mie note} we can prove the following relations
\begin{align}\label{eq: x in A}
\textit{if }x\in \textbf{A}\quad\textit{then }\quad &u_1\Upp(x)=-\frac{1}{2}v\Low(x)+\tilde{b}(x);\,u_1\Low(x)=-\frac{1}{2}v\Upp(x)+\tilde{b}(x)\\&u_2\Upp(x)=\frac{1}{2}v\Upp(x)+\tilde{b}(x);\,u_2\Low(x)=\frac{1}{2}v\Low(x)+\tilde{b}(x).
\end{align}
\begin{align}\label{eq: x in B1 U B2}
\textit{if }x\in \textbf{B}_1\cup\textbf{B}_2 \quad\textit{then }\quad &u_1\Upp(x)=-\frac{1}{2}v\Low(x)+b\Low(x);\,u_1\Low(x)=-\frac{1}{2}v\Upp(x)+b\Upp(x)\\&u_2\Upp(x)=\frac{1}{2}v\Upp(x)+b\Upp(x);\,u_2\Low(x)=\frac{1}{2}v\Low(x)+b\Low(x).
\end{align}
\begin{align}\label{eq: x in B3}
\textit{if }x\in \textbf{B}_3\quad\textit{then }\quad &u_1\Upp(x)=-\frac{1}{2}v\Upp(x)+b\Upp(x);\,u_1\Low(x)=-\frac{1}{2}v\Low(x)+b\Low(x)\\&u_2\Upp(x)=\frac{1}{2}v\Upp(x)+b\Upp(x);\,u_2\Low(x)=\frac{1}{2}v\Low(x)+b\Low(x).
\end{align}
\begin{align}\label{eq: x in B4 U B5}
\textit{if }x\in \textbf{B}_4\cup\textbf{B}_5\quad\textit{then }\quad &u_1\Upp(x)=-\frac{1}{2}v\Low(x)+b\Upp(x);\,u_1\Low(x)=-\frac{1}{2}v\Upp(x)+b\Low(x)\\&u_2\Upp(x)=\frac{1}{2}v\Upp(x)+b\Low(x);\,u_2\Low(x)=\frac{1}{2}v\Low(x)+b\Upp(x).
\end{align}
\begin{align}\label{eq: x in B6}
\textit{if }x\in \textbf{B}_6\quad\textit{then }\quad &u_1\Upp(x)=-\frac{1}{2}v\Low(x)+b\Upp(x);\,u_1\Low(x)=-\frac{1}{2}v\Upp(x)+b\Low(x)\\&u_2\Upp(x)=\frac{1}{2}v\Low(x)+b\Upp(x);\,u_2\Low(x)=\frac{1}{2}v\Upp(x)+b\Low(x).
\end{align}
\begin{align}\label{eq: x in C}
\textit{if }x\in \textbf{C}\quad\textit{then }\quad &u_1\Upp(x)=-\frac{1}{2}\tilde{v}(x)+b\Upp(x);\,u_1\Low(x)=-\frac{1}{2}\tilde{v}(x)+b\Low(x)\\&u_2\Upp(x)=\frac{1}{2}\tilde{v}(x)+b\Upp(x);\,u_2\Low(x)=\frac{1}{2}\tilde{v}(x)+b\Low(x).
\end{align}
\end{remark}

\begin{corollary}\label{cor: formula perimetro con b v}
If $v\in (BV\cap L^{\infty})(\R^{n-1};[0,\infty)) $, $b\in GBV(\R^{n-1})$ and \
\begin{align}
W=W[v,b]=\left\{x\in \R^n:\, |\q x - b(\p x)|<\frac{v(\p x)}{2}  \right\},
\end{align}
then $u_1=b-(v/2)\in GBV(\R^{n-1})$, $u_2 = b+ (v/2)\in GBV(\R^{n-1})$, $W$ is a set of locally finite perimeter with finite volume and for every Borel set $B\subset \R^{n-1}$ we have
\begin{align}
\!\!\!\!\!P_{K}(W;B\times \R)&=\int_{B\cap \{v>0 \}}\phi_{K}\left(\nabla\left(b-\frac{v}{2}\right),-1\right)+ \phi_{K}\left(-\nabla\left(b+\frac{v}{2}\right),1\right)d\mathcal{H}^{n-1}\label{eq: abs cont part formula v e b}\\
&+\int_{B\cap J_v}\min\left(v\Upp,\left(\left[\frac{v}{2}\right]+[b]+\max\left(\left[\frac{v}{2}\right]- [b],0  \right)   \right)   \right)\phi_{K}(-\nu_v,0)\, d\mathcal{H}^{n-2}\label{eq: jump part formula v e b1}\\
&+\int_{B\cap J_v}\min\left(v\Low,\max\left( 0,[b]-\left[\frac{v}{2}\right] \right)\right)\phi_{K}(\nu_v,0)\, d\mathcal{H}^{n-2}\label{eq: jump part formula v e b2}\\
&+\int_{B\cap (J_b\setminus J_v )}\min\left([b],\tilde{v}  \right)\left(\phi_{K}(-\nu_b,0)+\phi_{K^s}(\nu_b,0)  \right) d\mathcal{H}^{n-2}\label{eq: jump part formula v e b3}\\
&+\left|\left(D^c\left(b-\frac{v}{2} \right),0\right)   \right|_K(B\cap \{\tilde{v}>0  \})
\label{eq: cantor part formula v e b1}\\
&+\left|\left(-D^c\left(b+\frac{v}{2} \right),0\right)   \right|_K(B\cap \{\tilde{v}>0  \}).\label{eq: cantor part formula v e b2}
\end{align}
\end{corollary}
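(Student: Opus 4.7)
The plan is to reduce the statement to Theorem \ref{thm:perimetro GBV} by writing $W=\Sigma_{u_1}\cap\Sigma^{u_2}$ with $u_1=b-v/2$ and $u_2=b+v/2$, and then translating each piece of the resulting formula into the $(v,b)$-language displayed in the corollary. First I would verify the preliminary claims: since $v\in BV\cap L^\infty$ and $b\in GBV$, the truncations $\tau_M(u_i)=\tau_M(b\pm v/2)$ lie in $BV_{loc}(\R^{n-1})$ (one writes $\tau_M(b\pm v/2)=\tau_M(\tau_{M+\|v\|_\infty/2}(b)\pm v/2)$ and uses that truncation preserves $BV_{loc}$ and $BV+L^\infty\subset BV_{loc}$), so $u_1,u_2\in GBV(\R^{n-1})$ with $u_1\le u_2$; moreover $\h^n(W)=\int v<\infty$ by Fubini. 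Hence Theorem \ref{thm:perimetro GBV} applies and gives a five-term decomposition of $P_K(W;B\times\R)$.

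Next I would match each block of that decomposition to the corresponding line of the stated formula. The absolutely continuous part is immediate: $\nabla u_1=\nabla b-\nabla v/2$, $\nabla u_2=\nabla b+\nabla v/2$, and since $u_1=u_2$ $\h^{n-1}$-a.e.\ on $\{v=0\}$, one has $\{u_1<u_2\}=_{\h^{n-1}}\{v>0\}$, yielding exactly line \eqref{eq: abs cont part formula v e b}. For the Cantor part, $D^cu_i=D^cb\pm D^cv/2$ by linearity of the Radon--Nikodym decomposition and the fact that $\h^{n-1}(S_v\cap S_b)$ is $\sigma$-finite hence $|D^c u_i|$-null; using $\{\tilde u_1<\tilde u_2\}=\{\tilde v>0\}$ one obtains \eqref{eq: cantor part formula v e b1}--\eqref{eq: cantor part formula v e b2} directly from Remark \ref{rem: GBV anisotropic cantor part}.

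The main obstacle is the jump part, which requires a case-by-case bookkeeping. I would decompose $J_{u_1}\cup J_{u_2}\subset J_v\cup J_b$ using the partition of Remark \ref{rem: A B1 B2 B3 B4 B5 B6 C} into the pieces $\mathbf{A},\mathbf{B}_1,\ldots,\mathbf{B}_6,\mathbf{C}$. On each piece I would read off $\nu_{u_i}$ from Lemma \ref{lem:2.1 page 102 mie note} (also using \eqref{eq: J*1}--\eqref{eq: J*2} to exclude the degenerate cases $\mathbf{B}_2,\mathbf{B}_5$ from $J_{u_1},J_{u_2}$ respectively) and $u_i^\vee,u_i^\wedge$ from \eqref{eq: x in A}--\eqref{eq: x in C}, then compute the two expressions
\[
\phi_K(\nu_{u_1},0)\bigl(\min(u_1^\vee,u_2^\wedge)-u_1^\wedge\bigr),\qquad \phi_K(-\nu_{u_2},0)\bigl(u_2^\vee-\max(u_2^\wedge,u_1^\vee)\bigr).
\]
For example, on $\mathbf{B}_1$ one finds $\min(u_1^\vee,u_2^\wedge)-u_1^\wedge=v^\vee-[v/2]-[b]$ with the factor $\phi_K(-\nu_v,0)$, while on $\mathbf{B}_3$ the quantity $\min(u_1^\vee,u_2^\wedge)-u_1^\wedge$ collapses to $v^\wedge-([b]-[v/2])^+$ with factor $\phi_K(\nu_v,0)$; analogous accounting on $\mathbf{A},\mathbf{B}_4,\mathbf{B}_6,\mathbf{C}$ yields the contributions with $\phi_K(\pm\nu_v,0)$ and $\phi_K(\pm\nu_b,0)$. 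Summing the contributions from all cases and algebraically simplifying the resulting $\min/\max$ expressions produces exactly lines \eqref{eq: jump part formula v e b1}--\eqref{eq: jump part formula v e b3}; the hard part is the algebraic verification that the piecewise sums on $\mathbf{B}_1,\ldots,\mathbf{B}_6$ coincide with the compact $\min$/$\max$ formulas written there, which I would do by splitting each $\min$/$\max$ according to the sign of $[b]-[v/2]$ and comparing term by term.
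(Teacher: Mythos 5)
Your strategy coincides with the paper's own proof: apply Theorem \ref{thm:perimetro GBV} to $u_1=b-\frac{v}{2}\leq u_2=b+\frac{v}{2}$, obtain \eqref{eq: abs cont part formula v e b}, \eqref{eq: cantor part formula v e b1}, \eqref{eq: cantor part formula v e b2} by direct substitution (replacing $\{\widetilde{u_1}<\widetilde{u_2}\}$ by $\{\tilde{v}>0\}$ costs nothing, since the two sets differ only inside $S_v\cup S_b$, which is $\sigma$-finite with respect to $\h^{n-2}$ and hence negligible for the Cantor measures), and treat the jump part through the partition $\mathbf{A},\mathbf{B}_1,\dots,\mathbf{B}_6,\mathbf{C}$ via Lemma \ref{lem:2.1 page 102 mie note} and Remarks \ref{rem: Jv/Jb e Jb/Jv}, \ref{rem: A B1 B2 B3 B4 B5 B6 C}. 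Your reduction of $u_1,u_2$ to $GBV$ via $\tau_M(b\pm v/2)=\tau_M(\tau_{M+L}(b)\pm v/2)$ with $L=\|v\|_\infty/2$ is the same truncation trick used in Lemma \ref{lem:3.8 page 37 Filippo}, and the finite-volume claim is indeed just Fubini.

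The one genuine problem is that both sample computations in your jump bookkeeping are wrong, and as written they would not sum to the stated formula. On $\mathbf{B}_1$, \eqref{eq: x in B1 U B2} gives $\min(u_1\Upp,u_2\Low)-u_1\Low=\frac{1}{2}[v]-[b]$, not $v\Upp-\frac{1}{2}[v]-[b]$; combined with the $u_2$-term $u_2\Upp-\max(u_2\Low,u_1\Upp)=\frac{1}{2}[v]+[b]$, both weighted by $\phi_{K}(-\nu_v,0)$ (there $\nu_{u_1}=-\nu_v$ and $\nu_{u_2}=\nu_v$), the $\mathbf{B}_1$ contribution is $[v]\,\phi_K(-\nu_v,0)$, as \eqref{eq: jump part formula v e b1} predicts. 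On $\mathbf{B}_3$, \eqref{eq: x in B3} gives for the $u_1$-term $\min\bigl(v\Low,[b]-\tfrac{1}{2}[v]\bigr)$ with weight $\phi_K(\nu_v,0)$, not $v\Low-\max\bigl(0,[b]-\tfrac{1}{2}[v]\bigr)$, and for the $u_2$-term $\min\bigl(v\Upp,\tfrac{1}{2}[v]+[b]\bigr)$ with weight $\phi_K(-\nu_v,0)$, which is exactly what \eqref{eq: jump part formula v e b1} and \eqref{eq: jump part formula v e b2} encode. So the route is sound and identical to the paper's, but the case-by-case verification must be redone with the correct values (and analogously on $\mathbf{A},\mathbf{B}_4,\mathbf{B}_6,\mathbf{C}$). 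A minor remark: the identity $D^cu_i=D^cb\pm\frac{1}{2}D^cv$ is not needed (and is delicate for $b\in GBV$); the Cantor terms enter the corollary directly as $|(D^cu_1,0)|_K$ and $|(-D^cu_2,0)|_K$, so only the set identification above is required.
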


\begin{proof}
The absolutely continuous part and the Cantor parts of the formula, namely relations (\ref{eq: abs cont part formula v e b}), (\ref{eq: cantor part formula v e b1}) and (\ref{eq: cantor part formula v e b2}) are obtained directly by substitution of $u_1=b-\frac{1}{2}v$ and $u_2=b+\frac{1}{2}v$ in the formula (\ref{eq:formula perimetro GBV}). To prove the jump parts of the formula i.e. (\ref{eq: jump part formula v e b1}), (\ref{eq: jump part formula v e b2}) and (\ref{eq: jump part formula v e b3}) we have first to notice that (see (\ref{eq: come spezzare Jv U Jb})) 
$$J_{u_1}\cup J_{u_2}=J_v\cup J_{b}=J_v\setminus J_b \cup (J_v\cap J_b) \cup J_b\setminus J_v = \textbf{A}\cup \left(\bigcup_{i=1}^6\textbf{B}_i\right) \cup \textbf{C}. $$ 
Thanks to this relation, we can rewrite the second and third line of the formula (\ref{eq:formula perimetro GBV}) as
\begin{align*}
&\int_{B\cap (J_{u_1}\cup J_{u_2})}\phi_{K}\left(\nu_{u_1}(z),0  \right)\left(\min(u_1\Upp(z),u_2\Low(z))-u_1\Low(z)  \right) \nonumber\\
&+\phi_{K}\left(-\nu_{u_2}(z),0  \right)\left(u_2\Upp(z)-\max( u_2\Low(z),u_1\Upp(z))  \right)d\mathcal{H}^{n-2}(z)\\= &\int_{B\cap (J_{u_1}\cup J_{u_2})} I_1(z)+I_2(z)d\mathcal{H}^{n-2}(z)= \int_{\textbf{A}}I_1(z)+I_2(z)d\mathcal{H}^{n-2}(z)\\&+\sum_{i=1}^6\int_{\textbf{B}_i}I_1(z)+I_2(z)d\mathcal{H}^{n-2}(z)+ \int_{\textbf{C}}I_1(z)+I_2(z)d\mathcal{H}^{n-2}(z).
\end{align*}
Using then Lemma \ref{lem:2.1 page 102 mie note}, Remark \ref{rem: Jv/Jb e Jb/Jv} and Remark \ref{rem: A B1 B2 B3 B4 B5 B6 C} we deduce relations (\ref{eq: jump part formula v e b1}), (\ref{eq: jump part formula v e b2}) and (\ref{eq: jump part formula v e b3}). This concludes the proof.
\end{proof}
\begin{corollary}\label{cor: formula per F[v]}
If $v$ is as in (\ref{due tilde}), then
\begin{align*}
P_{K}(F[v];G\times \R)&= \int_{G\cap\{v>0\}}\phi_{K}\left(-\frac{1}{2}\nabla\left(v\right),-1\right)d\mathcal{H}^{n-1}+\int_{G\cap\{v>0\}}\phi_{K}\left(-\frac{1}{2}\nabla\left(v\right),1\right)d\mathcal{H}^{n-1}\\
&+\int_{G\cap J_v}[v]\phi_{K^s}(-\nu_v,0) d\mathcal{H}^{n-2}+2\left|\left(-\frac{1}{2}
D^cv,0\right)   \right|_K(G).
\end{align*}
\end{corollary}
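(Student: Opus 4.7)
The plan is to derive Corollary \ref{cor: formula per F[v]} as the direct specialization of Corollary \ref{cor: formula perimetro con b v} to the choice $b\equiv 0$. Indeed, comparing (\ref{eq:F[v]}) with the definition of $W=W[v,b]$ in Corollary \ref{cor: formula perimetro con b v}, one sees that $F[v]=W[v,0]$, so that $u_1=-v/2$, $u_2=v/2$, $\nabla b\equiv 0$, $D^c b\equiv 0$, $J_b=\emptyset$, and in particular $[b]\equiv 0$ on the relevant exceptional sets.

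The next step is term-by-term substitution and simplification in the formula (\ref{eq: abs cont part formula v e b})--(\ref{eq: cantor part formula v e b2}). The absolutely continuous contribution collapses immediately to
\[
\int_{G\cap\{v>0\}}\Big[\phi_K\big(-\tfrac12\nabla v,-1\big)+\phi_K\big(-\tfrac12\nabla v,1\big)\Big]\,d\mathcal{H}^{n-1}.
\]
For the jump term (\ref{eq: jump part formula v e b1}), setting $[b]=0$ gives
$
\min\!\big(v\Upp,\,[v/2]+\max([v/2],0)\big)=\min(v\Upp,[v]),
$
and since $[v]=v\Upp-v\Low\le v\Upp$ this reduces to $[v]$, producing the jump integral $\int_{G\cap J_v}[v]\phi_K(-\nu_v,0)\,d\mathcal{H}^{n-2}$. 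The jump term (\ref{eq: jump part formula v e b2}) vanishes because $\max(0,-[v/2])=0$, and (\ref{eq: jump part formula v e b3}) vanishes because $J_b=\emptyset$. Finally, the two Cantor contributions (\ref{eq: cantor part formula v e b1})--(\ref{eq: cantor part formula v e b2}) both equal $|(-D^cv/2,0)|_K(G\cap\{\tilde v>0\})$, so together they give $2|(-D^cv/2,0)|_K(G\cap\{\tilde v>0\})$; by Lemma \ref{lem: cantor total variation of v on v=0 is null} the measure $|D^cv|$ (hence also $|(-D^cv/2,0)|_K$, which is absolutely continuous with respect to $|D^cv|$ by Remark \ref{rem: |mu|_K << |mu|}) is concentrated on $\{v\Low>0\}\subset\{\tilde v>0\}$, so the restriction to $\{\tilde v>0\}$ can be dropped.

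The only small technical point is that Corollary \ref{cor: formula perimetro con b v} is stated under the assumption $v\in BV\cap L^\infty$, while (\ref{due tilde}) only requires $v\in BV$. I would handle this by applying the result to the truncated functions $v_M:=\min\{v,M\}\in BV\cap L^\infty$, noting that $F[v_M]\to F[v]$ in $L^1$ and $P_K(F[v_M];G\times\R)\to P_K(F[v];G\times\R)$, and then letting $M\to\infty$ term by term. Monotonicity in $M$ of each piece (absolutely continuous, jump, and Cantor) allows one to pass to the limit exactly as in Steps 1--4 of the proof of Theorem \ref{thm:perimetro GBV}. I do not expect any serious obstacle: once the reduction $b\equiv 0$ is made, the whole statement is a bookkeeping exercise, and the truncation argument is entirely parallel to the one already carried out in Theorem \ref{thm:perimetro GBV}.
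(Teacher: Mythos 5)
Your reduction is correct, and the term-by-term simplification (jump terms collapsing to $\int_{G\cap J_v}[v]\phi_K(-\nu_v,0)\,d\mathcal{H}^{n-2}$, the two Cantor terms doubling up, and the removal of the restriction to $\{\tilde v>0\}$ via Lemma \ref{lem: cantor total variation of v on v=0 is null} together with Remark \ref{rem: |mu|_K << |mu|}) matches what the paper needs. The difference is in the entry point: the paper does not go through Corollary \ref{cor: formula perimetro con b v} at all, but applies Theorem \ref{thm:perimetro GBV} directly with $u_1=-v/2$ and $u_2=v/2$, which is legitimate because that theorem only asks $u_1,u_2\in GBV(\R^{n-1})$ and $\h^n(F[v])<\infty$ (guaranteed by (\ref{due tilde})); since $BV\subset GBV$, no boundedness of $v$ is needed and the whole corollary is a one-line substitution plus the Cantor-set remark. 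Your route, by insisting on the $(BV\cap L^\infty)$ statement of Corollary \ref{cor: formula perimetro con b v} with $b\equiv 0$, forces the truncation $v_M=\min\{v,M\}$ and a limiting argument which essentially re-runs Steps 1--4 of the proof of Theorem \ref{thm:perimetro GBV} in the special case $F[v_M]=F[v]\cap\{|x_n|<M/2\}$ — workable, but redundant, since that machinery was built precisely to dispense with truncations at the level of the general formula. One small caution in your write-up: $P_K(F[v_M];G\times\R)\to P_K(F[v];G\times\R)$ is not a consequence of $L^1$ convergence (perimeter is only lower semicontinuous); it requires the slicing argument of Step 4, i.e.\ choosing levels $M_h$ with $\h^{n-1}(F[v]^{(1)}\cap\{|\q x|=M_h/2\})\to 0$ and $\h^{n-1}(\partial^e F[v]\cap\{|\q x|=M_h/2\})=0$, which suffices because the right-hand side converges along the full limit $M\to\infty$; likewise the absolutely continuous piece is not monotone in $M$ but is handled by the $I_1^M+I_2^M$ splitting of Step 3, which you implicitly invoke. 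With those details understood, your argument closes, but the paper's direct appeal to Theorem \ref{thm:perimetro GBV} is the cleaner path.
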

\begin{proof}
The proof follows by applying Corollary \ref{thm:perimetro GBV} with $ u_1=-\frac{1}{2}v$ and $u_2=\frac{1}{2}v$, and by recalling that by Lemma  \ref{lem: cantor total variation of v on v=0 is null}, $\left|\left(-\frac{1}{2}
D^cv,0\right)   \right|_K(G)=\left|\left(-\frac{1}{2}
D^cv,0\right)   \right|_K(G\cap\{\tilde{v}>0\} )$.
\end{proof}

\section{Characterization of equality cases for the anisotropic perimeter inequality}\label{section 6}
\noindent
This section is dedicated to the proof of Theorem \ref{thm:2.2 pag 117}. This proof is on the spirit of the proof of Theorem \ref{thm:1.2 Filippo} (see \cite[Theorem 1.9]{CCPMSteiner}). We split the proof of Theorem \ref{thm:2.2 pag 117} in the \emph{necessary part} and in the \emph{sufficient part}.
\begin{proof}[Proof of Theorem \ref{thm:2.2 pag 117}: Necessary conditions.]
Let $E\in \mathcal{M}_{K^s}(v)$. Condition (\ref{eq: 1.15 Filippo}) was already proved in \cite[Theorem 2.9]{cianchifusco2}. As a consequence , by Theorem \ref{thm: baricentro Filippo}, we have that $b_{\delta}=1_{\{v> \delta \}}b_E \in GBV(\R^{n-1})$ for every $\delta>0$ such that $\{v> \delta  \}$ is a set of finite perimeter in $\R^{n-1}$. Let us consider the same sets defined in \cite[page 1568]{CCPMSteiner} namely
\begin{align}
I&=\left\{\delta > 0:\, \{v < \delta  \}\textit{ \emph{and} }\{v> \delta  \}\textit{ \emph{are sets of finite perimeter}}  \right\},\label{eq:3.50 pag 31 Filippo}\\
J_{\delta}&=\left\{M > 0:\, \{b_{\delta} < M  \}\textit{ \emph{and} }\{b_{\delta}> -M  \}\textit{ \emph{are sets of finite perimeter}}  \right\},\label{eq:3.51 pag 31 Filippo}
\end{align}
where $J_\delta$ is defined for $\delta\in\ I$.
Let us observe that $\mathcal{H}^1((0,\infty)\setminus I)=0$ since $v\in BV(\R^{n-1})$ and that $\mathcal{H}^1((0,\infty)\setminus J_{\delta})=0$ for every $\delta \in I$, as for every $\delta \in I$ we have $b_{\delta}\in GBV(\R^{n-1})$. Let us fix $\delta, L\in I$ and $M\in J_{\delta}$ and set 
\begin{align*}
\Sigma_{\delta,L,M}= \left\{\delta < v < L  \right\}\cap \left\{|b_E|<M  \right\}= \left\{ |b_{\delta}|< M \right\}\cap \left\{\delta < v < L  \right\},
\end{align*}
so that $\Sigma_{\delta,L,M}$ is a set of finite perimeter. Since $\tau_Mb_{\delta}\in (BV\cap L^{\infty})(\R^{n-1})$, $1_{\Sigma_{\delta,L,M}}\in (BV\cap L^{\infty})(\R^{n-1})$ and $\tau_Mb_{\delta}=b_{\delta}=b_E$ on $\Sigma_{\delta,L,M}$, we set 
\begin{align*}
b_{\delta,L,M}=1_{\Sigma_{\delta,L,M}}b_E.
\end{align*}
Note that $b_{\delta,L,M}\in (BV\cap L^{\infty})(\R^{n-1}).$\\
\textbf{Step 1} In this step we are going to prove that for $\mathcal{H}^{n-1}$-a.e. $x\in\R^{n-1}$ there exists $z(x)\in \partial K^s$ such that
\begin{align}
\left\{\left(-\frac{1}{2}\nabla v(x)+t \nabla b_{\delta,L,M}(x),1  \right):\,t\in [-1,1]   \right\} \subset C^*_{K^s}(z(x)).\label{eq:3.52 pag 31 Filippo}
\end{align}
Indeed, let us set $v_{\delta,L,M}=1_{\Sigma_{\delta,L,M}}v$. Since $v_{\delta,L,M},b_{\delta,L,M}\in(BV\cap L^{\infty})(\R^{n-1})$, we can apply Corollary \ref{cor: formula perimetro con b v} to $W=W[v_{\delta,L,M},b_{\delta,L,M}]$. Moreover observe that  $W[v_{\delta,L,M},b_{\delta,L,M}]=E\cap(\Sigma_{\delta,L,M}\times \R)$ and thus \begin{align*}
\partial^e E \cap (\Sigma_{\delta,L,M}^{(1)}\times \R)= \partial^e W[v_{\delta,L,M},b_{\delta,L,M}]\cap (\Sigma_{\delta,L,M}^{(1)}\times \R),
\end{align*}
and so, for every Borel set $G\subset \Sigma_{\delta,L,M}^{(1)}\setminus (S_{v_{\delta,L,M}}\cup S_{b_{\delta,L,M}})$ we find that 
\begin{align*}
&P_{K^s}(E;G\times \R)=P_{K^s}(W[v_{\delta,L,M},b_{\delta,L,M}];G\times \R)\\
&=\int_{G}\phi_{K^s}\left(\nabla\left(b_{\delta,L,M}-\frac{v_{\delta,L,M}}{2}\right),1\right)+ \phi_{K^s}\left(-\nabla\left(b_{\delta,L,M}+\frac{v_{\delta,L,M}}{2}\right),1\right)d\mathcal{H}^{n-1}\\
&+\left|\left(D^c\left(b_{\delta,L,M}-\frac{v_{\delta,L,M}}{2}\right),0\right)   \right|_{K^s}(G)+\left|\left(-D^c\left(b_{\delta,L,M}+\frac{v_{\delta,L,M}}{2}\right),0\right)   \right|_{K^s}(G),
\end{align*}
where in the first addendum of the second line we have used Remark \ref{rem: K simmetrica e quindi pure phi_K}.
We can use Lemma \ref{lem: 2.3 FIlippo page 18} applied with $v_{\delta,L,M}=1_{\Sigma_{\delta,L,M}}v$, to find that 
\begin{align*}
\nabla v_{\delta,L,M}&=1_{\Sigma_{\delta,L,M}}\nabla v,\quad \textit{$\mathcal{H}^{n-1}$\emph{-a.e. on $\R^{n-1}$}},\\
D^cv_{\delta,L,M} &= D^cv\mres\Sigma^{(1)}_{\delta,L,M},\\
S_{v_{\delta,L,M}}\cap \Sigma_{\delta,L,M}^{(1)} &= S_v\cap \Sigma_{\delta,L,M}^{(1)}.
\end{align*}
Thus,
\begin{align*}
P_{K^s}(E;G\times \R)&=\int_{G}\phi_{K^s}\left(\nabla\left(b_{\delta,L,M}-\frac{v}{2}\right),1\right)+ \phi_{K^s}\left(-\nabla\left(b_{\delta,L,M}+\frac{v}{2}\right),1\right)d\mathcal{H}^{n-1}\\
&+\left|\left(D^c\left(b_{\delta,L,M}-\frac{v}{2}\right),0\right)   \right|_{K^s}(G)+\left|\left(-D^c\left(b_{\delta,L,M}+\frac{v}{2}\right),0\right)   \right|_{K^s}(G),
\end{align*}
for every Borel set $G\subset \Sigma_{\delta,L,M}^{(1)}\setminus (S_{v_{\delta,L,M}}\cup S_{b_{\delta,L,M}})$.
We are assuming that $E \in \mathcal{M}_{K^s}(v)$ and so for every Borel set $G\subset \R^{n-1}$ we have that $P_{K^s}(E;G\times \R)=P_{K^s}(F[v];G\times \R)$. In particular, having in mind the formula for $P_{K^s}(F[v];G\times \R)$ given by Corollary \ref{cor: formula per F[v]}, for every Borel set $G\subset \Sigma_{\delta,L,M}^{(1)}\setminus (S_{v_{\delta,L,M}}\cup S_{b_{\delta,L,M}})$ we get
\begin{small}
\begin{align}
0&= \int_{G}\phi_{K^s}\left(\nabla\left(b_{\delta,L,M}-\frac{v}{2}\right),1\right)+ \phi_{K^s}\left(-\nabla\left(b_{\delta,L,M}+\frac{v}{2}\right),1\right)-2\phi_{K^s}\left(-\nabla\left(\frac{v}{2}\right),1\right)d\mathcal{H}^{n-1} \label{eq: (1) sub addivita}\\
&+\left|\left(D^c\left(b_{\delta,L,M}-\frac{v}{2}\right),0\right)\right|_{K^s}\!\!\!\!\!\!\!(G)+\left|\left(-D^c\left(b_{\delta,L,M}+\frac{v}{2}\right),0\right)\right|_{K^s}\!\!\!\!\!\!\!(G)-2\left|\left(-D^c\left(\frac{v}{2}\right),0\right)\right|_{K^s}\!\!\!\!\!\!\!(G)\label{eq: (2)>0 grazie al lem misure anisotrope}
\end{align}
\end{small}
Let us notice that the first line in the above relation, namely (\ref{eq: (1) sub addivita}) is greater than or equal to zero by the sub additivity of $\phi_K$. Also the second line in the above relation, namely (\ref{eq: (2)>0 grazie al lem misure anisotrope}), is greater than or equal to zero thanks to Lemma \ref{lem:1.9 pag 70} with $\mu=\left(-\frac{1}{2}D^cv,0   \right)$ and $\nu=\left(D^cb_{\delta,L,M},0   \right)$. Thus, we have that 
\begin{small}
\begin{align}
0&= \int_{G}\phi_{K^s}\left(\nabla\left(b_{\delta,L,M}-\frac{v}{2}\right),1\right)+ \phi_{K^s}\left(-\nabla\left(b_{\delta,L,M}+\frac{v}{2}\right),1\right)-2\phi_{K^s}\left(-\nabla\left(\frac{v}{2}\right),1\right)d\mathcal{H}^{n-1} \label{eq:3.57 page 32 FIlippo}\\
0&=\left|\left(D^c\left(b_{\delta,L,M}-\frac{v}{2}\right),0\right)\right|_{K^s}\!\!\!\!\!\!\!(G)+\left|\left(-D^c\left(b_{\delta,L,M}+\frac{v}{2}\right),0\right)\right|_{K^s}\!\!\!\!\!\!\!(G)-2\left|\left(-D^c\left(\frac{v}{2}\right),0\right)\right|_{K^s}\!\!\!\!\!\!\!(G).\label{eq:3.58 page 32  FIlippo}
\end{align}
\end{small}
Let us observe that the relation (\ref{eq:3.57 page 32 FIlippo}) is satisfied if and only if $\h^{n-1}$-a.e. in $G$ we have
\begin{align*}
\phi_{K^s}\left(\nabla\left(b_{\delta,L,M}-\frac{v}{2}\right)(x),1  \right) + \phi_{K^s}\left(-\nabla\left(b_{\delta,L,M}+\frac{v}{2}\right)(x),1  \right)=2\phi_{K^s}\left(-\frac{\nabla v(x)}{2},1  \right). 
\end{align*}
Thanks to Proposition \ref{prop:linearityphi} the condition above is satisfied if and only if for $\mathcal{H}^{n-1}\textit{\emph{-a.e. $x\in G$}}$, $\exists \bar{z}(x) \in \partial K^s$ s.t.
\begin{align*}
\frac{\left(\nabla\left(b_{\delta,L,M}-\frac{v}{2}\right)(x),1  \right)}{\phi_{K^s}\left(\nabla\left(b_{\delta,L,M}-\frac{v}{2}\right)(x),1  \right)},\frac{\left(-\nabla\left(b_{\delta,L,M}+\frac{v}{2}\right)(x),1  \right)}{\phi_{K^s}\left(-\nabla\left(b_{\delta,L,M}+\frac{v}{2}\right)(x),1  \right)}\in \partial \phi^*_{K^s}(\bar{z}(x)).
\end{align*}
As we observed in Remark \ref{rem: more appealing way (iii)}, and in particular using relation (\ref{eq: more appealing way (iii)}) with $y_1=\left(-\frac{1}{2}\nabla(x)+\nabla b_{\delta,L,M}(x) ,1\right)$ and $y_2=\left(-\frac{1}{2}\nabla(x)-\nabla b_{\delta,L,M}(x),1\right)$ the condition above is equivalent to say that for $\mathcal{H}^{n-1}$-a.e. $x\in G$, there exists $\bar{z}(x) \in \partial K^s$ s.t.
\begin{align}
\left\{\left(-\frac{1}{2}\nabla(x)+t\nabla b_{\delta,L,M}(x),1 \right):\,t\in[-1,1]   \right\} \subset C_K^*(\bar{z}(x)).
\end{align}
This concludes the first step.\\
\textbf{Step 2} In this step we prove that there exists a Borel measurable function $g_{\delta,L,M}:\R^{n-1}\rightarrow \R^{n-1}$ such that
$$
D^cb_{\delta,L,M}\mres \Sigma_{\delta,L,M}^{(1)}= g_{\delta,L,M}\left|\frac{1}{2}D^cv  \right|_{K^s}\mres\Sigma_{\delta,L,M}^{(1)}.
$$
We prove also an intermediate relation for (\ref{eq:1.18 pag 7 Filippo}). Indeed, let us rewrite relation (\ref{eq:3.58 page 32  FIlippo}) as 
\begin{align*}
\left|\left(-D^c v,0  \right)  \right|_{K^s}\!(G)=\left|\left(D^c \left(b_{\delta,L,M}-\frac{v}{2} \right) ,0  \right)  \right|_{K^s}\!\!\!\!\!\!\!(G) + \left|\left(-D^c \left(b_{\delta,L,M}+\frac{v}{2} \right),0  \right)  \right|_{K^s}\!\!\!\!\!\!\!(G).
\end{align*}
As already observed, by calling
\begin{align*}
\mu&=\left(-\frac{D^c v}{2},0  \right),\\
\nu&= \left(D^cb_{\delta,L,M},0  \right)
\end{align*}
the above equality can be written as
\begin{align*}
2|\mu|_{K^s}(G)= |\mu+\nu|_{K^s}(G) + |\mu-\nu |_{K^s}(G). 
\end{align*}
Observe that we are in a case of equality in Lemma \ref{lem:1.9 pag 70}. Thus, by Remark \ref{rem: case equality radon measure}, for $\left|D^c v \right|$-a.e. $x\in G$ we define
\begin{align*}
g_{\delta,L,M}(x)= \frac{dD^c b_{\delta,L,M}}{d\left|(D^c v/2,0) \right|_{K^s}}(x),\quad
h(x)= \frac{-dD^c v/2 }{d\left|(D^c v/2,0) \right|_{K^s}}(x),
\end{align*}
and we conclude that for $|D^cv|$-a.e. $x\in G$ there exists $z(x)\in \partial K$ s.t.
\begin{align}\label{eq:3.53 page 31 Filippo}
\left\{(h(x)+tg_{\delta,L,M}(x),0):\,t\in[-1,1]\right\}\subset C^*_{K}(z(x)).
\end{align}
%
This concludes the second step.\\
\textbf{Step 3} In this step we prove (\ref{eq:1.17 pag 7 Filippo}). We fix $\delta, L\in I$ and we define $\Sigma_{\delta, L}= \{\delta < v < L   \}$, $b_{\delta, L}= 1_{\Sigma_{\delta, L}}b_E$ and $v_{\delta, L}= 1_{\Sigma_{\delta, L}}v$. Since $\Sigma_{\delta, L}$ is a set of finite perimeter, it turns out that $b_{\delta, L}\in GBV(\R^{n-1})$, while, by construction, $v_{\delta, L}\in (BV\cap L^{\infty})(\R^{n-1})$. So, we can apply the formula of Corollary \ref{cor: formula perimetro con b v} to the set $W[v_{\delta, L},b_{\delta, L}]$. In particular, if $G\subset \Sigma^{(1)}_{\delta, L}\cap (S_{v_{\delta, L}}\cup S_{b_{\delta, L}})$, then
\begin{align}
&P_{K^s}(E;G\times \R)=P_{K^s}(W[v_{\delta, L},b_{\delta, L}];G\times \R)\nonumber\\
&=\int_{G\cap J_{v}}\min\left(v\Upp,\left(\left[\frac{v}{2}\right]+[b_{\delta, L}]+\max\left(\left[\frac{v}{2}\right]- [b_{\delta, L}],0  \right)   \right)   \right)\phi_{K^s}(-\nu_{v},0) d\mathcal{H}^{n-2}\label{eq: parte di salto lungo -nu_v}\\
&+\int_{G\cap J_{v}}\min\left(v\Low,\max\left( 0,[b_{\delta, L}]-\left[\frac{v}{2}\right] \right)\right)\phi_{K^s}(\nu_{v},0) d\mathcal{H}^{n-2}\nonumber\\
&+\int_{G\cap (J_{b_{\delta, L}}\setminus J_{v} )}\min\left([b_{\delta, L}],\tilde{v}  \right)\left(\phi_{K^s}(-\nu_{b_{\delta, L}},0)+\phi_{K^s}(\nu_{b_{\delta, L}},0)  \right) d\mathcal{H}^{n-2},\nonumber
\end{align}
where we used the fact that, thanks to (\ref{eq: 2.10 Filippo pag 16})
\begin{align*}
\Sigma_{\delta, L}^{(1)}\cap S_{v_{\delta, L}}= \Sigma_{\delta, L}^{(1)}\cap S_v,\quad
v_{\delta, L}\Upp = v\Upp \quad
v\Low_{\delta, L} = v\Low,\quad 
[v_{\delta, L}]&=[v]\quad \forall x\in \Sigma_{\delta, L}^{(1)}.
\end{align*}
Let us observe that, calling $\mathcal{I}$ the argument of the integral in relation (\ref{eq: parte di salto lungo -nu_v}) i.e.
\begin{align*}
\mathcal{I}=\min\left(v\Upp,\left(\left[\frac{v}{2}\right]+[b_{\delta, L}]+\max\left(\left[\frac{v}{2}\right]- [b_{\delta, L}],0  \right)   \right)   \right)
\end{align*}
we have that
\begin{align}
\textit{if }\quad [b_{\delta,L}]=0 \quad \textit{then}\quad \mathcal{I}=[v],\label{eq: I1}\\
\textit{if }\quad [b_{\delta,L}]\leq \frac{1}{2}[v] \quad \textit{then}\quad \mathcal{I}=[v],\label{eq: I2}\\
\textit{if }\quad [b_{\delta,L}]>\frac{1}{2}[v] \quad \textit{then}\quad \mathcal{I}>[v].\label{eq: I3}
\end{align}
Recall that 
\begin{align*}
P_{K^s}(F[v];G\times \R)&=\int_{G\cap J_v}[v]\phi_{K^s}(-\nu_v,0) d\mathcal{H}^{n-2}.
\end{align*}
Thus, since $\phi_{K^s}\geq 0$, imposing that $P_{K^s}(F[v];G\times \R)= P_{K^s}(E;G\times \R)$ and having in mind relations (\ref{eq: I1})-(\ref{eq: I3}) we obtain that 
\begin{align}
\min\left([b_{\delta, L}],\tilde{v}  \right)= 0,\quad \mathcal{H}^{n-2}\textit{\emph{-a.e. in }} G\cap (S_{b_{\delta,L}}\setminus &S_v)\label{eq: As}\\
\min\left(v\Low,\max\left( 0,[b_{\delta, L}]-\left[\frac{v}{2}\right] \right)\right)=0,\quad \mathcal{H}^{n-2}\textit{\emph{-a.e. in }} G\cap  &S_v\label{eq: Bs}\\
\mathcal{I}=\min\left(v\Upp,\left(\left[\frac{v}{2}\right]+[b_{\delta, L}]+\max\left(\left[\frac{v}{2}\right]- [b_{\delta, L}],0  \right)   \right)   \right)= [v]\quad \mathcal{H}^{n-2}\textit{\emph{-a.e. in }} G\cap &S_v.\label{eq: Cs}
\end{align}
Since $\tilde{v}\geq \delta>0$ in $\Sigma_{\delta, L}^{(1)}$, from (\ref{eq: As}) it follows that $S_{b_{\delta,L}}\cap \Sigma_{\delta,L}^{(1)}\subset_{\mathcal{H}^{n-2}}S_v$. Moreover, from (\ref{eq: I1}), (\ref{eq: I2}) together with (\ref{eq: As}) and (\ref{eq: Bs}) it follows that 
\begin{align}
[b_{\delta, L}]\leq \frac{[v]}{2} \quad \mathcal{H}^{n-2}\textit{\emph{-a.e. }}x\in G\cap S_v.
\end{align}
By (\ref{eq: 2.10 Filippo pag 16}), $[b_{\delta,L}]=[b_E]$ on $\Sigma_{\delta,L}^{(1)}$. By taking the union of $\Sigma_{\delta,L}^{(1)}$ on $\delta, L \in I$ and by taking (\ref{eq: 2.8 Filippo page 15}), (\ref{eq: 2.9 Filippo page 15}) into account we thus find that 
\begin{align*}
[b_E]\leq \frac{[v]}{2}\quad \mathcal{H}^{n-2}\textit{\emph{-a.e. on $\{v\Low >0  \}\cup \{v\Upp< \infty  \}$}}.
\end{align*}
Since, by \cite[4.5.9(3)]{Fed69} $\{ v\Upp =\infty \}$ is $\mathcal{H}^{n-2}$-negligible, we have proved (\ref{eq:1.17 pag 7 Filippo}).\\ 
\textbf{Step 4} In this step we prove (\ref{eq:1.16 pag 7 Filippo}). Let $\delta, L \in I$ and $M\in J_{\delta}$. Since $b_{\delta, L, M}= b_{E}$ $\h^{n-1}$-a.e. on $\Sigma_{\delta, L, M}$ by (\ref{eq:3.52 pag 31 Filippo}) and by (\ref{eq: 2.12 FIlippo pag 16}) we find that for $\mathcal{H}^{n-1}$-a.e. $x\in \Sigma_{\delta, L, M}$, there exists $z(x) \in \partial K^s$ s.t.
\begin{align*}
\left\{\left(-\frac{1}{2}\nabla v(x)+t \nabla b_{E}(x),1  \right):\,t\in [-1,1]   \right\} \subset C^*_{K^s}(z(x)).
\end{align*}
By taking a union first on $M\in J_{\delta}$ and then on $\delta, L \in I$, we find that for $\mathcal{H}^{n-1}$-a.e. $x\in \{v>0 \}$, there exists $z(x) \in \partial K^s$ s.t.
\begin{align*}
\left\{\left(-\frac{1}{2}\nabla v(x)+t \nabla b_{E}(x),1  \right):\,t\in [-1,1]   \right\} \subset C^*_{K^s}(z(x)).
\end{align*}
This concludes the proof of (\ref{eq:1.16 pag 7 Filippo}).


\noindent
\textbf{Step 5 }In this step we prove (\ref{eq:1.18 pag 7 Filippo}). Let $\delta, L\in I$ and $M\in J_{\delta}$. Since $b_{\delta,L,M}= 1_{\Sigma_{\delta,L,M}} \tau_Mb_{\delta}$, by Lemma \ref{lem: 2.3 FIlippo page 18} we have 
\begin{align*}
D^cb_{\delta,L,M}=D^c(\tau_Mb_{\delta})\mres\Sigma_{\delta,L,M}^{(1)}.
\end{align*}
Combining this fact with (\ref{eq:3.53 page 31 Filippo}) we find that for every $G\subset \Sigma_{\delta,L,M}^{(1)}$, for $|D^cv|$-a.e. $x\in G$ there exists $z(x)\in \partial K$ s.t.
\begin{align*}
\left\{(h(x)+tg_{\delta,M}(x),0):\,t\in[-1,1]\right\}\subset C^*_{K}(z(x)),
\end{align*}
where for $\left|D^c v \right|$-a.e. $x\in G$ the functions $g_{\delta,M}$ and $h$ are given by
\begin{align*}
g_{\delta,M}(x)= \frac{dD^c (\tau_Mb_{\delta})}{d\left|(D^c v/2,0) \right|_{K^s}}(x),\quad
h(x)= \frac{-dD^c v/2 }{d\left|(D^c v/2,0) \right|_{K^s}}(x).
\end{align*}
Observe now that 
\begin{align*}
\bigcup_{L\in I}\Sigma_{\delta,L,M}^{(1)}&=\bigcup_{L\in I}\{ |b_{\delta}|<M \}^{(1)}\cap \{ v>\delta \}^{(1)}\cap \{ v<L \}^{(1)}\\
&=\left(\{ |b_{\delta}|<M \}^{(1)}\cap \{ v>\delta \}^{(1)}\right)\cap \bigcup_{L\in I} \{ v<L \}^{(1)}\\
&=\{ |b_{\delta}|<M \}^{(1)}\cap \{ v>\delta \}^{(1)} \cap \{ v\Upp<\infty \},
\end{align*}
where in the last identity we used (\ref{eq: 2.8 Filippo page 15}). Note that, as we pointed out at the end of step 3, $\mathcal{H}^{n-2}(\{ v\Upp =\infty \})=0$, so the set $\{ v\Upp = \infty  \}$ is negligible with respect to both $|D^c\tau_Mb_{\delta}| $ and $|D^cv|$. Thus, we proved that for every bounded Borel set $G\subset \{ |b_{\delta}|<M \}^{(1)}\cap \{ v>\delta \}^{(1)}$, for $|D^cv|$-a.e. $x\in G$ there exists $z(x)\in \partial K$ s.t.
\begin{align}\label{eq: 3.59 Filippos delta e M}
\left\{(h(x)+tg_{\delta,M}(x),0):\,t\in[-1,1]\right\}\subset C^*_{K}(z(x)).
\end{align}
Observe that for every $M'>M$ and $\delta' < \delta$ we have that $\tau_Mb_\delta=\tau_{M'}b_{\delta'}$ on $\{ |b_{\delta}|<M \}\cap \{ v>\delta \}$. So, by Lemma \ref{lem: 2.3 FIlippo page 18} we get that
\begin{align*}
D^c\left( \tau_{M}b_{\delta} \right)\mres \{ |b_{\delta}|<M \}^{(1)}\cap \{ v>\delta \}^{(1)}  =D^c\left(\tau_{M'}b_{\delta'}\right)\mres \{ |b_{\delta}|<M \}^{(1)}\cap \{ v>\delta \}^{(1)},
\end{align*}
and therefore the function $g_{\delta,M}$ actually does not depend on $\delta, M$.  So taking into account (\ref{eq: 3.59 Filippos delta e M}) we have that for $|D^cv|$-a.e. $x\in G$ there exists $z(x)\in \partial K$ s.t.
\begin{align}
\left\{(h(x)+tg(x),0):\,t\in[-1,1]\right\}\subset C^*_{K}(z(x)).
\end{align}
Lastly, let us notice that
\begin{align*}
\tau_Mb_{\delta}=M1_{\{b_\delta\geq M\}}-M1_{\{b_\delta\leq -M\}}+ 1_{\{|b_\delta|<M\}\cap \{v>\delta\}}\tau_Mb_{\delta},\quad \textnormal{on }\R^{n-1}
\end{align*}
is an identity between BV functions. Thus, thanks to \cite[Example 3.97]{AFP} we find that 
\begin{align*}
D^c\tau_Mb_{\delta}= D^c(\tau_Mb_{\delta})\mres\left(G\cap \{ |b_{\delta}|<M \}^{(1)}\cap \{ v>\delta \}^{(1)}  \right)
\end{align*}
i.e. the measure $D^c\tau_Mb_{\delta}$ is concentrated on $\{ |b_{\delta}|<M \}^{(1)}\cap \{ v>\delta \}^{(1)}$.
Therefore, we deduce that for every bounded Borel set $G\subset \R^{n-1}$, for $|D^cv|$-a.e. $x\in G\cap \{ |b_{\delta}|<M \}^{(1)}\cap \{ v>\delta \}^{(1)}$ there exists $z(x)\in \partial K$ s.t.
\begin{align}
\left\{(h(x)+tg(x),0):\,t\in[-1,1]\right\}\subset C^*_{K}(z(x)).
\end{align}
\end{proof}
\noindent
Before entering into the details of the proof for the \emph{sufficient conditions} part, we need a couple of technical results.
\begin{proposition}\label{prop:3.7 page 34 Filippo}
Let $K\subset \R^n$ be as in (\ref{HP per K})and let $v$ be as in (\ref{due tilde}). Then, if $E$ is a v-distributed set of finite perimeter with sections $E_z$ as segments $\h^{n-1}$-a.e on $\{v>0\}$ we have that
\begin{align}\label{eq:3.62 page 34 Filippo}
P_{K}(E;\{ v\Low=0 \}\times \R)=P_{K}(F[v];\{ v\Low=0 \}\times \R)= \int_{\{ v\Low=0 \}}v\Upp \phi_{K}(-\nu_v,0)d\mathcal{H}^{n-2}.
\end{align}
\end{proposition}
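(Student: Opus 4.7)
The plan is to treat $F[v]$ and a general $v$-distributed $E$ with segment sections separately, and to show in each case that $P_K(\cdot; G\times\R)$, with $G := \{v\Low = 0\}$, equals $\int_G v\Upp\,\phi_K(-\nu_v, 0)\,d\h^{n-2}$. For $F[v]$, Corollary \ref{cor: formula per F[v]} applied with this choice of $G$ gives the formula directly: the absolutely continuous integrals over $G \cap \{v > 0\}$ vanish because $v\Low = v$ $\h^{n-1}$-a.e.\ forces $\h^{n-1}(G \cap \{v > 0\}) = 0$; the Cantor contribution vanishes by Lemma \ref{lem: cantor total variation of v on v=0 is null}; and the jump contribution reduces to $\int_{G \cap J_v} v\Upp\,\phi_K(-\nu_v, 0)\,d\h^{n-2}$ since $[v] = v\Upp - v\Low = v\Upp$ on $G$, with the integration domain extending freely to all of $G$ because $v\Upp = 0$ $\h^{n-2}$-a.e.\ on $G \setminus J_v$.

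For a general $E$, I first dispose of the horizontal part of $\partial^{*} E$ over $G \times \R$. By the coarea formula (\ref{eq: Coarea CCF}) applied with $g \equiv 1$ on $G\times\R$, one has $\int_{\partial^{*}E \cap (G\times\R)}|\q\nu^E|\,d\h^{n-1} = \int_G \h^0((\partial^{*}E)_z)\,dz$; Vol'pert's Theorem \ref{thm: Volpert} identifies $(\partial^{*}E)_z$ with $\partial^{*} E_z$ for $\mathcal{L}^{n-1}$-a.e.\ $z$, and since $v(z) = 0$ for $\h^{n-1}$-a.e.\ $z \in G$ (as $v\Low = v$ a.e.), the sections $E_z$ are $\h^1$-null there, so the integral vanishes. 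This forces $\h^{n-1}(\partial^{*} E \cap (G\times\R) \cap \{\q\nu^E \neq 0\}) = 0$, hence $P_K(E; G\times\R) = \int_{\partial^{*}E \cap (G\times\R)}\phi_K(\p\nu^E, 0)\,d\h^{n-1}$, effectively an integral over the vertical part $\{\q\nu^E = 0\}$, on which $\phi_K(\nu^E) = \phi_K(\p\nu^E, 0)$.

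To evaluate this I invoke a BV slicing in the vertical direction: for $\mathcal{L}^1$-a.e.\ $t \in \R$, the horizontal section $E^t := \{z \in \R^{n-1} : (z,t) \in E\}$ is a set of finite perimeter in $\R^{n-1}$, and a Fubini computation on the horizontal component of the distributional gradient of $\chi_E$, combined with Theorem \ref{thm:Charac.anis.total.var.} applied to the $1$-homogeneous convex function $\eta \mapsto \phi_K(\eta, 0)$ on $\R^{n-1}$, yields
\begin{equation*}
P_K(E; G\times\R) \;=\; \int_\R dt \int_{\partial^{*}E^t \cap G}\phi_K(\nu^{E^t}, 0)\,d\h^{n-2}.
\end{equation*}
At $\h^{n-2}$-a.e.\ $z_0 \in J_v \cap G$, the GBV-regularity of $b_\delta = 1_{\{v > \delta\}}b_E$ (Theorem \ref{thm: baricentro Filippo}) provides the one-sided approximate limit $b^+(z_0) := \aplim(b_E, H^+_{z_0, \nu_v(z_0)}, z_0)$. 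Since $v \to 0$ from the $-\nu_v$-side, $E^t$ has density zero at $z_0$ from that side for every $t$; the segment structure of $E_z$ for $z$ on the $+\nu_v$-side then gives $z_0 \in \partial^{*}E^t$ with outer normal $-\nu_v(z_0)$ precisely when $t \in (b^+(z_0) - v\Upp(z_0)/2,\ b^+(z_0) + v\Upp(z_0)/2)$, a set of Lebesgue measure $v\Upp(z_0)$. Fubini then yields $\int_{G \cap J_v} v\Upp\,\phi_K(-\nu_v, 0)\,d\h^{n-2} = \int_G v\Upp\,\phi_K(-\nu_v, 0)\,d\h^{n-2}$, matching the formula obtained for $F[v]$. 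The main obstacle is this last identification: rigorously analyzing $\partial^{*} E^t$ at $\h^{n-2}$-a.e.\ $z_0 \in J_v \cap G$ by combining the segment structure of the sections $E_z$ with a one-sided trace-type analysis of $b_E$ from the side where $v$ stays positive, exploiting the GBV structure of Theorem \ref{thm: baricentro Filippo}.
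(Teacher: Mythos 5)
Your treatment of $F[v]$ is correct and is essentially a direct application of the paper's own machinery: with $G=\{v\Low=0\}$, Corollary \ref{cor: formula per F[v]} gives the claim because $\h^{n-1}(G\cap\{v>0\})=0$, the Cantor term vanishes by Lemma \ref{lem: cantor total variation of v on v=0 is null} together with Remark \ref{rem: |mu|_K << |mu|}, and $[v]=v\Upp$ on $G$ with $v\Upp=0$ $\h^{n-2}$-a.e.\ on $G\setminus J_v$. Likewise, your reduction showing $\q\nu^E=0$ $\h^{n-1}$-a.e.\ on $\partial^*E\cap(G\times\R)$, via (\ref{eq: Coarea CCF}) and Theorem \ref{thm: Volpert}, is sound. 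Bear in mind that the paper itself gives no proof of Proposition \ref{prop:3.7 page 34 Filippo}: it defers to a ``careful inspection'' of \cite[Proposition 3.8]{CCPMSteiner}, so your horizontal-slicing route is necessarily independent of the paper's (omitted) argument.

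The problem is that the decisive step — which you yourself label ``the main obstacle'' — is not carried out, and as sketched it is partly incorrect. First, the assertion that ``$E^t$ has density zero at $z_0$ from the $-\nu_v$-side for every $t$'' is false as stated: smallness of $v(z)=\h^1(E_z)$ does not prevent the short segments $E_z$ from clustering at one fixed height (e.g.\ $b_E\equiv t_0$ on the $\{v\Low=0\}$-side makes $E^{t_0}$ have full one-sided density at $z_0$); the statement can only hold for a.e.\ $t$, and even that needs an argument (e.g.\ via $\int_\R\h^{n-1}(E^t\cap H^-_{z_0,\nu_v(z_0)}\cap D_{z_0,r})\,dt=\int_{H^-_{z_0,\nu_v(z_0)}\cap D_{z_0,r}}v\,d\h^{n-1}$ and Fatou). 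Second, the existence \emph{and finiteness} of the one-sided limit $b^+(z_0)$ at $\h^{n-2}$-a.e.\ $z_0\in J_v\cap G$ is not established: Theorem \ref{thm: baricentro Filippo} only gives $b_\delta\in GBV(\R^{n-1})$, whose one-sided approximate limits may be $\pm\infty$, so you must exclude $|b^+|=\infty$ on a set of positive $\h^{n-2}$-measure (this requires a perimeter estimate, not just GBV structure). Third, the slicing identity $P_K(E;G\times\R)=\int_\R dt\int_{\partial^*E^t\cap G}\phi_K(\nu^{E^t},0)\,d\h^{n-2}$ is true, but it does not follow from ``Fubini plus Theorem \ref{thm:Charac.anis.total.var.}'': the sup-characterization only yields one inequality, and the equality together with the identification $\nu^{E^t}=\p\nu^E/|\p\nu^E|$ needs a Vol'pert/coarea theorem for hyperplane sections (see e.g.\ \cite[Chapter 18]{MaggiBOOK}), which is not among the tools you quote — the paper's Theorem \ref{thm: Volpert} and (\ref{eq: Coarea CCF}) concern vertical line sections and are blind to exactly this (horizontal-normal) part of the boundary. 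Finally, you also need the converse inclusion (for a.e.\ $t$, $\h^{n-2}$-a.e.\ point of $\partial^*E^t\cap G$ is of the described form, so no extra perimeter arises over $G\setminus J_v$ or outside the stated $t$-interval) and a justification of the interchange of $dt$ with $d\h^{n-2}$, since $\h^{n-2}$ is not $\sigma$-finite and joint measurability of $(z,t)\mapsto 1_{\partial^*E^t}(z)$ must be checked. As it stands, the proposal proves the easy half (the formula for $F[v]$ and the verticality of $\partial^*E$ over $G$) but not the proposition.
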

\begin{proof}
The proof of this result follows from a careful inspection of the proof of \cite[Proposition 3.8]{CCPMSteiner}, and for this reason is omitted.  
\end{proof}
\begin{lemma}\label{lem:3.8 page 37 Filippo}
If $v\in (BV\cap L^{\infty})(\R^{n-1})$, $b:\R^{n-1}\rightarrow \R$ is such that $\tau_Mb \in (BV\cap L^{\infty})(\R^{n-1})$ for a.e. $M>0$ and $\mu$ is a $\R^{n-1}$-valued Radon measure such that 
\begin{align}
\lim_{M\rightarrow \infty}|\mu -D^c\tau_Mb|(G)=0\quad \textit{ for every bounded Borel set $G\subset \R^{n-1}$},\label{eq:3.79 page 37 FIlippo}
\end{align}
then,
\begin{align}
|(D^c(b+v),0)|_{K^s}(G)\leq |(\mu+D^cv),0)|_{K^s}(G)\quad \textit{ for every bounded Borel set $G\subset \R^{n-1}$}\label{eq: 3.80 page 37 Filippo}.
\end{align}
\end{lemma}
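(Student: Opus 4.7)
The plan is to reduce the inequality to the $L^1$-convergence hypothesis (\ref{eq:3.79 page 37 FIlippo}) by re-expressing $\tau_M(b+v)$ through a truncation $\tau_{M'}b + v$ which is already in $(BV\cap L^{\infty})(\R^{n-1})$, and then apply the BV chain rule of \cite[Theorem 3.99]{AFP} as formulated in (\ref{eq: chain rule AFP 3.99}). Throughout the argument we regard the left-hand side of (\ref{eq: 3.80 page 37 Filippo}) as defined via (\ref{def: Cantor part GBV anisotropica}), that is, $|(D^c(b+v),0)|_{K^s}(G)=\lim_{M\to\infty}|(D^c\tau_M(b+v),0)|_{K^s}(G)$.

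Set $V:=\|v\|_{L^{\infty}(\R^{n-1})}$ and, for any $M>0$, put $M':=M+V$. The first step is the pointwise identity
\begin{equation*}
\tau_M(b+v)=\tau_M(\tau_{M'}b+v)\qquad \h^{n-1}\text{-a.e.\ on }\R^{n-1}.
\end{equation*}
Indeed, on $\{|b|\le M'\}$ we have $\tau_{M'}b=b$ and there is nothing to prove, whereas on $\{|b|>M'\}$ both $b+v$ and $\tau_{M'}b+v$ have absolute value at least $M$ and identical sign, so both sides of the identity equal $\pm M$ consistently. Since $\tau_{M'}b+v\in (BV\cap L^\infty)(\R^{n-1})$ by assumption, this in particular yields $\tau_M(b+v)\in (BV\cap L^\infty)(\R^{n-1})$ for every such $M$, so the limit definition of the left-hand side of (\ref{eq: 3.80 page 37 Filippo}) is meaningful.

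The second step is the chain rule applied to $\tau_M\circ(\tau_{M'}b+v)$. Using (\ref{eq: chain rule AFP 3.99}) (and the fact that $D^c v=D^c\tau_{M'}v$ since $M'>V$, together with the linearity of the Cantor part) we get
\begin{equation*}
D^c\tau_M(b+v)=D^c\tau_M(\tau_{M'}b+v)=\mathbf{1}_{A_M}\,\bigl(D^c\tau_{M'}b+D^cv\bigr),
\end{equation*}
with $A_M:=\{|\widetilde{\tau_{M'}b+v}|<M\}$. Consequently, for every bounded Borel set $G\subset\R^{n-1}$, by Definition \ref{def: anisotropic total variation} and the identity $|\mathbf{1}_{A}\nu|_{K^s}=|\nu|_{K^s}\mres A$,
\begin{equation*}
|(D^c\tau_M(b+v),0)|_{K^s}(G)\;\le\;|(D^c\tau_{M'}b+D^cv,0)|_{K^s}(G).
\end{equation*}

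The third step is the triangle inequality (\ref{eq: disuguaglianza triangola |mu|K}) applied with the splitting $D^c\tau_{M'}b+D^cv=(D^c\tau_{M'}b-\mu)+(\mu+D^cv)$, which gives
\begin{equation*}
|(D^c\tau_M(b+v),0)|_{K^s}(G)\le |(D^c\tau_{M'}b-\mu,0)|_{K^s}(G)+|(\mu+D^cv,0)|_{K^s}(G).
\end{equation*}
By Remark \ref{rem: |mu|_K << |mu|} the first term is bounded from above by $C\,|D^c\tau_{M'}b-\mu|(G)$, which by hypothesis (\ref{eq:3.79 page 37 FIlippo}) tends to $0$ as $M\to\infty$ (and hence $M'\to\infty$). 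Passing to the limit $M\to\infty$ in the inequality above and invoking (\ref{def: Cantor part GBV anisotropica}) yields (\ref{eq: 3.80 page 37 Filippo}).

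The main conceptual obstacle is the first step: since $b$ need not itself be in $BV$, the identity $\tau_M(b+v)=\tau_M(\tau_{M+V}b+v)$ is what allows us to place ourselves in the classical BV setting and apply the chain rule. Once this is established, everything else reduces to routine use of the triangle inequality (\ref{eq: disuguaglianza triangola |mu|K}) and of the control of the anisotropic total variation by the Euclidean one from Remark \ref{rem: |mu|_K << |mu|}.
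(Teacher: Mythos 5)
Your proof is correct and follows essentially the same route as the paper's: the key identity $\tau_M(b+v)=\tau_M(\tau_{M+V}b+v)$, the restriction formula for the Cantor part of a truncation (you invoke the chain rule (\ref{eq: chain rule AFP 3.99}), the paper uses \cite[Theorem 3.96]{AFP} --- same content), and the splitting $D^c\tau_{M'}b+D^cv=(D^c\tau_{M'}b-\mu)+(\mu+D^cv)$ combined with Remark \ref{rem: |mu|_K << |mu|} and hypothesis (\ref{eq:3.79 page 37 FIlippo}). The only difference is cosmetic: the paper runs a two-sided sandwich identifying $\lim_{M\to\infty}|(D^c(\tau_{M+L}b+v),0)|_{K^s}(G)=|(\mu+D^cv,0)|_{K^s}(G)$, whereas you keep only the upper bound, which indeed suffices for (\ref{eq: 3.80 page 37 Filippo}).
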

\begin{proof}
Let $L>0$ be such that $|v|\leq L$ $\mathcal{H}^{n-1}$-a.e. on $\R^{n-1}$. If $f \in BV(\R^{n-1}),$ then 
\begin{align*}
\tau_Mf=M1_{\{f>M\}}-M1_{\{f<-M\}}+1_{\{|f|<M\}}f \in (BV \cap L^{\infty})(\R^{n-1}),
\end{align*}
for every $M$ such that $\{ f>M \}$ and $\{ f<-M \}$ are of finite perimeter and thus, by \cite[Theorem 3.96]{AFP}
\begin{align*}
D^c\tau_Mf=D^c\left(1_{\{|f|<M\}}f  \right)=1_{\{|f|<M\}^{(1)}}D^cf=D^cf \mres \{|f|<M\}^{(1)};
\end{align*} 
in particular,
\begin{align}
|(D^c\tau_Mf,0)|_{K^s}&=|(D^cf,0)|_{K^s}\mres \{|f|<M  \}^{(1)}\leq |(D^cf,0)|_{K^s}.\label{eq:3.81 anisotropa}
\end{align}
From the equality $\tau_M(\tau_{M+L}(b)+v)=\tau_M(b+v)$ and from (\ref{eq:3.81 anisotropa}) applied with $f=\tau_{M+L}(b)+v$ it follows that, for every Borel set $G\subset \R^{n-1}$,
\begin{align}
|(D^c(\tau_M(b+v)),0)|_{K^s}(G)&= |(D^c(\tau_M(\tau_{M+L}(b)+v)),0)|_{K^s}(G)\nonumber \\&\leq |(D^c(\tau_{M+L}(b)+v),0)|_{K^s}(G)\label{eq:3.82 anisotropa}.
\end{align}
Now observe that (\ref{eq:3.79 page 37 FIlippo}) implies that 
\begin{align}
\lim_{M\rightarrow \infty}|-\left(\mu -D^c\tau_Mb\right)|(G)=0\quad \textit{ for every bounded Borel set $G\subset \R^{n-1}$}.\label{eq:3.79 page 37 FIlippo col meno}
\end{align}
Thanks to Remark \ref{rem: |mu|_K << |mu|} together with (\ref{eq:3.79 page 37 FIlippo}) and (\ref{eq:3.79 page 37 FIlippo col meno}), for every bounded Borel set $G\subset \R^{n-1}$ we get 
\begin{align}\label{eq:3.79 anisotropa}
\lim_{M\rightarrow \infty}|-(\mu -D^c\tau_Mb,0)|_{K^s}(G)=\lim_{M\rightarrow \infty}|(\mu -D^c\tau_Mb,0)|_{K^s}(G)=0.
\end{align}
Since we can always write
$
D^c\left(\tau_{M}b\right)+D^cv=\left(D^c\left(\tau_{M}b\right)-\mu  \right) + \left( \mu+D^cv \right)
$ by applying relations (\ref{eq: disuguaglianza triangola |mu|K}) and (\ref{eq: disuguaglianza triangola inversa |mu|K}) we obtain
\begin{align}
|\left(\mu+D^cv,0\right)|_{K^s}(G)-|-\left(D^c\left(\tau_{M+L}b\right)-\mu,0\right)|_{K^s}(G)\leq|\left(D^c\left(\tau_{M+L}b\right)+D^cv,0\right)|_{K^s}(G)\\\leq |\left(D^c\left(\tau_{M+L}b\right)-\mu,0\right)|_{K^s}(G)+|\left(\mu+D^cv,0\right)|_{K^s}(G).
\end{align}
So, by (\ref{eq:3.79 anisotropa}) we get 
\begin{align*}
\lim_{M\rightarrow \infty}|(D^c(\tau_{M+L}(b)+v),0)|_{K^s}(G)=|(\mu+D^cv,0)|_{K^s}(G).
\end{align*}
By (\ref{eq:3.82 anisotropa}), we get that 
\begin{align*}
\limsup_{M\to\infty}|(D^c(\tau_M(b+v)),0)|_{K^s}(G)\leq|(\mu+D^cv,0)|_{K^s}(G), 
\end{align*}
so that using (\ref{def: Cantor part GBV anisotropica}) we conclude the proof. 
\end{proof}

\begin{proof}[Proof of Theorem \ref{thm:2.2 pag 117}: sufficient conditions.]
Let $E$ be a $v$-distributed set of finite perimeter satisfying (\ref{eq: Ez segment}), (\ref{eq:1.16 pag 7 Filippo}), (\ref{eq:1.17 pag 7 Filippo}) and (\ref{eq:1.18 pag 7 Filippo}). Let $I$ and $J_{\delta}$ be defined as in (\ref{eq:3.50 pag 31 Filippo}) and (\ref{eq:3.51 pag 31 Filippo}). Let $\delta, S\in I$ and let us set $b_{\delta,S}=1_{\{\delta <v < S  \}}b_E=1_{\{\delta <v < S  \}}b_{\delta}$. Then, for every $M\in J_{\delta}$, we have $\tau_M b_{\delta}\in (BV\cap L^{\infty})(\R^{n-1})$ and so we obtain that $\tau_{M}b_{\delta,S}\in (BV\cap L^{\infty})(\R^{n-1}).$ Let us consider the $\R^{n-1}$-valued Radon measure $\mu_{\delta, S}$ on $\R^{n-1}$ defined as
\begin{align*}
\mu_{\delta, S}(G)=\int_{G\cap \{\delta< v < S  \}^{(1)}\cap \{|b_E|\Upp < \infty  \}}g(x)d\left|\left(\frac{1}{2} D^cv,0\right)\right|_{K^s},
\end{align*}
for every bounded Borel set $G\subset \R^{n-1}$, where $g(x)$ is the function that appears in condition (\ref{eq:1.18 pag 7 Filippo}), namely 
\begin{align*}
D^c(\tau_M(b_\delta))(G)=\int_{G\cap\{ |b_{\delta}|<M \}^{(1)}\cap \{ v>\delta \}^{(1)} }g(x)d\left|\left(\frac{1}{2} D^cv,0\right)\right|_{K^s}.
\end{align*}
Since $\tau_M b_{\delta,S}= 1_{\{v< S \}}\tau_Mb_{\delta}$, by Lemma \ref{lem: 2.3 FIlippo page 18} we have $D^c(\tau_Mb_{\delta,S})=1_{\{ v< S \}^{(1)}}D^c(\tau_Mb_{\delta})$ and thus, for every Borel set $G\subset \R^{n-1}$,
\begin{align*}
\lim_{M\rightarrow \infty}|\mu_{\delta,S}-D^c(\tau_Mb_{\delta,S})|(G)&=\lim_{M\rightarrow \infty}|\mu_{\delta,S}-D^c(\tau_Mb_{\delta})|(G\cap \{v<S \}^{(1)})\\
&\leq \lim_{M\rightarrow \infty}\int_{G\cap \{\delta< v < S  \}^{(1)}\cap \left(\{|b_E|\Upp < \infty  \}\setminus \{|b_E| < M \}^{(1)}\right) }\!\!\!\!\!\!\!\!\!\!\!\!\!\!\!\!\!\!\!\!\!\!\!\!\!\!\!\!\!\!\!\!\!\!\!|g(x)| d|(D^cv/2,0)|_{K^s}(x)\\
&=0,
\end{align*}
where the last equality follows from the fact that $\{|b_E| < M \}^{(1)}_{M\in I}$ is an increasing family of sets whose union is $\{|b_E|\Upp < \infty  \}$.
Thus, for every bounded Borel set $G\subset \R^{n-1}$, we get
\begin{small}
\begin{align}\label{eq:3.83 page 38 Filippo}
&\left|\left(-D^c(b_{\delta,S}+\frac{1}{2}
v_{\delta,S}),0\right)   \right|_{K^s}(G)+\left|\left(D^c(b_{\delta,S}-\frac{1}{2}v_{\delta,S}),0\right)   \right|_{K^s}(G) \nonumber\\&\leq  \left| \left(-\mu_{\delta,S}-\frac{1}{2}D^cv_{\delta,S},0 \right)\right|_{K^s}(G) 
+\left|\left(\mu_{\delta,S}-\frac{1}{2}D^cv_{\delta,S},0\right)   \right|_{K^s}(G)\nonumber\\& =\left|\left(-D^cv_{\delta,S}),0\right)\right|_{K^s}(G),
\end{align}
\end{small}
where the first inequality comes from  Lemma \ref{lem:3.8 page 37 Filippo} applied to $b_{\delta,S}-\frac{1}{2}v_{\delta,S}$ and $-b_{\delta,S}-\frac{1}{2}v_{\delta,S}$ with $v_{\delta,S}= 1_{\{\delta< v< S\}}v)$, (see in particular (\ref{eq: 3.80 page 37 Filippo})), whereas the equality is a consequence of Lemma \ref{lem:1.9 pag 70} applied to the two Radon measures $\mu_{\delta,S}-\frac{1}{2}D^cv_{\delta,S}$ and $-\mu_{\delta,S}-\frac{1}{2}D^cv_{\delta,S}$ together with Remark \ref{rem: case equality radon measure} having in mind (\ref{eq:1.18 pag 7 Filippo}). Since $b_{\delta,S}\in GBV(\R^{n-1})$ and $v_{\delta,S}\in (BV\cap L^{\infty})(\R^{n-1})$, if $W=W[v_{\delta,S},b_{\delta,S}]$, then we can compute $P_{K^s}(W; G\times \R)$ for every Borel set $G\subset \R^{n-1}$ by Corollary \ref{cor: formula perimetro con b v}. In particular, if $G\subset \{\delta < v <S  \}^{(1)}$, then by $E\cap (\{\delta< v < S  \}\times \R)=W\cap (\{\delta< v < S  \}\times \R) $, we find that
\begin{align}
&P_{K^s}(E; G\times \R)= P_{K^s}(W; G\times \R)\\
&=\int_{G}\phi_{K^s}\left(\nabla\left(b_{\delta,S}-\frac{v_{\delta,S}}{2}\right),1\right)+ \phi_{K^s}\left(-\nabla\left(b_{\delta,S}+\frac{v_{\delta,S}}{2}\right),1\right)d\mathcal{H}^{n-1}\label{eq:3.84 page 38 Filippo}\\
&+\int_{G\cap J_v}\min\left(v_{\delta,S}\Upp,\left(\left[\frac{v_{\delta,S}}{2}\right]+[b_{\delta,S}]+\max\left(\left[\frac{v_{\delta,S}}{2}\right]- [b_{\delta,S}],0  \right)   \right)   \right)\phi_{K^s}(-\nu_v,0) d\mathcal{H}^{n-2}\label{eq:3.85a page 38 Filippo}\\
&+\int_{G\cap J_v}\min\left(v_{\delta,S}\Low,\max\left( 0,[b_{\delta,S}]-\left[\frac{v_{\delta,S}}{2}\right] \right)\right)\phi_{K^s}(\nu_v,0) d\mathcal{H}^{n-2}\label{eq:3.85b page 38 Filippo}\\
&+\int_{G\cap (J_b\setminus J_v )}\min\left([b_{\delta,S}],\tilde{v}  \right)\left(\phi_{K^s}(-\nu_b,0)+\phi_{K^s}(\nu_b,0)  \right) d\mathcal{H}^{n-2}\label{eq:3.85c page 38 Filippo}\\
&+\left|\left(D^c\left( b_{\delta,S}-\frac{v_{\delta,S}}{2}  \right),0     \right)   \right|_{K^s}(G)\label{eq:3.86a page 38 Filippo}\\
&+\left|\left(-D^c\left( b_{\delta,S}+\frac{v_{\delta,S}}{2}  \right),0     \right)   \right|_{K^s}(G)\label{eq:3.86b page 38 Filippo}
\end{align}
We can also compute $P_{K^s}(F[v_{\delta,S}];G\times \R)$. Taking also into account that $F[v]\cap (\{\delta<v<S \}\times \R)=F[v_{\delta,S}]\cap (\{\delta<v<S \}\times \R)$ we obtain that 
\begin{align*}
P_{K^s}(F[v];G\times \R)&=P_{K^s}(F[v_{\delta,S}];G\times \R)= 2\int_{G}\phi_{K^s}\left(-\nabla\left(\frac{v_{\delta,S}}{2}\right),1\right)d\mathcal{H}^{n-1}\\
&+\int_{G\cap J_{v_{\delta,S}}}[v]\phi_{K^s}(-\nu_v,0) d\mathcal{H}^{n-2}+2\int_{G}\phi_{K^s}\left(-\frac{dD^c\left(\frac{v_{\delta,S}}{2}\right)}{d\left|D^c\left(\frac{v_{\delta,S}}{2}\right)\right|},0  \right) d\left|D^c\left(\frac{v_{\delta,S}}{2}\right)\right|.
\end{align*}
Firstly, applying (\ref{eq: 2.12 FIlippo pag 16}) to $b_E$ and (\ref{eq: 2.10 Filippo pag 16}) to $v$ we get 
\begin{align*}
&\nabla b_{\delta,S}(x)=\nabla b_E(x),\quad  \textit{for $\mathcal{H}^{n-1}$-a.e. $x\in\{\delta<v<S  \}$},\\
&[v]=[v_{\delta,S}] ,\quad  \textit{for $\mathcal{H}^{n-2}$-a.e. on $\{\delta<v<S  \}^{(1)}$}.
\end{align*}
Putting together the above relations with the assumptions (\ref{eq:1.16 pag 7 Filippo}) and (\ref{eq:1.17 pag 7 Filippo}) we deduce that, for $\mathcal{H}^{n-1}$-a.e. $x\in\{\delta<v<S  \}$ there exists $z(x)\in\partial K^s$ s.t.
\begin{align}\label{eq:3.87 page38 Filippo}
\left\{\left(-\frac{1}{2}\nabla v(x)+t \nabla b_E(x),1  \right):\,t\in [-1,1]   \right\} \subset C^*_{K^s}(z(x)),
\end{align}
\begin{align}
2[b_{\delta,S}]=2[b_E]\leq [v]=[v_{\delta,S}] ,\quad  &\textit{for $\mathcal{H}^{n-2}$-a.e. on $\{\delta<v<S  \}^{(1)}$}\label{eq:3.88 page38 Filippo}.
\end{align}
Thanks to Proposition \ref{prop:linearityphi} and Remark \ref{rem: more appealing way (iii)}, condition (\ref{eq:3.87 page38 Filippo}) is equivalent to say that we can rewrite (\ref{eq:3.84 page 38 Filippo}) in the following way
\begin{align}
&\int_{G}\phi_{K^s}\left(\nabla\left(b_{\delta,S}-\frac{v_{\delta,S}}{2}\right),1\right)+ \phi_{K^s}\left(-\nabla\left(b_{\delta,S}+\frac{v_{\delta,S}}{2}\right),1\right)d\mathcal{H}^{n-1}\nonumber\\
&\int_{G}\phi_{K^s}\left(\nabla\left(b_{E}-\frac{v}{2}\right),1\right)+ \phi_{K^s}\left(-\nabla\left(b_{E}+\frac{v}{2}\right),1\right)d\mathcal{H}^{n-1}\nonumber\\
&=2\int_{G}\phi_{K^s}\left(-\nabla\left(\frac{v}{2}\right),1\right)d\mathcal{H}^{n-1}.
\end{align}
Furthermore, substituting (\ref{eq:3.88 page38 Filippo}) into (\ref{eq:3.85a page 38 Filippo}),(\ref{eq:3.85b page 38 Filippo}) and (\ref{eq:3.85c page 38 Filippo}), and using (\ref{eq:3.83 page 38 Filippo}) applied to (\ref{eq:3.86a page 38 Filippo}) and (\ref{eq:3.86b page 38 Filippo}), we find that
\begin{align}\label{eq:3.89 page 38 Filippo}
P_{K^s}(E;\{\delta<v< S  \}^{(1)}\times \R)\leq P_{K^s}(F[v];\{\delta<v< S  \}^{(1)}\times \R),
\end{align}
where, actually, equality holds thanks to  (\ref{eq:anisotropic Steiner inequality}). Recalling that by \cite[4.5.9(3)]{Fed69} we have that $\mathcal{H}^{n-2}\left(\{v\Upp =\infty  \}\right)=0$, thanks to (\ref{eq: 2.9 Filippo page 15}) it follows that 
\begin{align}\label{eq:3.90 page 38 Filippo}
\bigcup_{M\in I}\{v<M  \}^{(1)}= \{v\Upp <\infty  \}=_{\mathcal{H}^{n-2}}\R^{n-1}.
\end{align}
By (\ref{eq: 2.9 Filippo page 15}) if we  consider the sequences $\delta_h\in I$ and $S_h \in I$ such that $\delta_h \rightarrow 0$ and $S_h\rightarrow 0$ as $h\rightarrow \infty$ we get
\begin{align*}
\{v\Upp>0\}=\bigcup_{h\in\mathbb{N}}\{\delta_h<v\Upp<S_h\}^{(1)}.
\end{align*}
So, by the above relation together with  (\ref{eq:3.89 page 38 Filippo}),  and (\ref{eq:3.90 page 38 Filippo}) we get that  
\begin{align*}
P_{K^s}(E;\{ v\Low>0 \}\times \R)\leq P_{K^s}(F[v];\{ v\Low>0 \}\times \R).
\end{align*}
By Proposition \ref{prop:3.7 page 34 Filippo} $P_{K^s}(E;\{ v\Low=0 \}\times \R)=P_{K^s}(F[v];\{ v\Low=0 \}\times \R)$ and thus $P_{K^s}(E)=P_{K^s}(F[v])$. This concludes  the proof.
\end{proof}
\section{Rigidity of the Steiner's inequality for the anisotropic perimeter}\label{section Rigidity}
In this final section we will prove the main results about (\ref{rigidity anisotropic steiner}). Let us start the section with the proof of Theorem \ref{thm: rigidity}.

\begin{proof}[(Proof of Theorem \ref{thm: rigidity})]
By Theorem \ref{thm:1.2 Filippo} we have to prove that conditions (\ref{eq: 1.16 Filippo})-(\ref{eq: 1.18Dc Filippo}) hold true. We divide the proof in few steps.\\
\textbf{Step 1} In this step we prove that (\ref{eq: 1.16 Filippo}) holds true. Since $E\in\mathcal{M}_{K^s}(v)$, by Theorem \ref{thm:2.2 pag 117} we have that condition (\ref{eq:1.16 pag 7 Filippo}) holds true, namely for $\mathcal{H}^{n-1}$-a.e. $x\in \{v>0\}$ there exists $z(x)\in\partial K^s$ s.t.
\begin{align*}
\left(-\frac{1}{2}\nabla v(x)+t \nabla b_E(x),1  \right)  \in C^*_{K^s}(z(x))\quad \forall\, t\in [-1,1]. 
\end{align*}
By condition \textbf{R1} we have that for $\mathcal{H}^{n-1}$-a.e. $x\in\{v>0\}$ there exists $z(x)\in\partial K^s$ s.t. $\forall\,t\in[-1,1]$ there exists $\lambda=\lambda(t,x)\in [0,1]$ such that
$$(t \nabla b_E(x),0)=\lambda\left(-\frac{1}{2}\nabla v(x),1  \right).$$
that implies $\nabla b_E=0$ for $\mathcal{H}^{n-1}$-a.e. $x\in\{v>0\}$, that implies $\nabla b_E=0$ for $\mathcal{H}^{n-1}$-a.e. $x\in\R^{n-1}$.
\\
\textbf{Step 2} In this step we prove that (\ref{eq: 1.18Dc Filippo}) holds true. Again, since $E\in\mathcal{M}_{K^s}(v)$ we know that condition (\ref{eq:1.18 pag 7 Filippo}) holds true, namely we know that
for $|D^cv|$-a.e. $x\in\{v\Low>0\}$ there exists $z(x)\in \partial K$ s.t.
\begin{align}\label{eq: Step 2 thm 8.1}
(h(x)+tg(x),0)\in C^*_{K^s}(z(x)),\quad \forall\, t\in[-1,1].
\end{align}
So, by condition \textbf{R2} we know that for $|D^cv|$-a.e. $x\in\{v\Low>0\}$ there exists $\lambda=\lambda(x)\in[-1,1]$ such that $g(x)=\lambda h(x)$. By definition of $g(x)$ and $h(x)$, for every Borel set $G\subset\R^{n-1}$, every $M>0$, and $\mathcal{H}^1$-a.e. $\delta >0$ we have
\begin{align*}
D^c(\tau_M(b_{\delta}))(G)&=\int_{G\cap \{ |b_{\delta}|<M \}^{(1)}\cap \{ v>\delta \}^{(1)}}g(x)d\left|\left(\frac{1}{2}D^cv,0  \right)\right|_{K^s}(x)\\&=\int_{G\cap \{ |b_{\delta}|<M \}^{(1)}\cap \{ v>\delta \}^{(1)}}\lambda(x)h(x)d\left|\left(\frac{1}{2}D^cv,0  \right)\right|_{K^s}(x)\\&=\int_{G\cap \{ |b_{\delta}|<M \}^{(1)}\cap \{ v>\delta \}^{(1)}}-\frac{1}{2}\lambda(x)dD^cv(x).
\end{align*}
Since $-\frac{1}{2}\lambda(x)\in [-1/2,1/2]$ for $|D^cv|$-a.e. $x\in \{v\Low>0  \}$, we conclude the proof of step 2.\\
\textbf{Step 3} In this step we prove that (\ref{eq: 1.19 Filippo}) and (\ref{eq: 1.20 Filippo}) holds true. By step 2 we have that (\ref{eq: 1.18Dc Filippo}) holds true. By taking the total variation in (\ref{eq: 1.18Dc Filippo}) we find that $2|D^c(\tau_M(b_\delta))|(G)\leq |D^cv|(G)$ for every bounded Borel set $G\subset \R^{n-1}$. By passing to the limit for $M\rightarrow +\infty$ (in $J_{\delta}$) and then $\delta\rightarrow 0$ (in $I$) we prove (\ref{eq: 1.19 Filippo}).  As observed in \cite[Remark 1.10]{CCPMSteiner}, note that (\ref{eq: 1.20 Filippo}) is a consequence of (\ref{eq: 1.12 Filippo pag 7}), taking into account (\ref{eq: 1.16 Filippo}), (\ref{eq: 1.18Dc Filippo}) and (\ref{eq: 1.19 Filippo}). This concludes the proof.
\end{proof}
\noindent
The following result provides a geometrical characterization of the validity of \textbf{R1} and \textbf{R2}. In the following, given any set $G\subset\R^n$ we denote by $\overline{G}$ its topological closure. Having in mind the definitions of exposed and extreme points (see Definition \ref{def: exposed point} and \ref{def: extreme point} respectively), we can now prove the following proposition, that will be an important intermediate result in order to prove Proposition \ref{prop: R1,R2}.

\begin{proposition}\label{prop: R1 e R2 sono estremi}
Let $v$ be as in (\ref{due tilde}) and let $K\subset\R^n$ be as in (\ref{HP per K}). For $\h^{n-1}$-a.e. $x\in \{v>0\}$ let us call $\nu(x)=\left(-\frac{1}{2}\nabla v(x),1  \right)$. Then,
\begin{align}\label{R1 extreme}
\textbf{R1} \textit{ holds true }\quad \Longleftrightarrow\quad &\frac{\nu(x)}{\phi_{K^s}\left( \nu(x) \right)} \textit{ is an extreme point of $\overline{(K^s)^*}$} \\&\textit{for $\h^{n-1}$-a.e. $x\in\{v>0\}$}.\nonumber
\end{align}
\begin{align}\label{R2 extreme}
\textbf{R2}\textit{ holds true}\quad \Longleftrightarrow \quad &\frac{(h(x),0)}{\phi_{K^s}\left(( h(x),0) \right)} \textit{ is an extreme point of $\overline{(K^s)^*}$}\\ &\textit{ for $|D^cv|$-a.e. $x\in\{ v\Low>0\}$,}\nonumber
\end{align} 
where $h$ has been defined in (\ref{h}).
\end{proposition}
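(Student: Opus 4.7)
The plan is to prove both equivalences via a single pointwise lemma in convex geometry and then lift it through the measure-theoretic ``$\forall y$, a.e.\ $x$'' quantifier structure of \textbf{R1} and \textbf{R2}. Writing $\xi=\nu(x):=(-\nabla v(x)/2,1)$ in the \textbf{R1} case and $\xi=(h(x),0)$ in the \textbf{R2} case, I intend to show the following pointwise statement: for every $\xi\in\R^n\setminus\{0\}$, the implication
\[
\xi+y,\;\xi-y\in C^*_{K^s}(z)\quad\Longrightarrow\quad y\in[-1,1]\,\xi
\]
holds for every $y\in\R^n$ and every $z\in\mathcal{Z}_{K^s}(\xi)$ if and only if $\xi/\phi_{K^s}(\xi)$ is an extreme point of $\overline{(K^s)^*}$. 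Once this pointwise lemma is established, specialising $\xi$ to $\nu(x)$ (resp.\ to $(h(x),0)$) and handling the quantifier swap will produce (\ref{R1 extreme}) and (\ref{R2 extreme}).

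For the ``extreme $\Rightarrow$ implication'' half of the pointwise lemma, I would argue as follows. Assume $\xi\pm y\in C^*_{K^s}(z)$ with $z\in\mathcal{Z}_{K^s}(\xi)$, so also $\xi\in C^*_{K^s}(z)$. Then Remark~\ref{rem: lem page 16 mie note} yields $\phi_{K^s}(\xi+y)+\phi_{K^s}(\xi-y)=(\xi+y)\cdot z+(\xi-y)\cdot z=2\phi_{K^s}(\xi)$, so by Proposition~\ref{prop:linearityphi} the identity $2\xi=(\xi+y)+(\xi-y)$ rewrites as the convex combination
\[
\frac{\xi}{\phi_{K^s}(\xi)}=\alpha\,\frac{\xi+y}{\phi_{K^s}(\xi+y)}+(1-\alpha)\,\frac{\xi-y}{\phi_{K^s}(\xi-y)},\qquad \alpha:=\frac{\phi_{K^s}(\xi+y)}{2\phi_{K^s}(\xi)}\in[0,1].
\]
By Lemma~\ref{lem: sub-diff e bordo wulff} and Remark~\ref{rem: lem page 16 mie note}, all three unit vectors belong to $\partial\phi^*_{K^s}(z)\subset\partial(K^s)^*\subset\overline{(K^s)^*}$, so extremality of $\xi/\phi_{K^s}(\xi)$ forces the other two to coincide with it, giving $y=\lambda\xi$ with $\lambda=2\alpha-1\in[-1,1]$.

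For the reverse half I argue contrapositively. If $\xi/\phi_{K^s}(\xi)$ is not extreme in $\overline{(K^s)^*}$, pick $p\neq q\in\overline{(K^s)^*}$ and $t\in(0,1)$ with $\xi/\phi_{K^s}(\xi)=(1-t)p+tq$. Since $\xi/\phi_{K^s}(\xi)\in\partial(K^s)^*$, the entire segment $[p,q]$ lies on a supporting hyperplane of $\overline{(K^s)^*}$ at $\xi/\phi_{K^s}(\xi)$; combining the characterization~(\ref{altra caratterizzazione del sub diff}) of subdifferentials with Lemma~\ref{lem: sub-diff e bordo wulff} produces $z\in\partial K^s$ with $p,q\in\partial\phi^*_{K^s}(z)$, whence $z\in\mathcal{Z}_{K^s}(\xi)$ automatically. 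Setting $y:=\phi_{K^s}(\xi)\bigl[(1-t)p-tq\bigr]$ makes $\xi+y=2\phi_{K^s}(\xi)(1-t)\,p$ and $\xi-y=2\phi_{K^s}(\xi)\,t\,q$, both in $C^*_{K^s}(z)$. Moreover $p$ and $q$ are necessarily linearly independent (colinearity would force either $p=q$ or $\xi=0$, because each radial ray from the origin meets $\partial(K^s)^*$ exactly once), and then demanding $y=\lambda\xi$ and equating coefficients in the basis $\{p,q\}$ would require simultaneously $\lambda=1$ and $\lambda=-1$.

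The main obstacle I anticipate lies not in the convex geometry above, but in the quantifier swap: \textbf{R1} requires, for each fixed $y$, only an $\mathcal{H}^{n-1}$-null exceptional set in $x$, while the counterexample $y$ constructed in the reverse half genuinely depends on $x$. To bridge this gap, I plan to apply a Borel measurable selection theorem to the closed-valued multimap $x\mapsto\{(p,q,t,z):\text{witnesses of non-extremality at }x\}$ on the positive-measure set where extremality fails, obtain a measurable function $x\mapsto y(x)$, and then exploit inner regularity of the relevant measures ($\mathcal{H}^{n-1}\mres\{v>0\}$ for \textbf{R1}, $|D^cv|\mres\{v\Low>0\}$ for \textbf{R2}) to pass to a positive-measure subset on which $y(x)$ stays inside an arbitrarily small Euclidean ball, yielding a constant $y$ that still defeats the condition on a positive-measure subset. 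This measure-theoretic step is where the delicate work lies; everything else reduces cleanly to Proposition~\ref{prop:linearityphi} together with the description of $\partial(K^s)^*$ in Lemma~\ref{lem: sub-diff e bordo wulff}.
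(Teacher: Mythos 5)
Your pointwise convex-geometry lemma is essentially correct, and it is the same mechanism the paper uses: the half ``extreme $\Rightarrow$ implication'' is the normalized convex combination $\frac{\xi}{\phi_{K^s}(\xi)}=\alpha\frac{\xi+y}{\phi_{K^s}(\xi+y)}+(1-\alpha)\frac{\xi-y}{\phi_{K^s}(\xi-y)}$ (the computation of the paper's Step~2 run forwards, with the degenerate cases $\xi\pm y=0$ handled by coercivity), and the half ``not extreme $\Rightarrow$ a violating pair $(y,z)$ exists at that point'' parallels the paper's Step~1; your explicit choice $y=\phi_{K^s}(\xi)\bigl[(1-t)p-t q\bigr]$ is in fact cleaner than the paper's ball argument, which invokes a full-dimensional interior of $C^*_{K^s}(z)$ that need not exist when $n\geq 3$. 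One small repair: your reason for the linear independence of $p,q$ (each ray from the origin meets $\partial (K^s)^*$ once) only excludes $q=cp$ with $c>0$; for $c<0$ the segment $[p,q]$ would contain the origin, which is impossible because the whole segment lies in a supporting hyperplane of $\overline{(K^s)^*}$, so the claim is true but needs this extra line.

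The genuine gap is in the lifting of the direction ``\textbf{R1} (resp. \textbf{R2}) $\Rightarrow$ extremality a.e.'', i.e. non-extremality on a set of positive measure must produce a \emph{single} $y$ defeating \textbf{R1} on a set of positive measure — exactly the step you flag as delicate. Your plan (measurable selection of witnesses, then Lusin/inner regularity to make $y(x)$ nearly constant, then replace it by a constant $y_0$) fails at the last step: for fixed $x$ the set of admissible violators $\{y:\,\nu(x)\pm y\in C^*_{K^s}(z)\ \text{for some }z\in\mathcal{Z}_{K^s}(\nu(x))\}$ is closed but in general has empty interior in $\R^n$, since it is contained in the linear span of the face of $\overline{(K^s)^*}$ through $\nu(x)/\phi_{K^s}(\nu(x))$ (typically a $2$-dimensional plane that varies with $x$); hence proximity of $y_0$ to $y(x)$ gives no control, $\nu(x)\pm y_0$ will generically lie outside every cone $C^*_{K^s}(z)$, and the constant $y_0$ defeats nothing. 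So ``nearly constant $\Rightarrow$ a constant works'' is false as stated, and this implication is not established by your proposal; any repair has to exploit the structure of the violating set (within the $2$-plane spanned by $p(x),q(x)$ it contains a full symmetric two-dimensional neighbourhood of the origin, and $\nu(x)$ itself lies in that plane), not bare approximation of $y(x)$ by a constant. For what it is worth, the paper's own Step~1 is brisk on precisely this point: it too produces an $x$-dependent perturbation $\bar w(x)$ and then asserts a contradiction with \textbf{R1} without addressing the ``$\forall y$, a.e. $x$'' order of quantifiers, so you have correctly identified the soft spot, but your proposed bridge does not close it.
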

\begin{proof}
Let us prove that (\ref{R1 extreme}) holds true, then statement (\ref{R2 extreme}) follows using an identical argument.\\
\textbf{Step 1 } Let us assume that \textbf{R1} holds true and suppose by contradiction that there exists $G\subset \{v>0\}$ such that $\h^{n-1}(G)>0$ and $\nu(x)/\phi_{K^s}(\nu(x))$ is not an extreme point for every $x\in G$. Note that by construction, $\nu(x)/\phi_{K^s}(\nu(x))\in \partial (K^s)^*$. So, if $\nu(x)/\phi_{K^s}(\nu(x))$ is not an extreme point, there exist $y(x),z(x)\in \partial (K^s)^*$ with $y(x)\neq z(x)$ and $\lambda(x)\in (0,1 )$ such that
$$
\frac{\nu(x)}{\phi_{K^s}(\nu(x))}= (1-\lambda(x))z(x)+\lambda(x)(y(x)).
$$ 
By convexity, of $(K^s)^*$, we have that
\begin{align*}
(1-\lambda)z(x)+\lambda y(x)\in \partial (K^s)^*\quad \forall\,\lambda\in[0,1],
\end{align*} 
and thus there exist $\bar{x}\in \partial(K^s)^*$, and $\omega\in\mathbb{S}^{n-1}$ such that 
\begin{align*}
(1-\lambda)y(x)+\lambda z(x) &\in \partial(K^s)^* \cap H_{\bar{x},\omega}\quad \forall\,\lambda\in [0,1],\\
(K^s)^* &\subset H^-_{\bar{x},\omega}.
\end{align*}
Thanks to the above relations, and by definition of $\phi^*_{K^s}$ we have that
\begin{align*}
\left((1-\lambda)y(x)+\lambda z(x)  \right)\cdot \omega = \phi_{K^s}^*(\omega)\quad \forall\, \lambda\in [0,1].
\end{align*}
Recalling (\ref{altra caratterizzazione del sub diff}) we get that 
\begin{align*}
(1-\lambda)y(x)+\lambda z(x) \in \partial\phi^*_{K^s}(\omega)\quad \forall\, \lambda\in [0,1],
\end{align*}
and thus, since $\omega\in Z_{K^s}(\nu(x)/\phi_{K^s}(\nu(x)))$, by Lemma \ref{lem: propedeutico lemma R1,R2} this implies that 
\begin{align*}
(1-\lambda)z(x)+\lambda y(x)\in \partial \phi^*_{K^s}(z)\quad \forall\,\lambda\in[0,1],\,\forall\, z\in\mathcal{Z}_{K^s}\left(\frac{\nu(x)}{\phi_{K^s}(\nu(x))}  \right).
\end{align*}
In particular, this implies that 
\begin{align}\label{eq: prop extreme R1}
(1-\lambda)\phi_{K^s}(\nu(x))z(x)+\lambda \phi_{K^s}(\nu(x))y(x)\in C^*_{K^s}(z)\quad \forall\,\lambda\in[0,1],\,\forall\, z\in\mathcal{Z}_{K^s}\left(\frac{\nu(x)}{\phi_{K^s}(\nu(x))}\right),
\end{align}
where recall that $\mathcal{Z}_{K^s}\left(\nu(x)/\phi_{K^s}(\nu(x))\right)=\mathcal{Z}_{K^s}\left(\nu(x)  \right)$.
Let us consider $z\in \mathcal{Z}_{K^s}(\nu(x))$. Applying the above formula with $\lambda(x)\in (0,1)$ we obtain that $\nu(x)$ belongs to the \emph{interior} of $C^*_{K^s}(z)$, that is there exists a radius $r>0$ such that $B(\nu(x),r)\subset C^*_{K^s}(z)$. Let us take $w\in B(\nu(x),r)$ such that $w\neq t\nu(x)$ for every $t\in \R$, and let us denote $\bar{w}= w-\nu(x)$. Then, 
\begin{align}
&\bar{w}\neq t \nu(x)\quad \forall\,t\in \R,\label{eq: contraddetto R1 0}\\
&\nu(x)+ \bar{w}\in C^*_{K^s}(z),\label{eq: contraddetto R1 1}\\
&\nu(x)- \bar{w}\in C^*_{K^s}(z).\label{eq: contraddetto R1 2}
\end{align}
Relation (\ref{eq: contraddetto R1 0}) is true since $w\neq t \nu(x)$ for every $t \in \R$. From the choice of $w\in B(\nu(x),r)$ we get that $\nu(x)+\bar{w}=w\in B(\nu(x,r))\subset C^*_{K^s}(z)$. On the other hand, $\nu(x)-\bar{w}=2\nu(x)-w$. In order to prove that $2\nu(x)-w\in B(\nu(x),r)$ let us check if $|2\nu(x)-w-v|< r$. So, $|2\nu(x)-w-v|=|\nu(x)-w|= |\bar{w}|< r$ since $w\in B(\nu(x,r))$. Thus, since (\ref{eq: prop extreme R1}) holds true for $\mathcal{H}^{n-1}$-a.e. $x\in G$ and $\h^{n-1}(G)>0$, and having in mind (\ref{eq: contraddetto R1 0}), (\ref{eq: contraddetto R1 1}), and (\ref{eq: contraddetto R1 2}) we reached a contradiction with \textbf{R1}.\\
\textbf{Step 2 } Let us now assume that $\nu(x)/\phi_{K^s}\left( \nu(x) \right)$ is an extreme point of $\overline{(K^s)^*}$ for $\h^{n-1}$-a.e. $x\in\{ v>0 \}$, and suppose by contradiction that  \textbf{R1} is not verified, namely that there exists $y\in\R^n$, and $G\subset \{v>0\}$ with $\h^{n-1}(G)>0$ such that, for every $x\in G$ there exists $z\in\mathcal{Z}_{K^s}(\nu(x))$ such that,
\begin{align*}
\nu(x)\pm y \in C^*_{K^s}(z)\quad \textnormal{but}\quad y\neq\lambda \nu(x),\quad \textit{for every $\lambda\in [-1,1]$}.
\end{align*} 
In particular, by convexity, 
\begin{align*}
(1-\lambda) \left( \nu(x)+y \right)+\lambda\left( \nu(x)-y \right)\in  C^*_{K^s}(z),\quad \forall\, \lambda\in [0,1].
\end{align*}
But this implies that the projection of this segment over $\partial\phi^*_{K^s}(z)$ contains in its relative interior the point $\nu(x)/\phi_{K^s}(\nu(x))$, namely there exists $\lambda(x)\in (0,1)$ such that 
\begin{align}\label{eq: prop extreme 2}
\frac{\nu(x)}{\phi_{K^s}(\nu(x))}=(1-\lambda(x))\frac{\left( \nu(x)+y \right)}{\phi_{K^s}\left( \nu(x)+y \right)}+\lambda(x)\frac{\left( \nu(x)-y \right)}{\phi_{K^s}\left( \nu(x)-y \right)}.
\end{align}
Since (\ref{eq: prop extreme 2}) holds true for $\h^{n-1}$-a.e. $x\in G$ and $\h^{n-1}(G)>0$ we contradicted our assumptions. This concludes the proof.
\end{proof}
\noindent
As mentioned above, Proposition \ref{prop: R1 e R2 sono estremi} give a characterization of conditions \textbf{R1} and \textbf{R2} in terms of the geometric properties of the dual Wulff shape $(K^s)^*$ we are considering. Before the proof of Proposition \ref{prop: R1,R2}, we need the following lemma.

\begin{lemma}\label{lem: V_{K^s}}
Let $K\subset\R^n$ be as in (\ref{HP per K}), and consider $y\in\R^n$. Then, $y/\phi_{K^s}(y)$ is an extreme point of $\overline{({K^s})^*}$ if and only if $y/|y|\in \overline{\mathbb{V}_{K^s}}$, where $\mathbb{V}_{K^s}$ is the set defined in (\ref{V_{K^s}}).
\end{lemma}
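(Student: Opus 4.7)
The plan is to characterize the extreme points of $\overline{(K^s)^*}$ via its exposed points and then relate the exposed points to outer unit normals to $\partial^*K^s$. The first step is to apply Lemma \ref{lem: corollary 25.1.3 Rockafellar} to the closed convex set $C = \overline{(K^s)^*}$, noting that by the definition of the gauge function (\ref{def: gauge function}), the support function of $(K^s)^*$ is exactly $\phi^*_{K^s}$, so that $C = \{z\in\R^n:\, z\cdot y \leq \phi^*_{K^s}(y) \;\forall y\in\R^n\}$. The lemma then yields: $z\in\overline{(K^s)^*}$ is an exposed point if and only if there exists $y\in\R^n_0$ such that $\phi^*_{K^s}$ is differentiable at $y$ and $\nabla\phi^*_{K^s}(y) = z$. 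Using the $0$-homogeneity of $\nabla\phi^*_{K^s}$ on its set of differentiability (analogous to (\ref{eq: gradiente 0-homogeneo}), since $\phi^*_{K^s}$ is $1$-homogeneous), one may restrict attention to $y\in\partial K^s$.

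Next, I would prove the key identity: for any $x\in\partial K^s$ at which $\phi^*_{K^s}$ is differentiable,
\[
\nabla\phi^*_{K^s}(x) \;=\; \frac{\nu^{K^s}(x)}{\phi_{K^s}(\nu^{K^s}(x))}.
\]
Indeed, by Remark \ref{rem: differentiability and sub differential}, differentiability of $\phi^*_{K^s}$ at $x$ means $\partial\phi^*_{K^s}(x)$ is a singleton, which, combined with Lemma \ref{lem: sub-diff e bordo wulff}, forces $K^s$ to have a unique supporting hyperplane at $x$. For a convex body, such $x$ belongs to $\partial^*K^s$ with well-defined outer unit normal $\nu^{K^s}(x)$, and $\nabla\phi^*_{K^s}(x)$ must be a positive multiple of $\nu^{K^s}(x)$ (being normal to the level set $\{\phi^*_{K^s} = 1\} = \partial K^s$ from outside). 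The proportionality constant is pinned down by the Euler identity $\nabla\phi^*_{K^s}(x)\cdot x = \phi^*_{K^s}(x) = 1$ together with the defining property $\nu^{K^s}(x)\cdot x = \phi_{K^s}(\nu^{K^s}(x))$ of the support function at a smooth boundary point. Consequently, the set of exposed points of $\overline{(K^s)^*}$ equals $\{\nu/\phi_{K^s}(\nu):\, \nu\in\mathbb{V}_{K^s}\}$.

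The last step is a continuity/compactness argument combined with Remark \ref{exposed dense in extreme}, which states that the extreme points of $\overline{(K^s)^*}$ form the closure of the exposed points. For the forward implication, if $y/\phi_{K^s}(y)$ is extreme, pick $\nu_k\in\mathbb{V}_{K^s}$ with $\nu_k/\phi_{K^s}(\nu_k)\to y/\phi_{K^s}(y)$; by compactness of $\mathbb{S}^{n-1}$, a subsequence $\nu_k\to \bar\nu\in\overline{\mathbb{V}_{K^s}}$, and continuity of $\phi_{K^s}$ (with $\phi_{K^s}(\bar\nu)>0$ by coercivity, cf.\ Proposition \ref{prop: Maggi 20.10}) gives $y/\phi_{K^s}(y) = \bar\nu/\phi_{K^s}(\bar\nu)$; since both sides are positive multiples of the same unit vector, $y/|y| = \bar\nu \in\overline{\mathbb{V}_{K^s}}$. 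For the reverse, if $y/|y| = \lim_k \nu_k$ with $\nu_k\in\mathbb{V}_{K^s}$, then by $1$-homogeneity of $\phi_{K^s}$ one has $\nu_k/\phi_{K^s}(\nu_k)\to y/\phi_{K^s}(y)$, exhibiting the latter as a limit of exposed points, hence extreme.

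I expect the main subtlety to lie in the second step, namely in ensuring that for the convex body $K^s$ the set of $x\in\partial K^s$ at which $\phi^*_{K^s}$ is differentiable is precisely the set where the De Giorgi outer unit normal $\nu^{K^s}(x)$ is defined, so that the indexing over such $x$ recovers exactly $\mathbb{V}_{K^s}$ rather than a strictly smaller set. While this equivalence is classical for convex bodies, carrying it out cleanly requires combining the convex-analytic material in Proposition \ref{prop: Maggi 20.10}, Lemma \ref{thm: page 16 mie note} and Lemma \ref{lem: sub-diff e bordo wulff} with the geometric measure theoretic description of $\partial^*K^s$ from Section \ref{preliminaries}.
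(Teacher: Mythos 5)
Your proposal is essentially the paper's own proof: Step 1 of the paper obtains the exposed-point characterization exactly as you do, by applying Lemma \ref{lem: corollary 25.1.3 Rockafellar} with $g=\phi^*_{K^s}$ and observing that $\partial\phi^*_{K^s}(x)=\left\{\nu^{K^s}(x)/\phi_{K^s}(\nu^{K^s}(x))\right\}$ for $x\in\partial^*K^s$ (the identity you derive, as the paper implicitly does, from Proposition \ref{prop: Maggi 20.10}, Lemma \ref{thm: page 16 mie note}, Lemma \ref{lem: sub-diff e bordo wulff} and Remark \ref{rem: differentiability and sub differential}), and Step 2 is the same normalization/compactness argument based on Remark \ref{exposed dense in extreme}. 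The only thin point in your argument is the reverse implication, where you conclude ``a limit of exposed points, hence extreme'': Remark \ref{exposed dense in extreme} gives density of exposed points in extreme points, not that limits of exposed points are extreme, and in general this fails (for a closed convex body such as the convex hull of a planar circle and a segment orthogonal to its plane through one of its points, the limit point is a midpoint of the segment, hence not extreme, yet it is a limit of exposed points); note, however, that the paper treats this direction no more carefully, dispatching it with ``the reverse implication follows by similar argument'', so your proposal reproduces the printed proof at the same level of rigor rather than missing an ingredient the paper supplies.
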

\begin{proof}
\textbf{Step 1} We first prove the result for the exposed points of $\overline{(K^s)^*}$, namely we prove that $y/\phi_{K^s}(y)$ is an exposed point of $\overline{({K^s})^*}$ if and only if $y/|y|\in\mathbb{V}_{K^s}$. This first part is the direct consequence of Lemma \ref{lem: corollary 25.1.3 Rockafellar} using $g=\phi_{K^s}^*$ and observing that $\partial\phi_{K^s}^*(x)=\{\nu^{K^s}(x)/\phi_{K^s}(\nu^{K^s}(x))\}$ for every $x\in\partial^*K^s$.\\
\textbf{Step 2} We now conclude the proof. Let $y\in\R^n$ be such that $y/\phi_{K^s}(y)$ is an extreme point of $\overline{(K^s)^*}$, by Remark \ref{exposed dense in extreme}, it implies that there exists a sequence $(\omega_h)_{h\in \mathbb{N}}$ of exposed points of $\overline{(K^s)^*}$ such that $\lim_{h\to \infty}\omega_h=y/\phi_{K^s}(y)$. Observe that by definition, $\omega_h\in \partial (K^s)^*$, and so $\phi_{K^s}(\omega_h)=1$ for all $h\in\mathbb{N}$. Thanks to the first step, every $\omega_h$ is such that $\eta_h:=\omega_h/|\omega_h|\in \mathbb{V}_{K^s}$, and so $\phi_{K^s}(\eta_h)=1/|\omega_h|$. Moreover, the fact that $\omega_h$ is a converging sequence implies that there exists $\eta\in\mathbb{S}^{n-1}$ such that $\lim_{h\to \infty}\eta_h=\eta$. Thus, $\eta\in \overline{\mathbb{V}_{K^s}}$, and $y/\phi_{K^s}(y)=\eta/\phi_{K^s}(\eta)$. In particular, since $|\eta|=1$, we have that $\eta= y/|y|\in \overline{\mathbb{V}_{K^s}}$. The reverse implication follows by similar argument.
\end{proof}

\begin{corollary}\label{cor: new R1, R2 equivalence}
Let $v$ be as in (\ref{due tilde}) and let $K\subset\R^n$ be as in (\ref{HP per K}). Then,

\begin{align}\label{R1 extreme new}
\textbf{R1} \textit{ holds true }\quad \Longleftrightarrow\quad &\exists\, S_1\subset\{v\Low >0 \}\text{ such that } \mathcal{H}^{n-1}(S_1)=0,\text{ and }\nonumber\\
&\nu^{F[v]}\left(x,\frac{1}{2}v(x)\right)\in \overline{\mathbb{V}_{K^s}}\quad \forall\, x\in\{v\Low>0 \}\setminus S_1.
\end{align}
\begin{align}\label{R2 extreme new}
\textbf{R2} \textit{ holds true }\quad \Longleftrightarrow\quad &\exists\, S_2\subset\{v\Low >0 \}\text{ such that } |D^cv|(S_2)=0,\text{ and }\nonumber\\
&\nu^{F[v]}\left(x,\frac{1}{2}v(x)\right)\in \overline{\mathbb{V}_{K^s}}\quad \forall\, x\in\{v\Low>0 \}\setminus S_2.
\end{align}
\end{corollary}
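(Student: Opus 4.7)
The plan is to chain together three equivalences that have already been established in the paper. First, by Proposition \ref{prop: R1 e R2 sono estremi}, condition \textbf{R1} is equivalent to saying that $\nu(x)/\phi_{K^s}(\nu(x))$ is an extreme point of $\overline{(K^s)^*}$ for $\mathcal{H}^{n-1}$-a.e. $x\in\{v>0\}$, where $\nu(x)=(-\tfrac{1}{2}\nabla v(x),1)$; similarly, \textbf{R2} is equivalent to saying that $(h(x),0)/\phi_{K^s}((h(x),0))$ is an extreme point of $\overline{(K^s)^*}$ for $|D^cv|$-a.e. $x\in\{v\Low>0\}$. Second, by Lemma \ref{lem: V_{K^s}}, a point $y/\phi_{K^s}(y)$ is extreme in $\overline{(K^s)^*}$ if and only if $y/|y|\in \overline{\mathbb{V}_{K^s}}$. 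So the whole question reduces to identifying $\nu(x)/|\nu(x)|$ and $(h(x),0)/|(h(x),0)|$ with the outer normals $\nu^{F[v]}(x,\tfrac{1}{2}v(x))$ on the appropriate a.e.\ sets.

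For the \textbf{R1}-identity, I would apply Lemma \ref{lem: 3.2 CCF} to $E=F[v]$: for $\mathcal{L}^{n-1}$-a.e.\ $z\in\{v>0\}$ and $y=v(z)/2$, the lemma yields $\tfrac{1}{2}\partial_i v(z)=-\nu_i^{F[v]}(z,y)/|\q(\nu^{F[v]}(z,y))|$, so that the upper normal to $F[v]$ is the unit multiple of $(-\tfrac{1}{2}\nabla v(z),1)$. This gives $\nu^{F[v]}(z,v(z)/2)=\nu(z)/|\nu(z)|$, and chaining with the previous two equivalences yields (\ref{R1 extreme new}), with $S_1$ taken as the union of the exceptional set from Lemma \ref{lem: 3.2 CCF} and the exceptional set from Proposition \ref{prop: R1 e R2 sono estremi}.

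For the \textbf{R2}-identity, the situation is more delicate because $(h(x),0)$ is built from the Cantor part of $Dv$. I would apply Theorem \ref{thm:1.6 pag 57 note}(iii) to $u=v/2$ (whose subgraph coincides with the upper half of $F[v]$ up to an $\mathcal{H}^{n-1}$-null set), which gives, at $|\eta|$-a.e.\ point of the set where the vertical component of the normal vanishes, the identity $\nu^{F[v]}(x,v(x)/2)=(-dD^cv/d|D^cv|(x),0)$. On the other hand, since $|(D^cv/2,0)|_{K^s}$ and $|D^cv/2|$ are mutually absolutely continuous by Remark \ref{rem: |mu|_K << |mu|} with a nonnegative Radon-Nikodym derivative between them, one sees from the definition of $h$ in (\ref{h}) that $h(x)$ is a positive scalar multiple of $-dD^cv/d|D^cv|(x)$ for $|D^cv|$-a.e.\ $x$; normalizing gives $(h(x),0)/|(h(x),0)|=\nu^{F[v]}(x,v(x)/2)$ on a set of full $|D^cv|$-measure inside $\{v\Low>0\}$. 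Together with Proposition \ref{prop: R1 e R2 sono estremi} and Lemma \ref{lem: V_{K^s}}, this yields (\ref{R2 extreme new}), with $S_2$ absorbing the $|D^cv|$-null exceptional sets coming from each step and from the identification of $\Sigma^{v/2}$ with the upper half of $F[v]$ (noting by Lemma \ref{lem: cantor total variation of v on v=0 is null} that $|D^cv|$ is concentrated on $\{v\Low>0\}$, so no contribution is missed).

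The main obstacle is the bookkeeping of the exceptional sets: the identities hold only outside different null sets (Lebesgue-null in the $\mathcal{H}^{n-1}$ case, $|D^cv|$-null in the Cantor case), and the statement of the corollary ranges over $\{v\Low>0\}$ whereas Proposition \ref{prop: R1 e R2 sono estremi} phrases \textbf{R1} over $\{v>0\}$. This is resolved by noting that $\{v>0\}$ and $\{v\Low>0\}$ differ only on an $\mathcal{H}^{n-1}$-null subset of $S_v$ (which can be absorbed into $S_1$), and by using Lemma \ref{lem: cantor total variation of v on v=0 is null} to restrict \textbf{R2}-related statements to $\{v\Low>0\}$ without loss.
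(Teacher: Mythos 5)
Your argument is correct and follows essentially the same route as the paper: Proposition \ref{prop: R1 e R2 sono estremi} combined with Lemma \ref{lem: V_{K^s}}, followed by identifying the relevant unit vectors with $\nu^{F[v]}\left(x,\tfrac{1}{2}v(x)\right)$, which the paper obtains from Theorem \ref{thm:1.6 pag 57 note} applied with $u=v/2$ (your use of Lemma \ref{lem: 3.2 CCF} for the absolutely continuous part is an equivalent substitute). Your detailed treatment of \textbf{R2} via Theorem \ref{thm:1.6 pag 57 note}(iii) and the mutual absolute continuity of $|(D^cv/2,0)|_{K^s}$ and $|D^cv|$ is exactly the ``similar argument'' the paper leaves implicit, and your bookkeeping of the null sets and of the difference between $\{v>0\}$ and $\{v\Low>0\}$ matches the paper's.
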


\begin{proof}
We prove (\ref{R1 extreme new}), then (\ref{R2 extreme new}) follows by similar argument. Thanks to (\ref{R1 extreme}) we have just to prove that 
\begin{align}
&\frac{\nu(x)}{\phi_{K^s}\left( \nu(x) \right)} \textit{ is an extreme point of $\overline{(K^s)^*}$}\quad \Longleftrightarrow &&\exists\, S_1\subset\{v\Low >0 \}\text{ s.t. } \mathcal{H}^{n-1}(S_1)=0,\,\text{and}\nonumber\\&\textit{for $\h^{n-1}$-a.e. $x\in\{v>0\}$}  &&\nu^{F[v]}\left(x,\frac{1}{2}v(x)\right)\in \overline{\mathbb{V}_{K^s}}\quad \forall\, x\in\{v\Low>0 \}\setminus S_1,
\end{align}
where $\nu(x)=\left(-\frac{1}{2}\nabla v(x),1  \right)$ for $\h^{n-1}$-a.e. $x\in \{v>0\}$. By Lemma \ref{lem: V_{K^s}} we have that $\nu(x)/\phi_{K^s}(\nu(x))$ is an extreme point if and only if $\nu(x)/|\nu(x)|\in \overline{\mathbb{V}_{K^s}}$ for $\mathcal{H}^{n-1}$-a.e. $x\in\{v>0\}$, that is equivalent to say that there exists $S_1\subset \{ v\Low>0\}$ s.t. $\mathcal{H}^{n-1}(S_1)=0$, and $\nu(x)/|\nu(x)|\in \overline{\mathbb{V}_{K^s}}$ for every $x\in \{v\Low>0\}\setminus S_1$. By Theorem \ref{thm:1.6 pag 57 note}, with $u=v/2$, we deduce that $\nu(x)/|\nu(x)|= \nu^{F[v]}\left(x,\frac{1}{2}v(x)\right)$ for $\mathcal{H}^{n-1}$-a.e. $x\in \{v\Low>0\}$, and thus we conclude.
\end{proof}

We are ready now to prove Proposition \ref{prop: R1,R2}.

\begin{proof}[Proof of Proposition \ref{prop: R1,R2}]
\textbf{Step 1} Suppose that \textbf{R1} and \textbf{R2} hold true. Then, by Corollary \ref{cor: new R1, R2 equivalence} there exist $S_1\subset \{v\Low>0 \}$, $S_2\subset\{v\Low>0 \}$ such that $\mathcal{H}^{n-1}(S_1)=|D^cv|(S_2)=0$, and 
$$\nu^{F[v]}\left(z,\frac{1}{2}v(z) \right)\in \overline{\mathbb{V}_{K_s}}\quad \forall\, x\in\{v\Low>0\}\setminus S_1 \cup \{v\Low>0\}\setminus S_2.$$
By De Morgan's laws, calling $S:= S_1\cap S_2$, the above relation is equivalent to
$$\nu^{F[v]}\left(z,\frac{1}{2}v(z) \right)\in \overline{\mathbb{V}_{K_s}}\quad \forall\, x\in\{v\Low>0\}\setminus S.$$ Since $S\subseteq S_i$, for $i=1,2$, we deduce  that $\mathcal{H}^{n-1}(S)=|D^cv|(S)=0$. This concludes the first step.\\
\textbf{Step 2} Suppose that $ii)$ of Proposition \ref{prop: R1,R2} holds true, namely that $\exists\, S\subset\{ v\Low>0 \}$ such that $\mathcal{H}^{n-1}(S)=|D^cv|(S)=0$, and 
\begin{align*}
\nu^{F[v]}\left(z,\frac{1}{2}v(z)\right) \in \overline{\mathbb{V}_{K^s}}\quad \forall\,z\in \{v\Low>0\}\setminus S.
\end{align*}
Thanks to Corollary \ref{cor: new R1, R2 equivalence} with $S_1=S_2=S$ we deduce that \textbf{R1} and \textbf{R2} hold true. This concludes the proof.
\end{proof}

\begin{proof}[Proof of Remark \ref{rem: post Prop 1.12 2}]
If $K^s$ is polyhedral, then $\mathbb{V}_{K^s}$ coincides with the set of the outer unit normals to the facets of $K^s$. Since $K^s$ has a finite number of facets, we conclude that $\mathbb{V}_{K^s}$ is closed. In case $K^s$ has $C^1$ boundary, we have to notice that thanks to \cite[Corollary 3, 
Theorem 1]{HanNi}), every point in $\partial (K^s)^*$ is an exposed point, so by Lemma \ref{lem: V_{K^s}} we have that $\mathbb{V}_{K^s}$ coincides with $\mathbb{S}^{n-1}$, which is closed.
\end{proof}

\begin{proof}[Proof of Corollary \ref{cor: smooth K^s}]
Thanks to Remark \ref{rem: post Prop 1.12 2} we know that if $K^s$ has $C^1$ boundary, then $\mathbb{V}_{K^s}$ coincides with $\mathbb{S}^{n-1}$. Therefore condition (\ref{eq lem R1,R2}), namely there exists $S\subset\{v\Low>0 \}$ s.t. $\mathcal{H}^{n-1}(S)=|D^cv|(S)=0$ and $\nu^{F[v]}\left(z,\frac{1}{2}v(z) \right)\in \overline{\mathbb{V}_{K^s}}=\mathbb{S}^{n-1}$ for every $z\in \{v\Low>0\}$, is always verified. This concludes the proof.  
\end{proof}

\section*{Acknowledgements}\noindent
I would like to thank my PhD advisor Filippo Cagnetti for the help he was always willing to give me. Lastly, but not the least, I would like to thank my PhD thesis examiners Miroslav Chleb\'{i}k and Nicola Fusco for their valuable comments and suggestions. This work was partially supported by the DFG project FR 4083/1-1 and by the Deutsche
Forschungsgemeinschaft (DFG, German Research Foundation) under Germany’s Excellence Strategy EXC 2044-390685587, Mathematics M\"unster: Dynamics-Geometry-Structure.

\def\cprime{$'$}


\begin{thebibliography}{57}

\bibitem{ambrosiocaselles}
{\sc L.~Ambrosio, V.~Caselles, S.~Masnou, and J.~Morel}. 
\newblock {\em Connected
  components of sets of finite perimeter and applications to image processing}.
\newblock J.\ Eur.\ Math.\ Soc.\ (JEMS)
\newblock {\bf 3} (2001), 39-92.


\bibitem{AFP}
{\sc L.~Ambrosio, N.~Fusco, D.~Pallara}.
\newblock {\em Functions of bounded variation and free discontinuity problems}. 
\newblock Oxford University Press, Oxford 2000.



\bibitem{BCF13}
{\sc M.~Barchiesi, F.~Cagnetti, and N.~Fusco}. 
\newblock {\em Stability of the Steiner symmetrization of convex sets}.
\newblock J.\ Eur.\ Math.\ Soc.\ (JEMS)
\newblock {\bf 15} (2013), 1245–1278.




  
\bibitem{Baernstein95}
{\sc A.~Baernstein}. 
\newblock {\em A unified approach to symmetrization}.
\newblock Symposia Mathematica
\newblock {\bf 35} (1995), 47-91.
   
  
    


\bibitem{Brezis}
{\sc H.~Brezis}.
\newblock {\em Functional Analysis, Sobolev Spaces and Partial Differential Equations}. 
\newblock Springer, 2011.



 


\bibitem{portugalia}
{\sc F.~Cagnetti}. 
\newblock {\em Rigidity for Perimeter Inequalities under symmetrization: state of the art and open problems}.
\newblock Port.\ Math. 
\newblock {\bf 75} (2018), 329-366.

\bibitem{CCPMGauss}
{\sc F.~Cagnetti, M.~Colombo, G.~De Philippis, and F.~Maggi}. 
\newblock {\em Essential connectedness and the rigidity problem for Gaussian symmetrization}.
\newblock J.\ Eur.\ Math.\ Soc.\ (JEMS)
\newblock {\bf 19} (2017), 395-439.



\bibitem{CCPMSteiner}
{\sc F.~Cagnetti, M.~Colombo, G.~De Philippis, and F.~Maggi}. 
\newblock {\em Rigidity of equality cases in Steiner's perimeter inequality}.
\newblock Anal.\ PDE
\newblock {\bf 7} (2014), 1535-1593.


\bibitem{cagneti_perugini_stoeger}
{\sc F.~Cagnetti, M.~Perugini, and D.~St\"oger}. 
\newblock {\em Rigidity for perimeter
  inequality under spherical symmetrisation}.
\newblock Calc.\ Var.\ PDE
\newblock {\bf 59} (2020), 1-53.


\bibitem{capriani2}
{\sc G.~M.~Capriani}. 
\newblock {\em The Steiner rearrangement in any codimension}.
\newblock Calc.\ Var.\ PDE
\newblock {\bf 49} (2014), 517-548.



\bibitem{CCF}
{\sc M.~Chleb\'{i}k, A.~Cianchi, and N.~Fusco}. 
\newblock {\em The perimeter inequality under Steiner symmetrization: Cases of equality}.
\newblock Annals of Mathematics
\newblock {\bf 162} (2005), 525-555.



\bibitem{cianchifusco1}
{\sc A.~Cianchi and N.~Fusco}. 
\newblock {\em Functions of bounded variation and
  rearrangements}.
\newblock Arch.\ Ration.\ Mech.\ Anal.\
\newblock {\bf 165} (2002), 1-40.

\bibitem{cianchifusco2}
{\sc A.~Cianchi and N.~Fusco}. 
\newblock {\em Steiner symmetric
  extremals in {P}\'{o}lya-{S}zeg\"{o} type inequalities}.
\newblock Adv.\ Math.\
\newblock {\bf 203} (2006), 673-728.

\bibitem{cianchifuscomaggipratelliGAUSS}
{\sc A.~Cianchi, N.~Fusco, F.~Maggi, and A.~Pratelli}. 
\newblock {\em On the
  isoperimetric deficit in {G}auss space}.
\newblock Amer.\ J.\ Math.\
\newblock {\bf 133} (2011), 131-186.


\bibitem{PolarettiD}
{\sc J.~Dalphin}. 
\newblock {\em Some Characterizations of a uniform ball property}.
\newblock ESAIM proc.\ Surveys
\newblock {\bf 45} (2014), 437-446.


\bibitem{DeGiorgi}
{\sc E.~De Giorgi}. 
\newblock {\em Sulla propriet\`{a} isoperimetrica dell'ipersfera, nella classe degli insiemi aventi frontiera orientata di misura finita}.
\newblock Atti Accad.\ Naz.\ Lincei Mem.\ Cl.\ Sci.\ Fis.\ Mat.\ Nat.\ Sez.\
\newblock {\bf 5} (1958), 33-44.



\bibitem{DeGiorgiSelected}
\leavevmode\vrule height 2pt depth -1.6pt width 23pt. 
\newblock {\em Selected papers}.
\newblock Edited by Luigi Ambrosio,
  Gianni Dal Maso, Marco Forti, Mario Miranda and Sergio Spagnolo, Reprint of
  the 2006 edition.
\newblock Springer Collected Works in Mathematics, Springer, Heidelberg, 2013.



%
%


\bibitem{EvaGa}
{\sc L.~C.~Evans and R.~F.~Gariepy}. 
\newblock {\em Measure Theory and Fine Properties of Functions}.
\newblock CRC Press,
revised edition,
2015.

\bibitem{Fed69}
{\sc H.~Federer}. 
\newblock {\em Geometric measure theory}.
\newblock Die Grundlehren der mathematischen Wissenschaften. Springer-Verlag New york Inc.\
\newblock {\bf 153}, New York, 1969 .

\bibitem{FM}
{\sc I.~Fonseca and S.~M\"uller}. 
\newblock {\em A uniqueness proof for the Wulff theorem}.
\newblock Proc.\ R.\ Soc.\ Edinb.\
\newblock Sect. A, {\bf 119}/1-2, (1991), 125-136.


\bibitem{GaZie}
{\sc R.~F.~Gariepy and W.~P.~Ziemer}.
\newblock {\em Modern real analysis}.
\newblock PWS publishing company,
1995.

\bibitem{GiaMoSou}
{\sc M.~Giaquinta, G.~Modica, and J.~Soucek}.
\newblock {\em Cartesian Currents in the Calculus of Variations. I. Cartesian currents}.
\newblock  Ergebnisse der Mathematik und ihrer Grenzgebiete.
\newblock {\bf 3}
\newblock A Series of Modern Surveys in
  Mathematics, Springer-Verlag, Berlin, 1998.
  
   


\bibitem{G}
{\sc M.~E.~Gurtin}. 
\newblock {\em On a theory of phase transitions with interfacial energy}.
\newblock Arch.\ Ration.\ Mech.\ Anal.\
\newblock  {\bf 87}/3, (1985), 187-212.

\bibitem{HanNi}
{\sc H.~Han and T.~Nishimura}. 
\newblock {\em Strictly convex Wulff shapes and $C^1$ convex integrands}.
\newblock Proc.\ Amer.\ Math.\ Soc.\
\newblock  {\bf 145}, (2017), 3997-4008.

\bibitem{H}
{\sc C.~Herring}. 
\newblock {\em Some theorems on the free energies of crystal surfaces}.
\newblock Phys.\ Rev.\
\newblock  {\bf 82}, (1951), 87-93.


%


\bibitem{MaggiBOOK}
{\sc F.~Maggi}.
\newblock {\em Sets of Finite Perimeter and Geometrical Variational Problems. An Introduction to Geometric Measure Theory}.
\newblock Cambridge University Press, 2012.


%

%

\bibitem{Rock}
{\sc R.~Tyrrell Rockafellar}.
\newblock {\em Convex analysis}.
\newblock Princeton University Press, 1970.


\bibitem{Simon}
{\sc L. Simon}.
\newblock {\em Lectures on Geometric Measure Theory}.
\newblock Proceedings of the Centre for Mathematical Analysis, Australian National University
\newblock {\bf 3}, 1983.

%


\bibitem{Specto}
{\sc D.~Spector}.
\newblock {\em Simple proofs of some results of some results of Reshetnyak}
\newblock Proc.\ Amer.\ Math.\ Soc.\
\newblock {\bf 139}, (2011), 1681-1690.


\bibitem{Ta}
{\sc J.~E.~Taylor}.
\newblock {\em Crystalline variational problems}.
\newblock Bull.\ Am.\ Math.\ Soc.\
\newblock {\bf 84}/4, (1978), 568-588.

\bibitem{Volpert}
{\sc A.~I.~Vol\cprime~pert}.
\newblock {\em Spaces {${\rm BV}$} and quasilinear
  equations}.
\newblock Mat.\ Sb.\ (N.S.)
\newblock {\bf 73}(115), (1967), 255-302.

\bibitem{W} 
{\sc G.~Wulff}.
\newblock {\em Zur Frage der Geschwindigkeit des Wachstums und der Aufl\"osung der Kristallfl\"achen}.
\newblock Z.\ Kristallogr.\
\newblock {\bf 34}, (1901), 449-530.

\end{thebibliography}
\end{document}